\newcommand{\pointsize}{11pt}
   \renewcommand{\headrulewidth}{0pt} 
   \renewcommand{\headrulewidth}{0.4pt}
   \renewcommand{\headrulewidth}{0.4pt}
   \renewcommand{\headrulewidth}{0.4pt}
\numberwithin{figure}{chapter} 
\numberwithin{table}{chapter}
\numberwithin{equation}{chapter}
\numberwithin{section}{chapter}
\newtheorem{theorem}{Theorem}[section]
\newtheorem{lemma}[theorem]{Lemma}
\newtheorem{corollary}[theorem]{Corollary}
\newtheorem{cor}[theorem]{Corollary}
\newtheorem{proposition}[theorem]{Proposition}
\theoremstyle{definition}
\newtheorem{definition}[theorem]{Definition}
\newtheorem{remark}[theorem]{Remark}
\newtheorem{example}[theorem]{Example}
\newcommand{\fmax}{f_{\text{\rm max}}}
\newcommand{\fmin}{f_{\text{\rm min}}}
\newcommand{\fhan}{f_{\text{\rm han}}}
\newcommand{\C}{{\mathbb C}}
\newcommand{\R}{{\mathbb R}}
\newcommand{\Q}{{\mathbb Q}}	
\newcommand{\Z}{{\mathbb Z}}
\newcommand{\N}{{\mathbb N}}
\let\ve=\mathbf
\newcommand\fractional[1]{\{#1\}}
\newcommand\ceil[1]{\lceil{#1}\rceil}
\newcommand\floor[1]{\lfloor{#1}\rfloor}
\renewcommand{\ll}{{\langle}}
\newcommand{\rr}{{\rangle}}
\newcommand{\CG}{{\mathcal{G}}}
\newcommand{\CE}{{\mathcal E}}
\newcommand{\CF}{{\mathcal F}}
\newcommand{\PP}{{ P}}
\newcommand{\Res}{\operatorname{Res}}
\newcommand{\res}{\operatorname{res}}
\newcommand{\Cone}{\operatorname{Cone}}
\newcommand{\Conv}{\operatorname{Conv}}
\newcommand{\TCone}{\operatorname{TCone}}
\newcommand{\vol}{{\mathrm{\rm vol}}}
\renewcommand{\ll}{{\langle}}
\newcommand\coneC{\mathfrak{c}}
\newcommand\coneU{\mathfrak{u}}
\DeclareMathOperator{\lcm}{lcm}
\renewcommand\d{\mathrm d}
\newcommand\e{\mathrm e}
\newcommand\T{\top}
\newcommand\vexi{\boldsymbol{\xi}}
\newcommand\vebeta{\boldsymbol{\beta}}
\newcommand\veeta{\boldsymbol{\eta}}
\newcommand{\norm}[1]{\left\lVert#1\right\rVert}
\renewcommand\d{\,\mathrm{d}}
\newcommand\latteintegrale{{\tt LattE integrale}\xspace} 
\newcommand\maple{{\tt Maple}\xspace}
\newcommand\mapleKnapsack{\emph{M-Knapsack}\xspace}
\newcommand\latteKnapsack{\emph{LattE Knapsack}\xspace}
\newcommand\coneApx{\emph{LattE Top-Ehrhart}\xspace}
\newcommand{\latte}{{\tt LattE}\xspace}
\newcommand{\latteInt}{{\tt LattE integrale}\xspace}
\newcommand{\cubpack}{{\tt CUBPACK}\xspace}
\newcommand{\matlab}{{\tt MATLAB}\xspace}
\newcommand\maplecode[1]{\textsf{#1}}
\newcommand\shellcode[1]{\texttt{#1}}
\newcommand\vealpha{\boldsymbol{\alpha}}
\renewcommand{\a}{{\vealpha}}
\newcommand{\timeFaster}[1]{{\bf #1}}
\newcommand{\timeSlower}[1]{#1}
\newcommand{\timeTie}[1]{#1}
\newcommand{\timeNotComputed}[1]{--}
\newcommand\smallstep[1]{\fractional{-#1}}  
\newcommand\inlinefrac[2]{{#1}/{#2}}
\begin{document}
   \frontmatter

   \pagestyle{prelim}
   
   %
   \fancypagestyle{plain}{%
      \fancyhf{}
      \cfoot{-\thepage-}
   }%
\begin{center}
   \null\vfill
   \textbf{%
     Decomposition Methods for Nonlinear Optimization and Data Mining
   }%
   \\
   \bigskip
   By \\
   \bigskip
   BRANDON EMMANUEL DUTRA \\
   \bigskip
   B.S. (University of California, Davis) 2012 \\
   M.S. (University of California, Davis) 2015 \\
   \bigskip
   DISSERTATION \\
   \bigskip
   Submitted in partial satisfaction of the requirements for the
   degree of \\
   \bigskip
   DOCTOR OF PHILOSOPHY \\
   \bigskip
   in \\
   \bigskip
   Applied Mathematics \\
   \bigskip
   in the \\
   \bigskip
   OFFICE OF GRADUATE STUDIES \\
   \bigskip        
   of the \\
   \bigskip
   UNIVERSITY OF CALIFORNIA \\
   \bigskip
   DAVIS \\
   \bigskip
   Approved: \\
   \bigskip
   \bigskip
   \makebox[3in]{\hrulefill} \\
   Jesus De Loera \\
   \bigskip
   \bigskip
   \makebox[3in]{\hrulefill} \\
   Matthias K{\"o}ppe \\
   \bigskip
   \bigskip
   \makebox[3in]{\hrulefill} \\
   David Woodruff \\
   \bigskip
   Committee in Charge \\
   \bigskip
   2016 \\
   \vfill
\end{center}

   \newpage

   \thispagestyle{empty}
   \begin{titlepage}
   \vspace*{50em}
   \begin{center} 
      \copyright \ Brandon E.\ Dutra, 2016.  All rights reserved.  
   \end{center}
   \end{titlepage}
   \newpage
   \stepcounter{page}
   
   %
   \doublespacing
   
   \setcounter{tocdepth}{2} 
   \tableofcontents
   
   %
   
   \newpage
   
\textbf{Abstract}

We focus on two central themes in this dissertation. The first one is on decomposing polytopes and polynomials in ways that allow us to perform nonlinear optimization. We start off by explaining important results on decomposing a polytope into special polyhedra. We use these decompositions and develop methods for computing a special class of integrals exactly. Namely, we are interested in computing the exact value of integrals  of polynomial functions over convex polyhedra. We present prior work and new extensions of the integration algorithms. Every integration method we present requires that the polynomial has a special form. We explore two special polynomial decomposition algorithms that are useful for integrating polynomial functions. Both polynomial decompositions have strengths and weaknesses, and we experiment with how to practically use them. 

After developing practical algorithms and efficient software tools for integrating a polynomial over a polytope, we focus on the problem of maximizing a polynomial function over the continuous domain of a polytope. This maximization problem is NP-hard, but we develop approximation methods that run in polynomial time when the dimension is fixed. Moreover, our algorithm for approximating the maximum of a polynomial over a polytope is related to integrating the polynomial over the polytope. We show how the integration methods can be used for optimization. 

We then change topics slightly and consider a problem in combinatorics. Specifically, we seek to compute the function $E(t)$ that counts the number of nonnegative integer solutions to the equation $\alpha_1 x_1 + \cdots + \alpha_n x_n =t$ where the $\alpha_i$ are given positive integers. It is known that this function is a quasi-polynomial function, and computing every term is $\#P$-hard. Instead of computing every term, we compute the top $k$ terms of this function in polynomial time in varying dimension when $k$ is fixed. We review some applications and places where this counting function appears in mathematics. Our new algorithm for computing the largest order terms of $E(t)$ is based on the polyhedral decomposition methods we used in integration and optimization. We also use an additional polyhedral decomposition: Barvinok's fast decomposition of a polyhedral cone into unimodular cones.

The second central topic in this dissertation is on problems in data science. We first consider a heuristic for mixed-integer linear optimization. We show how many practical  mixed-integer linear have a special substructure containing set partition constraints. We then describe a nice data structure for finding feasible zero-one integer solutions to systems of set partition constraints.

Finally, we end with an applied project using data science methods in medical research. The focus is on identifying how T-cells and nervous-system cells interact in the spleen during inflammation. To study this problem, we apply topics in data science and computational geometry to clean data and model the problem. We then use clustering algorithms and develop models for identifying when a spleen sample is responding to inflammation. This project's lifetime surpasses the author's involvement in it. Nevertheless, we focus on the author's contributions, and on the future steps.

   \newpage

\section*{Acknowledgments}

\emph{My advisers: Jesus De Loera and Matthias K{\"o}ppe.} These two people have played a big role during my time at UC Davis. I appreciated working with two experts with diverse backgrounds. They greatly enriched my experience in graduate school. My advisers have also been supportive of my career development. As I went through graduate school, my career and research goals changed a few times, but my advisers have been agile and supportive. Some of the most important career development experiences I had were from my three summer internships, and I'm grateful for their support in getting and participating in these opportunities. 

Through undergraduate and graduate school at UC Davis, I have spent about 7 years working on the \latte project. I am very grateful for this enriching experience. I have enjoyed every aspect of working within the intersection of mathematical theory and mathematical software. \latte was my first experience with the software life cycle and with real software development tools like GNU Autotools, version control, unit testing, et  cetera. I have grown to love software development through this work, and how it has enriched my graduate experience. I also want to acknowledge the \latte users. I am thankful they find our work interesting, useful, and care enough to tell us how they use it and how to improve it. I hope that future graduate students get the experience of working on this great project.

\emph{Professors.} I wish to thank David Woodruff for being in my dissertation and qualifying exam committees. I also thank Dan Gusfield and Angela Cheer for also taking part in my qualifying exam committee. I also owe a thank you to Janko Gravner and Ben Morris for their support and letters of recommendation.

\emph{Co-authors.} I wish to thank Velleda Baldoni (University of Rome Tor Vergata, Rome, Italy), Nicole Berline (\'{E}cole Polytechnique, Palaiseau, France), and Mich\`{e}le Vergne (The Mathematical Institute of Jussieu, Paris Rive Gauche, Paris, France) for their work in Chapter 4. I also thank my collaborators from the UC Davis School of Veterinary Medicine who contributed to Chapter 6: Colin Reardon and Ingrid Brust-Mascher.

\emph{Life.}
I am also very lucky to have found my wife in graduate school. I am very happy, and I look forward to our life together. She is an amazing person, and my best friend.

\emph{Money.} I am thankful for the funding I received from my advisers and the mathematics department. I am especially thankful for the funding I received over my first year and summer, which allowed me to focus on passing the preliminary examinations. I also owe a big thank you to my friend Swati Patel who played a monumental role in editing my NSF Graduate Research Fellowship Program application, which resulted in me obtaining the award! The financial support I received over the years greatly reduced stress and made the experience great. Money makes all the difference. I also wish to thank some organizations for their financial support for conferences: American Institute of Mathematics, Institute for Mathematics and Its Applications, and the Rocky Mountain Mathematics Consortium at the University of Wyoming. I was partially supported by NSF award number 0914107, and a significant amount of this dissertation was supported by NSF grant number DGE-1148897.

\emph{Internships.} I am grateful for three great summer internships: two at SAS Institute (in 2013 and 2014 under Manoj Chari and Yan Xu, respectively), and one at Google, Inc. (in 2015 under Nicolas Mayoraz). All three showed me how diverse the software industry can be.

\emph{People.} All family members alive and dead. 
Angel Castro.
Lorenzo Medina.
Andy Tan.
Greg Webb.
Anne Carey.
Julia Mack.
Tom Brounstein.
Travis Scrimshaw.
Gordon Freeman.
Jim Raynor.
Sarah Kerrigan.
Dan Gusfield.
Sean Davis.
Mohamed Omar.
Yvonne Kemper.
Robert Hildebrand.
Mark Junod.

I would like to end with a quote that perfectly captures why I like mathematics, 

\begin{displayquote}
Despite some instances where physical application may not exist, mathematics has historically been the primary tool of the social, life, and physical sciences. It is remarkable that a study, so potentially pure, can be so applicable to everyday life. Albert Einstein questions, ``How can it be that mathematics, being after all a product of human thought which is independent of experience, is so admirably appropriate to the objects of reality?" This striking duality gives mathematics both power and charm. \cite[p. 171]{wapner2005pea}
\end{displayquote}

   \mainmatter
   
   \pagestyle{maintext}
   
   %
   \fancypagestyle{plain}{%
      \renewcommand{\headrulewidth}{0pt}
      \fancyhf{}
      \cfoot{\thepage}
   }%

\chapter{Introduction}
\label{ch:background} 

The first three chapters of this thesis are focused on the optimization of a polynomial function where the domain is a polytope. That is, we focus on the continuous optimization problem
\begin{equation*}
\begin{split}
\max & \; f(x) \\
 & x \in \PP,
\end{split}
\end{equation*}
where $P$ is a polytope and $f(x)$ is a polynomial. As we review in Section \ref{ch:bg:sh:poly-complexity}, exactly computing the maximum of a polynomial over a polytopal domain is hard, and even approximating the maximum is still hard. However, this has not damped research in this area, and many of the popular methods for approximating the optimum depend on decomposing the polynomial function, approximating the polynomial function with similar functions, or decomposing the domain. References are numerous in the literature \cite{anjos2011handbook, deKlerk2015, lasserre2009momentsBook, Lasserre2000929, Lasserre01globaloptimization, lasserre2002semidefinite, lasserre2011NewLook, marshall2008positive, parrilo2003SDP}. A common characteristic between these methods is their reliance on ideas in real semialgebraic geometry and semidefinite programming. A key contribution of this thesis is another algorithm for approximating the maximum of a polynomial function over $\PP$. Unlike previous methods, our method is based on combinatorial results. When convenient, to help develop our tools for the continuous optimization problem, we also state analogous results for the discrete optimization problem
\begin{equation*}
\begin{split}
\max & \; f(x) \\
 & x \in \PP \cap \Z^d.
\end{split}
\end{equation*}

One key step of our method for approximating the polynomial optimization problem requires computing the integral $\int_P (f(x))^k \d x$ where $k$ is some integer power. Chapter \ref{ch:Integration} is devoted to this step. Then Chapter \ref{ch:polynomialOptimization} connects the pieces together and culminates in an efficient algorithm for the continuous polynomial optimization problem. Some of the tools developed, namely the way we apply polyhedral decompositions and generating functions, can also be applied to a different type of problem: computing the Ehrhart polynomial of a knapsack polytope. Chapter \ref{ch:knapsack} addresses this idea. 

The remaining chapters cover the second part of this thesis: topics in data science. In particular, Chapter \ref{ch:dancingLinks} develops a useful heuristic for finding solutions to set partition constraints, which are a common constraint type in linear integer programming. Then Chapter \ref{ch:spleen} applies tools from distance geometry and cluster analysis to identify disease in spleens.

In this chapter, we review the background material used in all the other chapters. In the figure below, we suggest possible reading orders and identify which chapters builds upon topics in other chapters. 

\begin{center}
\begin{tikzpicture}[scale=0.80,transform shape,->,>=stealth',shorten >=1pt,auto,node distance=3cm,
	  thick,main node/.style={rectangle,draw,font=\sffamily\Large\bfseries}]
	  \node[main node] (1) {Chapter 1} ;
	  \node[main node] (2) [below left of =1] {Chapter 2};
	  \node[main node] (3) [below  of =2] {Chapter 3};
	  \node[main node] (4) [below  of =3] {Chapter 4};	  
	  \node[main node] (5) [right  of =2] {Chapter 5};	  
	  \node[main node] (6) [right  of =5] {Chapter 6};	  
	
	  \path[every node/.style={font=\sffamily\small}]
		 (1) edge node  {} (2)
		(2) edge node {} (3)
		(3) edge node {} (4)
		(1) edge node {} (5)
		(1) edge node {} (6);
\end{tikzpicture}
\end{center}

\section{Polyhedra and their representation}

Polytopes and polyhedra appear as a central object in this thesis. We state just the basic definitions and results that we need. For a complete review, see \cite{barvinokzurichbook, de2013algebraic, schrijver, zieglerpolybook}.

\begin{definition}
	Let $x_1, \dots x_k \in \R^d$, then the combination $a_1x_1 + \dots + a_kx_k$  with $a_i \in \R$ is called
	\begin{itemize}
		\item \emph{linear} with no restrictions on the $a_i$
		\item \emph{affine} if $\sum_{i=1}^k a_i = 1$
		\item \emph{conical} if $a_i \geq 0$
		\item \emph{convex} if it is affine and conical.
	\end{itemize}
\end{definition}

We can define a polytope as a special kind of convex set.

\begin{definition}
A set $C \in \R^d$ is  \emph{convex} if $\forall x, y \in C \Rightarrow tx + (1-t)y \in C, \:\forall \:0 \leq t \leq 1$. This means, the line segment between $x$ and $y$ is in $C$. 
\end{definition}

\begin{definition}
Let $C \in \R^d$, the \emph{convex hull of $C$} is \[\Conv(C) = \{ t_1x_1 + \dots + t_kx_k \; | \; x_i \in C \:\: and \:\: t_i \geq 0 \:\: and \:\:\sum_{i=1}^k t_i = 1 \}\] 
\end{definition}

\begin{definition}
Let $V \in \R^d$, be a finite point set, then a \emph{polytope $P$} is $P = \Conv(V)$. 
\end{definition}

Polytopes are the convex hull of finite point sets. But there are other ways to represent a polytope. Instead of looking at convex combinations, we can look at halfspaces: 

\begin{definition}
Let $a \in \R^d$, then $H = \{x \in R^d \; | \; a^Tx \leq b \}$ is a \emph{halfspace}. A halfspace is ``one side'' of a linear function.
\end{definition}

\begin{definition}
Let $P = \{x \in R^d \; | \; Ax \leq b \}$ be a finite intersection of halfspaces, then $P$ is called a \emph{polyhedron}.
\end{definition}

\begin{example}
Take the unit square with vertices $(0,0),$ $(1,0),$ $(0,1),$ and $(1,1)$ in $\R^2$. The interior of the square is given by all convex combinations of the vertices. It is also given by all $x, y \in \R$ such that

\begin{displaymath}
\begin{matrix}
	0 \leq x \leq 1 \\
	0 \leq y \leq 1 \\	
\end{matrix}	
\end{displaymath}

but this can be rewritten as 

\begin{displaymath}
\begin{matrix}
	x &\leq 1 \\
	-x &\leq 0 \\
	y &\leq 1 \\	
	-y &\leq 0 \\
\end{matrix}	
\end{displaymath}

or in matrix form $Ax \leq b$ as in

\begin{displaymath}
\left(\begin{matrix}
	1 & 0 \\
	-1 & 0 \\
	0 & 1 \\
	0 & -1 \\
\end{matrix}\right) 
\left( \begin{matrix}
	x \\
	y \\
\end{matrix}\right) \leq
\left(\begin{matrix}
	1 \\
	0 \\
	1 \\
	0 \\
\end{matrix}\right). 
\end{displaymath}
\end{example}

The unit square can be described by two different objects: as convex combinations of a point set, and the bounded intersection of finitely many half spaces. By the next theorem, these descriptions are equivalent as every polytope has these two representations.

\begin{theorem}[Finite basis theorem for polytopes, Minkowski-Steinitz-Weyl, see Corollary 7.1c in  \cite{schrijver}]
\label{thm:nonpointed-polyhedra}
	A set $P$ is a polytope if and only if it is a bounded polyhedron.
\end{theorem}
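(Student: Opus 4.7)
The plan is to prove the two directions separately and to reduce as much as possible to a single workhorse technique, namely Fourier–Motzkin elimination, which states that the projection of an H-polyhedron onto a coordinate hyperplane is again defined by finitely many linear inequalities. Because the two representations describe radically different data (a finite vertex list versus a finite inequality list), the bridge between them needs a constructive elimination or a duality argument; Fourier–Motzkin gives us both in a uniform way.

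For the direction \emph{polytope $\Rightarrow$ bounded polyhedron}, I would start from $P=\Conv(V)$ with $V=\{v_1,\dots,v_k\}\subset\R^d$ and realize $P$ as the image, under the linear map $(t_1,\dots,t_k)\mapsto\sum t_iv_i$, of the standard simplex $\Delta_k=\{t\in\R^k_{\ge 0}:\sum t_i=1\}$. The simplex $\Delta_k$ is manifestly an H-polyhedron, and the graph of the linear map together with $\Delta_k$ cuts out an H-polyhedron in $\R^{k+d}$. Iterated Fourier–Motzkin elimination of the $t_i$-coordinates then yields finitely many inequalities in the remaining $d$ variables describing $P$ exactly. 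Boundedness is automatic: any $x\in P$ satisfies $\|x\|\le\max_i\|v_i\|$ by convexity.

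For the direction \emph{bounded polyhedron $\Rightarrow$ polytope}, let $P=\{x\in\R^d:Ax\le b\}$ be bounded. I would define a \emph{vertex} of $P$ to be a point $v\in P$ at which some $d$ linearly independent rows of $A$ are tight; since $A$ has only finitely many rows, there are finitely many such candidates, hence finitely many vertices $v_1,\dots,v_m$. The key step is to show $P=\Conv(v_1,\dots,v_m)$. One inclusion is immediate from convexity of $P$. For the other, I would take any $x\in P$ and argue by induction on the dimension of the minimal face of $P$ containing $x$: if $x$ is not already a vertex, the set of active constraints at $x$ has rank less than $d$, so there is a line through $x$ inside $\aff P$. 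Moving along this line in either direction until a new inequality becomes tight (possible because $P$ is bounded) produces two points $x^+,x^-\in P$ with strictly more active constraints, and $x$ is their convex combination. By induction each of $x^\pm$ lies in $\Conv(v_1,\dots,v_m)$, hence so does $x$.

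The main obstacle I expect is the inductive step in the H-to-V direction: namely, making rigorous the claim that adding an active constraint strictly decreases the face dimension and that the process terminates in vertices. Boundedness is essential here, since it guarantees that moving along any line inside $P$ hits a new facet in finite time; without it one would only obtain a decomposition into vertices plus an extreme ray (the unbounded Minkowski–Weyl theorem), which explains why the ``bounded'' hypothesis appears in the statement. Once this step is carried out carefully using the rank of the active-constraint submatrix as the induction parameter, both directions combine to yield the claimed equivalence.
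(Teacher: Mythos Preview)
Your argument is correct and follows the standard textbook route (lifting to the simplex plus Fourier--Motzkin for V$\Rightarrow$H, and the face-dimension induction for H$\Rightarrow$V); the one point you flag yourself---that activating a new inequality strictly increases the rank of the tight submatrix---is indeed the crux, and it holds because any row already in the span of the active rows would be constant along your line direction~$y$ and hence could not become newly tight.

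As for comparison: the paper does not prove this theorem at all. It is stated as background with a citation to Schrijver (Corollary~7.1c) and used as a black box, so there is no ``paper's own proof'' to compare against. Your sketch is essentially the argument one finds in Schrijver or Ziegler, so you have supplied what the thesis deliberately outsourced.
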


Because both polytope representations are very important, there are many ways or algorithms to convert one to the other. Instead of describing convex hull algorithms and others, we will consider them a technology and seek an appropriate software tool when needed. For more details about transferring from one representation to another, see \cite{4ti2, avis2000revised, cddlib-094a, polymake-software}.

\subsection{Polyhedral cones}

A special type of unbounded polyhedra that will appear often is a polyhedral cone. Generally, a \emph{cone} is a set that is closed under taking nonnegative scalar multiplication, and a \emph{convex cone} is also closed under addition. For example the set $D = \{ (x, 0): x \geq 0\} \cup \{ (0,y) : y \geq 0\}$ is closed under nonnegative scalar multiplication because if $z \in C$ then $az \in C$ for any $a \in \R_{\geq 0}$, but $D$ is not closed under addition. But if $K = \Conv(C)$, then $K$ is $\R^d_{\geq 0}$ and $K$ is a convex cone. We will always want cones to be convex, and we will use cone to mean convex cone.

\begin{definition}
A \emph{finitely generated cone $C$} has the form \[C = \left\{\sum_{j=1}^m a_jx_j \; | \; a_j \geq 0, j = 1, \dots, m \right\},\] for some finite collections of points $x_j \in \R^d$.
\end{definition}

\begin{definition}
 A \emph{polyhedral cone} is a cone of the form $\{x \in \R^d \; | \; Ax \leq 0\}$. Therefore, a polyhedral cone is a finite set of homogeneous linear inequalities.
\end{definition}

Just as bounded polyhedra and polytopes are the same object, a polyhedral cone and a finitely generated cone are two descriptions of the same object. 

\begin{theorem}[Farkas-Minkowski-Weyl, see Corollary 7.1a in  \cite{schrijver}]
\label{thm:nonpointed-polyhedra}
	A convex cone is polyhedral if and only if it is finitely generated.
\end{theorem}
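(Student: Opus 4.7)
The plan is to prove the two implications separately. The reverse direction, that every polyhedral cone is finitely generated, reduces cleanly to the finite basis theorem for polytopes stated just above. Given a polyhedral cone $C = \{x \in \R^d : Ax \leq 0\}$, intersect it with the unit box to form the bounded polyhedron $Q = C \cap \{x \in \R^d : -\mathbf{1} \leq x \leq \mathbf{1}\}$. By the finite basis theorem $Q$ is a polytope, so $Q = \Conv(v_1, \ldots, v_k)$ for some finite vertex set. I claim $C = \Cone(v_1, \ldots, v_k)$. One inclusion is immediate: each $v_i$ lies in $C$, and $C$ is closed under nonnegative combinations, so $\Cone(v_1, \ldots, v_k) \subseteq C$. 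For the other, given any $x \in C$, choose $\lambda > 0$ small enough that $\lambda x \in [-1,1]^d$; then $\lambda x \in Q$, so $\lambda x = \sum_i t_i v_i$ with $t_i \geq 0$ and $\sum_i t_i = 1$, and dividing through by $\lambda$ exhibits $x$ as a nonnegative combination of the $v_i$.

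For the forward direction, that every finitely generated cone is polyhedral, the plan is to use Fourier--Motzkin elimination. Given $C = \Cone(x_1, \ldots, x_m)$, realize $C$ as the projection onto the first $d$ coordinates of the polyhedron
\[
R = \Bigl\{(x, y) \in \R^{d+m} : x - \textstyle\sum_{i=1}^{m} y_i x_i = 0, \; y \geq 0\Bigr\},
\]
where each equation is first expanded into a pair of inequalities. Fourier--Motzkin elimination says that projecting a polyhedron along a coordinate axis again yields a polyhedron, described concretely by taking all pairwise sums of inequalities in which the eliminated variable appears with opposite signs, while retaining inequalities that do not involve it. Applying this iteratively to eliminate $y_1, \ldots, y_m$ produces a finite system of linear inequalities in $x$ alone, which by construction describes exactly $C$.

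The main obstacle is the bookkeeping underlying Fourier--Motzkin elimination: one has to verify that any point satisfying the reduced system in the remaining variables actually lifts to a point of the original polyhedron, which is the content of a single elimination step and is proved by choosing the eliminated variable between its induced lower and upper bounds. The reverse direction is comparatively clean given the finite basis theorem, with the only subtlety being that the rescaling step is well-defined precisely because every point of a cone can be shrunk into the unit box without leaving the cone.
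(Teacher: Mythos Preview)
The paper does not give its own proof of this theorem; it simply cites Corollary 7.1a in Schrijver. So there is nothing to compare against directly, and your proposal stands on its own as a complete argument.

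Your argument is correct. The direction ``polyhedral $\Rightarrow$ finitely generated'' via intersecting with the box and invoking the finite basis theorem for polytopes is clean and works exactly as you describe; the scaling trick handles every nonzero $x\in C$, and $x=0$ is trivial. The direction ``finitely generated $\Rightarrow$ polyhedral'' via lifting to $\R^{d+m}$ and applying Fourier--Motzkin elimination is the standard textbook route and is fine as sketched.

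One caveat worth flagging: you are invoking the polytope version (the Minkowski--Steinitz--Weyl theorem stated just above in the paper) to prove the cone version. In many standard developments, including Schrijver's, both results are derived as corollaries of a common decomposition theorem for general polyhedra, so neither logically precedes the other. But some treatments prove the cone version first and deduce the polytope version from it by homogenization, in which case your argument for the ``polyhedral $\Rightarrow$ finitely generated'' direction would be circular. Since the paper states both theorems without proof and cites the same source, this is not a problem here, but if you were writing this up self-contained you would want to be sure the polytope theorem has been established independently (e.g., also via Fourier--Motzkin).
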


\begin{definition}
Let $K$ be a convex set, then the \emph{polar cone $K$} is $K^o = \{ y \in \R^d \; | \; y^Tx \leq 0, \forall x \in K \}$. 
\end{definition}

The polar of a finitely generated cone is easy to compute. 

\begin{theorem}
\emph{(Polar of a finitely generated cone)}
\label{thm:polar-of-finitely-generated-cone}
Let $K = cone(\{c_1, \dots, c_m\})$, then the polar cone is the interception of a finite number of halfspaces: $K^o = \{ y \in \R^d : c_j^Ty \leq 0, \forall j=1, \dots, m \}$. Likewise, if $K$ is given by $Cy \leq 0$, then $K^o$ is generated by the rows of $C$.
\end{theorem}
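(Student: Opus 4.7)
The plan is to prove the first statement by a direct double inclusion from the definition of the polar, and then obtain the second statement by combining the first with the bipolar identity $(K^o)^o = K$ for closed convex cones, where closedness is guaranteed by Farkas-Minkowski-Weyl (the preceding theorem in the excerpt).

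First I would handle the forward inclusion of the first claim. Suppose $K = \Conv(\{c_1, \dots, c_m\})$ is the finitely generated cone and take $y \in K^o$. Since each $c_j$ equals the conical combination with $a_j = 1$ and all other coefficients zero, we have $c_j \in K$, so by definition of the polar $c_j^\T y \leq 0$ for all $j$. For the reverse inclusion, suppose $c_j^\T y \leq 0$ for every $j$ and let $x \in K$ be arbitrary, written as $x = \sum_{j=1}^m a_j c_j$ with $a_j \geq 0$. Then
\[
y^\T x \;=\; \sum_{j=1}^m a_j \, y^\T c_j \;\leq\; 0,
\]
since each summand is a product of a nonnegative number and a nonpositive number. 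Hence $y \in K^o$, establishing the first equality.

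For the second statement, let $C$ have rows $c_1^\T, \dots, c_m^\T$, so $K = \{y : c_j^\T y \leq 0,\ j = 1, \dots, m\}$. Define $L = \Conv(\{c_1, \dots, c_m\})$. By the first part applied to $L$, we obtain $L^o = K$. Taking polars once more gives $(L^o)^o = K^o$, so it suffices to show $(L^o)^o = L$, i.e., the bipolar identity for the closed convex cone $L$.

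The main obstacle is justifying this bipolar step. The inclusion $L \subseteq (L^o)^o$ is immediate from the definitions: every $x \in L$ satisfies $y^\T x \leq 0$ for every $y \in L^o$, hence lies in $(L^o)^o$. The reverse inclusion requires $L$ to be closed and convex, and for this I would invoke the Farkas-Minkowski-Weyl theorem stated just above: since $L$ is finitely generated, it is polyhedral, hence of the form $\{x : Ax \leq 0\}$ and therefore closed. A standard separating-hyperplane argument then finishes the job: if some $x_0 \notin L$, a closed convex cone, there exists $y_0$ with $y_0^\T x \leq 0$ for all $x \in L$ but $y_0^\T x_0 > 0$, so $y_0 \in L^o$ witnesses $x_0 \notin (L^o)^o$. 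Combining, $K^o = (L^o)^o = L = \Conv(\{c_1,\dots,c_m\})$, which is the desired description of $K^o$ as the cone generated by the rows of $C$.
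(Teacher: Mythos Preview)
The paper states this theorem without proof, so there is no argument to compare against. Your proof is correct: the double inclusion for the first claim is the standard direct verification, and deducing the second claim via the bipolar identity $(L^o)^o = L$---justified by closedness of finitely generated cones (Farkas--Minkowski--Weyl) together with a separating-hyperplane argument---is the natural route. One cosmetic slip: you repeatedly write $\Conv(\{c_1,\dots,c_m\})$ where you mean the conical hull $\Cone(\{c_1,\dots,c_m\})$; your argument clearly uses conical combinations $\sum a_j c_j$ with $a_j\geq 0$, so this is only a notational error, but it should be fixed throughout.
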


\section{Generating functions for integration and summation}
\label{ch:bg:sec:integration-stuff}
   
Chapters \ref{ch:Integration},  \ref{ch:polynomialOptimization}, and \ref{ch:knapsack} make great use of encoding values in generating functions. This section gives a general introduction to how they are used in the later chapters. For a more complete description of the topics in this section, see \cite{barvinokzurichbook, BarviPom}.
   
\subsection{Working with generating functions: an example}
 
 Let us start with an easy example. Consider the one dimensional polyhedra in $\R$ given by $\PP = [0, n]$. We encode the lattice points of $\PP \cap \Z$ by placing each integer point as the power of a monomial, thereby obtaining the polynomial $S(\PP; z) := z^0 + z + z^2 + z^3 + \cdots + z^n$. The polynomial $S(\PP; z)$ is called the \emph{generating function of $\PP$.} Notice that counting $\PP \cap \Z$ is equivalent to evaluating $S(\PP, 1)$.
 
 In terms of the computational complexity, listing each monomial in the polynomial $S(\PP, z)$ results in a polynomial with exponential length in the bit length of $n$. However, we can rewrite the summation with one term:
 
 \[ S(\PP, z) = 1 + z^1 + \cdots + z^n = \frac{1-z^{n+1}}{1-z}.\]

Counting the number of points in $|\PP \cap \Z|$ is no longer as simple as evaluating $\frac{1-z^{n+1}}{1-z}$ at $z=1$ because this is a singularity. However, this singularity is removable. One could perform long-polynomial division, but this would result in a exponentially long polynomial in the bit length of $n$. Another option that yields a polynomial time algorithm would be to apply L'Hospital's rule:

\[\lim_{z \rightarrow 1} S(\PP, z) = \lim_{z \rightarrow 1} \frac{-(n+1)z^{n}}{1} = n+1. \]

Notice that $S(\PP, z)$ can be written in two ways:

\[ S(\PP, z) = \frac{1}{1-z} - \frac{z^{n+1}}{1-z} = \frac{1}{1-z} + \frac{z^n}{1-z^{-1}}.\]

The first two rational expressions have a nice description in terms of their series expansion:

\[ 1+z + \cdots + z^n = (1 + z^1 + \cdots ) -  (z^{n+1} + z^{n+2} + \cdots).\]

For the second two rational functions, we have to be careful about the domain of convergence when computing the series expansion. Notice that in the series expansion,

\begin{align*}
\frac{1}{1-z} =& \begin{cases} 
      1 + z^1 + z^2 \cdots & \;\;\;\;\; \text{ if } |z| < 1 \\
      -z^{-1} -z^{-2} - z^{-3} - \cdots & \;\;\;\;\; \text{ if }  |z| > 1 
   \end{cases} \\
\frac{z^n}{1-z^{-1}} =& \begin{cases} 
		-z^{n+1} - z^{n+2} - z^{n+3} -\cdots & \text{ if } |z| < 1 \\
      z^{n} + z^{n-1} + z^{n-2} + \cdots & \text{ if }  |z| > 1 
   \end{cases} \\
\end{align*}

adding the terms when $|z| <1$  or $|z| > 1$ results in the desired polynomial: $1 + z^1 + \cdots + z^n$. But we can also get the correct polynomial by adding the series that correspond to different domains of convergence. However, to do this we must now add the series $\cdots + z^{-2} + z^{-1} + 1 + z + z^2 + \cdots$ which corresponds to the polyhedra that is the entire real line:
\begin{align*}
1 + z + \cdots z^n &= (1 + z + z^2 + \cdots)\\
& \quad  + (z^n + z^{n-1} + \cdots)  \\
& \quad - (\cdots + z^{-2} + z^{-1} + 1 + z + z^2 + \cdots)
\end{align*}
and
\begin{align*}
1 + z + \cdots z^n &= (-z^{-1} -z^{-2} - z^{-3} - \cdots)\\
& \quad  + (-z^{n+1} - z^{n+2} - z^{n+3} -\cdots)  \\
& \quad + (\cdots + z^{-2} + z^{-1} + 1 + z + z^2 + \cdots)
\end{align*}

Hence by including the series $\cdots + z^{-2} + z^{-1} + 1 + z + z^2 + \cdots$, we can perform the series expansion of $\frac{1}{1-z} + \frac{z^n}{1-z^{-1}}$ by computing the series expansion of each term on potentially different domains of convergence. 

In the next sections, we will develop a rigorous justification for adding the series  $\cdots + z^{-2} + z^{-1} + 1 + z + z^2 + \cdots$.
 
\subsection{Indicator functions}

\begin{definition} 
The indicator function, $[A]: \R^d \rightarrow \R$, of a set $A \subseteq \R^d$ takes two values: $[A](x) =1$ if $x \in A$ and $[A](x)=0$ otherwise. 
\end{definition} 

The set of indicator functions on $\R^d$ spans a vector space with point-wise additions and scalar multiplication. The set also has an algebra structure where $[A]\cdot [B] = [A \cap B]$, and $[A]+[B] = [A \cup B] + [A \cap B]$. 

Recall the \emph{cone} of a set $A \subseteq \R^d$ is all conic combinations of the points from $A$:
\[\Cone(A) := \left\{ \sum_i \alpha_i a_i \mid a_i \in A, \alpha_i \in \R_{\geq 0} \right\}. \]

\begin{definition} 
Let $\PP$ be a polyhedron and $x \in \PP$. Then the \emph{tangent cone}, of $\PP$ at $x$ is the polyhedral cone 
\[\TCone(\PP, x) := x + \Cone(\PP - x)\]  
\end{definition} 

\begin{definition} 
Let $\PP$ be a polyhedron and $x \in \PP$. Then the \emph{cone of feasible directions}, of $\PP$ at $x$ is the polyhedral cone $\Cone(\PP - x)$.
\end{definition} 
 
Note that if $x$ is a vertex of $\PP$, and $\PP$ is given by an inequality description, then the tangent cone $\TCone(\PP, x)$ is the intersection of inequalities that are tight at $x$. Also, $\TCone(\PP, x)$ includes the affine hull of the smallest face that $x$ is in, so the tangent cone is pointed only if $x$ is a vertex. The difference between a tangent cone and a cone of feasible directions is that the latter is shifted to the origin. 

When $F$ is a face of $\PP$, we will also use the notation $\TCone(\PP, F)$ to denote $\TCone(\PP, x)$ where $x$ is any interior point of $F$.

\begin{theorem}[\cite{brianchon1837}, \cite{gram1871}]
Let $\PP$ be a polyhedron, then 
\[[\PP] = \sum_{F} (-1)^{\dim(F)}[\TCone(\PP, F)] \]
where the sum ranges over all faces $F$ of $\PP$ including $F = \PP$ but excluding $F = \emptyset$
\end{theorem}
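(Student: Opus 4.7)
The plan is to verify the identity pointwise: fix $x \in \R^d$ and show
\[
[\PP](x) \;=\; \sum_{F} (-1)^{\dim F}[\TCone(\PP,F)](x).
\]
I start from the facet description $\PP = \{y : a_i^\T y \leq b_i,\; i=1,\dots,m\}$ and, for each face $F$, set $I(F) = \{i : a_i^\T y = b_i \text{ for all } y \in F\}$. A direct computation (take an interior point $x_0 \in F$, note that $I(F) = \{i : a_i^\T x_0 = b_i\}$, and use that inequalities with slack at $x_0$ remain satisfied for directions of small enough magnitude) yields
\[
\TCone(\PP,F) \;=\; \{y : a_i^\T y \leq b_i \text{ for } i \in I(F)\}.
\]
Two consequences fall out immediately: (a) $\TCone(\PP,F) \supseteq \PP$ for every nonempty face $F$, and (b) face containment $F \subseteq G$ implies $I(G) \subseteq I(F)$, hence $\TCone(\PP,F) \subseteq \TCone(\PP,G)$. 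So ``visibility'' from $x$ is preserved by passing to larger faces, and the set of invisible faces $\Sigma_x := \{F : x \notin \TCone(\PP,F)\}$ is closed under taking subfaces.

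The case $x \in \PP$ is then easy: by (a), $x$ lies in every $\TCone(\PP,F)$, so the right-hand side reduces to $\sum_{F \neq \emptyset}(-1)^{\dim F}$. The boundary of $\PP$ is combinatorially a $(d-1)$-sphere, giving $\sum_{\emptyset \neq F \neq \PP}(-1)^{\dim F} = 1 + (-1)^{d-1}$, and including the $(-1)^d$ term for $F = \PP$ yields $1 = [\PP](x)$.

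The case $x \notin \PP$ is the substantive one. From the facet description of $\TCone(\PP,F)$, the condition $x \notin \TCone(\PP,F)$ is equivalent to $F$ being contained in some facet $F_i$ whose hyperplane strictly separates $x$ from $\PP$; writing $S = \{i : a_i^\T x > b_i\}$, one has $\Sigma_x = \bigcup_{i \in S} F_i$ closed under subfaces, which is a subcomplex of $\partial \PP$. Splitting the sum over visible versus invisible faces,
\[
\sum_{F : x \in \TCone(\PP,F)}(-1)^{\dim F} \;=\; \sum_{F \neq \emptyset}(-1)^{\dim F} - \sum_{F \in \Sigma_x}(-1)^{\dim F} \;=\; 1 - \chi(\Sigma_x),
\]
so it remains to establish $\chi(\Sigma_x) = 1$, and the identity will follow with value $0 = [\PP](x)$.

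The main obstacle is the topological claim that $\Sigma_x$ is contractible (in fact a $(d-1)$-ball). Intuitively $\Sigma_x$ is the ``back side'' of $\PP$ as seen from $x$. I would make this rigorous by radial projection from $x$ onto a sphere around $x$: because $\PP$ is convex and $x \notin \PP$, every ray from $x$ meets $\partial \PP$ in either zero or two points, and the two ``hit maps'' partition $\partial \PP$ into a back region (the far hit, exactly $\Sigma_x$), a front region, and the silhouette between them. A straight-line deformation retraction inside $\Sigma_x$ onto a fixed interior point of any separating facet $F_i$ (with $i \in S$) exhibits $\Sigma_x$ as contractible; alternatively, one picks a hyperplane strictly separating $x$ from $\PP$ and argues by induction that $\Sigma_x$ is the combinatorial ``upper hull'' of $\PP$ with respect to the resulting direction, which is a polytopal $(d-1)$-ball. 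Either way $\chi(\Sigma_x) = 1$, completing the proof.
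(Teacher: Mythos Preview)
The paper does not prove this theorem; it cites it as a classical result of Brianchon and Gram and immediately passes to its corollary. So there is no proof in the paper to compare against.

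Your argument is the standard Euler-characteristic proof of the Brianchon--Gram identity and is correct in outline for a full-dimensional polytope. Two points deserve attention. First, the statement says ``polyhedron'' while your proof uses that $\partial\PP$ is a combinatorial $(d{-}1)$-sphere and that $\Sigma_x$ is a $(d{-}1)$-ball; both require $\PP$ to be bounded and full-dimensional, so you should state that hypothesis explicitly (the paper only ever applies the result to polytopes, so this suffices for its purposes). Second, of the two routes you sketch for the contractibility of $\Sigma_x$, the ``straight-line deformation retraction onto a fixed interior point of a separating facet'' does not work as written: $|\Sigma_x|$ is in general not star-shaped in $\R^d$ with respect to such a point. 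For instance, with $\PP=[0,1]^2$ and $x=(2,2)$, $|\Sigma_x|$ is the union of the right and top edges, and the straight segment from $(0,1)$ to the midpoint $(1,\tfrac12)$ of the right edge leaves $\partial\PP$. A clean fix is to choose a hyperplane $H$ strictly separating $x$ from $\PP$ and centrally project from $x$ onto $H$: since $|\Sigma_x|=\{y\in\partial\PP:[x,y]\cap\PP=\{y\}\}$, this projection is a homeomorphism from $|\Sigma_x|$ onto the convex set $\pi_x(\PP)\subset H$, so $|\Sigma_x|$ is a ball and $\chi(\Sigma_x)=1$. Your alternative ``upper-hull'' route also works with more effort.
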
 

This theorem is saying that if the generating function of a polytope is desired, it is sufficient to just find the generating function for every face of $\PP$. The next
 corollary takes this a step further and says it is sufficient to just construct the generating functions associated at each vertex. This is because, as we will see, the generating functions for non-pointed polyhedra can be ignored.
 
\begin{cor}
\label{cor:tcone-mod-lines}
Let $\PP$ be a polyhedron, then 
\[ [\PP] \equiv \sum_{v \in V} [\TCone(\PP, v)] \pmod{\text{indicator functions of non-pointed polyhedra}},\]
where $V$ is the vertex set of $\PP$.
\end{cor}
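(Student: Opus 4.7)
The plan is to derive this corollary directly from the Brianchon--Gram identity stated just above. Starting from
\[[\PP] = \sum_{F} (-1)^{\dim(F)}[\TCone(\PP, F)],\]
where $F$ ranges over the nonempty faces of $\PP$, I would partition the sum according to $\dim(F)$. The vertices of $\PP$ are precisely the faces of dimension $0$, and each contributes with sign $(-1)^0 = +1$, producing exactly the sum $\sum_{v \in V} [\TCone(\PP, v)]$ on the right-hand side of the claim. Everything else in the Brianchon--Gram sum then needs to be shown to lie in the subspace spanned by indicator functions of non-pointed polyhedra.

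The key observation I would establish is that for a face $F$ with $\dim(F) \ge 1$, the tangent cone $\TCone(\PP, F)$ is non-pointed. Indeed, if $x$ is a relative interior point of $F$, then for every $y \in F$ both $y - x$ and $x - y$ can be scaled into $\PP - x$ (by moving along $F$ a small amount before taking conic hulls), so $\Cone(\PP - x)$ contains the linear span of $F - x$, which is the direction space of $\aff(F)$ and has positive dimension. Thus $\TCone(\PP, F) = x + \Cone(\PP - x)$ contains a full affine subspace of dimension $\dim(F) \ge 1$, which is exactly the definition of a non-pointed polyhedron used in the paper.

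Once these two observations are in place, the corollary follows immediately by reading the Brianchon--Gram identity modulo the linear subspace spanned by indicator functions of non-pointed polyhedra: the $\dim(F) = 0$ terms reproduce the asserted vertex sum, and every higher-dimensional face contributes a term that is $0$ in the quotient. The main subtlety, and the step that deserves the most care, is the verification that $\TCone(\PP, F)$ really contains the full affine hull of $F$ (not just $F$ itself), since this is what guarantees non-pointedness rather than merely unboundedness; the rest is bookkeeping on signs and dimensions in the Brianchon--Gram sum.
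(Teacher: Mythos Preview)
Your proposal is correct and matches the paper's approach exactly: the paper states the corollary as an immediate consequence of Brianchon--Gram, having already noted just before the theorem that ``$\TCone(\PP, x)$ includes the affine hull of the smallest face that $x$ is in, so the tangent cone is pointed only if $x$ is a vertex.'' Your careful justification that $\TCone(\PP,F)$ contains $\aff(F)$ when $\dim F\ge 1$ is precisely this observation spelled out in detail.
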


\subsection{Generating functions of simple cones}

In this section, we quickly review the generating function for summation and integration when the polyhedron is a cone.

And there's still confusion
regarding multiplication: To make a vector space, you need addition of
two elements and multiplication of an element by a scalar (field
element). The multiplication of two indicator functions is NOT a
multiplication by a scalar. Instead, multiplication by a scalar is
really just scaling a function: Take indicator function of positive
real numbers: f(x) = 1 if x>=0; 0 if x < 0. Take a real number, say 7.
Then (7 . f)(x) = 7 if x >= 0; 0 if x < 0.
This makes the "algebra of polyhedra" a real vector space.
But the algebra of polyhedra is also an "algebra". For that you need
another multiplication, namely the multiplication of two elements; and
that is the multiplication that you describe (actually it's the
bilinear extension of what you describe -- because the multiplication
needs to be defined not only for two indicator functions, but for two
R-linear combinations of indicator functions). 
 
\begin{definition}
Let $V$ and $W$ be vector spaces. Let $\mathcal{P}$ be the real vector space spanned by the indicator functions of all polyhedra in $V$ where scalar multiplication is with a real number and an indicator function, and the addition operator is addition of indicator functions.  When $\mathcal{P}$ is equipped with the additional binary operation from $\mathcal{P} \times \mathcal{P}$ to $\mathcal{P}$ representing multiplication of indicator functions, then $\mathcal{P}$ is called the \emph{algebra of polyhedra}. A \emph{valuation} $T$ is a linear transformation  $T: \mathcal{P} \rightarrow W$.
\end{definition}
 
The next Proposition serves as a basis for all the summation algorithms we will discus. Its history can be traced to Lawrence in \cite{lawrence91-2}, and Khovanskii and Pukhlikov in \cite{pukhlikov1992riemann}. It is well described as Theorem 13.8 in \cite{barvinokzurichbook}.

\begin{proposition}\label{valuationI}
There exists a unique valuation  $S(\;\cdot\;,\ell)$ which  associates  to every rational polyhedron
$\PP\subset \R^d$ a meromorphic function in $\ell$ so that the following properties hold 

\begin{itemize}
\item If $\ell \in \R^d$ such that $e^{\langle \ell, x\rangle}$ is summable over the lattice points of $\PP$, then
$$
S(\PP,\ell)= \sum_{\PP \cap \Z^d} e^{\langle \ell,x\rangle}.
$$

\item For every point $s \in \Z^d$, one has
$$
S(s+\PP,\ell) = e^{\langle \ell,s\rangle}S(\PP,\ell).
$$
\item If $\PP$ contains a straight line, then $S(\PP,\ell)=0$.
\end{itemize}
\end{proposition}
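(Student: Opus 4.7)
The plan is to establish uniqueness first and then construct $S$ on simplicial rational cones, extending by linearity and the Brianchon--Gram identity from Corollary \ref{cor:tcone-mod-lines}.

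For uniqueness, I would set $T = S_1 - S_2$ for two candidate valuations satisfying the three properties. Then $T$ is a valuation that vanishes on non-pointed polyhedra and on the convergence locus of the exponential sum. Corollary \ref{cor:tcone-mod-lines} reduces the problem to showing $T$ vanishes on pointed rational cones $\coneC$, where choosing any $\ell$ in the interior of the polar cone $\coneC^o$ puts us inside the convergence region; since $T(\coneC,\cdot)$ is meromorphic and vanishes on a nonempty open set, it vanishes identically.

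For existence, I would first define $S$ on a simplicial rational cone $\coneC = s + \sum_{i=1}^d \R_{\geq 0} u_i$ (with lattice apex $s$ and primitive integer generators $u_i$) by the explicit fundamental-parallelepiped formula
\[
S(\coneC, \ell) := e^{\langle \ell, s\rangle} \, \frac{\sum_{x \in \Pi \cap \Z^d} e^{\langle \ell, x\rangle}}{\prod_{i=1}^d \bigl(1 - e^{\langle \ell, u_i\rangle}\bigr)},
\]
where $\Pi$ is the half-open fundamental parallelepiped. Writing every lattice point of $\coneC$ uniquely as an element of $\Pi \cap \Z^d$ plus a nonnegative integer combination of the $u_i$ shows that this rational function agrees with $\sum_{x \in \coneC \cap \Z^d} e^{\langle \ell, x\rangle}$ on the convergence region, hence realizes the required meromorphic extension; property (2) is transparent from the prefactor $e^{\langle \ell, s\rangle}$ and property (3) is vacuous. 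I would then extend to arbitrary rational cones by triangulating into simplicial cones and combining the formulas via inclusion-exclusion, and to arbitrary rational polyhedra by setting $S(\PP,\ell) := \sum_{v} S(\TCone(\PP,v),\ell)$, summed over vertices of $\PP$, exactly as dictated by Corollary \ref{cor:tcone-mod-lines}, with any non-pointed contributions annihilated by property (3).

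The main obstacle is well-definedness of the construction: different simplicial subdivisions of the same cone yield formally distinct rational expressions, and one must check that they represent the same meromorphic function and that the inclusion-exclusion across overlapping simplicial pieces is respected. I would handle this via a common-refinement argument combined with the Khovanskii--Pukhlikov identity among fundamental parallelepipeds; alternatively, once well-definedness is verified on an open set via absolute convergence, meromorphic continuation forces agreement everywhere. With well-definedness in hand, properties (1)--(3) propagate from simplicial cones to the whole algebra of rational polyhedra by linearity of $S$ as a valuation.
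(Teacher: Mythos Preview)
The paper does not supply its own proof of this proposition: it is quoted as a known result, attributed to Lawrence and to Khovanskii--Pukhlikov, with a pointer to Theorem~13.8 of Barvinok's book for a complete treatment. Your outline is essentially the standard argument one finds in those references, so there is no substantive divergence to compare.

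Two places in your sketch would need tightening before it is a complete proof. First, you state the simplicial-cone formula for a \emph{lattice} apex~$s$, but the vertices of a rational polyhedron are only rational in general; the parallelepiped formula still works, but $\Pi$ must be taken as the translate by the rational apex, and the finite lattice sum then depends on the fractional part of~$s$. Second, and more seriously, the well-definedness issue is broader than triangulation-independence. To obtain a \emph{valuation} you must verify that every linear relation $\sum_i a_i[\PP_i]=0$ among indicator functions is annihilated by your construction, not only the relations coming from refining one triangulation by another. Your convergence-plus-analytic-continuation idea is the right ingredient, but the convergence regions for different $\PP_i$ can be disjoint (and empty for non-pointed pieces), so one cannot simply pick a single $\ell$ where all sums converge simultaneously; the standard treatments handle this either by first reducing modulo indicator functions of polyhedra with lines (where you declare $S=0$) and then working on pointed pieces whose convergence cones do intersect, or by invoking the Lawrence--Khovanskii--Pukhlikov structure theorem for the algebra of polyhedra directly.
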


A consequence of the valuation property is the following fundamental theorem. 
It follows from the Brion--Lasserre--Lawrence--Varchenko decomposition theory of a
polyhedron into the supporting cones at its vertices \cite{barvinokzurichbook, beck-haase-sottile:theorema, Brion88, lasserre-volume1983}.

\begin{lemma} \label{brion-exp} Let $\PP$ be a polyhedron with set of vertices $V(\PP)$. For each
vertex~$s$, let $C_s(\PP)$ be the cone of feasible directions at vertex $s$. Then
\begin{equation*}
S(\PP,\ell)=\sum_{s\in V(\PP)}S(s+C_s(\PP),\ell).
\end{equation*}
\end{lemma}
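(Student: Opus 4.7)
The plan is to combine Corollary \ref{cor:tcone-mod-lines} (the Brianchon--Gram identity modulo non-pointed pieces) with the valuation properties of $S(\,\cdot\,,\ell)$ spelled out in Proposition \ref{valuationI}. The whole proof amounts to recognizing that the valuation $S$ was designed precisely to annihilate the error terms that appear in the cone decomposition of $[\PP]$.

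First, I would invoke Corollary \ref{cor:tcone-mod-lines}: there exist rational polyhedra $Q_1,\dots,Q_m$ each containing a straight line, together with real scalars $c_1,\dots,c_m$, such that, as an identity in the algebra of polyhedra,
$$
[\PP] \;=\; \sum_{s\in V(\PP)} [\TCone(\PP,s)] \;+\; \sum_{i=1}^m c_i\,[Q_i].
$$
Next, since $S(\,\cdot\,,\ell)$ is a valuation by Proposition \ref{valuationI}, it extends by linearity to the algebra of polyhedra, and applying it to the identity above yields
$$
S(\PP,\ell) \;=\; \sum_{s\in V(\PP)} S(\TCone(\PP,s),\ell) \;+\; \sum_{i=1}^m c_i\, S(Q_i,\ell)
$$
as an equality of meromorphic functions in $\ell$.

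The crucial step is the third bullet of Proposition \ref{valuationI}: every polyhedron $Q_i$ contains a straight line, so $S(Q_i,\ell)=0$ identically, and the entire error sum drops out. All that remains is to translate notation. By the definition of the tangent cone, $\TCone(\PP,s) = s + \Cone(\PP - s) = s + C_s(\PP)$, whence $S(\TCone(\PP,s),\ell) = S(s + C_s(\PP),\ell)$ and the claimed identity follows.

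The main obstacle, conceptually, is already handled upstream. The sum $\sum_s [\TCone(\PP,s)]$ is genuinely not equal to $[\PP]$ as a function on $\R^d$; what saves the lemma is that the discrepancy lives in the subspace of non-pointed indicator functions, and the valuation $S$ vanishes on exactly that subspace. So the real work is not in this lemma itself but in (a) establishing the Brianchon--Gram-type identity used in Corollary \ref{cor:tcone-mod-lines}, and (b) building a valuation $S$ that kills straight-line-containing polyhedra; given both, the lemma is a two-line consequence.
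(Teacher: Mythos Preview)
Your proof is correct and matches the paper's approach exactly: the paper does not give a detailed argument but simply remarks that the lemma is ``the natural result of combining Corollary \ref{cor:tcone-mod-lines} and Proposition \ref{valuationI} part (3),'' which is precisely what you have written out in full.
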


This last lemma can be identified as the natural result of combining Corollary \ref{cor:tcone-mod-lines} and Proposition \ref{valuationI} part (3). A non-pointed polyhedron is another characterization of  a polyhedron that contains a line.

Note that the cone $C_s(\PP)$ in Lemma~\ref{brion-exp} may not be simplicial, but for simplicial cones there are explicit rational function formulas. As we will see in Proposition~\ref{prop:integral-exp-simplicial}, one can derive an explicit formula for 
the rational function $S(s+C_s(\PP),\ell)$ in terms of the geometry of the cones.

\begin{proposition} 
  \label{prop:summation-exp-simplicial}
  For a simplicial full-dimensional pointed cone $C$ generated by rays $u_1,u_2,\dots u_d$ (with vertex $0$) where $u_i \in \Z^d$ and for any point $s$
\begin{equation*}
S(s+C,\ell)
=\sum_{a \in (s+\Pi_C) \cap \Z^d} e^{\langle \ell, a
  \rangle} \prod_{i=1}^d \frac1{1- e^{\langle \ell, u_i \rangle}}
\end{equation*}

where $\Pi_c := \{ \sum_{i=1}^d \alpha_i u_i \mid 0 \leq \alpha_i < 1\}$
This identity holds as a meromorphic function of~$\ell$ 
and pointwise for every $\ell$ such that $\langle \ell, u_i \rangle \neq 0$ for
all $u_i$.
\end{proposition}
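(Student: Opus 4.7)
The plan is to build the formula from the geometric fact that a simplicial cone is tiled by integer translates of its fundamental parallelepiped, and then assemble the sum of exponentials as a product of geometric series, finally invoking the uniqueness part of Proposition~\ref{valuationI} to extend the identity meromorphically.

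First I would establish the tiling
\[
s + C \;=\; \bigsqcup_{(n_1,\dots,n_d)\in\Z_{\geq 0}^d} \bigl(s + \Pi_C + n_1 u_1 + \cdots + n_d u_d\bigr),
\]
which follows because $\Pi_C$ is the half-open fundamental parallelepiped of the $\R_{\geq 0}$-linear span of the rays $u_i$, and every point of $C$ has a unique expression $\sum \alpha_i u_i$ with $\alpha_i \geq 0$. Splitting each $\alpha_i$ into its integer part $n_i$ and fractional part gives the disjoint decomposition. Because $u_1,\dots,u_d \in \Z^d$, the translations by $n_1 u_1 + \cdots + n_d u_d$ preserve the lattice, so intersecting with $\Z^d$ yields
\[
(s + C) \cap \Z^d \;=\; \bigsqcup_{a \in (s+\Pi_C) \cap \Z^d}\;\bigsqcup_{(n_1,\dots,n_d)\in\Z_{\geq 0}^d} \{a + n_1 u_1 + \cdots + n_d u_d\}.
\]

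Next I would choose $\ell \in \R^d$ in the open set where $\langle \ell, u_i \rangle < 0$ for each $i$, so that the exponential $e^{\langle \ell, x \rangle}$ is absolutely summable over $(s+C)\cap\Z^d$. Under this assumption, Fubini/absolute convergence lets us rewrite the sum as
\begin{align*}
\sum_{x \in (s+C)\cap\Z^d} e^{\langle \ell, x\rangle}
&= \sum_{a \in (s+\Pi_C)\cap\Z^d} e^{\langle \ell, a\rangle}\prod_{i=1}^d \sum_{n_i \geq 0} e^{\langle \ell, n_i u_i\rangle} \\
&= \sum_{a \in (s+\Pi_C)\cap\Z^d} e^{\langle \ell, a\rangle} \prod_{i=1}^d \frac{1}{1 - e^{\langle \ell, u_i\rangle}},
\end{align*}
using the geometric series identity in each factor. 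This is exactly the desired formula, and by Proposition~\ref{valuationI} its left-hand side equals $S(s+C,\ell)$ on this open domain.

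Finally, both sides of the claimed identity are meromorphic functions of $\ell \in \C^d$: the right-hand side is manifestly a ratio of entire functions with poles only on the hyperplanes $\langle \ell, u_i \rangle \in 2\pi i \Z$, and the left-hand side is meromorphic by the existence statement in Proposition~\ref{valuationI}. Two meromorphic functions agreeing on a nonempty open set agree everywhere, giving the identity as meromorphic functions, and pointwise at every $\ell$ where no $\langle \ell, u_i \rangle$ vanishes.

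The step I expect to require the most care is the tiling/disjointness argument: one must pin down exactly which boundary faces of $\Pi_C$ are included so that the translates form a true partition of $s + C$, and then verify that the half-open conventions transport cleanly under integer lattice translations. Once this combinatorial bookkeeping is right, the analytic part (geometric series plus meromorphic continuation via the uniqueness of $S(\,\cdot\,,\ell)$) is routine.
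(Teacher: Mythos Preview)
Your proof is correct and follows the standard approach: tile $s+C$ by lattice translates of the half-open parallelepiped, rewrite the lattice-point sum as a product of geometric series on the region where $\langle \ell,u_i\rangle<0$, and then invoke the meromorphic extension of Proposition~\ref{valuationI} to obtain the identity globally. The paper itself states this proposition without proof, citing it as a well-known result (see Barvinok's book, where it appears as part of the material surrounding Theorem~13.8), so your argument supplies exactly what the paper omits and matches the classical derivation.
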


The set $\Pi_C$ is often called the \emph{half-open fundamental parallelepiped} of $C$. It is also common to force each ray $u_i$ to be \emph{primitive}, meaning that the greatest common divisor of the elements in $u_i$ is one, and this can be accomplished by scaling each ray. 

The continuous generating function for $\PP$ almost mirrors the discrete case. It can again be attributed to Lawrence, Khovanskii, and Pukhlikov, and appears as  Theorem 8.4 in \cite{barvinokzurichbook}.

\begin{proposition}\label{valuationII}
There exists a unique valuation  $I(\;\cdot\;, \ell)$ which  associates  to every polyhedron
$\PP\subset \R^d$ a meromorphic function so that the following properties hold 

\begin{enumerate}
\item If $\ell$ is a linear form such that $e^{\ll \ell, x \rr}$ is integrable over $\PP$ with the standard Lebesgue measure on $\R^d$ , then 
\[ I(\PP,\ell) = \int_\PP e^{\ll \ell, x \rr} \d x\]
\item For every point $s \in \R^d$, one has
\[I(s+\PP, \ell) = e^{\ll \ell, s \rr}I(\PP,\ell).\]
\item If $\PP$ contains a line, then $I(\PP, \ell) = 0$.
\end{enumerate}
\end{proposition}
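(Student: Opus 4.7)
The plan is to mirror the proof of Proposition \ref{valuationI} for summation, replacing the discrete sum over lattice points with Lebesgue integration. The scaffolding is the same: Brianchon--Gram reduces a polyhedron to its tangent cones, property (2) translates cones to the origin, and triangulation sends an arbitrary pointed cone to simplicial pointed cones on which an explicit rational function formula is available.

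For uniqueness, suppose $I$ satisfies the three properties. By Corollary \ref{cor:tcone-mod-lines} together with property (3) — which makes $I$ vanish on indicator functions of non-pointed polyhedra — the value $I(\PP,\ell)$ equals $\sum_{v\in V(\PP)} I(\TCone(\PP,v),\ell)$, and property (2) rewrites each summand as $e^{\langle\ell,v\rangle}I(C_v,\ell)$, where $C_v$ is the cone of feasible directions at $v$. Any pointed cone decomposes, via inclusion--exclusion in the algebra of polyhedra, as a signed sum of simplicial pointed cones, with lower-dimensional boundary pieces contributing rational functions that cancel. On a simplicial pointed cone $C$ based at the origin and generated by rays $u_1,\dots,u_d$, property (1) on the nonempty open chamber $\{\ell : \langle\ell,u_i\rangle<0 \text{ for every } i\}$, combined with the change of variables $x=\sum_i t_i u_i$, forces
\[ I(C,\ell) = (-1)^d\,|\det(u_1,\dots,u_d)|\,\prod_{i=1}^{d}\frac{1}{\langle\ell,u_i\rangle}, \]
a rational function of $\ell$. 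Since two meromorphic functions agreeing on a nonempty open set coincide everywhere, $I$ is uniquely determined on simplicial pointed cones, and hence on all polyhedra.

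For existence, I would \emph{define} $I(\PP,\ell)$ by exactly this signed sum of explicit rational functions, one per simplicial piece in a fixed triangulation of each vertex cone. The main obstacle — and the step I expect to require the most care — is showing that this definition is independent of all choices and yields a genuine valuation on $\mathcal{P}$. I would handle both with the analytic continuation trick of Lawrence and Khovanskii--Pukhlikov already used in the discrete setting: on the nonempty open set of $\ell$ where every piece of every decomposition simultaneously has a convergent exponential integral, property (1) applied piecewise forces the sum to equal $\int_\PP e^{\langle\ell,x\rangle}\d x$, an object manifestly independent of the decomposition. Since the sum is rational in $\ell$, the agreement propagates everywhere as meromorphic functions, so well-definedness holds and the bilinear extension to $\mathcal{P}$ is a valuation. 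Properties (1) and (2) are then immediate from the construction, and property (3) follows because a polyhedron $\PP$ containing a line $\R u$ satisfies $[\PP]=[u+\PP]$, giving $I(\PP,\ell)=e^{\langle\ell,u\rangle}I(\PP,\ell)$ by property (2), which forces $I(\PP,\ell)\equiv 0$ since $e^{\langle\ell,u\rangle}\not\equiv 1$ as a meromorphic function.
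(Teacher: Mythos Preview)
The paper does not give its own proof of this proposition; it is quoted as Theorem 8.4 of Barvinok's book and attributed to Lawrence and Khovanskii--Pukhlikov. Your outline follows the standard architecture of that argument, and your uniqueness half is fine.

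The existence half has a genuine gap precisely where you flag the difficulty. Your device --- find a nonempty open set of $\ell$ on which \emph{every} simplicial piece has a convergent exponential integral, compare termwise to $\int_\PP e^{\langle\ell,x\rangle}\,\d x$, then analytically continue --- works for comparing two triangulations of a \emph{single} vertex cone $C_v$, since all pieces sit inside $C_v$ and share the chamber $\{\ell:\langle\ell,u\rangle<0\text{ for every ray }u\text{ of }C_v\}$. It cannot work across vertices: if $v$ and $w$ are adjacent in a polytope, then $w-v$ is a ray of $C_v$ while $v-w$ is a ray of $C_w$, so the convergence chambers of $C_v$ and $C_w$ are disjoint. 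For a polytope there is no $\ell$ making all tangent-cone integrals converge simultaneously, so your ``nonempty open set'' is empty and the comparison to $\int_\PP e^{\langle\ell,x\rangle}\,\d x$ never gets off the ground. The usual repair is to prove Brion's identity $\int_\Delta e^{\langle\ell,x\rangle}\,\d x=\sum_v I(v+C_v,\ell)$ for a \emph{simplex} first by direct computation (both sides are explicit there), extend to arbitrary polytopes by triangulating into simplices (the lower-dimensional overlaps have Lebesgue measure zero, so property (1) disposes of them), and then read off the valuation property from ordinary additivity of the integral. Alternatively, one works first on the subalgebra of polyhedra with recession cone contained in a fixed half-space --- where a common convergence chamber \emph{does} exist --- and patches via property (2).
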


\begin{lemma}
\label{ch:bg:lemma-integration-brion}  Let $\PP$ be a polyhedron with set of vertices $V(\PP)$. For each
vertex~$s$, let $C_s(\PP)$ be the cone of feasible directions at vertex $s$. Then
\begin{equation*}
I(\PP,\ell)=\sum_{s\in V(\PP)}I(s+C_s(\PP),\ell).
\end{equation*}
\end{lemma}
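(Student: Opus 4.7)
The plan is to mirror exactly the argument that was implicitly used for the discrete generating function $S(\PP,\ell)$, but now for the continuous valuation $I(\PP,\ell)$. All the necessary ingredients are already in place: the Brianchon--Gram decomposition of $[\PP]$ into tangent cones, Corollary~\ref{cor:tcone-mod-lines} which collapses this to a sum over vertices modulo non-pointed polyhedra, and the three defining properties of $I(\,\cdot\,,\ell)$ from Proposition~\ref{valuationII}.

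First, I would apply Corollary~\ref{cor:tcone-mod-lines} to write
\[
[\PP] \;=\; \sum_{v\in V(\PP)} [\TCone(\PP,v)] \;+\; \sum_{j} c_j\,[Q_j],
\]
where each $Q_j$ is a non-pointed polyhedron (it contains a line), and the $c_j\in\R$ come from the Brianchon--Gram identity applied to the positive-dimensional faces of $\PP$. Next, I would apply the valuation $I(\,\cdot\,,\ell)$ to this identity. Linearity (the very fact that $I$ is a valuation in the sense of Proposition~\ref{valuationII}) gives
\[
I(\PP,\ell) \;=\; \sum_{v\in V(\PP)} I\bigl(\TCone(\PP,v),\ell\bigr) \;+\; \sum_j c_j\, I(Q_j,\ell).
\]
By property (3) of Proposition~\ref{valuationII}, each $I(Q_j,\ell)=0$ since $Q_j$ contains a line, so the error term vanishes identically as a meromorphic function of $\ell$.

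It then remains to rewrite each vertex term. By definition $\TCone(\PP,v) = v + \Cone(\PP-v) = v + C_v(\PP)$, so property (2) of Proposition~\ref{valuationII} gives $I(\TCone(\PP,v),\ell) = e^{\langle \ell, v\rangle}\,I(C_v(\PP),\ell) = I(v+C_v(\PP),\ell)$. Substituting back yields
\[
I(\PP,\ell) \;=\; \sum_{v\in V(\PP)} I\bigl(v+C_v(\PP),\ell\bigr),
\]
as desired.

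The main obstacle, and really the only subtle point, is justifying the identity as a \emph{meromorphic} identity in $\ell$, not just a pointwise equality where every term happens to converge. The trouble is that for a generic single $\ell$, the integral $\int_{v+C_v(\PP)} e^{\langle \ell,x\rangle}\,dx$ converges for at most one vertex $v$ (namely the one maximizing $\langle \ell,v\rangle$ on $\PP$), so the right-hand side does not make sense as a literal sum of convergent integrals. This is exactly why Proposition~\ref{valuationII} is stated in terms of a meromorphic extension: $I(\,\cdot\,,\ell)$ is uniquely defined on the algebra of polyhedra by the three listed properties, and because both sides agree as elements of that algebra (by Brianchon--Gram modulo non-pointed polyhedra), they must agree as meromorphic functions of $\ell$. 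Once this is understood, the proof is essentially formal.
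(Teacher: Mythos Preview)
Your proof is correct and follows exactly the approach the paper indicates: the paper does not give a formal proof but simply remarks that the lemma ``can be identified as the natural result of combining Corollary~\ref{cor:tcone-mod-lines} and Proposition~\ref{valuationII} part~(3),'' which is precisely what you carry out in detail. The only minor redundancy is the detour through property~(2) in your last step, since $\TCone(\PP,v)=v+C_v(\PP)$ by definition and no translation property is needed there.
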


Again, this last lemma can be identified as the natural result of combining Corollary \ref{cor:tcone-mod-lines} and Proposition \ref{valuationII} part (3).

\begin{proposition} 
  \label{prop:integral-exp-simplicial}
  For a simplicial full-dimensional pointed cone $C$ generated by rays $u_1,u_2,\dots u_d$ (with vertex $0$) and for any point $s$
\begin{equation*}
I(s+C,\ell) = \text{vol} (\Pi_C)e^{\ll \ell, s \rr} \prod_{i=1}^d \frac{1}{-\ll \ell, u_i \rr}.
\end{equation*}
These identities holds as a meromorphic function of~$\ell$ 
and pointwise for every $\ell$ such that $\langle \ell, u_i \rangle \neq 0$ for
all $u_i$.
\end{proposition}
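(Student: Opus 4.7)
The plan is to reduce to the case $s=0$ using the translation property, compute the integral by a change of variables on the domain of convergence, and then extend the resulting rational expression by meromorphic continuation using the uniqueness asserted in Proposition~\ref{valuationII}.

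First I would apply Proposition~\ref{valuationII}(2) to write $I(s+C,\ell) = e^{\langle \ell, s\rangle} I(C,\ell)$, so the problem reduces to showing $I(C,\ell) = \vol(\Pi_C) \prod_{i=1}^d \frac{1}{-\langle \ell, u_i\rangle}$. For this, I would first restrict attention to linear forms $\ell$ lying in the open cone $\Omega := \{\ell \in \R^d : \langle \ell, u_i\rangle < 0 \text{ for all } i\}$. On $\Omega$, the exponential $e^{\langle \ell, x\rangle}$ decays along every generating ray of $C$, so by part~(1) of Proposition~\ref{valuationII} the honest Lebesgue integral $\int_C e^{\langle \ell, x\rangle}\d x$ exists and equals $I(C,\ell)$.

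Next I would parametrize $C$ via the linear map $\Phi\colon \R_{\geq 0}^d \to C$ defined by $\Phi(t_1,\dots,t_d) = \sum_{i=1}^d t_i u_i$. Because $C$ is simplicial, pointed, and full-dimensional, $\Phi$ is a bijection onto $C$ with constant Jacobian $|\det(u_1,\dots,u_d)| = \vol(\Pi_C)$. Carrying out the change of variables,
\begin{equation*}
\int_C e^{\langle \ell, x\rangle}\d x = \vol(\Pi_C) \int_{\R_{\geq 0}^d} \prod_{i=1}^d e^{t_i \langle \ell, u_i\rangle}\d t = \vol(\Pi_C) \prod_{i=1}^d \int_0^\infty e^{t_i \langle \ell, u_i\rangle}\d t_i,
\end{equation*}
and each one-dimensional integral evaluates to $\frac{1}{-\langle \ell, u_i\rangle}$ since $\langle \ell, u_i\rangle < 0$. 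Multiplying together yields the claimed formula on $\Omega$.

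Finally, I would upgrade this pointwise identity on the open set $\Omega$ to the full meromorphic identity in $\ell$. The right-hand side $\vol(\Pi_C) \prod_i \frac{1}{-\langle \ell, u_i\rangle}$ is a meromorphic function on $\C^d$, agreeing with $I(C,\ell)$ on the nonempty open set $\Omega$. By the uniqueness of the meromorphic valuation $I(\;\cdot\;,\ell)$ in Proposition~\ref{valuationII}, the two functions must agree as meromorphic functions of $\ell$, and in particular pointwise wherever $\langle \ell, u_i\rangle \neq 0$ for all $i$. Reattaching the factor $e^{\langle \ell, s\rangle}$ gives the proposition. The only subtle point is justifying the meromorphic-extension step cleanly; this is exactly what the valuation framework of Proposition~\ref{valuationII} is designed to handle, so I would simply invoke that uniqueness rather than re-prove analytic continuation from scratch.
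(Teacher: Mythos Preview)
Your argument is correct and is the standard direct computation: reduce to $s=0$ via translation, evaluate the absolutely convergent integral on the open cone $\Omega=\{\ell:\langle\ell,u_i\rangle<0\ \forall i\}$ by the change of variables $x=\sum t_iu_i$, and then extend by analytic continuation.

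Note, however, that the paper does not actually supply its own proof of this proposition; it is stated as a known result, with the surrounding material attributed to Lawrence and to Khovanskii--Pukhlikov and referenced to Barvinok's book. So there is no ``paper's proof'' to compare against; your write-up is essentially the textbook argument one would find in those sources.

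One small point of phrasing: in your final step you appeal to the \emph{uniqueness of the valuation} in Proposition~\ref{valuationII} to justify the meromorphic extension. That is not quite the right lever. Uniqueness in Proposition~\ref{valuationII} concerns the assignment $\PP\mapsto I(\PP,\ell)$ as a valuation on polyhedra, not the analytic continuation in $\ell$ for a fixed polyhedron. What you actually need (and what you implicitly use) is just the identity theorem: $I(C,\ell)$ is, by Proposition~\ref{valuationII}, already a meromorphic function of $\ell$; the rational function $\vol(\Pi_C)\prod_i(-\langle\ell,u_i\rangle)^{-1}$ is also meromorphic; and the two agree on the nonempty open set $\Omega$, hence everywhere. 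Stating it this way makes the logic cleaner and avoids an apparent circularity.
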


\subsubsection{Integration example}

Let $a < b < c < d$, then it is a well known fact from calculus that 

\[
\int_a^c e^{\ell_1 x_1} \d x_1 = \int_a^b e^{\ell_1 x_1} \d x_1 + \int_b^c e^{\ell_1 x_1} \d x_1 = \int_a^d e^{\ell_1 x_1} \d x_1 - \int_c^d e^{\ell_1 x_1} \d x_1.
\]


However, the domain $[a, c]$ cannot be decomposed in every way. For example \[\int_a^c e^{\ell_1 x_1} \d x_1 \neq \int_a^\infty e^{\ell_1 x_1} \d x_1 + \int_{-\infty}^c e^{\ell_1 x_1} \d x_1 - \int_{-\infty}^\infty e^{\ell_1 x_1} \d x_1.\] Notice that not only is the expression on the right hand side of the equation not equal to the left hand side, but there is no value for $\ell_1$ that makes the three integrals finite. However, results in this section allow us to assign numbers (or meromorphic functions) to the integrals that do not converge! 

We now consider an example in dimension two. Consider the triangle below with coordinates at $(1,1)$, $(1, 3)$, and $(4,1)$. This domain can we written as the sum of 7 polyhedrons: addition of three tangent cones, subtraction of three halfspaces, and the addition of one copy of $\R^2$. 

\begin{tabular}{ccccc}
\begin{tikzpicture}[scale=.4]
	\definecolor{greeo}{RGB}{91,170,210}

	\coordinate (A1) at (1,1);
	\coordinate (A2) at (1,3);
	\coordinate (A3) at (4,1);
    
    \draw[fill=greeo,opacity=0.6,line width=0.05cm] (A1) -- (A2) -- (A3) -- (A1);
    
    \draw [->] (0,0) -- (4.5,0) node [above left]  {};
    \draw [->] (0,0) -- (0,3.5) node [below right] {};    
\end{tikzpicture}
&
\raisebox{1.5em}{$=$}
&
\begin{tikzpicture}[scale=.4]
	\definecolor{greeo}{RGB}{91,170,210}

	\coordinate (A1) at (1,1);
	\coordinate (A2) at (1,3);
	\coordinate (A3) at (4,1);
	
	\coordinate (center) at (4,1);
	\coordinate (r1) at ($(center) +(146.5:4)$);
	\coordinate (r2) at ($(center) +(180:4)$);

    \draw [->] (0,0) -- (4.5,0) node [above left]  {};
    \draw [->] (0,0) -- (0,3.5) node [below right] {};

\filldraw[draw=black, fill=greeo, fill opacity=0.3,dotted]
   let \p1 = ($(r1) - (center)$),
       \p2 = ($(r2) - (center)$),
       \n0 = {veclen(\x1,\y1)},            
       \n1 = {atan(\y1/\x1)+180*(\x1<0)},  
       \n2 = {atan(\y2/\x2)+180*(\x2<0)}   
    in
    (center) --  (r1) arc(\n1:\n2:\n0) -- cycle;
\draw[fill=greeo,opacity=0.6,line width=0.05cm,->] (A3) -- (r1);    
\draw[fill=greeo,opacity=0.6,line width=0.05cm,->] (A3) -- (r2);
\end{tikzpicture}
 &
\begin{tikzpicture}[scale=.4]
	\definecolor{greeo}{RGB}{91,170,210}

	\coordinate (A1) at (1,1);
	\coordinate (A2) at (1,3);
	\coordinate (A3) at (4,1);
	
	\coordinate (center) at (1,1);
	\coordinate (r1) at ($(center) +(0:3)$);
	\coordinate (r2) at ($(center) +(90:3)$);	
    
    \draw [->] (0,0) -- (4.5,0) node [above left]  {};
    \draw [->] (0,0) -- (0,3.5) node [below right] {};    

    \filldraw[draw=black, fill=greeo, fill opacity=0.3,dotted]
    (center) --  (r1) arc(0:90:3) -- cycle;
\draw[fill=greeo,opacity=0.6,line width=0.05cm,->] (A1) -- (r1);    
\draw[fill=greeo,opacity=0.6,line width=0.05cm,->] (A1) -- (r2);
\end{tikzpicture}
&
\begin{tikzpicture}[scale=.4]
	\definecolor{greeo}{RGB}{91,170,210}

	\coordinate (A1) at (1,1);
	\coordinate (A2) at (1,3);
	\coordinate (A3) at (4,1);
	
	\coordinate (center) at (1,3);
	\coordinate (r1) at ($(center) +(270:4)$);
	\coordinate (r2) at ($(center) +(300:4)$);

    \draw [->] (0,0) -- (4.5,0) node [above left]  {};
    \draw [->] (0,0) -- (0,3.5) node [below right] {};    

    \filldraw[draw=black, fill=greeo, fill opacity=0.3,dotted]
    (center) --  (r1) arc(270:300:4) -- cycle;
\draw[fill=greeo,opacity=0.6,line width=0.05cm,->] (A2) -- (r1);    
\draw[fill=greeo,opacity=0.6,line width=0.05cm,->] (A2) -- (r2);
\end{tikzpicture}
\\
&&
\begin{tikzpicture}[scale=.4]
	\definecolor{greeo}{RGB}{255, 152, 33}

    \draw [->] (0,0) -- (4.5,0) node [above left]  {};
    \draw [->] (0,0) -- (0,3.5) node [below right] {};    

    \filldraw[draw=black, fill=greeo, fill opacity=0.3,dotted]
    (-1,1) --  (4,1) -- (4,3) -- (-1,3)-- cycle;
\draw[fill=greeo,opacity=0.6,line width=0.05cm,<->] (-1,1) -- (4,1);    
\end{tikzpicture}
 &
\begin{tikzpicture}[scale=.4]
	\definecolor{greeo}{RGB}{255, 152, 33}

    \draw [->] (0,0) -- (4.5,0) node [above left]  {};
    \draw [->] (0,0) -- (0,3.5) node [below right] {};    

    \filldraw[draw=black, fill=greeo, fill opacity=0.3,dotted]
    (1,3) --  (1,-1) -- (4,-1) -- (4,3)-- cycle;
\draw[fill=greeo,opacity=0.6,line width=0.05cm,<->] (1,3) -- (1,-1);    
\end{tikzpicture}
&
\begin{tikzpicture}[scale=.4]
	\definecolor{greeo}{RGB}{255, 152, 33}

    \draw [->] (0,0) -- (4.5,0) node [above left]  {};
    \draw [->] (0,0) -- (0,3.5) node [below right] {};    

    \filldraw[draw=black, fill=greeo, fill opacity=0.3,dotted]
    (-.5,4) --  (-1,4) -- (-1,-1) -- (4,-1) -- (4,1) -- cycle;
\draw[fill=greeo,opacity=0.6,line width=0.05cm,<->] (-.5,4) -- (4,1);    
\end{tikzpicture}
\\
&&
\begin{tikzpicture}[scale=.4]
	\definecolor{greeo}{RGB}{91,170,210}

	\coordinate (A1) at (1,1);
	\coordinate (A2) at (1,3);
	\coordinate (A3) at (4,1);
	
	\coordinate (center) at (1,3);
	\coordinate (r1) at ($(center) +(270:4)$);
	\coordinate (r2) at ($(center) +(300:4)$);

    \draw [->] (0,0) -- (4.5,0) node [above left]  {};
    \draw [->] (0,0) -- (0,3.5) node [below right] {};    

	\fill[draw=black, fill=greeo, fill opacity=0.3,dotted] (2,2) circle(2);
\end{tikzpicture}
 &

&
\end{tabular}

For example, the point $(1,1)$ is part of the triangle, so it is counted once. In the decomposition, the point $(1,1)$ is counted positively four times (once in each tangent cone and once in $\R^2$), and is counted negatively three times (once in each halfspace), resulting in being counted exactly once. A similar calculation shows that $(0,0)$ is counted negatively in one of the halfspaces, and positively in $\R^2$, resulting in a total count of zero, meaning $(0,0)$ is not part of the triangle. 

The integral of $e^{\ell_1 x + \ell_2 y}$ over the triangle clearly exist because the function is continuous and the domain is compact. As the triangle can be written as the sum of 7 other polyhedrons, we want to integrate $e^{\ell_1 x + \ell_2 y}$ over each of the 7 polyhedrons. However, the integral of $e^{\ell_1 x + \ell_2 y}$ over some of them does not converge! Instead, we map each domain to a meromorphic function using Propositions \ref{valuationII} and \ref{prop:integral-exp-simplicial}. Because $I(\cdot, \ell)$ is a valuation, we can apply $I(\cdot, \ell)$ to each domain. The fact that $I(P, \ell)=0$ if $P$ contains a line, simplifies the calculation to just the three tangent cones: $s_1 +C_1$, $s_2 +C_2$, and $s_3 +C_3$.

\begin{tabular}{ccccc}
\begin{tikzpicture}[scale=.4]
	\definecolor{greeo}{RGB}{91,170,210}

	\coordinate (A1) at (1,1);
	\coordinate (A2) at (1,3);
	\coordinate (A3) at (4,1);
    
    \draw[fill=greeo,opacity=0.6,line width=0.05cm] (A1) -- (A2) -- (A3) -- (A1);
    
    \draw [->] (0,0) -- (4.5,0) node [above left]  {};
    \draw [->] (0,0) -- (0,3.5) node [below right] {};    
\end{tikzpicture}
&
\raisebox{1.5em}{$=$}
&
\begin{tikzpicture}[scale=.4]
	\definecolor{greeo}{RGB}{91,170,210}

	\coordinate (A1) at (1,1);
	\coordinate (A2) at (1,3);
	\coordinate (A3) at (4,1);
	
	\coordinate (center) at (4,1);
	\coordinate (r1) at ($(center) +(146.5:4)$);
	\coordinate (r2) at ($(center) +(180:4)$);

    \draw [->] (0,0) -- (4.5,0) node [above left]  {};
    \draw [->] (0,0) -- (0,3.5) node [below right] {};

\filldraw[draw=black, fill=greeo, fill opacity=0.3,dotted]
   let \p1 = ($(r1) - (center)$),
       \p2 = ($(r2) - (center)$),
       \n0 = {veclen(\x1,\y1)},            
       \n1 = {atan(\y1/\x1)+180*(\x1<0)},  
       \n2 = {atan(\y2/\x2)+180*(\x2<0)}   
    in
    (center) --  (r1) arc(\n1:\n2:\n0) -- cycle;
\draw[fill=greeo,opacity=0.6,line width=0.05cm,->] (A3) -- (r1);    
\draw[fill=greeo,opacity=0.6,line width=0.05cm,->] (A3) -- (r2);
\end{tikzpicture}
 &
\begin{tikzpicture}[scale=.4]
	\definecolor{greeo}{RGB}{91,170,210}

	\coordinate (A1) at (1,1);
	\coordinate (A2) at (1,3);
	\coordinate (A3) at (4,1);
	
	\coordinate (center) at (1,1);
	\coordinate (r1) at ($(center) +(0:3)$);
	\coordinate (r2) at ($(center) +(90:3)$);	
    
    \draw [->] (0,0) -- (4.5,0) node [above left]  {};
    \draw [->] (0,0) -- (0,3.5) node [below right] {};    

    \filldraw[draw=black, fill=greeo, fill opacity=0.3,dotted]
    (center) --  (r1) arc(0:90:3) -- cycle;
\draw[fill=greeo,opacity=0.6,line width=0.05cm,->] (A1) -- (r1);    
\draw[fill=greeo,opacity=0.6,line width=0.05cm,->] (A1) -- (r2);
\end{tikzpicture}
&
\begin{tikzpicture}[scale=.4]
	\definecolor{greeo}{RGB}{91,170,210}

	\coordinate (A1) at (1,1);
	\coordinate (A2) at (1,3);
	\coordinate (A3) at (4,1);
	
	\coordinate (center) at (1,3);
	\coordinate (r1) at ($(center) +(270:4)$);
	\coordinate (r2) at ($(center) +(300:4)$);

    \draw [->] (0,0) -- (4.5,0) node [above left]  {};
    \draw [->] (0,0) -- (0,3.5) node [below right] {};    

    \filldraw[draw=black, fill=greeo, fill opacity=0.3,dotted]
    (center) --  (r1) arc(270:300:4) -- cycle;
\draw[fill=greeo,opacity=0.6,line width=0.05cm,->] (A2) -- (r1);    
\draw[fill=greeo,opacity=0.6,line width=0.05cm,->] (A2) -- (r2);
\end{tikzpicture}
\\
$I(\Delta, \ell)$& $=$ & $I(s_1 +C_1, \ell)$& $+ I(s_2 +C_2, \ell)$ & $+ I(s_3 +C_3, \ell)$
\end{tabular}

\begin{align*}
\int_1^4 \int_1^{\frac{-2}{3}x + \frac{11}{3}} e^{\ell_1 x + \ell_2 y}\d y \d x = & 2e^{4\ell_1 + \ell_2} \frac{1}{(-3\ell_1 + 2\ell_2)} \frac{1}{(-\ell_1 )} \\
& + 1e^{\ell_1 + \ell_2} \frac{1}{\ell_1} \frac{1}{\ell_2} \\
& + 3e^{1\ell_1 + 3\ell_2} \frac{1}{(-\ell_2)} \frac{1}{(3\ell_1 -2\ell_2)}
\end{align*}

Propositions \ref{valuationII} and \ref{prop:integral-exp-simplicial} say that the meromorphic function associated with the triangle is equal to the sum of the three meromorphic functions associated at each tangent cone. Moreover, because the integral over the triangle exist, the meromorphic function associated with the triangle gives the integral. For example, evaluating the meromorphic function at $\ell_1 = 2, \ell_2 = 1$ results in $1924.503881$, which is the integral of $e^{2x + y}$ over the triangle. 

There is one problem. The integral of $e^{\ell_1 x + \ell_2 y}$ over the triangle is a holomorphic function, and we have written it as the sum of three meromorphic functions, so this means the poles of the meromorphic functions must cancel in the sum. Consider evaluating at $\ell_1 = 2, \ell_2 = 3$. This would produce division by zero, and so $\ell_1 = 2, \ell_2 = 3$ is among the poles. A common approach is to instead evaluate $\ell$ at $\ell_1 = 2 + \epsilon$, $\ell_2 = 3 + \epsilon$ and take the limit as $\epsilon \rightarrow 0$. Hence 
\[\int_1^4 \int_1^{\frac{-2}{3}x + \frac{11}{3}} e^{2 x + 3 y}\d y \d x = \lim_{\epsilon\rightarrow 0} \sum_{i=1}^3 I(s_i +C_i, (2, 3) + \epsilon (1, 1)).\] 
Notice that for each $i$, $I(s_i +C_i, (2, 3) + \epsilon (1, 1))$ is a meromorphic in $\epsilon$, but $\sum_{i=1}^3 I(s_i +C_i, (2, 3) + \epsilon (1, 1))$ is a holomorphic function (as it is the integral of $e^{(2 + \epsilon)x + (3 + \epsilon)y}$ over the triangle). This means that in the Laurent series expansion of $I(s_i +C_i, (2, 3) + \epsilon (1, 1))$, any terms where $\epsilon$ has a negative exponent will cancel out in the sum. Thus the limit can be computed by finding the Laurent series expansion at $\epsilon=0$ for each $I(s_i +C_i, (2, 3) + \epsilon (1, 1))$ and summing the coefficient of $\epsilon^0$ in each Laurent series. Chapter \ref{ch:Integration} will show that computing the Laurent series is easy in this case. 

This is a common technique, and we will see it used many times in this manuscript.

\subsection{Generating function for non-simple cones}

Lemma \ref{brion-exp} and Proposition \ref{prop:summation-exp-simplicial} (or Lemma \ref{ch:bg:lemma-integration-brion} and Proposition \ref{prop:integral-exp-simplicial}) can be used for computing the summation (or integral) over a polytope only if the polytope is a \emph{simple} polytope. Meaning, for a $d$-dimensional polytope, every vertex of the polytope is adjacent to exactly $d$ edges.

In this section, we review the generating function of $\sum_{x \in \PP \cap \Z^d } e^{\ll \ell, x \rr }$ and $\int_\PP e^{\ll \ell, x \rr} \d x$ for a general polytope $\PP$. When $\PP$ is not simple, the solution is to triangulate it the tangent cones.

\begin{definition}
A triangulation of a cone $C$ is the set $\Gamma$ of simplicial cones $C_i$ of the same dimension as the affine hull of $C$ such that
\begin{enumerate}
\item the union of all the simplicial cones in $\Gamma$ is $C$,
\item the intersection of any pair of simplicial cones in $\Gamma$ is a common face of both simplicial cones,
\item and every ray of every simplicial cone is also a ray of $C$.
\end{enumerate}
\end{definition}
  
There are many references and software tools for computing a triangulation of a polytope or polyhedral cone, see \cite{4ti2, avis2000revised, DRStriangbook, cddlib-094a,  polymake-software, leeRegularTriangulations, rambau2002topcom}.  
  
 Let $C$ be a full-dimensional pointed polyhedral cone, and $\Gamma_1 = \{ C_i \mid i \in I_1\}$ be a triangulation into simplicial cones $C_i$ where $I_1$ is a finite index set. It is true that $C =  \bigcup_{i \in I_1} C_i$, but $[C] = \sum_{i \in I_1} [C_i]$ is false as points on the boundary of two adjacent simplicial cones are counted multiple times. The correct approach is to use the inclusion-exclusion formula:
 
 \[ [C] = \sum_{\emptyset \neq J \subseteq I_1} (-1)^{|J|-1} [\cap_{j \in J} C_j]\]

Also, note that this still holds true when $C$ (and the $C_i$) is shifted by a point $s$. When $|J| \geq 2$, $I(\cap_{j \in J} C_j, \ell) = 0$ as $\bigcap_{j \in J} C_j$ is not full-dimensional, and the integration is done with respect to the Lebesgue measure on $\R^d$. This leads us to the next lemma.

\begin{lemma}
\label{lemma:integration-triangulation}
For any triangulation $\Gamma_s$ of the feasible cone at each of the vertices $s$ of the polytope $\PP$ we have $I(\PP,\ell) = \sum_{s \in V(\PP)} \sum_{C \in \Gamma_s} I(s+C,\ell)$
\end{lemma}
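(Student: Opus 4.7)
The plan is to combine the Brion-style decomposition for the full polytope with an inclusion–exclusion on the triangulation of each tangent cone, and then kill the lower-dimensional terms using the valuation property. First I would invoke Lemma \ref{ch:bg:lemma-integration-brion} to write
\[
I(\PP,\ell)\;=\;\sum_{s\in V(\PP)} I\bigl(s+C_s(\PP),\ell\bigr),
\]
thereby reducing the problem to a vertex-by-vertex analysis: it suffices to show that for each vertex $s$, $I(s+C_s(\PP),\ell)=\sum_{C\in\Gamma_s}I(s+C,\ell)$.

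Next, for a fixed vertex $s$ with triangulation $\Gamma_s=\{C_i : i\in I_s\}$ of $C_s(\PP)$, I would apply the inclusion–exclusion identity for indicator functions recalled just before the lemma statement,
\[
[C_s(\PP)]\;=\;\sum_{\emptyset\neq J\subseteq I_s}(-1)^{|J|-1}\Bigl[\bigcap_{j\in J}C_j\Bigr],
\]
and translate by $s$ to obtain the analogous identity for $[s+C_s(\PP)]$. Since $I(\cdot,\ell)$ is a valuation (Proposition \ref{valuationII}), I can apply it linearly to both sides, producing
\[
I(s+C_s(\PP),\ell)\;=\;\sum_{\emptyset\neq J\subseteq I_s}(-1)^{|J|-1}\,I\Bigl(s+\bigcap_{j\in J}C_j,\ell\Bigr).
\]

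The crux is then to argue that every term with $|J|\geq 2$ vanishes. By definition of a triangulation, for distinct indices $j_1,\dots,j_k$, the set $\bigcap_{r=1}^k C_{j_r}$ is a common face of these simplicial cones, and in particular has dimension strictly less than $d$. Since $I(\cdot,\ell)$ is defined via integration against the Lebesgue measure on $\R^d$, such a lower-dimensional polyhedron contributes zero, exactly as highlighted in the discussion preceding the lemma. Thus only the singletons $J=\{i\}$ survive, giving $I(s+C_s(\PP),\ell)=\sum_{C\in\Gamma_s}I(s+C,\ell)$, and summing over $s\in V(\PP)$ yields the claim.

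The main obstacle is the vanishing claim for lower-dimensional intersections. The quickest justification is to appeal directly to the ambient Lebesgue measure, as the author does in the paragraph preceding the lemma; more formally, one can note that a lower-dimensional simplicial cone can be realized as a limit of full-dimensional cones whose fundamental parallelepipeds have volume tending to zero, and then invoke the explicit formula in Proposition \ref{prop:integral-exp-simplicial}, where the factor $\operatorname{vol}(\Pi_C)$ forces $I(s+C,\ell)=0$ in the degenerate case. Either route closes the argument.
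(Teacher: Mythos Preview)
Your proposal is correct and follows essentially the same approach as the paper: the paragraph immediately preceding the lemma already lays out the inclusion--exclusion on the triangulation, the application of the valuation $I(\cdot,\ell)$, and the vanishing of the $|J|\geq 2$ terms via the Lebesgue-measure argument, and the lemma is then stated as an immediate consequence (together with Lemma~\ref{ch:bg:lemma-integration-brion}). Your write-up simply formalizes that sketch.
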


Lemma \ref{lemma:integration-triangulation} states that we can triangulate a polytope's feasible cones and apply the integration formulas on each simplicial cone without worrying about shared boundaries among the cones. Note that there is no restriction on how the triangulation is performed. 

More care is needed for the discrete case as $S(\cap_{j \in J} C_j, \ell) \neq 0$ when $|J| \geq 2$. We want to avoid using the inclusion-exclusion formula as it contains exponentially many terms (in size of $|I_1|$). 

The discrete case has another complication. Looking at Proposition \ref{prop:summation-exp-simplicial}, we see that the sum 

\[ \sum_{a \in (s+\Pi_C) \cap \Z^d} e^{\ll \ell, a \rr}\]

has to be enumerated. However, there could be an exponential number of points in $(s+\Pi_C) \cap \Z^d$ in terms of the bit length of the simplicial cone $C$. 

We will illustrate one method for solving these problems called the \emph{Dual Barvinok Algorithm}.

\subsubsection{Triangulation}

Recall that the \emph{polar} of a set $A \subset \R^d$ is the set $A^\circ = \{ x \in \R^d \mid  \ll x, a \rr \leq 1 \text{ for every } a \in A \}$. Cones enjoy many properties under the polar operation. If $C$ is a finitely generated cone in $\R^d$, then

\begin{enumerate}
\item $C^\circ = \{ x \in \R^d \mid \ll x, c \rr \leq 0, \forall c \in C \} \}$, 
\item $C^\circ$ is also a cone,
\item $(C^\circ)^\circ = C$, and
\item if $C = \{ x \mid A^Tx \leq 0 \}$, then $C^\circ$ is generated by the columns of $A$.
\end{enumerate}
The next lemma is core to  Brion's ``polarization trick'' \cite{Brion88} for dealing with the inclusion-exclusion terms.

\begin{lemma}[Theorem 5.3 in \cite{barvinokzurichbook}]
Let $\mathcal{C}$ be the vector space spanned by the indicator functions of all closed convex sets in $\R^d$. Then there is a unique linear transformation  $\mathcal{D}$ from $\mathcal{C}$ to itself such that $\mathcal{D}([A]) = [A^\circ]$ for all non-empty closed convex sets $A$.
\end{lemma}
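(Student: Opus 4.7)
The plan is to define $\mathcal{D}$ on the generating set $\{[A] : A \text{ non-empty closed convex}\}$ of $\mathcal{C}$ by the rule $\mathcal{D}([A]) := [A^\circ]$, and then extend by linearity. Uniqueness is automatic: this set spans $\mathcal{C}$ by definition, so any linear map is determined by its values on generators. The entire content of the lemma is well-definedness of the linear extension: if $\sum_i \alpha_i [A_i] = 0$ as an element of $\mathcal{C}$, one must verify that $\sum_i \alpha_i [A_i^\circ] = 0$ as well.

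I would reduce this to a pointwise check. Fix $x \in \R^d$ and let $\nu_x$ be the set-function that assigns to each non-empty closed convex $A \subseteq \R^d$ the value $[A^\circ](x)$, which equals $1$ if $A \subseteq H_x$ and $0$ otherwise, where $H_x := \{y \in \R^d : \langle x, y \rangle \leq 1\}$ (note $H_0 = \R^d$, so $\nu_0 \equiv 1$ on non-empty closed convex sets). If each $\nu_x$ extends to a linear functional $\tilde\nu_x$ on $\mathcal{C}$, then setting $\mathcal{D}(f)(x) := \tilde\nu_x(f)$ produces a well-defined function on $\R^d$ that agrees with $[A^\circ]$ when $f = [A]$ and depends linearly on $f$; this gives the desired map, and its image lies in $\mathcal{C}$ because the image of each generator does.

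The substantive step is to verify that $\nu_x$ is a valuation, i.e., that $\nu_x(A_1 \cup A_2) + \nu_x(A_1 \cap A_2) = \nu_x(A_1) + \nu_x(A_2)$ whenever $A_1, A_2$, and $A_1 \cup A_2$ are all closed convex. Most cases fall out of the characterization ``$\nu_x(A) = 1$ iff $A \subseteq H_x$''. The only case needing real argument is when neither $A_1$ nor $A_2$ is contained in $H_x$: then both sides must vanish, so I must show $A_1 \cap A_2 \not\subseteq H_x$. For that, pick witnesses $a_i \in A_i \setminus H_x$ with $\langle x, a_i \rangle > 1$; any convex combination $p = t a_1 + (1-t) a_2$ satisfies $\langle x, p \rangle > 1$, so the segment $[a_1, a_2]$ lies entirely outside $H_x$. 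Convexity of $A_1 \cup A_2$ places the segment inside it, and a connectedness argument on the closed cover of the segment by $A_1$ and $A_2$ (each containing one endpoint) produces a ``transition'' point $p \in A_1 \cap A_2$ on the segment, which is then outside $H_x$.

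With the valuation identity in hand, Groemer's extension theorem---the standard result asserting that any real-valued valuation on non-empty closed convex sets extends uniquely to a linear functional on $\mathcal{C}$---provides the required $\tilde\nu_x$ and completes the construction. The main obstacle is recognizing Groemer's theorem as the appropriate foundational tool; once identified, the valuation check above is elementary. A more self-contained alternative (closer to Barvinok's own development in the cited reference) avoids Groemer by first establishing the analog for closed convex cones in $\R^{d+1}$, exploiting the fact that indicator functions of non-pointed cones lie in the kernel of the polarity, and then deducing the convex-set case via the homogenization $A \mapsto \Cone(A \times \{1\})$ that identifies polars of convex sets with suitable slices of polars of cones.
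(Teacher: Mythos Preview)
The paper does not supply its own proof of this lemma; it is stated with a citation to Theorem~5.3 in Barvinok's book and then used as a black box for the polarization trick. So there is no in-paper argument to compare against.

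Your proposal is sound. Reducing to the claim that each $\nu_x$ is a valuation and then invoking Groemer's extension theorem is a standard and correct route. The case analysis for the valuation identity is right, and the connectedness argument in the one nontrivial case works; note in passing that $A_1 \cap A_2$ is automatically non-empty whenever $A_1 \cup A_2$ is convex and both $A_i$ are non-empty and closed, so the empty-intersection convention never actually arises. The alternative you sketch at the end---homogenizing to cones in $\R^{d+1}$ and handling polarity there first---is indeed closer to Barvinok's own development in the cited reference, so either path would be acceptable here.
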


Instead of taking the non-simplicial cone $C$ and triangulating it, we first compute $C^\circ$ and triangulate it to $\Gamma' = \{C_i^\circ \mid i \in I_2 \}$. Then
 \[ [C^\circ] = \sum_{i \in I_2} [C_i^\circ] + \sum_{\emptyset \neq J \subseteq I_2, |J| > 1} (-1)^{|J|-1} [\cap_{j \in J} C_j^\circ].\]
Applying the fact that $(C^\circ)^\circ = C$ we get
  \[ [C] = \sum_{i \in I_2} [C_i] + \sum_{\emptyset \neq J \subseteq I_2, |J| > 1} (-1)^{|J|-1} [(\cap_{j \in J} C_j^\circ)^\circ].\]
Notice that the polar of a full-dimensional pointed cone is another full-dimensional pointed cone. For each $J$ with $|J| \geq 2$, $\cap_{j \in J} C_j^\circ$ is not a full-dimensional cone. The polar of a cone that is not full dimensional is a cone that contains a line. Hence $S((\cap_{j \in J} C_j^\circ)^\circ, \ell) = 0$. By polarizing a cone, triangulating in the dual space, and polarizing back, the boundary terms from the triangulation can be ignored. 

\subsubsection{Unimodular cones}
Next, we address the issue that for a simplicial cone $C$, the set $\Pi_C \cap \Z^d$ contains too many terms for an enumeration to be efficient. The approach then is to decompose $C$ into cones that only have one lattice point in the fundamental parallelepiped. Such cones are called \emph{unimodular cones}. Barvinok in \cite{bar} first developed such a decomposition and showed that it can be done in polynomial time when the dimension is fixed. We next give an outline of Barvinok's decomposition algorithm. 

Given a pointed simplicial full-dimensional cone $C$, Barvinok's decomposition method will produce new simplicial cones $C_i$ such that $|\Pi_{C_i} \cap \Z^d| \leq |\Pi_{C} \cap \Z^d|$ and values $\epsilon_i \in \{1, -1\}$ such that

\[ [C] \equiv \sum_{i \in I} \epsilon_i [C_i] \pmod{\text{indicator functions of lower dimensional cones}}.\]

Let $C$ be generated by the rays $u_1, \dots, u_d$. The algorithm first constructs a vector $w$ such that
 \[w = \alpha_1 u_1 + \cdots + \alpha_d u_d \text{ and } |\alpha_i| \leq |\det(U)|^{-1/d} \leq 1,\]
 
 where the columns of $U$ are the $u_i$. This is done with integer programming or using a lattice basis reduction method \cite{latte1}. Let $K_i = \Cone(u_1, \dots, u_{i-1}, w, u_{i+1}, \dots, u_d)$, then it can be shown that $|\Pi_{K_i} \cap \Z^d| \leq |\Pi_{C} \cap \Z^d|^{\frac{d-1}{d}}$, meaning that these new cones have less integer points in the fundamental parallelepiped than the old cone. This process can be recursively repeated until unimodular cones are obtained. 
 
\begin{theorem}[Barvinok \cite{bar}]
Let $C$ be a simplicial full-dimensional cone generated by rays $u_1, \dots, u_d$. Collect the rays into the columns of a matrix $U \in \Z^{d \times d}$. Then the depth of the recursive decomposition tree is at most 
\[ \left\lfloor 1 + \frac{\log_2 \log_2 |\det(U)|}{\log_2 \frac{n}{n-1}} \right\rfloor. \]
\end{theorem}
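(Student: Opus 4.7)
The plan is to unwind the recursion explicitly and bound how fast the index $|\Pi_C\cap\Z^d|$ shrinks at each level. The starting observation is that for a simplicial full-dimensional cone $C$ generated by the columns of $U$, the number of lattice points in the half-open fundamental parallelepiped equals $|\det(U)|$. So if we denote the \emph{index} of a simplicial cone $K$ (with ray matrix $U_K$) by $\iota(K):=|\det(U_K)|$, then $C$ is unimodular precisely when $\iota(C)=1$, and the recursion stops exactly at unimodular leaves. Letting $N_0:=|\det(U)|=\iota(C)$ be the index of the input cone, I want to show that after $k$ levels of recursion every leaf $K$ satisfies $\iota(K)\le N_0^{((d-1)/d)^k}$.

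The key one-step estimate comes directly from the construction of the children $K_i=\Cone(u_1,\dots,u_{i-1},w,u_{i+1},\dots,u_d)$. Replacing column $i$ of $U$ by $w=\sum_j \alpha_j u_j$ produces a matrix whose determinant is $\alpha_i\det(U)$ (by multilinearity), so
\[
\iota(K_i)=|\alpha_i|\,\iota(C)\le |\det(U)|^{-1/d}\cdot\iota(C)=\iota(C)^{1-1/d}=\iota(C)^{(d-1)/d},
\]
using exactly the inequality $|\alpha_i|\le|\det(U)|^{-1/d}$ guaranteed by the lattice-basis construction of $w$. Iterating this bound along any root-to-leaf path in the decomposition tree yields the advertised estimate $\iota(K)\le N_0^{((d-1)/d)^k}$ at depth $k$.

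To finish, I solve for the smallest depth $k$ at which every leaf is guaranteed to be unimodular. Since indices are positive integers, $\iota(K)=1$ is forced as soon as $\iota(K)<2$, i.e.\ as soon as $N_0^{((d-1)/d)^k}<2$. Taking $\log_2$ twice (assuming $N_0\ge 2$; the case $N_0\le 1$ is trivial since $C$ is already unimodular) this becomes
\[
k\,\log_2\!\tfrac{d}{d-1} \;>\; \log_2\log_2 N_0,
\]
so the recursion terminates after at most
\[
\Bigl\lfloor 1+\tfrac{\log_2\log_2|\det(U)|}{\log_2\frac{d}{d-1}}\Bigr\rfloor
\]
levels, matching the stated bound (with $n=d$).

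The main obstacle, and the only step that is not a routine calculation, is justifying the bound $|\alpha_i|\le|\det(U)|^{-1/d}$ on the coefficients of the short vector $w$ in the basis $u_1,\dots,u_d$. This is precisely where lattice-basis reduction (or, equivalently, Minkowski's theorem applied to the parallelepiped spanned by $U$ on the dual lattice) is used: one produces a nonzero lattice point in $U\cdot[-t,t]^d$ with $t=|\det(U)|^{-1/d}$, and this vector is $w$. Once that input is taken from the algorithm's specification, the rest of the argument is just the geometric/arithmetic decay above and a double logarithm to read off the depth.
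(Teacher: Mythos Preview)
Your argument is correct and is exactly the standard derivation the paper is pointing to: the text preceding the theorem already records the one-step estimate $|\Pi_{K_i}\cap\Z^d|\le |\Pi_C\cap\Z^d|^{(d-1)/d}$ (your $\iota(K_i)\le\iota(C)^{(d-1)/d}$), and the theorem itself is quoted from Barvinok without a proof in the paper. Your contribution is to carry out the routine iteration and double-logarithm that the paper omits, and your computation of the depth bound matches the stated formula; there is no methodological difference.
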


Because at each node in the recursion tree has at most $n$ children, and and the depth of the tree is doubly logarithmic in $\det(U)$, only polynomial many unimodular cones are constructed. 

In \cite{bar}, the inclusion-exclusion formula was applied to boundaries between the unimodular cones in the primal space. However, like in triangulation, the decomposition can be applied in the dual space where the lower dimensional cones can be ignored. For the full details of Barvinok's decomposition algorithm, see \cite{latte1}, especially Algorithm 5 therein. This variant of Barvinok’s algorithm has
efficient implementations in \texttt{LattE}  \cite{latte-1.2} and the library \texttt{barvinok} \cite{barvinok-noversion}.

\subsection{Generating functions for full-dimensional polytopes}

In this section, we explicitly combine the results from the last two sections and write down the polynomial time algorithms for computing the discrete and continuous generating function for a polytope $\PP$.

\begin{algorithm}
\caption{Barvinok's Dual Algorithm}
\label{alg:barvinok-dual}
\begin{justify}
Output: the rational generating function for $S(\PP, \ell) = \sum\limits_{x \in \PP \cap \Z^d} e^{\langle \ell, x \rangle}$ in the form
\[S(\PP,\ell) = \sum_{i \in I} \epsilon_i \frac{e^{\langle \ell, v_i \rangle}}{\prod_{j=1}^d ( 1 - e^{\langle \ell, u_{ij}\rangle})}\] 
where $\epsilon_i \in \{-1,1\}, v_i \in \Z^d$, $u_{ij} \in \Z^d$, and $|I|$ is polynomially bounded in the input size of $\PP$ when $d$ is fixed. Each $i \in I$ corresponds to a simplicial unimodular cone $v_i + C_i$ where $u_{i1}, \dots, u_{id}$ are the $d$ rays of the cone $C_i$.
\end{justify}
\begin{enumerate}
\item  Compute all vertices $v_i$ and corresponding supporting cones $C_i$ of $\PP$
\item Polarize the supporting cones $C_i$ to obtain $C_i^\circ$
\item Triangulate $C_i^\circ$ into simplicial cones $C_{ij}^\circ$, discarding lower-dimensional cones
\item Apply Barvinok’s signed decomposition (see \cite{latte1}) to the cones $C_{ij}^\circ$ to obtain cones $C_{ijk}^\circ$, which results in the identity

\[ [C^\circ_{ij}] \equiv \sum_{k} \epsilon_{ijk} [C^\circ_{ijk}] \pmod{\text{indicator functions of lower dimensional cones}}.\]

Stop the recursive decomposition when unimodular cones are obtained. Discard all lower-dimensional cones
\item Polarize back $C_{ijk}^\circ$ to obtain cones $C_{ijk}$
\item $v_i$ is the unique integer point in the fundamental parallelepiped of every resulting cone $v_i+C_{ijk}$
\item Write down the above formula
\end{enumerate}
\end{algorithm}

The key part of this variant of Barvinok's algorithm is that computations with rational generating are simplified when non-pointed cones are used. The reason is that the rational generating
function of every non-pointed cone is zero. By operating in the dual space, when computing the polar cones, lower-dimensional cones can be safely discarded because this is equivalent to discarding non-pointed cones in the primal space. 

Triangulating a non-simplicial cone in the dual space was done to avoid the many lower-dimensional terms that arise from using the inclusion-exclusion formula for the indicator function of the cone. Other ways to get around this exist. In \cite{beck-haase-sottile:theorema, beck-sottile:irrational, koeppe:irrational-barvinok}, irrational decompositions were developed which are decompositions of polyhedra whose proper faces do not contain any lattice points. Counting formulas for lattice points that are constructed with irrational decompositions do not need the inclusion-exclusion principle. The implementation of this idea \cite{latte-macchiato} was the first practically efficient variant of Barvinok’s algorithm that works in the primal space.

For an extremely well written discussion on other practical algorithms to solve these problems using slightly different decompositions, see \cite{koeppe:irrational-barvinok}. For completeness, we end with the algorithmic description for the continuous generating function. 

\begin{algorithm}
\caption{Continuous generating function}

\begin{justify}
Output: the rational generating function for $I(\PP, \ell) = \int\limits_{\PP} e^{\langle \ell, x \rangle}\d x$ in the form

\[ I(\PP,\ell) = \sum_{s \in V(\PP)} \sum_{C \in \Gamma_s} \text{vol} (\Pi_C)e^{\ll \ell, s \rr} \prod_{i=1}^d \frac{1}{-\ll \ell, u_i \rr}. \]

where $u_1, \dots, u_d$ are the rays of cone $C$.
\end{justify}
\begin{enumerate}
\item  Compute all vertices $V(\PP)$ and corresponding supporting cones $\Cone(\PP - s)$
\item Triangulate $\Cone(\PP - s)$ into a collection of simplicial cones $\Gamma_s$ using any method
\item Write down the above
\end{enumerate}
\end{algorithm}

Note that in fixed dimension, the above algorithms compute the generating functions in polynomial time. We will repeatedly use the next lemma to multiply series in polynomial time in fixed dimension. The idea is to multiply each factor, one at a time, truncating after total degree $M$. 

\subsection{A power of a linear form}

Above, we developed an expression for $\sum_{x \in \PP \cap \Z^d} e^{\langle \ell, x \rangle}$, and $I(\PP, \ell) = \int_{\PP} e^{\langle \ell, x \rangle}\d x$. Later in Chapters \ref{ch:Integration} and \ref{ch:polynomialOptimization}, we will compute similar sums and integrals where instead of an exponential function, the summand or integrand is a power of a linear form, or more generally, a product of affine functions. The common trick will be to introduce a new variable $t$ and compute $S(\PP, \ell \cdot t)$ or $I(\PP, \ell \cdot t)$. If the series expansion in $t$ about $t=0$ is computed, we get a series in $t$ where the coefficient of $t^m$ is $\sum_{x \in \PP \cap \Z^d} \langle \ell, x \rangle^m$ or $ \int_{\PP} \langle \ell, x \rangle^m \d x$. To compute these series expansions, many polynomials will be multiplied together while deleting monomials whose degree is larger than some value $M$. The next lemma shows that this process is efficient when the number of variables is fixed, and we  repeatedly apply it in Chapters \ref{ch:Integration} and \ref{ch:polynomialOptimization}.

\begin{lemma}[Lemma 4 in \cite{baldoni-berline-deloera-koeppe-vergne:integration}]
\label{lemma:poly-mult}
For $k$ polynomials $h_1,\dots,h_k \in \Q[x]$ in $d$ variables, the product $h_1\cdots h_k$ can be truncated at total degree $M$ by performing $O(kM^{2d})$ elementary rational operations. 
\end{lemma}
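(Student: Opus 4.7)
The plan is to compute the truncated product by the obvious incremental scheme: define $p_0 = 1$ and $p_j = (p_{j-1} \cdot h_j)_{\leq M}$ for $j = 1,\dots,k$, where $(\,\cdot\,)_{\leq M}$ denotes discarding all monomials of total degree exceeding $M$. Since truncation commutes with multiplication in the sense that $(fg)_{\leq M} = (f_{\leq M}\,g_{\leq M})_{\leq M}$, we have $p_k = (h_1 \cdots h_k)_{\leq M}$, so this scheme produces the desired output.

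The key combinatorial estimate is a bound on the size of each intermediate polynomial. The number of monomials in $d$ variables of total degree at most $M$ is
\[
\binom{M+d}{d} = O(M^d),
\]
where the implicit constant depends only on $d$. Consequently each $p_j$, viewed as a vector of coefficients indexed by such monomials, has at most $O(M^d)$ nonzero entries. We may also assume, by truncating each $h_j$ first at total degree $M$ (which costs only $O(M^d)$ to inspect), that each factor $h_j$ has the same size bound.

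Now I would analyze one multiplication step $p_j = (p_{j-1} \cdot h_j)_{\leq M}$. Performing the convolution naively, for each pair of monomials $(m, m')$ with $m$ appearing in $p_{j-1}$ and $m'$ appearing in $h_j$, we form the product $m \cdot m'$, check whether its total degree is $\leq M$, and if so add the coefficient contribution into the appropriate slot of $p_j$. The number of such pairs is at most $O(M^d) \cdot O(M^d) = O(M^{2d})$, and each pair requires a constant number of elementary rational operations (one multiplication of coefficients, one addition into the target slot, plus the degree check). Thus one multiplication step costs $O(M^{2d})$ elementary operations.

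Summing over the $k$ multiplication steps yields the claimed bound $O(kM^{2d})$. The only subtle point is making sure the hidden constants genuinely depend on $d$ alone, not on $M$ or $k$; this is immediate from the standard estimate $\binom{M+d}{d} \leq (M+1)^d$, so no real obstacle arises. The argument is essentially a bookkeeping exercise, and the main thing to be careful about is to perform the truncation \emph{during} each multiplication (or immediately afterwards) rather than at the end, so that the polynomials being multiplied never exceed the $O(M^d)$-monomial size bound.
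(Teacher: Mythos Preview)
Your argument is correct and is exactly the standard proof of this fact. Note that the paper does not actually give its own proof of this lemma; it merely quotes it as Lemma~4 of \cite{baldoni-berline-deloera-koeppe-vergne:integration}, so there is no in-paper proof to compare against. Your incremental-truncation scheme with the $\binom{M+d}{d}=O(M^d)$ monomial count and $O(M^{2d})$ cost per pairwise product is precisely the intended argument.
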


\section{Handelman's Theorem and polynomial optimization}
In this section, we comment on the problem
\begin{equation*}
\begin{split}
\max & \; f(x) \\
 & x \in \PP,
\end{split}
\end{equation*}
where $f(x)$ is a polynomial and $\PP$ is a polytope. Handelman's Theorem is used in Chapters \ref{ch:Integration} and \ref{ch:polynomialOptimization} as a tool for rewriting a polynomial in a special way. This section introduces Handelman's theorem along with how it can directly be used for polynomial optimization. Section \ref{ch:bg:sec:handelman:moments} briefly illustrates how Handelman's theorem can be used instead of sum of squares polynomials. Then  finally in Section \ref{ch:bg:sh:poly-complexity}, we review the computational complexity of the polynomial optimization problem. 

In Chapter \ref{ch:polynomialOptimization}, Handelman's theorem is \emph{not} used to perform optimization. It is used as a tool to decompose the polynomial $f(x)$ into a form that makes integrating $f(x)^k$ more practical. These integrals are then used to produce bounds on the optimal value. With this view, we are using Handelman's theorem in a novel way. 

\begin{theorem}[Handelman \cite{handelman1988}]
Assume that $g_1,\dots,g_n\in\R[x_1, \dots, x_d]$ are linear polynomials and that the semialgebraic set
\begin{equation}\label{eqK}
S=\{x\in\R^d \mid g_1(x)\geq 0,\dots,g_n(x)\geq 0\}
\end{equation}
is compact and has a non-empty interior. Then any polynomial $f\in\R[x_1, \dots, x_d]$ strictly positive on $S$ can be written as
$f(x) = \sum_{\alpha\in\N^n} c_\alpha g_1^{\alpha_1}\cdots g_n^{\alpha_n}$
for some nonnegative scalars $ c_{\alpha}$.
\end{theorem}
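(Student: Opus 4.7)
The plan is to reduce the theorem to Pólya's classical positivity result, which states that if a form $p(y_0,\dots,y_n)$ is strictly positive on the standard simplex $\Delta=\{y\ge 0:y_0+\cdots+y_n=1\}$, then $(y_0+\cdots+y_n)^N p(y)$ has all nonnegative coefficients for $N$ sufficiently large. The reduction proceeds in three steps: normalize the generators so that they sum to the constant $1$, re-express $f$ as a form in the generators, and then invoke Pólya.

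First I would normalize. Because $S$ is compact, for $M>0$ large enough the affine function $g_0(x):=M-g_1(x)-\cdots-g_n(x)$ is nonnegative on $S$. Adjoining $g_0$ to the list of generators only strengthens the conclusion (any Handelman representation in $g_0,g_1,\dots,g_n$ can be re-expanded via $g_0=M-\sum g_i$ and reorganized, but more practically we simply accept $g_0$ as an allowed generator). Rescaling so that $M=1$, we obtain the identity $g_0+g_1+\cdots+g_n\equiv 1$ on $\R^d$, and the map $\Phi(x):=(g_0(x),\dots,g_n(x))$ carries $\R^d$ into the hyperplane $\{\sum y_i=1\}$ and carries $S$ into $\Delta$.

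Next I would re-express $f$. Since $S$ is compact with non-empty interior, the gradients of $g_1,\dots,g_n$ must span $\R^d$ (otherwise $S$ would be unbounded along some direction), so together with the constant $1$ the $g_i$ generate $\R[x_1,\dots,x_d]$ as an $\R$-algebra. Hence one can write $f(x)=F(g_0(x),\dots,g_n(x))$ for some $F\in\R[y_0,\dots,y_n]$, and using $\sum y_i\equiv 1$ on $\Phi(\R^d)$ one may homogenize $F$ to a form $\tilde F(y_0,\dots,y_n)$ of some degree $D$ satisfying $\tilde F\circ\Phi=f$. By hypothesis $\tilde F$ is then strictly positive on $\Phi(S)\subseteq\Delta$.

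The main obstacle, and the heart of the argument, is that Pólya's theorem requires $\tilde F$ to be strictly positive on the entire simplex $\Delta$, not just on the image $\Phi(S)$. To bridge this gap I would exploit the non-uniqueness of $F$: the kernel of the substitution map $\R[y]\to\R[x]$, $y_i\mapsto g_i(x)$, is an ideal generated by linear forms $\ell_k(y)$ (the linear syzygies among the $g_i$), and adding to $\tilde F$ any polynomial in this ideal that also vanishes on $\{\sum y_i=1\}$ does not disturb the identity $\tilde F\circ\Phi=f$. The task is therefore to use this freedom, together with the compactness of $\Delta$ and the positivity of $\tilde F$ on the closed subset $\Phi(S)$, to modify $\tilde F$ into a form strictly positive everywhere on $\Delta$; this is the delicate convex-geometric step. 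Once the modification is achieved, Pólya's theorem supplies an integer $N$ such that $(y_0+\cdots+y_n)^N\tilde F(y)$ is a form with all nonnegative coefficients, and substituting $y_i=g_i(x)$ (which collapses $\sum y_i$ to $1$) produces the desired identity $f(x)=\sum_\alpha c_\alpha\, g_1^{\alpha_1}\cdots g_n^{\alpha_n}$ with $c_\alpha\ge 0$, possibly after re-expanding any residual powers of $g_0$ via $g_0=1-\sum_{i\ge 1}g_i$ in the nonnegative way provided by the Handelman cone's closure properties.
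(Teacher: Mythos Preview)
The paper does not give a proof of Handelman's theorem; it is quoted (twice) as a classical result from \cite{handelman1988} and used purely as a black box for decomposing polynomials. There is therefore no paper proof to compare against.

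That said, your sketch follows a legitimate and well-known route---reduction to P\'olya's positivity theorem---but it has two genuine gaps.

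The first is the handling of $g_0$. You claim that adjoining $g_0=M-\sum_i g_i$ ``only strengthens the conclusion'' and later speak of re-expanding $g_0$ ``in the nonnegative way provided by the Handelman cone's closure properties.'' Na\"ive binomial expansion of $g_0^{\alpha_0}=(1-\sum_{i\ge1}g_i)^{\alpha_0}$ produces negative coefficients, so this is not automatic. The correct justification is Farkas' lemma: since $g_0\ge0$ is a valid linear inequality on the nonempty polyhedron $S=\{g_1\ge0,\dots,g_n\ge0\}$, one has $g_0=\lambda_0+\sum_{i\ge1}\lambda_ig_i$ with all $\lambda_i\ge0$. Thus $g_0$ already lies in the degree-one Handelman cone of $g_1,\dots,g_n$, and since that cone is closed under products, every monomial $g_0^{\alpha_0}g_1^{\alpha_1}\cdots g_n^{\alpha_n}$ lies in it too. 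You should make this explicit rather than gesture at ``closure properties.''

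The second gap is the one you flag yourself: extending positivity of $\tilde F$ from the affine slice $\Phi(S)=\Phi(\R^d)\cap\Delta$ to all of~$\Delta$. You call this ``the delicate convex-geometric step'' but do not carry it out, so the proof is incomplete as written. One standard way to close it: let $\ell_1,\dots,\ell_{n-d}$ be homogeneous linear forms in $y_0,\dots,y_n$ cutting out $\Phi(\R^d)$ inside $\{\sum y_i=1\}$, and replace $\tilde F$ by $\tilde F+C\bigl(\sum_iy_i\bigr)^{D-2}\sum_k\ell_k^{\,2}$ (taking $D\ge2$ without loss of generality). This still pulls back to $f$ since $\ell_k\circ\Phi=0$. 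On $\Delta$, the correction is nonnegative and vanishes exactly on $\Phi(S)$, where $\tilde F\ge\epsilon>0$; on the compact complement of a neighborhood of $\Phi(S)$ in $\Delta$ the correction is bounded below by some $\delta>0$ while $\tilde F\ge-M'$, so $C>M'/\delta$ makes the sum strictly positive on all of~$\Delta$, and P\'olya then applies. Without this argument or an equivalent, the reduction to P\'olya does not go through.
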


We define the \emph{degree} of a Handelman decomposition be $\max |\alpha|$, where the maximum is taken over all the exponent vectors $\alpha$ of $g_i(x)$ that appear in a decomposition. 

Note that this theorem is true when $S$ is a polytope $\PP$, and the polynomials $g_i(x)$ correspond to the rows in the constraint matrix $b - Ax \geq 0$. See \cite{Castle20091285, deKlerk2015, monique2014} for a nice introduction to the Handelman decomposition. The Handelman decomposition is only guaranteed to exist if the polynomial is strictly greater than zero on $\PP$, and the required degree of the Handelman decomposition can grow as the minimum of the polynomial approaches zero  \cite{sankaranarayanan2013lyapunov}. The next three examples are taken from Section 3.1 of \cite{sankaranarayanan2013lyapunov}.

\begin{example}
Consider the polynomial $f(x) \in \Q[x]$ given by $f(x) = x^2 -3x+2 = (x-2)(x-1)$ on $[-1,1]$. Because $f(1)=0$, Handelman's theorem does not say that $f(x)$ must have a Handelman decomposition. However, 
\[ f(x) = (1-x)^2 + (1-x) = 1\cdot g_1^2g_2^0 + 1\cdot g_2^1 g_2^0\]
where $g_1 :=1-x $ and $g_2:=x+1 $, so $f(x)$ has a Handelman decomposition.
\end{example}

To apply Handelman's theorem, we must have that $f(x) > 0$ on $\PP$. 

\begin{example}
Consider the polynomial $f(x) \in \Q[x]$ given by $f(x) = x^2$ on $[-1,1]$. Because $f(0)=0$, Handelman's theorem does not say that $f(x)$ must have a Handelman decomposition. If $f(x)$ had a decomposition, then there would be numbers $c_\alpha > 0$ and integers $\alpha_i \in \Z_{\geq 0}$ such that \[x^2 = \sum_{\alpha \in J} c_\alpha (1-x)^{\alpha_1} (x+1)^{\alpha_2},\]
with $J$ being a finite subset of $\Z_{\geq 0}^2$. Evaluating both sides at zero produces the contradiction $0 = \sum_{\alpha \in J} c_\alpha > 0$. Hence $f(x)=x^2$ does not have a Handelman decomposition on $[-1,1]$.
\end{example}

\begin{example}
For every fixed $\epsilon > 0$, $p(x) = x^2+\epsilon$ must have a Handelman decomposition on $[-1,1]$. Let $t = \max\{\alpha_1 + \alpha_2 \mid \alpha \in J\}$ be the total degree of a Handelman representation. Then the next table lists what is the smallest $\epsilon$ value for which $x^2 + \epsilon$ has a degree $t$ decomposition.

\begin{center}
\begin{tabular}{ l | r r r r r r }
  \hline                       
  $t$ & 2 & 3 & 4 & 5 & 6 & 7 \\
  \hline
  $\epsilon$ & 1 & 1/3 & 1/3 & 1/5 & 1/5 & 1/7 \\
  \hline  
\end{tabular}

\end{center}
\end{example}

There are many questions relating to Handelman's theorem. For instance, answers to these questions are not well known or completely unknown.

\begin{itemize}
\item Given a nonnegative polynomial $f(x)$ on a polytope $P$, how large does the Handelman degree have to be? 
\item By adding a positive shift $s$ to $f(x)$, how can the  Handelman degree change for $f(x)+s$?
\item Fix $t$. Can a large enough shift be added to $f(x)$ so that $f(x)+s$ has a degree $t$ Handelman decomposition?
\item How can redundant inequalities in $\PP$'s description lower the Handelman degree or reduce the number of Handelman terms?
\end{itemize}

However, these questions do not prevent us from using Handelman's theorem as an effective tool for polynomial optimization. We now present a hierarchy of linear relaxations as described in \cite{monique2014} for maximizing a polynomial over a polytope. This is the most traditional way Handelman's theorem can directly be applied for optimization. Let $g$ denote the set of polynomials $g_1,\ldots,g_n$.
For an integer
$t\ge 1$, define the {\em Handelman set of order t} as
\begin{equation*}
\mathcal{H}_t(g):=\left\{\sum_{\alpha\in \Z_{\geq 0}^n \;:\; |\alpha| \leq t} c_{\alpha} g^\alpha: c_\alpha \geq 0\right\}
\end{equation*}
 and the
corresponding {\em Handelman bound of order $t$} as
\begin{equation*}\label{eqphan}
\fhan^{(t)}:=\inf \{\lambda: \lambda -f(x) \in \mathcal{H}_t(g)\}.
\end{equation*}
Clearly, any polynomial in $\mathcal{H}_t(g)$ is nonnegative on $\PP$ and one has the following chain of inclusions:
 $$\mathcal{H}_1(g)\subseteq \ldots \subseteq \mathcal{H}_t(g)\subseteq\mathcal{H}_{t+1}(g) \subseteq \ldots $$
giving the chain of inequalities:
 $\fmax\le \fhan^{(t+1)}\le  \fhan^{(t)}\le\dots\le \fhan^{(1)}$ for $t\geq 1$. When $\PP$ is a polytope with non-empty interior, the asymptotic convergence of the bounds $\fhan^{(t)}$ to $\fmax$ as the order $t$ increases is guaranteed by  Handelman's theorem.

Some results have been proved on the convergence rate in the case
when $\PP$ is the standard simplex or the hypercube $[0,1]^d$.

\begin{theorem}\cite{deKlerk2006210}
Let $\PP$ be the standard simplex $\PP = \{ x \in \R^d \mid x_i \geq 0, \sum_{i=1}^d x_i = 1 \}$, and let $f$ be a polynomial of total degree $D$. Let $\fmax$ and $\fmin$ be the maximum and minimum value $f$ takes on $\PP$, respectively. Then
\begin{eqnarray*}
\fhan^{(t)} -\fmax
\le  D^D {2D-1\choose D} \frac{{D\choose 2}}{t-{D\choose 2}} (\fmax-\fmin).
\end{eqnarray*}
\end{theorem}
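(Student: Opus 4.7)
The plan is to identify $\fhan^{(t)}$ with a concrete combinatorial quantity arising from the Bernstein expansion of $f$ on the standard simplex, and then to control the gap between this quantity and $\fmax$ via a classical Pólya-type error estimate. Specifically, for $t \geq D$ we use the fact that $(\sum_i x_i)^{t-D} \equiv 1$ on $\PP$ to write
\begin{equation*}
f(x) \;=\; \sum_{|\alpha|=t} b_\alpha(f,t)\,\binom{t}{\alpha}\, x^\alpha \qquad \text{on } \PP,
\end{equation*}
where the Bernstein coefficients $b_\alpha(f,t)$ are obtained by expanding $f(x)\cdot(\sum_i x_i)^{t-D}$ and reading off normalized monomial coefficients. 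The first step is to show that $\fhan^{(t)} = \max_{|\alpha|=t} b_\alpha(f,t)$. The reasoning is that, on the simplex with generators $g_i=x_i$, membership of $\lambda-f$ in the degree-$t$ Handelman cone is equivalent to the polynomial $(\lambda-f)(x)\cdot(\sum_i x_i)^{t-D}$ having all nonnegative monomial coefficients, and the coefficient of $x^\alpha$ in that product is exactly $(\lambda-b_\alpha(f,t))\binom{t}{\alpha}$. Since $\fhan^{(t)} \geq \fmax$ is already given by the containments of the Handelman sets, the problem reduces to upper-bounding $\max_\alpha b_\alpha(f,t)-\fmax$.

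The second step is to compare $b_\alpha(f,t)$ with $f(\alpha/t)$, which automatically lies in $[\fmin,\fmax]$. Writing $f(x)=\sum_{|\beta|\le D} f_\beta x^\beta$ and expanding $f(x)(\sum_i x_i)^{t-D}$ termwise yields an explicit formula expressing each $b_\alpha(f,t)$ as a linear combination of falling-factorial ratios $\alpha^{\underline{\beta}}/t^{\underline{|\beta|}}$ in place of the ordinary powers $(\alpha/t)^\beta$ that appear in $f(\alpha/t)$. A short combinatorial estimate, comparing $\alpha_i(\alpha_i-1)\cdots(\alpha_i-\beta_i+1)$ with $\alpha_i^{\beta_i}$ and similarly for $t$, shows that for every multi-index $\beta$ with $|\beta|\le D$,
\begin{equation*}
\left| \frac{\alpha^{\underline{\beta}}}{t^{\underline{|\beta|}}} - \left(\frac{\alpha}{t}\right)^\beta \right| \;\le\; \frac{\binom{|\beta|}{2}}{t-\binom{|\beta|}{2}} \;\le\; \frac{\binom{D}{2}}{t-\binom{D}{2}},
\end{equation*}
uniformly in $\alpha$. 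Summing over $\beta$ yields $|b_\alpha(f,t)-f(\alpha/t)| \le \frac{\binom{D}{2}}{t-\binom{D}{2}}\sum_{|\beta|\le D}|f_\beta|$, and since $f(\alpha/t)\le \fmax$, we obtain
\begin{equation*}
\fhan^{(t)}-\fmax \;\le\; \frac{\binom{D}{2}}{t-\binom{D}{2}} \sum_{|\beta|\le D}|f_\beta|.
\end{equation*}

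The third and crucial step is to bound the absolute coefficient sum $\sum_{|\beta|\le D}|f_\beta|$ in terms of $\fmax-\fmin$ with a constant that depends only on $D$. Replacing $f$ by $f-\fmin$ leaves both $\fhan^{(t)}-\fmax$ and the coefficient structure essentially intact while normalizing the range to $[0,\fmax-\fmin]$. One then expresses $f$ in the degree-$D$ Bernstein basis on $\PP$, whose coefficients are bounded in absolute value by $\fmax-\fmin$, and converts back to the monomial basis. The change-of-basis matrix is composed of multinomial coefficients; bounding its operator norm in the $\ell^1$ sense produces the factor $D^D\binom{2D-1}{D}$, independent of the dimension $d$. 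Putting the three steps together gives the stated inequality. The main obstacle is precisely this third step: one must carefully control the monomial-to-Bernstein change of basis to extract the clean, dimension-free constant $D^D\binom{2D-1}{D}$, whereas the first two steps follow almost mechanically from classical Bernstein calculus on the simplex.
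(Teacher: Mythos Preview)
The paper does not prove this theorem; it is quoted from de~Klerk, Laurent and Parrilo with the citation \cite{deKlerk2006210}. So there is no proof in the paper to compare against, and the question is simply whether your sketch stands on its own.

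Steps 1 and 2 are the right skeleton and match the structure of the cited argument. The genuine gap is Step 3. First, the claim that the degree-$D$ Bernstein coefficients of $f-\fmin$ are bounded in absolute value by $\fmax-\fmin$ is false already in one dimension: for $f(x_1,x_2)=4x_1x_2$ on the $1$-simplex one has $\fmax-\fmin=1$ while the middle Bernstein coefficient equals $2$. Second, and more seriously, no dimension-free inequality of the form $\sum_{|\beta|\le D}|f_\beta|\le C(D)\,(\fmax-\fmin)$ can hold at all. Take $D=2$, $d=5$, and the homogeneous polynomial $g(x)=x_1^2-x_2^2+\bigl(\sum_{i=1}^5 x_i\bigr)^2$; on the simplex $g=x_1^2-x_2^2+1\in[0,2]$, so $\fmax-\fmin=2$, yet $\sum_\beta|g_\beta|=25>24=D^D\binom{2D-1}{D}\cdot 2$. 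The reason is structural: for homogeneous $f$ of degree $D$ the Bernstein-to-monomial change of basis is the diagonal scaling $f_\beta=b_\beta\binom{D}{\beta}$, so passing from an $\ell^\infty$ bound on the $b_\beta$ to an $\ell^1$ bound on the $f_\beta$ picks up the factor $\sum_{|\beta|=D}\binom{D}{\beta}=d^{\,D}$, which grows with the number of variables and cannot be absorbed into a constant depending only on $D$. The de~Klerk--Laurent--Parrilo argument does not decouple Steps 2 and 3 in this way; the falling-factorial ratios are kept paired with the coefficients $f_\beta$, and the identity $\sum_{|\beta|=D}\binom{D}{\beta}\,\alpha^{\underline\beta}/t^{\underline D}=1$ is used so that the constant $D^D\binom{2D-1}{D}$ emerges from a pointwise comparison of $b_\alpha(f,t)$ with values of $f$ on the simplex rather than from a raw $\ell^1$ coefficient bound.
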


\begin{theorem}\cite{klerk2010}
Let $K=[0,1]^d$ be the hypercube.
If $f$ is a polynomial of degree $D$ and $r\ge 1$ is an integer then the Handelman bound of order $t=rk$ satisfies:
$$\fhan^{(rk)}-\fmax \le {L(f) \over r} {D+1\choose 3} d^D,$$
with $L(f) := \max_{\alpha}  {\alpha!\over |\alpha|!}|f_\alpha|$.
\end{theorem}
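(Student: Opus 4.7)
The plan is to exploit the classical Bernstein polynomial approximation on the hypercube, which is the strategy behind de Klerk's result. For $K = [0,1]^d$ the generating linear polynomials are $g_i(x) = x_i$ and $g_{d+i}(x) = 1-x_i$ for $i = 1,\dots,d$, and the Bernstein basis elements
\[ B_\alpha^{(r)}(x) = \prod_{i=1}^d \binom{r}{\alpha_i}\, x_i^{\alpha_i}(1-x_i)^{r-\alpha_i}, \qquad \alpha \in \{0,1,\dots,r\}^d, \]
are visibly nonnegative scalar multiples of monomials of total degree $rd$ in the generators, so they lie in $\mathcal{H}_{rd}(g)$. Moreover $\sum_\alpha B_\alpha^{(r)} \equiv 1$ on $K$ by the binomial theorem applied coordinate-wise.

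First I would show that every polynomial $f$ of degree $D \le r$ admits a Bernstein representation $f = \sum_\alpha b_\alpha^{(r)}(f)\, B_\alpha^{(r)}$. This follows from the elementary identity $x_i^{m} = \sum_{j=0}^{r-m}\binom{r-m}{j} x_i^{m+j}(1-x_i)^{r-m-j}$, obtained by multiplying $x_i^m$ by the binomial expansion of $\bigl((1-x_i)+x_i\bigr)^{r-m}$. Applying this to each variable in each monomial of $f$ and collecting terms, a short calculation identifies
\[ b_\alpha^{(r)}(f) = \sum_\gamma f_\gamma \prod_{i=1}^d \frac{\alpha_i(\alpha_i-1)\cdots(\alpha_i-\gamma_i+1)}{r(r-1)\cdots(r-\gamma_i+1)}. \]
Consequently $\lambda - f = \sum_\alpha \bigl(\lambda - b_\alpha^{(r)}(f)\bigr) B_\alpha^{(r)}$ lies in $\mathcal{H}_{rd}(g)$ whenever $\lambda \ge \max_\alpha b_\alpha^{(r)}(f)$, yielding $\fhan^{(rd)} \le \max_\alpha b_\alpha^{(r)}(f)$.

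The central step is then to control $\max_\alpha b_\alpha^{(r)}(f) - \fmax$. Since $\alpha/r \in K$ we have $f(\alpha/r) \le \fmax$, so it suffices to estimate $b_\alpha^{(r)}(f) - f(\alpha/r)$ for each $\alpha$. Writing $f(\alpha/r) = \sum_\gamma f_\gamma \prod_i(\alpha_i/r)^{\gamma_i}$ and comparing termwise against the falling-factorial formula above, a telescoping argument based on the identity $\alpha_i/r - (\alpha_i-j)/(r-j) = j(r-\alpha_i)/(r(r-j))$ shows that each per-monomial error is of order $\frac{1}{r}\binom{|\gamma|}{2}$ uniformly in $\alpha$.

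The main obstacle will be aggregating these per-monomial errors so that the precise combinatorial constants $\binom{D+1}{3}$ and $d^D$ appear. The normalization $L(f) = \max_\gamma(\gamma!/|\gamma|!)|f_\gamma|$ is what makes this work: reweighting each coefficient by $|\gamma|!/\gamma!$ converts the monomial sum into a multinomial sum, and the identity $\sum_{|\gamma|=k}|\gamma|!/\gamma! = d^k$ (the expansion of $(1+\cdots+1)^k$ in $d$ variables) combined with $\sum_{k=0}^D\binom{k}{2} = \binom{D+1}{3}$ produce exactly the advertised constants. Putting these ingredients together yields $\fhan^{(rd)} - \fmax \le \frac{L(f)}{r}\binom{D+1}{3}d^D$, matching the stated inequality.
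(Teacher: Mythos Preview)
The paper does not supply a proof of this theorem; it is quoted verbatim as a result of de~Klerk and Laurent and cited as \cite{klerk2010}. So there is no ``paper's own proof'' to compare against. That said, your sketch is exactly the argument used in the cited reference: represent $f$ in the multivariate Bernstein basis of degree~$r$ in each coordinate, observe that the basis elements lie in $\mathcal{H}_{rd}(g)$, and bound $\max_\alpha b_\alpha^{(r)}(f)-f(\alpha/r)$ via the falling-factorial-versus-power estimate. Your handling of the constants is also the right one: bound each $d^{|\gamma|}$ by $d^D$ and then use the hockey-stick identity $\sum_{k\le D}\binom{k}{2}=\binom{D+1}{3}$. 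One small remark: the statement in the thesis writes the Handelman order as $t=rk$ with an undefined $k$; as you implicitly recognized, the intended order is $rd$ (degree $r$ per coordinate times $d$ coordinates), which is what your argument actually proves.
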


\subsection{Handelman and moment methods}
\label{ch:bg:sec:handelman:moments}

Lasserre \cite{Lasserre01globaloptimization} introduced the observation that optimizing a  polynomial over a polytope can be done by integrating over all probability measures that are  absolutely continuous with respect to the  Lebesgue measure: 
\[\fmax = \sup_\sigma \int_\PP f(x) \sigma(x) \d x,\]
where the supremum is taken over all absolutely continuous functions $\sigma(x)$ such that $\sigma(x) \geq 0$ on $\PP$ and $\int_\PP \sigma(x) \d x = 1.$ 

A common relaxation is to just integrate over some subset of measures. Such methods are generally called \emph{moment methods}. One such method, the \emph{Lasserre hierarchy} \cite{Lasserre01globaloptimization}, is given by 

\[ f_t^{sos} := \max_{\sigma \in SOS_t} \left\{ \int_\PP f(x) \sigma(x) \d x \mid \int_\PP \sigma(x) \d x = 1\right\}, \]
where $SOS_t$ is the set of all polynomials which are sums of squares (and hence nonnegative on $\PP$) of degree at most $t$. Moreover, it is known that $\fmax \geq f_t^{sos} \geq f_{t+1}^{sos}$ and $\lim_{t \rightarrow \infty} f_t^{sos} = \fmax$ \cite{lasserre2011NewLook}.

The authors of \cite{deKlerk2015} extended this idea using Handelman's theorem. They defined  

\[ f_t^{han} := \max_{\sigma \in \mathcal{H}_t} \left\{ \int_\PP f(x) \sigma(x) \d x \mid \int_\PP \sigma(x) \d x = 1\right\}, \]
where $\mathcal{H}_t$ is the set of Handelman polynomials on $\PP$ of degree at most $t$. They also show $f_t^{han}$ converges to $\fmax$ as $t \rightarrow \infty$. 

The roles played by sums of squares polynomials and Handelman polynomials in polynomial optimization are very similar. Our approach to Handelman's theorem is not focused on using it for optimization, but rather as a decomposition of a nonnegative polynomial function. For more information on sum of squares type optimization techniques, see \cite{deKlerk2015, lasserre2009momentsBook, Lasserre01globaloptimization, lasserre2002semidefinite, lasserre2011NewLook, Lasserre2000929}.

\subsection{Complexity of the problem}
\label{ch:bg:sh:poly-complexity}

In Chapter \ref{ch:polynomialOptimization}, the main results are presented with precise statements on an algorithm's complexity. In this section, we review the complexity of polynomial optimization and explain why we always fix the dimension in complexity statements. First we review the complexity of maximizing a polynomial $f(x) \in \R[x_1, \dots, x_d]$ over a polytope $\PP \subset \R^d$. That is, consider the complexity of the problems
\begin{equation*}
\begin{split}
\max & \; f(x) \\
 & x \in \PP.
\end{split}
\hspace{2em}\text{ and }\hspace{2em}
\begin{split}
\max & \; f(x) \\
  x \in & \PP \cap \Z^d.
\end{split}
\end{equation*}
Both exact optimization problems are NP-hard, as they include two well known NP-complete problems: the max-cut problem, and the maximum stable set problem. The max-cut problem can be reduced to a quadratic optimization problem over the integer points of the 0-1 hypercube: $[0, 1]^d \cap \Z^d$ \cite{Hastad99someoptimal}. The maximum stable set problem can be reduced to a quadratic optimization problem over the standard simplex $\Delta_n = \{ x \in \R^d \mid \sum x_i = 1, x \geq 0\}$ \cite{motzkin-straus}. Furthermore, problems in fixed dimension can still be hard. For example, the problem of minimizing a degree-4 polynomial over the lattice points of a convex polytope in dimension two is NP-hard. See \cite{koeppeComplexity2012} for a nice survey on the complexity of optimization problems.

Because the exact optimization problems are difficult, even when the domain is a cube or simplex, practical algorithms instead approximate the maximum. However, this too can be hard without some further restrictions. When the dimension is allowed to vary, approximating the optimal objective value of polynomial programming problems is still hard. For instance, Bellare and Rogaway \cite{Bellare:1995} proved that if $P \neq NP$ and $\epsilon \in (0,\frac{1}{3})$ is fixed, there does not exist an algorithm $\mathcal{A}$ for continuous quadratic programming over polytopes that produces an approximation $f_\mathcal{A}$ in polynomial time where $|f_\mathcal{A} - \fmax| < \epsilon |\fmax - \fmin|$. 

Next we give three definitions of approximation algorithm common within combinatorial optimization for nonnegative objective functions $f(x)$.

\begin{definition}
\hfill
\begin{enumerate}
\item An algorithm $\mathcal{A}$ is an \emph{$\epsilon$-approximation algorithm} for a maximization problem with optimal cost $\fmax$, if for each instance of the problem of encoding length n, $\mathcal{A}$ runs in polynomial time in $n$ and returns a feasible solution with cost $f_\mathcal{A}$ such that $f_\mathcal{A} \geq (1-\epsilon) \fmax$. Note that epsilon in this case is fixed, and is not considered an input to the algorithm.
\item A family of algorithms $\mathcal{A}_\epsilon$ is a \emph{polynomial time approximation scheme (PTAS)} if for every error parameter $\epsilon > 0$, $\mathcal{A}_\epsilon$ is an $\epsilon$-approximation algorithm and its running time is polynomial in the size of the instance for every fixed $\epsilon$. Here, the algorithms are parameterized by $\epsilon$, but $\mathcal{A}_\epsilon$ is still efficient when $\epsilon$ is fixed.
\item A family of $\epsilon$-approximation algorithms $\mathcal{A}_\epsilon$ is a \emph{fully polynomial time approximation scheme (FPTAS)} if the running time of $\mathcal{A}_\epsilon$ is polynomial in the encoding size of the instance and $1/\epsilon$. In this case, $\epsilon$ is an input to the algorithm, and its complexity term is polynomial in $1/\epsilon$.
\end{enumerate}
\end{definition}

In light of these complexity results and definitions, Chapter \ref{ch:polynomialOptimization} describes a FPTAS algorithm for polynomial optimization over a polytope, when the dimension $d$ is fixed. Note that we use the usual input size where everything is measured in the binary encoding. However exponents, such as a polynomial's degree or the integer $k$ in $f(x)^k$, must encoded in unary, otherwise their values cannot be computed in polynomial time. To see this, consider the problem of computing $2^x$. The encoding length of this number is $x$ in binary, which is a polynomial size when $x$ is measured in unary. If $x$ is encoded in binary, then its length is $\log_2(x)$, and so the length of $2^x$ is exponential in the binary encoding size of $x$.

\section{Ehrhart Polynomials}
\label{sec:bg:knap}


This section introduces Chapter \ref{ch:knapsack}. In Section \ref{ch:bg:sec:ehrhart:theory}, we review the two most important theorems in Ehrhart theory. Then in Section \ref{ch:bg:sec:ehrhart:knapsack-polytopes} we set up the main question Chapter \ref{ch:knapsack} addresses.

\subsection{Two classic theorems}
\label{ch:bg:sec:ehrhart:theory}

Let $\PP$ be a rational polytope, meaning each vertex is in $\Q^d$. The Ehrhart function counts the number of integer points in $\PP$ as $\PP$ is scaled by an \emph{nonnegative integer} number $t$:

\[L(\PP, t) := | \{x \in \Z^d \mid x \in t\cdot \PP \}|. \]

If $\PP$ is given by a vertex representation, $t\cdot \PP$ is given by multiplying each vertex by the scalar $t$. If $\PP$ has the description $Ax \leq b$, then $t\cdot\PP$ is $Ax \leq tb$. The next theorem states that $L(\PP, t)$ has remarkable structure, and is the cornerstone to an area of combinatorics called Ehrhart Theory.

\begin{theorem}[Ehrhart \cite{ehrhart1962geometrie}]
For any polytope $\PP$ with integer vertices, $L(\PP, t)$ is a polynomial in $t$, where the degree is equal to the dimension of $\PP$, the leading term in the polynomial is equal to the volume of $\PP$, and the constant term is 1.
\end{theorem}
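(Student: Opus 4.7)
The strategy is to compute the discrete generating function $S(t\PP, \ell)$ using the machinery of Section~\ref{ch:bg:sec:integration-stuff} and to extract $L(\PP, t) = \lim_{\ell \to 0} S(t\PP, \ell)$ from it. The crucial hypothesis that $\PP$ has integer vertices means each vertex $tv$ of $t\PP$ is a lattice point, which keeps the generating function expression clean in $t$.

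First I would apply Brion's decomposition (Lemma~\ref{brion-exp}) to write
$$S(t\PP, \ell) = \sum_{v \in V(\PP)} S\bigl(tv + C_v(\PP), \ell\bigr),$$
and then decompose each (possibly non-simplicial) cone $C_v(\PP)$ into signed simplicial unimodular cones via Algorithm~\ref{alg:barvinok-dual}. Because each $tv$ is an integer point and the resulting cones are unimodular, Proposition~\ref{prop:summation-exp-simplicial} produces the closed form
$$S(t\PP, \ell) = \sum_{i \in I} \epsilon_i \frac{e^{\ll \ell, tv_i \rrr}}{\prod_{j=1}^d \bigl(1 - e^{\ll \ell, u_{ij}\rrr}\bigr)},$$
where each $v_i$ is a vertex of $\PP$ and the ray generators $u_{ij} \in \Z^d$ are independent of $t$. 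Substituting $\ell = \tau \ell_0$ for a generic direction $\ell_0$ and expanding each summand as a Laurent series in $\tau$ at $\tau = 0$, the numerator contributes $\sum_k (t\tau)^k \ll \ell_0, v_i\rrr^k / k!$ while each factor $1/(1 - e^{\tau \ll \ell_0, u_{ij}\rrr})$ contributes a simple pole. Hence each summand has a pole of order at most $d$, and the $\tau^0$ coefficient is a polynomial in $t$ of degree at most $d$. Since $L(\PP, t)$ is a finite integer for every integer $t \geq 0$, all negative powers of $\tau$ must cancel after summing over $i$, so $L(\PP, t)$ is a polynomial in $t$ of degree at most $d$.

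To identify the leading coefficient, I would either compare with the continuous analogue via Lemma~\ref{ch:bg:lemma-integration-brion} and Proposition~\ref{prop:integral-exp-simplicial}, or invoke the Riemann-sum identity $\lim_{t\to\infty} t^{-d} L(\PP, t) = \vol(\PP)$; either route shows that the coefficient of $t^d$ equals $\vol(\PP)$, and in particular the degree is exactly $\dim \PP$ (the lower-dimensional case reduces to the full-dimensional one by restricting to the affine hull and using the lattice induced there). Finally, the constant term is obtained by direct evaluation: $0 \cdot \PP = \{0\}$, so $L(\PP, 0) = |\{0\} \cap \Z^d| = 1$.

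\textbf{Main obstacle.} The subtle step is not the existence of a polynomial bound but the cancellation: each individual term in the Brion--Barvinok sum is a genuine Laurent series with a pole of order $d$ in $\tau$, yet the sum is a polynomial in $t$ with no poles. Justifying this cancellation and then pinning down the leading coefficient as $\vol(\PP)$ (rather than merely bounding the degree) is the heart of the argument; the cleanest route is the Riemann-sum comparison, which uses that $t^{-d}\bigl|(t\PP) \cap \Z^d\bigr|$ approximates $\vol(\PP)$ as $t \to \infty$ and thereby fixes the top coefficient of the polynomial we have just produced.
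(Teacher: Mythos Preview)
The paper does not prove this theorem; it is stated in Section~\ref{ch:bg:sec:ehrhart:theory} as a classical result of Ehrhart, with a citation and a pointer to standard references for a full treatment. So there is no ``paper's own proof'' to compare against.

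Your proposal is a valid modern proof, essentially the Barvinok--Brion argument one finds in Barvinok's book or in Beck--Robins. The key observation---that when the vertices $v_i$ are lattice points the unique integer point in the half-open fundamental parallelepiped of each unimodular cone $tv_i + C$ is $tv_i$ itself, so that the only $t$-dependence in the generating function sits in the numerators $e^{t\tau\langle \ell_0, v_i\rangle}$---is exactly what forces the $\tau^0$ coefficient to be an honest polynomial in $t$ rather than a quasi-polynomial. The Riemann-sum argument for the leading coefficient is fine.

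There is one gap worth flagging. Your derivation shows that $L(\PP,t)$ agrees with a polynomial $p(t)$ for every integer $t\ge 1$: those are the $t$ for which $t\PP$ has the same face lattice and tangent-cone structure as $\PP$, so your Brion decomposition applies verbatim. You then write ``$0\cdot\PP=\{0\}$, so $L(\PP,0)=1$'' and conclude that the constant term of $p$ is $1$. But this silently uses $p(0)=L(\PP,0)$, which is not yet justified: at $t=0$ the polytope degenerates to a single point with a trivial tangent cone, so the decomposition you wrote down for $t\ge 1$ does not a priori compute anything about $0\cdot\PP$. What your formula actually produces at $t=0$ is the $\tau^0$ coefficient of $\sum_{v\in V(\PP)} S(C_v(\PP),\tau\ell_0)$, the sum of the generating functions of the feasible-direction cones all translated to the origin. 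The needed fact is that this sum equals $1$ identically as a meromorphic function of $\ell$. That is true---one can verify it by hand for an interval and bootstrap to products, or deduce it from the Brianchon--Gram relation pushed through the valuation $S$---but it deserves a sentence of justification rather than an appeal to $|\{0\}|=1$.
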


\begin{example}
Consider the polytope that is a square in $\R^2$ given as the convex hull of the vertices $(0,0)$, $(0,1)$, $(1,0)$, and $(1,1)$. Computing a few values gives $L(\PP, 0) = 1$, $L(\PP, 1) = 4$, $L(\PP, 2) = 9$. The Ehrhart polynomial is $L(\PP, t) = (t+1)^2$.
\end{example}

When the polytope has rational vertices, the Ehrhart polynomial can still be defined, but now the coefficients may no longer be constants, they may be periodic functions. 

\begin{definition}
A \emph{quasi-polynomial} of degree $k$ is a polynomial that has the from $q(t) = c_d(t)t^d + c_{d-1}(t)t^{d-1} + \cdots + c_0(t)$, where $c_i(t)$ is a periodic function with integral period. Quasi-polynomials can be written in many ways. For instance given a collection of polynomials $\{p_i(t)\}$ and a period $s$, $q(t) = p_i(t)$ where $i = t \mod s$ is a quasi-polynomial. In Chapter \ref{ch:knapsack}, we use another representation: \emph{step functions}.
\end{definition}

The other well-known theorem in Ehrhart theory applies to rational polytopes. 

\begin{theorem}[Ehrhart \cite{ehrhartbook}]
For any polytope $\PP$ with rational vertices, $L(\PP, t)$ is a quasi-polynomial in $t$, where the degree is equal to the dimension of $\PP$, the leading term in the polynomial is equal to the volume of $\PP$, and the constant term is 1.
\end{theorem}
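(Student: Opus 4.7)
The plan is to derive the quasi-polynomial structure of $L(\PP, t)$ directly from the generating-function machinery of Section~\ref{ch:bg:sec:integration-stuff}. I would begin by observing that $L(\PP, t)$ can be recovered from the valuation $S(\,\cdot\,, \ell)$ by a limiting procedure: substituting $\ell = \tau\xi$ for a sufficiently generic linear form $\xi$ (so that $\langle \xi, u \rangle \neq 0$ for every primitive ray direction $u$ appearing in any tangent cone of $\PP$), the function $S(t\PP, \tau\xi)$ is meromorphic in $\tau$, and $L(\PP, t) = \lim_{\tau\to 0} S(t\PP, \tau\xi)$ is its $\tau^0$-coefficient.

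Next I would apply Lemma~\ref{brion-exp} to write
\[
S(t\PP, \tau\xi) \;=\; \sum_{v \in V(\PP)} S\bigl(tv + C_v(\PP), \tau\xi\bigr),
\]
using the fact that the cone of feasible directions at $tv$ in $t\PP$ equals $C_v(\PP)$ for every $t > 0$ (cones are invariant under positive scaling of the basepoint). After a $t$-independent triangulation of each $C_v(\PP)$ into simplicial cones with primitive integer rays $u_1,\dots,u_d$, Proposition~\ref{prop:summation-exp-simplicial} gives on each simplicial piece $C$
\[
S(tv + C, \tau\xi) \;=\; \Biggl(\sum_{a \in (tv+\Pi_C)\cap \Z^d} e^{\tau \langle\xi, a\rangle}\Biggr) \prod_{i=1}^d \frac{1}{1 - e^{\tau\langle \xi, u_i\rangle}}.
\]
The central observation is that the lattice-point set $(tv + \Pi_C) \cap \Z^d$ depends on $t$ only through the fractional parts of the coordinates of $tv$; hence it varies periodically with period dividing $N := \lcm\{\text{denominators of coordinates of the vertices of } \PP\}$.

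The main obstacle is then to check that the $\tau^0$-coefficient of each summand is a polynomial in $t$ of degree at most $d$ whose coefficients are periodic in $t$ (modulo~$N$). The rational factor $\prod_i (1 - e^{\tau\langle\xi, u_i\rangle})^{-1}$ contributes a pole of order exactly $d$, expanding as $\tau^{-d}\prod_i(-\langle\xi, u_i\rangle)^{-1}$ times a function analytic at $\tau = 0$. Each exponential $e^{\tau\langle\xi, a\rangle}$ in the inner sum expands as a power series whose $\tau^k$-coefficient is $\langle\xi, a\rangle^k/k!$. Writing $a = tv + w_a$ with $w_a \in \Pi_C$ depending only on $t \bmod N$, the quantity $\langle\xi, a\rangle^k$ becomes a polynomial of degree $k$ in $t$ with periodic coefficients. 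Multiplying these expansions together and extracting the coefficient of $\tau^0$, each simplicial cone contributes a polynomial in $t$ of degree at most $d$ with periodic coefficients; summing over all cones and vertices produces the asserted quasi-polynomial. Genericity of $\xi$ ensures no spurious poles appear in any individual term, and the $\xi$-dependence must cancel in the final sum because $L(\PP, t)$ is intrinsic.

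Finally, I would read off the degree, leading coefficient, and constant term. For the leading term I would compare with the continuous analogue: by Lemma~\ref{ch:bg:lemma-integration-brion} applied to $t\PP$ and Proposition~\ref{prop:integral-exp-simplicial}, the continuous generating function satisfies $I(t\PP, \tau\xi) = t^d I(\PP, \tau\xi)$, and a Riemann-sum comparison between the inner lattice-point sums $\sum_{a\in(tv+\Pi_C)\cap\Z^d} e^{\tau\langle\xi,a\rangle}$ and the corresponding $\vol(\Pi_C)$-weighted integrals shows that the $t^d$ contribution to the $\tau^0$-coefficient of $S(t\PP, \tau\xi)$ agrees with that of $I(t\PP, \tau\xi)$, which is $\vol(\PP) \cdot t^d$. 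The constant-term assertion is immediate: since $0 \cdot \PP = \{0\}$, we have $L(\PP, 0) = 1$, i.e., the quasi-polynomial evaluated at $t=0$ equals $1$, so its constant term in the residue class $t \equiv 0 \pmod{N}$ is $1$.
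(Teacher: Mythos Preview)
The paper does not prove this theorem; it is stated as classical background with a citation to Ehrhart~\cite{ehrhartbook}, so there is no proof in the paper to compare against. Your strategy of extracting $L(\PP,t)$ as the $\tau^0$-coefficient of $S(t\PP,\tau\xi)$ via Brion's decomposition and the explicit formula for simplicial cones is a legitimate route, and the periodicity mechanism you identify---that $(tv+\Pi_C)\cap\Z^d$ depends only on $t\bmod N$---is exactly the right observation.

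That said, there are two genuine gaps in the write-up. First, you invoke a triangulation of each $C_v(\PP)$ and then apply Proposition~\ref{prop:summation-exp-simplicial} on each piece as if the contributions simply add. As the paper itself stresses just after Lemma~\ref{lemma:integration-triangulation}, in the discrete case $S(\cap_{j\in J} C_j,\ell)\neq 0$ for the lower-dimensional overlaps, so a naive triangulation overcounts. You need either a half-open decomposition, the inclusion--exclusion correction, or the polarization trick; any of these preserves the quasi-polynomial conclusion, but the argument as written skips this entirely. Second, your constant-term paragraph only establishes that the quasi-polynomial evaluates to $1$ at $t=0$, hence that $c_0(t)=1$ on the residue class $t\equiv 0\pmod N$. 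The theorem asserts that the constant term is identically~$1$ across all residue classes; this is a stronger statement and typically requires an additional ingredient such as Ehrhart--Macdonald reciprocity or the Euler-characteristic interpretation of the constant term. You flag this limitation yourself, but the proof as stated does not close it.

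A minor point: your ``Riemann-sum comparison'' for the leading coefficient is heuristic. The clean argument is simply that, for integer generators $u_1,\dots,u_d$, any translate of the half-open parallelepiped $\Pi_C$ contains exactly $|\det(u_1,\dots,u_d)|=\vol(\Pi_C)$ lattice points, so the $t^d$ term of the discrete contribution matches the continuous formula of Proposition~\ref{prop:integral-exp-simplicial} exactly, cone by cone.
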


There are a few software packages for computing the Ehrhart polynomial. One of the best has to be \latte\footnote{Available under the GNU General Public
  License at \url{https://www.math.ucdavis.edu/~latte/}.}  \cite{latteintegrale}. \latte can also be used within some other popular software tools such as \texttt{Polymake} \cite{polymake-software} and \texttt{Sage} \cite{sage}. Another good package is \texttt{barvinok} \cite{barvinok-noversion}.

For introductions to Ehrhart Theory, see  \cite{ barvinokzurichbook, beckrobins, ehrhartbook, stanley}. For generalizations when the dilation factor $t$ is real, see \cite{so-called-paper-2, linke:rational-ehrhart}. \latte can also compute the quasi-polynomial of rational polytopes when $t$ is a real number.
\subsection{Special case: knapsack polytopes}
\label{ch:bg:sec:ehrhart:knapsack-polytopes}

Given sequence $\a = [\alpha_1,\alpha_2,\dots,\alpha_{N+1}]$  of $N+1$ positive integers, Chapter \ref{ch:knapsack} seeks to compute the the combinatorial function $E(\a; t)$ that counts the non-negative integer solutions of the equation $\alpha_1x_1+\alpha_2 x_2+\cdots+\alpha_{N} x_{N}+\alpha_{N+1}x_{N+1}=t$, where the right-hand side~$t$ is a varying non-negative integer. This is precisely the Ehrhart quasi-polynomial of the polytope 
\[\PP = \left\{ \ x \in \R^{N+1} \mid \sum_{i=1}^{N+1}  x_i = 1, x_i \geq 0\right\}.\]
And so $E(\a; t)$ is a quasi-polynomial function in the variable $t$ of degree $N$. The polytope $\PP$ can be called by a few different names such as a knapsack polytope and $N$-dimensional simplex. 

The Ehrhart function over this polytope also has a few names and computing it as a close formula or evaluating it for specific $t$ is relevant in several other areas of mathematics. For example, in combinatorial number theory, this function is known as Sylvester's \emph{denumerant}.
In the combinatorics literature the denumerant has been studied extensively (see e.g.,\cite{agnarsson,beckgesselkomatsu, comtetbook,lisonek,riordanbook}. The denumerant plays an important role in integer optimization too\cite{kellereretalbook,martellotothbook}, where the problem is called an \emph{equality-constrained knapsack}.  In combinatorial number theory and the theory of partitions, the problem appears in relation to the \emph{Frobenius problem} or the \emph{coin-change problem} of finding the largest value of $t$ with $E(\a; t)=0$ (see \cite{wagonetal, kannanfrobenius, ramirezalfonsinbook} for details and algorithms).  Authors in the theory of numerical semigroups have also investigated the so called \emph{gaps} or \emph{holes} of the function (see \cite{hemmeckeetal} and references therein), which are values of $t$ for which $E(\a; t)=0$, i.e., those positive integers $t$ which cannot be represented by the $\alpha_i$.  For $N=1$ the number of gaps is $(\alpha_1-1)(\alpha_2-1)/2$ but for larger $N$ the problem is quite difficult.

Unfortunately, computing $E(\a; t)$ or evaluating it are very challenging computational problems. Even deciding whether $E(\a; t)>0$ for a given $t$, is a well-known (weakly) NP-hard problem. Computing $E(\a; t)$ for a given~$t$, is $\#P$-hard. Computing the Frobenius number is also known to be NP-hard \cite{ramirezalfonsinbook}.  Despite this, when the dimension $N$ is fixed (the number of variables is fixed), the Frobenius number can be computed in polynomial time \cite{barvinokwood, kannanfrobenius}. Also, when $N+1$ is fixed, the entire quasi-polynomial $E(\a; t)$ can be computed in polynomial time as a special case of a well-known result of Barvinok \cite{bar}. There are several papers exploring the practical computation of the Frobenius numbers (see e.g., \cite{wagonetal} and the many references therein).

One main result in Chapter \ref{ch:knapsack} is a polynomial time algorithm for approximating the Ehrhart polynomial by computing the highest-degree $k+1$ terms of the Ehrhart quasi-polynomial. Unlike methods for computing the entire polynomial $E(\a; t)$ in polynomial time, our method allows the dimension to vary; however, we fix $k$ to obtain a polynomial time algorithm. The algorithm takes advantage of three key things: the simple structure of the knapsack polytope, a geometric reinterpretation of some rational generating functions in terms of lattice points in polyhedral cones, and Barvinok's \cite{bar} fast decomposition of a polyhedral cone into unimodular cones. 

Chapter \ref{ch:knapsack} is also closely related to some other works. In \cite{so-called-paper-1}, the authors presented a polynomial-time algorithm to compute the first $k+1$ coefficients of $L(\PP, t)$ for any \emph{simple polytope} (a $d$-dimensional polytope where each vertex is adjacent to exactly $d$ edges) given by its rational vertices. In \cite{barvinok-2006-ehrhart-quasipolynomial}, the entire quasi-polynomial over a simplex is computed for a fixed periodicity value. These two papers use 
the geometry of the problem very strongly, while the methods of Chapter \ref{ch:knapsack} are based  the number-theoretic  structure of the knapsack.

  \section{MILP Heuristics}
\label{sec:bg:milp}
   
A mixed integer linear program (MILP) is a problem in the form  

\begin{align*}
\min \quad & c^Tx \\
\text{such that } & Ax \leq b\\
& x_i \in \Z \text{ for } i \in I
\end{align*}
 
 where $c \in \R^d$, $A \in \R^{m \times d}$, $b \in \R^m$, and $I$ is an index set for the integer variables. The problem is to find values for the $d$ variables $x_i$  that satisfy the linear constraints $Ax \leq b$ and the integer constraints $x_i \in \Z$ for $i \in I$ while minimizing the objective function $c^Tx$. The $A$, $b$, and $c$ are given as constants and $x$ is the only vector of variables. If $I = \emptyset$, meaning that the problem contains no integer variables, this problem reduces to a linear program. Linear programs are some of the best understood mathematical objects in optimization, and there exist a large collection of commercial-grade software to solve these problems. Furthermore, linear programs can be solved in polynomial time. 
 
 Once some of the variables in a linear program are forced to be integer, the optimization problem becomes a mixed integer problem. MILP have extremely useful modeling power and have been successfully used to solve problems in mining and refinery, air transport, telecommunications, finance, water supply management, chip design and verification, network design---basically all  scheduling and planning problems, see  for instance \cite{milpApplicationXpress}.
 
 The success of solving MILP problems despite being NP-Hard \cite{Schrijver2003Combinatorial-O} can be attributed to tricks and techniques that transform the MILP problem into smaller linear programming problems. The common tools include pre-solvers, branch-and-bound, cutting planes,  and heuristics \cite{fischettiMILP, grossmanReview}. A MILP heuristic is any algorithm that attempts to find a MILP solution, or somehow speeds the process up. A heuristic does not have to have any mathematical logic to why it is a good idea. For example, one simple heuristic for solving MILP problems is to compute the linear programming relaxation. Let $x^*$ be a solution to 
\begin{align*}
\min \quad & c^Tx \\
\text{such that } & Ax \leq b\\
& x_i \in \R \text{ for every } i.
\end{align*}
 Some of the integer variables $x_i \in I$ might have non-integer-valued solutions in the relaxation. A candidate feasible solution can be obtained by rounding the fractional values to integer values: $\floor{x_i^*}$ for $i\in I$. The rounded linear relaxation solution may or may not satisfy $Ax \leq b$. If it does, then this rounding heuristic just produced a feasible point to the MILP problem. 
 
 Heuristics themselves come in many types and can appear in every stage of the solving process. There are heuristics for finding an initial feasible point, producing better feasible points, and for directing the search of the optimizer. See \cite{berthold2006primal} for a nice short summary of the different classes of MILP heuristics. Chapter \ref{ch:dancingLinks} develops a special heuristic for problems with \emph{set partitioning constraints}. 

\section{Using computational geometry in neuro-immune communication}
\label{sec:bg:spleen}

Chapter \ref{ch:spleen} is based on a collaboration between the author and researchers at the University of California, Davis, School of Veterinary Medicine. The focus is on using tools from data science, image processing, and computational geometry to help identify diseases in the spleen. In this section, we give the medical background of the project. 


Imagine a scenario where someone is sick and they visit the doctor. Instead of getting a pill to take, they are connected to a machine that sends electrical current to their body. The electricity is targeted to specific organs or parts of the nervous or lymphatic systems connected to their immune system. Under the electric current, their body's immune system is improved, and the disease is cured! Imagine a world where we can regulate neural impulses to repair the body. Such techniques could be used, for example, to treat diabetes by restoring insulin producing cells. Weight loss could be achieved by  regulating food intake by making the body think it is full or by removing cravings for junk food.  Imagine treating diseases ranging from hypertension to pulmonary diseases with electricity. 

This is not electroshock therapy or something out of Frankenstein, but an area of medical research called electroceuticals \cite{famm2013drug}. The body's neurons make a great system for delivering medical treatment. First, almost all organs and biological functions are regulated through circuits of neurons which function through electrical charges. In addition, the neural network is discrete; it is composed of nerve bundles and fiber tracts which interconnect individual cells. Hence an electrical current could be precisely applied to an area of the body. 

Some electroceuticals have already been developed. Pacemakers and defibrillators already use the body's nervous system to save lives.  Sacral-nerve stimulation can restore bladder control in people with paraplegia, and vagus-nerve stimulation can be applied to epilepsy and rheumatoid arthritis. What separates these technologies from future techniques is that they do not target specific areas of the nervous system. It is easy to send a signal to 100,000 nerve fibers, but not to just one. Electroceuticals of the future could be applied on precise areas for targeted results. 

Many advances must be made before the full power of electroceuticals  can be used. For instance, we must be able to map the body's network of nerve cells, we must know how to excite only a few nerve fibers at a time, and we must understand how the frequency of the electrical current should change over the course of treatment. For much more detailed description of what electroceuticals can cure and what are the problems we must first solve, see \cite{famm2013drug, metz2005takes, pavlov2012vagus}. 

\subsection{Spleen}

In the spleen and other secondary lymph organs, it has been suggested that $CD4^+ChAT^+$ T-cells (henceforth referred to as just T-cells) directly synapse with neurons; however, there is little data to support or refute this idea \cite{felten1987noradrenergic, tournier2003neuro}. My collaborators in the  School of Veterinary Medicine at the University of California, Davis have started addressing this knowledge gap by defining the mechanisms of communication and rigorously mapping the nature of the interaction between  T-cells, neurons, and target effector cells in mice during various stages of intestinal inflammation. The functional ramifications of how T-cells and nerve cells interact  are vast. A synaptic model could imply that T-cells communicate with neurons, forming intimate associations with these cells, ceasing their surveying of the spleen. In addition, this tight association would suggest that unless released from such synapses,  T-cells would have limited mobility and would exert a highly localized regulatory effect. This “lymphocyte-neuron synapse” notion has led to the hypothesis that immune cells can be innervated and controlled by specific neurons to reduce morbidity in patients suffering from immunopathologies, including inflammatory bowel disease \cite{andersson2012new, famm2013drug}. On the other hand, it could be that the T-cells do not form tight bonds with neurons and are always free to move. This is called the \emph{diffusion model}, and its implications mean the nervous system cannot be easily used to program their function or activation. 

My collaborators at the UC Davis School of Veterinary Medicine, Colin Reardon and Ingrid Brust-Mascher, have used an advanced imaging technique \cite{chung2013clarity, chung2013structural}  to generate detailed mappings of neuro-immune interfaces with subcellular resolution. They have the capability to image the entire lymph node or spleen volume. Mapping these interactions and defining how neural stimulation influences a response from lymphocytes in mice is vital in understanding how cellular communication occurs in mice, and eventually humans. The T-cells form a critical component of the inflammatory reflex by significantly inhibiting morbidity and mortality in septic shock. These cells were proposed to interface directly with  sympathetic neurons, and were shown to be in close proximity to these neurons in the spleen. It has been suggested that a complete neuro-immune circuit not only requires input from neurons \cite{rosas2011acetylcholine}, but that this communication occurs in a classical synapse \cite{felten1987noradrenergic, tournier2003neuro}. A dissenting view has emerged in neurobiology showing that a classical synapse is not always required for neural signaling \cite{agnati2006volume, muller2014cell, nguyen2010vivo, sarter2009phasic, tsai2009phasic}.  Despite these high profile studies demonstrating neural regulation of innate immune function, a functional circuit tracing in the spleen with and without  inflammation has not been performed. 

Preliminary data suggests that most  T-cells are not in intimate contact with sympathetic neurons. Although we observed T-cells within $5 \mu$m of  sympathetic neurons, the majority of these cells do not appear to be intimately associated, as would be required in a synapse. Our data set also demonstrates the advantage of 3D image reconstruction to assess localization. While  T-cells and fibers appear to be co-localized in some orientations, rotation along the axis of the neuron reveals no intimate association. Our data does not directly refute the possibility of synaptic transmission, but it suggests that most T-cells communicate with sympathetic neurons through another mechanism.

However, we still want to study how strongly the T-cells and neurons connect. It may be true that most T-cells are not located near the sympathetic neurons, and hence do not form synaptic connections to them, but there are still clusters of T-cells that seem to connect to the neurons. It might be the case that the distribution of where the T-cells are located changes as the spleen responds to intestinal inflammation. For example, maybe T-cells are more likely to from synaptic bonds with the neurons during inflammation, and then diffuse when the inflammation is over. The advanced imaging techniques allow us to see how the  physical relationships between the cells changes as the spleen responds to the inflammation. Images showing the location of T-cells and neurons  do not show a clear distinction between healthy and sick spleens. This can be due to a few factors: our data sets are large and hard to visualize, imaging is not perfect and so our data contains noise, and it is possible that there is little relationship between T-cell location and spleen inflammation. Just because it is hard to visualize  how the location of T-cells and neurons change with inflammation does not mean we cannot detect a difference. This is where the project leaves the sphere of biology and enters the world of data science. 

\subsection{The data science question}

My collaborators have mapped the location of T-cells and nerve cells in the spleen, forming two sets of points in $\R^3$. The two main questions now are: 
\begin{enumerate}
\item How does the distance between these two sets of points change during inflammation?
\item Can we use the distribution of these point sets to predict inflammation?
\end{enumerate}
Our approach is to compute the distance from each T-cell to a nervous cell. Looking at the distribution of these distance values should reveal if T-cells are closer to nervous cells or not. Hence the problem is to perform a classical supervised clustering analysis on the distance distributions across different spleen samples. How we did this is one of the main topics in Chapter \ref{ch:spleen}. 

Before we can compute the distance distributions, the T-cells and nerve cells must first be identified. As it turns out, this is a hard problem. While it is true that we have sophisticated imaging tools and software, our data is still extremely noisy. To accurately extract the location of these cells, we use the software tool Imaris \cite{imaris-software}.  Despite being the best tool for the job, it cannot process the entire spleen at once without identifying T-cells and nerve cells at locations that are physically impossible! 
Instead, the spleen data must be divided into many small regions, and correct parameters to Imaris functions must be picked for each region. The result is that it is an extremely labor- and time-intensive process to clean up the image data in Imaris. Hence a major problem is how to automate some of the data cleaning within Imaris. This problem leads us into using topics in distance geometry for data cleaning. Chapter \ref{ch:spleen} also explores some of the tools we used and developed to significantly reduce this project's  time to completion.





\chapter{Integration}
\label{ch:Integration}   

The theory of integrating a polynomial over a polytope by using continuous generating functions has been developed over a series a papers \cite{baldoni-berline-deloera-koeppe-vergne:integration, deloera:software-exact-integration-polynomials, brandon-handelman-paper}. These papers develop different generating functions and algorithms depending on the form the integrand takes. All the methods are based on the continuous generating function for $\int_\PP e^{\ll x, \ell \rr}\d x$ (see Section \ref{ch:bg:sec:integration-stuff} for an introduction). In this section, we seek to organize the results by the integrand type. 

For this chapter, $\PP$ will be a $d$-dimensional rational polytope given by an inequality description $Ax \leq b$ where $A \in \Q^{n \times d}$ and $b \in \Q^n$. Let $f(x) \in \Q[x_1, \dots, x_d]$ be a polynomial. Our goal is to compute $\int_\PP f(x) \d x$. In \cite{baldoni-berline-deloera-koeppe-vergne:integration}, the focus is on the case when the polytope $\PP$ is a simplex and the integrand is a power of a linear form (e.g., $(x_1 + 2x_2 - x_3)^5$), or a product of linear forms (e.g., ${(x_1 + 2x_2)^2(3x_1 + x_3)^4}$). Then in \cite{deloera:software-exact-integration-polynomials}, the case when $\PP$ is an arbitrary full-dimensional polytope and the integrand is a power of a linear form is developed. Most recently in \cite{brandon-handelman-paper}, integration of products  of affine functions (e.g., ${(x_1 + 2x_2 + 3)^2(3x_1 + x_3 - 1)^4}$) was developed. The difference between products of linear forms and products of affine functions is that the latter is allowed to have constant terms in the factors.

Section \ref{sec:integrand:power-of-linear-forms} covers the discussion on integrating a polynomial $f(x)$ by decomposing it into a sum of powers of linear forms. This section contains highlights from \cite{deloera:software-exact-integration-polynomials}, except Section \ref{sec:intplf:domain:simplex} which contains results from \cite{baldoni-berline-deloera-koeppe-vergne:integration}. 

Section \ref{sec:integration-prducts-affine-functions} covers integrating a product of affine functions. It is a partial summary of \cite{brandon-handelman-paper}, except Section \ref{sec:intpaf:domain:simplex} which contains slightly generalized results from \cite{baldoni-berline-deloera-koeppe-vergne:integration}.

The integration methods we focus on will depend on decomposing the polyhedral domain $\PP$ in two different ways: triangulating $\PP$ or triangulating the tangent cones of $\PP$. This chapter describes the integration algorithms on both domain types. 

Polyhedral triangulation plays a role in every method we develop for integration. For general background in triangulations, see \cite{DRStriangbook, leeRegularTriangulations, Pfeifle2003, rambau2002topcom} and the software packages \cite{4ti2, cddlib-094a, polymake-software, topcom-software, sage}.

\section{Integrand: powers of linear forms}
\label{sec:integrand:power-of-linear-forms}

In this section, we focus on computing $\int_\PP f(x) \d x$ by decomposing $f(x)$ into a sum of power of linear forms: 

\[f(x) = \sum_\ell c_\ell \ll x, \ell \rr^{m_\ell}\]
where $\ell \in \Q^d$, $c_\ell \in \Q$ and $m_\ell \in \N$. 

This has been done in \cite{deloera:software-exact-integration-polynomials} by using the following formula on each monomial of $h$. If $x^{m}= x_1^{m_1}x_2^{m_2}\cdots x_d^{m_d}$, then

\begin{multline} 
x^{m}  = \frac{1}{|m|!} \sum_{0\leq p_i\leq m_i}(-1)^{|m|-(p_1+\cdots+p_d)}
  \binom{m_1}{p_1}\cdots \binom{m_d}{p_d}(p_1 x_1+\cdots+p_d x_d)^{|m|},
  \label{eq:decomp-powerlinform}
\end{multline}
where $|m| = m_1+\cdots+m_d$. Using this formula on a degree $D$ polynomial in fixed dimension $d$ results in at most $O(D^{2d})$ linear forms.

It is worth noting that Equation \eqref{eq:decomp-powerlinform} does \emph{not} yield an optimal decomposition. The problem of finding a decomposition with the smallest possible number of summands is known as the \emph{polynomial Waring problem} \cite{alexanderhirschowitz, brambillaottaviani, Carlini20125}. A key benefit of Equation \eqref{eq:decomp-powerlinform} is that it is explicit, is computable over $\Q$, and is sufficient for generating a polynomial-time algorithm on when the dimension is fixed \cite{baldoni-berline-deloera-koeppe-vergne:integration}. However, Equation \eqref{eq:decomp-powerlinform} has a major problem: it can produce many terms. For example, a monomial in $10$ variables and total degree $40$ can have more than $4$ million terms. See Table \ref{tabel:count-linear-forms}. Nevertheless, in the next two sections we will develop polynomial time algorithms for integrating a power of a linear form over a polytope. 

\begin{table}
\centering
\caption{Average number of powers of linear forms plus or minus one standard deviation necessary to express one monomial in $d$ variables, averaged over 50 monomials of the same degree}
\label{tabel:count-linear-forms}
\tabcolsep 2.5pt
\small
\begin{tabular}{c*8{r}}
	\toprule	
	
	& \multicolumn{6}{c}{Monomial Degree}\\
		\cmidrule(r){2-7}

\multicolumn{1}{c}{$d$}&   \multicolumn{1}{c}{5} &  \multicolumn{1}{c}{10} &  \multicolumn{1}{c}{20} &  \multicolumn{1}{c}{30} &  \multicolumn{1}{c}{40} &  \multicolumn{1}{c}{50} \\ 
\hline
3 & $14 \pm 3$ & $(6.6 \pm 1.2)\cdot10^{1}$ & $(4.0 \pm 0.5)\cdot10^{2}$ & $(1.2 \pm 0.1)\cdot10^{3}$ & $(2.7 \pm 0.2)\cdot10^{3}$ & $(5.2 \pm 0.2)\cdot10^{3}$  \\ 
4 & $16 \pm 5$ & $(1.1 \pm 0.2)\cdot10^{2}$ & $(1.1 \pm 0.2)\cdot10^{3}$ & $(4.5 \pm 0.6)\cdot10^{3}$ & $(1.3 \pm 0.2)\cdot10^{4}$ & $(3.0 \pm 0.2)\cdot10^{4}$  \\ 
5 & $19 \pm 4$ & $(1.5 \pm 0.4)\cdot10^{2}$ & $(2.2 \pm 0.6)\cdot10^{3}$ & $(1.2 \pm 0.3)\cdot10^{4}$ & $(4.7 \pm 0.7)\cdot10^{4}$ & $(1.4 \pm 0.2)\cdot10^{5}$  \\ 
6 & $20 \pm 5$ & $(2.0 \pm 0.6)\cdot10^{2}$ & $(4.1 \pm 1.2)\cdot10^{3}$ & $(3.2 \pm 0.8)\cdot10^{4}$ & $(1.5 \pm 0.3)\cdot10^{5}$ & $(5.2 \pm 0.6)\cdot10^{5}$  \\ 
7 & $21 \pm 5$ & $(2.4 \pm 0.9)\cdot10^{2}$ & $(6.7 \pm 2.4)\cdot10^{3}$ & $(7.1 \pm 2.1)\cdot10^{4}$ & $(4.0 \pm 1.0)\cdot10^{5}$ & $(1.7 \pm 0.3)\cdot10^{6}$  \\ 
8 & $21 \pm 5$ & $(2.9 \pm 0.9)\cdot10^{2}$ & $(1.1 \pm 0.5)\cdot10^{4}$ & $(1.4 \pm 0.5)\cdot10^{5}$ & $(9.8 \pm 2.7)\cdot10^{5}$ & $(4.8 \pm 1.1)\cdot10^{6}$  \\ 
10 & $24 \pm 5$ & $(3.5 \pm 1.1)\cdot10^{2}$ & $(2.1 \pm 0.9)\cdot10^{4}$ & $(4.1 \pm 1.6)\cdot10^{5}$ & $(4.5 \pm 1.7)\cdot10^{6}$ & $(3.1 \pm 1.0)\cdot10^{7}$ \\ 

	\bottomrule
\end{tabular}
\end{table}

\subsection{Domain: cone decomposition}
\label{sec:intplf:domain:cone}

We now consider powers of linear forms instead of exponentials.  Similar to
$I(P, \ell)$ in Section \ref{ch:bg:sec:integration-stuff}, we now let $L^M(P, \ell)$ be the meromorphic extension of the function defined by
$$ L^M(P, \ell) = \int_{P} {\langle \ell, x \rangle}^M  \d{x}$$ for those
$\ell$ such that the integral exists. 
To transfer what we know about integrals of exponentials to those of powers of
linear forms, 
we can consider the formula of Proposition~\ref{prop:integral-exp-simplicial}
as a function of the auxiliary parameter $t$:  
\begin{equation}\label{eq:formula-with-t}
  \int_{s+C} e^{\langle t \ell, x \rangle}  \d{x}=\vol(\Pi_C)
  e^{\langle t \ell, s \rangle}\prod_{i=1}^d \frac{1}{\langle- t \ell,
    u_i \rangle}. 
\end{equation}
Using the series expansion of the left in the variable~$t$, we wish to
recover the value of the integral of $\langle \ell, x \rangle^M$ over
the cone (by value, we mean a real number or a meromorphic function as explained in Section \ref{ch:bg:sec:integration-stuff}). This is the coefficient of $t^M$ in the expansion; to
compute it, we equate it to the Laurent series expansion around $t=0$
of the right-hand-side expression, which is a meromorphic function
of~$t$. Clearly
$$\vol(\Pi_C) e^{\langle t \ell, s \rangle}  \prod_{i=1}^d \frac1{\langle- t
  \ell, u_i \rangle}=\sum_{n=0}^\infty t^{n-d} \frac{\langle \ell, s
  \rangle^n}{n!}\cdot \vol(\Pi_C) \prod_{i=1}^d\frac1{\langle-\ell, u_i \rangle}.$$

\begin{corollary} \label{oneconeoneplf}
  For a linear form $\ell$ and a simplicial cone $C$ generated by rays $u_1,u_2,\dots u_d$ with vertex $s$ and $\ll \ell, u_i \rr \neq 0$,

\begin{equation} \label{above}
L^M(s+C, \ell) 
=\frac{M!}{(M+d)!} \vol(\Pi_C) \frac{(\langle \ell, s \rangle)^{M+d}}{\prod_{i=1}^d  \langle -\ell, u_i \rangle}.
\end{equation}
\end{corollary}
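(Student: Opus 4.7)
The plan is to obtain the power-of-a-linear-form formula by specializing and then expanding the already-established exponential formula from Proposition~\ref{prop:integral-exp-simplicial} in an auxiliary variable~$t$. Concretely, I would introduce the parameter $t$ and apply Proposition~\ref{prop:integral-exp-simplicial} with the linear form $t\ell$ in place of $\ell$, obtaining the meromorphic identity~\eqref{eq:formula-with-t}. Both sides of this identity are meromorphic functions of $t$, and the plan is to identify them as Laurent series around $t=0$ and then read off the coefficient of $t^M$.

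For the left-hand side, formally (and rigorously for $t\ell$ in the summability region) one has
\begin{equation*}
\int_{s+C} e^{\langle t\ell, x\rangle}\,\mathrm dx
= \sum_{n=0}^\infty \frac{t^n}{n!}\int_{s+C}\langle \ell, x\rangle^n\,\mathrm dx
= \sum_{n=0}^\infty \frac{t^n}{n!}\, L^n(s+C,\ell),
\end{equation*}
so the coefficient of $t^M$ on the left is $L^M(s+C,\ell)/M!$. For the right-hand side, I would factor out $t^{-d}$ from the product of linear denominators, write $e^{\langle t\ell, s\rangle}=\sum_{n\geq 0} t^n\langle\ell,s\rangle^n/n!$, and collect powers of $t$ to obtain the Laurent expansion displayed in the excerpt:
\begin{equation*}
\vol(\Pi_C)\,e^{\langle t\ell,s\rangle}\prod_{i=1}^d\frac1{\langle -t\ell, u_i\rangle}
= \sum_{n=0}^\infty t^{n-d}\,\frac{\langle\ell,s\rangle^n}{n!}\cdot\vol(\Pi_C)\prod_{i=1}^d\frac1{\langle -\ell, u_i\rangle}.
\end{equation*}
Picking out the $t^M$ coefficient corresponds to the index $n=M+d$, and equating it with $L^M(s+C,\ell)/M!$ yields exactly the claimed formula after multiplying through by $M!$.

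The main subtlety, and the one step I would treat most carefully, is the fact that $L^M(s+C,\ell)$ is defined as a meromorphic extension rather than an honest integral (the cone $s+C$ is unbounded and $\langle \ell, x\rangle^M$ is not integrable in general). The way to handle this is to invoke the uniqueness of the valuation $I(\,\cdot\,,\ell)$ from Proposition~\ref{valuationII}: both $t\mapsto I(s+C, t\ell)$ and $t\mapsto \sum_{M\geq 0}\tfrac{t^M}{M!}L^M(s+C,\ell)$ are meromorphic in $t$, they agree whenever $t\ell$ lies in the summability/integrability region (a non-empty open set by pointedness of $C$ and the hypothesis $\langle\ell,u_i\rangle\neq 0$), and hence they coincide as meromorphic functions of $t$. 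This legitimizes the term-by-term comparison of Laurent coefficients and delivers the corollary. The only remaining checks are the purely algebraic manipulations of the two Laurent series, which are routine.
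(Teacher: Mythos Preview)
Your proposal is correct and follows essentially the same approach as the paper: introduce the auxiliary parameter~$t$, apply Proposition~\ref{prop:integral-exp-simplicial} with $t\ell$ to obtain~\eqref{eq:formula-with-t}, expand both sides as Laurent series in~$t$, and match the coefficient of~$t^M$. Your added remark about justifying the coefficient-matching via the meromorphic extension and uniqueness of the valuation is a nice touch of rigor that the paper leaves implicit.
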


\begin{corollary} \label{cor:the-lawrence-method}
For any triangulation $\mathcal D_s$ of the tangent cone~$C_s$ at each of the
vertices~$s$ of the polytope~$P$ we have 
\begin{equation}
	L^M(P, \ell)
        =\sum_{s\in V(P)} \sum_{C \in \mathcal D_s} L^M(s+C)(\ell).
\end{equation}
\end{corollary}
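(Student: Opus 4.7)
The plan is to derive this identity by applying the exponential-integrand version of the decomposition (Lemma~\ref{lemma:integration-triangulation}) to a scaled linear form $t\ell$ and extracting the coefficient of $t^M$ from both sides. Concretely, I would first invoke Lemma~\ref{lemma:integration-triangulation} with the linear form $t\ell$ in place of $\ell$, obtaining the meromorphic identity in $t$:
$$I(\PP, t\ell) = \sum_{s \in V(\PP)} \sum_{C \in \mathcal D_s} I(s+C, t\ell).$$
This identity is valid because $\mathcal D_s$ is, by hypothesis, a triangulation of the tangent cone at the vertex $s$.

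Next I would identify $L^M$ as a Laurent coefficient in $t$. On the left, since $\PP$ is bounded, $I(\PP, t\ell) = \int_\PP e^{\langle t\ell, x\rangle}\d x$ is an entire function of $t$ whose Taylor expansion has $t^M$-coefficient equal to $\frac{1}{M!}\int_\PP \langle \ell, x\rangle^M \d x = \frac{1}{M!}L^M(\PP, \ell)$. On the right, the derivation preceding Corollary~\ref{oneconeoneplf} (which uses Proposition~\ref{prop:integral-exp-simplicial}) already computed that the $t^M$-coefficient of the Laurent expansion of $I(s+C, t\ell)$ at $t=0$ equals $\frac{1}{M!}L^M(s+C, \ell)$. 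Equating coefficients of $t^M$ on both sides and cancelling $\frac{1}{M!}$ would then yield the asserted formula.

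The main subtlety — and the only step that needs real justification — is the coefficient-extraction itself. Each individual term $I(s+C, t\ell)$ is only \emph{meromorphic} at $t=0$: if $\langle \ell, u_i \rangle = 0$ for some ray $u_i$ of $C$, the factor $\prod_i \frac{1}{\langle -t\ell, u_i\rangle}$ contributes a pole. However, since the left-hand side is holomorphic at $t=0$, the polar parts of the right-hand side must cancel in the finite sum. Because the Laurent coefficient at a fixed power $t^M$ is a linear functional on the space of meromorphic germs at $0$, it commutes with finite sums, so extracting the coefficient term-by-term is legitimate even when individual summands have singular parts. To handle the degenerate directions $\ell$ with $\langle \ell, u_i\rangle = 0$, I would use the standard deformation trick already described in the excerpt: evaluate at $\ell + \epsilon \ell'$ for a generic perturbation $\ell'$ and take $\epsilon \to 0$; the cancellation of polar parts guarantees the limit exists and reproduces the integral over $\PP$.
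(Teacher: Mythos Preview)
Your approach is correct and matches the paper's implicit reasoning: the corollary is stated without proof there, since it follows immediately from Lemma~\ref{lemma:integration-triangulation} applied to $t\ell$ together with the coefficient-of-$t^M$ extraction already used to derive Corollary~\ref{oneconeoneplf}. One minor correction to your discussion of the subtlety: the pole of $I(s+C,t\ell)$ at $t=0$ (of order~$d$) is present for \emph{every} regular $\ell$, since $\langle -t\ell,u_i\rangle=-t\langle\ell,u_i\rangle$ always vanishes at $t=0$; the condition $\langle\ell,u_i\rangle=0$ is a different and more severe degeneracy in which the whole line $t\ell$ lies in the singular locus of $I(s+C,\cdot)$, and that is precisely what your perturbation step correctly addresses.
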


Notice that when $\PP$ is a polytope, $L^M(P, \ell)$ is holomorphic in $\ell$, while each summand in the last corollary is meromorphic in $\ell$. Hence the singularities in $L^M(s+C, \ell)$ cancel out in the sum. 

We say that $\ell$ is \emph{regular}  if $\langle \ell, u_i \rangle \neq 0$ for 
 every ray $u_i$ of a cone $C$. So if $\ell$ is not orthogonal to any ray of every simplicial cone in $\mathcal D_s$, then Corollary \ref{cor:the-lawrence-method} gives an explicit formula for evaluating the exact value of $\int_\PP \ll \ell, x \rr^M \d x.$ We will also say that $\ell$ is \emph{regular on $\PP$}, if it is regular for every tangent cone of $\PP$, which implies it is regular on every cone $C \in \mathcal{D}_s$ at each vertex $s \in V(\PP)$.
 
Otherwise when $\ell$ is not regular,  there is a nearby perturbation which is regular. To obtain it,
we use  $\ell + \hat \epsilon$ where $\hat \epsilon = \epsilon a$ is any linear form with $a \in \R^n$ 
such that $\langle -\ell - \hat \epsilon, u_i \rangle \neq 0$ for all $u_i$. Notice that $L^M(\PP, \ell + \hat \epsilon)$ is holomorphic in  $\hat \epsilon$ when $\PP$ is a polytope, but for some cones $C$, $L^M(s+C, \ell + \hat \epsilon)$ is meromorphic in $\epsilon$; hence, the singularities cancel out in the sum in Corollary \eqref{cor:the-lawrence-method}, so taking the limit at as $\epsilon$ goes to zero is possible. This means when $\ell$ is not regular, we can just collect the coefficient of $\epsilon^0$ in the series expansion of $L^M(s+C, \ell + \hat \epsilon)$ and compute $L^M(s + C, \ell)$ as
\begin{equation}\label{eq:residue-with-form-perturbation}
\frac{M!}{(M+d)!} \vol(\Pi_C) \Res_{\epsilon=0}\frac{(\langle \ell + \hat \epsilon, s \rangle)^{M+d}}{\epsilon \prod_{i=1}^d  \langle - \ell - \hat \epsilon, u_i \rangle}.
\end{equation}
We use the residue residue operator $\Res$ as a shorthand to mean to the coefficient of $\epsilon^0$ in the series expansion of 
\[
\frac{(\langle \ell + \hat \epsilon, s \rangle)^{M+d}}{\prod_{i=1}^d  \langle - \ell - \hat \epsilon, u_i \rangle}.
\]
about $\epsilon = 0$.

Next we recall some useful facts on complex analysis (see, e.g., \cite{Henrici} for details).  As we observed, there is a pole at $\epsilon=0$ for our univariate rational 
function  given in Formula \eqref{eq:residue-with-form-perturbation} of Corollary \ref{oneconeoneplf}.  
Recall that if a univariate rational function $f(\epsilon)=p(\epsilon)/q(\epsilon)$ has Laurent series expansion 
$f(\epsilon)=\sum_{k=-m}^{\infty} a_k \epsilon^k,$ the residue is defined as $a_{-1}$.
Given a rational function $f(\epsilon)$ with a pole at~$\epsilon=0$ there are a variety of well-known techniques to extract the value of the residue. 
For example,    if $\epsilon=0$ is a simple pole ($m=1$), then $\Res_{\epsilon=0} (f)=\frac{p(0)}{q'(0)}$. 
Otherwise, when $\epsilon=0$ is a pole of order $m>1$, we can write $f(\epsilon)=\frac{p(\epsilon)}{\epsilon^m q_1(\epsilon)}.$
Then  expand $p,q_1$ in powers of $\epsilon$  with $p(\epsilon)=a_0+a_1\epsilon+a_2\epsilon^2+\dots$ and $q_1(\epsilon)=b_0+b_1\epsilon+b_2\epsilon^2+\dots$.
This way the Taylor expansion of $p(\epsilon)/q_1(\epsilon)$ at $\epsilon_0$ is $c_0+c_1\epsilon+c_2\epsilon^2+c_3\epsilon^3+\dots$, where
$ c_0=\frac{a_0}{b_0}$, and $c_k=\frac{1}{b_0}(a_k-b_1c_{k-1}-b_2c_{k-2}-\dots-b_kc_0)$. Thus  we recover the residue  
$\Res_{\epsilon=0}(f)=c_{m-1}$. We must stress that the special structure of the rational functions in Corollary \ref{oneconeoneplf} can be exploited to speed up computation further rather than using this general methodology. Summarizing the above discussion results in the next theorem.

\begin{theorem}
\label{thm:both-parts-integrate-plf}
Let $\PP$ be a full dimensional polytope with vertex set $V(\PP)$, and $C_s$ be the cone of feasible directions at vertex $s$. Let $\mathcal D_s$ be a triangulation of $C_s$. Then
\begin{enumerate}
\item if $\ell$ is regular, meaning for every vertex $s$ of $\PP$ and every ray $u$ in $\mathcal{D}_s$ we have $\ll \ell, u \rr \neq 0$, then 

\[\int_\PP \ll \ell, x \rr^M \d x = \sum_{s \in V(P)} \sum_{C \in \mathcal{D}_s} \frac{M!}{(M+d)!} \vol(\Pi_C) \frac{(\langle \ell, s \rangle)^{M+d}}{\prod_{i=1}^d  \langle -\ell, u_i \rangle}, \]
\item otherwise, pick an $a \in \R^d$ that is regular on $\PP$ and then

\[\int_\PP \ll \ell, x \rr^M \d x = \sum_{s \in V(P)} \sum_{C \in \mathcal{D}_s} \frac{M!}{(M+d)!} \vol(\Pi_C) \Res_{\epsilon=0}\frac{(\langle \ell + \epsilon \cdot a, s \rangle)^{M+d}}{\epsilon \prod_{i=1}^d  \langle - \ell - \epsilon \cdot a, u_i \rangle}. \]
\end{enumerate}

In both cases, when the dimension $d$ is fixed, the integral can be computed in polynomial time in the usual binary encoding of $\PP$, $\ell$, and unary encoding of $M$. 

\end{theorem}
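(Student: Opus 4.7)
The plan is to derive both formulas by transferring the exponential integration identities from Proposition~\ref{valuationII}, Lemma~\ref{ch:bg:lemma-integration-brion}, Lemma~\ref{lemma:integration-triangulation}, and Proposition~\ref{prop:integral-exp-simplicial} into statements about the power-of-a-linear-form integrand via a standard generating function trick, and then carefully handle the regular/non-regular dichotomy together with the complexity bookkeeping.

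First I would establish the per-cone formula of Corollary~\ref{oneconeoneplf}. Given a simplicial pointed cone $s+C$ with primitive rays $u_1,\dots,u_d$, substitute $t\ell$ for $\ell$ in Proposition~\ref{prop:integral-exp-simplicial} to obtain the meromorphic identity $I(s+C,t\ell)=\vol(\Pi_C)\,e^{t\langle\ell,s\rangle}\prod_{i=1}^d\frac{1}{-t\langle\ell,u_i\rangle}$. The left side has Taylor expansion $\sum_{M\ge 0}\frac{t^M}{M!}\int_{s+C}\langle\ell,x\rangle^M\d x$ whenever the integral exists, and the right side expands as $\sum_{n\ge 0}t^{n-d}\frac{\langle\ell,s\rangle^n}{n!}\vol(\Pi_C)\prod_i\frac{1}{-\langle\ell,u_i\rangle}$. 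Matching the coefficient of $t^M$ (i.e.\ $n=M+d$) yields the explicit formula $L^M(s+C,\ell)=\frac{M!}{(M+d)!}\vol(\Pi_C)\,\frac{\langle\ell,s\rangle^{M+d}}{\prod_i\langle-\ell,u_i\rangle}$ as in Corollary~\ref{oneconeoneplf}. This step works at the level of meromorphic functions and requires $\langle\ell,u_i\rangle\neq 0$ only to avoid poles.

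For the regular case, Lemma~\ref{ch:bg:lemma-integration-brion} combined with the triangulation statement Lemma~\ref{lemma:integration-triangulation} (applied with the auxiliary variable $t$ and then by equating coefficients of $t^M$ on both sides) gives $L^M(\PP,\ell)=\sum_{s\in V(\PP)}\sum_{C\in\mathcal D_s}L^M(s+C,\ell)$. Plugging in the per-cone formula from the previous paragraph yields part~(1) directly. For the non-regular case, I would choose $a\in\R^d$ so that $\ell+\epsilon a$ is regular on every cone of every $\mathcal D_s$ for all small nonzero $\epsilon$; such an $a$ exists by a finite Zariski-avoidance argument (avoid the hyperplanes $\langle a,u_i\rangle=0$ for the finitely many rays). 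Since $\PP$ is bounded, $L^M(\PP,\ell+\epsilon a)$ is a polynomial in $\epsilon$, hence in particular holomorphic at $\epsilon=0$, and its value at $\epsilon=0$ is exactly $L^M(\PP,\ell)$. Each summand $L^M(s+C,\ell+\epsilon a)$ is meromorphic in $\epsilon$ and individually may have a pole at $0$, but the poles cancel in the sum. Thus
\begin{equation*}
L^M(\PP,\ell)=\sum_{s\in V(\PP)}\sum_{C\in\mathcal D_s}\operatorname{Res}_{\epsilon=0}\frac{L^M(s+C,\ell+\epsilon a)}{\epsilon},
\end{equation*}
which after substituting the explicit per-cone formula becomes the expression in part~(2). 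The only subtlety here is that $\operatorname{Res}_{\epsilon=0}$ must be interpreted as extracting the coefficient of $\epsilon^0$ in the Laurent expansion of $L^M(s+C,\ell+\epsilon a)$, as discussed after \eqref{eq:residue-with-form-perturbation}.

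For the polynomial-time claim in fixed dimension $d$, I would itemise each building block: (a) compute $V(\PP)$ and the tangent cones in polynomial time (standard in fixed dimension); (b) triangulate each $C_s$ into simplicial cones $\mathcal D_s$, producing polynomially many cones, each with polynomial-size rays (this is the classical fact cited around Lemma~\ref{lemma:integration-triangulation}); (c) for each simplicial cone, the numerator $\langle\ell+\epsilon a,s\rangle^{M+d}$ and denominator factors $\langle-\ell-\epsilon a,u_i\rangle$ are univariate polynomials in $\epsilon$ of degree $M+d$ and $1$ respectively, with rational coefficients of polynomial bit-length; expanding the quotient as a Laurent series at $\epsilon=0$ up to degree $0$ requires only $O(d\cdot M)$ ring operations on rationals of polynomial bit-length, using the recursion $c_k=\tfrac{1}{b_0}(a_k-b_1c_{k-1}-\cdots-b_kc_0)$ recalled in the excerpt (or Lemma~\ref{lemma:poly-mult} applied to truncated products). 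Since $M$ is given in unary, $M+d$ and the number of required coefficients are polynomial in the input size, and summing a polynomial number of rationals of polynomial bit-length stays polynomial time.

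The main obstacle I expect is a clean complexity accounting in case~(2): one must choose the perturbation direction $a$ with integer coordinates of polynomial bit-length while avoiding all the hyperplanes $\{\langle a,u_i\rangle=0\}$, and then bound the bit-length of the rational coefficients appearing in the Laurent expansion so that no blow-up occurs. Both tasks are standard but technical; the first can be handled by sampling $a$ from a polynomially-bounded grid and using the Schwartz–Zippel style avoidance, while the second follows from the fact that each truncated product in Lemma~\ref{lemma:poly-mult} preserves polynomial bit-length in fixed dimension. Everything else is a direct assembly of earlier results.
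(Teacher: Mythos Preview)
Your proposal is correct and follows essentially the same approach as the paper: the formulas in parts~(1) and~(2) are exactly what the paper has already established in Corollaries~\ref{oneconeoneplf} and~\ref{cor:the-lawrence-method} and Equation~\eqref{eq:residue-with-form-perturbation}, so the paper's proof of Theorem~\ref{thm:both-parts-integrate-plf} itself only addresses the complexity claim. The one refinement in the paper's complexity argument that you might adopt is that it splits the rays of each cone into $I_1=\{i:\langle\ell,u_i\rangle=0\}$ and $I_2=\{i:\langle\ell,u_i\rangle\neq0\}$, factors out the pure $\epsilon^{-|I_1|}$ pole, and then only needs to truncate the remaining product at degree $|I_1|\le d$ (a constant in fixed dimension) rather than at degree depending on $M$; this sharpens the bound but does not change the polynomial-time conclusion.
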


\begin{proof}
The only thing left to show is the statement about the complexity. Because the dimension is fixed, there are polynomial many cones $C_s$ and simplicial cones $\mathcal{D}_s$. Hence the only thing that has to be shown is that the residue can be computed in polynomial time. We will do this by multiplying truncated power series. For a fixed cone $C \in \mathcal{D}_s$ with rays $\{u_1, \dots, u_d\}$, let $I_1 := \{ i \mid \ll \ell, u_i \rr = 0\}$ and $I_2 := \{ i \mid \ll \ell, u_i \rr  \neq 0\}.$ Then 

\[\Res_{\epsilon=0}\frac{(\langle \ell + \epsilon \cdot a, s \rangle)^{M+d}}{\epsilon \prod_{i=1}^d  \langle - \ell - \epsilon \cdot a, u_i \rangle} = 
\frac{(\langle \ell + \epsilon \cdot a, s \rangle)^{M+d}}{\epsilon} \cdot 
\frac{1}{\prod_{i \in I_1} -\epsilon \ll a, u_i \rr} \cdot
\frac{1}{\prod_{i \in I_2} \ll -\ell, u_i\rr + \epsilon \ll -a, u_i \rr}
\]

Let $h_i(\epsilon)$ be the series expansion of $\frac{1}{\ll -\ell, u_i\rr + \epsilon\ll -a, u_i \rr}$ about $\epsilon=0$ up to degree $|I_1|$ for each $i \in I_2$, which can be done via the generalized binomial theorem:

\[h_i(\epsilon) := \sum_{j = 0}^{|I_1|} (-1)^j (\epsilon\ll -a, u_i \rr)^j(\ll -\ell, u_i\rr)^{-1-j}. \]

Let $h_0(\epsilon)$ be the expansion of $(\langle \ell + \epsilon \cdot a, s \rangle)^{M+d}$. Let $H := h_0(\epsilon) \cdot \prod_{i \in I_2} h_i(\epsilon)$ be the resulting polynomial product in $\epsilon$ truncated at degree $|I_1|$. This is can be done in polynomial time by  Lemma \ref{lemma:poly-mult}. Then the desired residue is simply the coefficient of $\epsilon^{|I_1|}$ in $H$ times the coefficient $\prod_{i \in I_1} \frac{1}{-\ll a, u_i \rr}$. 
\end{proof}

Algorithm \ref{alg:integratePolytopeTangentCone} explicitly states the polynomial time algorithm for integrating a polynomial over a polytope by decomposing the polynomial into powers of linear forms, and by decomposing the polytope into simplicial cones. Each main step---decomposing a polynomial into a sum of powers of linear forms, decomposing $\PP$ into simple cones, and computing the integral over each cone---is done in polynomial time when $d$ is fixed.

\begin{algorithm}                      
\caption{ Integrate by decomposing a polynomial into a sum of powers of linear forms and by triangulating a polytope's tangent cones}
\label{alg:integratePolytopeTangentCone}
\begin{algorithmic}                    
\REQUIRE $f(x) \in \Q[x_1, \dots, x_d]$, $d$-dimensional polytope $\PP \subset \R^d$
\ENSURE $\int_\PP f(x) \d x$
\STATE Use Equation \eqref{eq:decomp-powerlinform} to decompose each monomial in $f$ into a sum of powers of linear forms. 
\STATE Let $F$ be the resulting collection of linear forms. 
\STATE Let $T = \emptyset$
\FORALL{ $s \in V(P)$}
	\STATE Let $\mathcal{D}_s$ be a triangulation of $C_s$
	\FORALL{ $C \in \mathcal{D}_s$}
		\STATE $T \gets T \cup \{s + C \}$
	\ENDFOR
\ENDFOR
\STATE Pick $a\in \R^d$ so that for each $s + C \in T$, $\ll a, u_i\rr \neq 0$, where the $u_i$ are the rays of cone $C$
\STATE $\mathtt{sum} \gets 0$
\FORALL{linear forms $c \langle \ell, x \rangle^M$ in $F$}
	\FORALL{$ s + C \in T$}
		\IF{$\ell$ is regular on $C$}
			\STATE $\mathtt{sum} \gets \mathtt{sum} + c \cdot L^M(s+C, \ell)$, computed using Corollary \ref{oneconeoneplf}
		\ELSE
			\STATE $\mathtt{sum} \gets \mathtt{sum} + c \cdot L^M(s+C, \ell)$, by computing a residue as outlined in Theorem \ref{thm:both-parts-integrate-plf}
		\ENDIF
	\ENDFOR
\ENDFOR
\RETURN $\mathtt{sum} $
\end{algorithmic}
\end{algorithm}

\subsection{Domain: a full dimensional simplex}
\label{sec:intplf:domain:simplex}

Suppose now  that $\Delta \subset \R^d$ is a $d$-dimensional simplex with vertices $s_1, s_2, \dots, s_{d+1}$ and $\ell \in \R^d$.  We say that $\ell$ is
\emph{regular} for the simplex~$\Delta$ if it is not orthogonal to any of the edges of
the simplex, meaning $\ll \ell, s_i - s_j\rr \neq 0$ for every $i \neq j$.

\begin{theorem}[Corollary 12 and 13 in \cite{baldoni-berline-deloera-koeppe-vergne:integration}]
\label{thm:integration-simplex-plf}
  Let $\Delta \subset \R $ be a d-simplex with vertices $s_1, \dots, s_{d+1}$, and let $\ell \in \R$.
  
\begin{enumerate}
\item If $\ell$ is regular on $\Delta$, i.e., $\langle
  \ell, s_i\rangle \neq \langle \ell, s_j \rangle$ for any
  pair $i\neq j$, then we have the following relation.
  
  \begin{equation*}
  L^M(\Delta, \ell)
  =\int_\Delta {\langle \ell, x \rangle}^M \d x = d!\vol(\Delta, \d m)\frac{M!}{(M+d)!}
  \Big(\sum_{i=1}^{d+1}\frac{ \langle \ell , s_i
    \rangle^{M+d}}{\prod_{j\neq i} \langle \ell, s_i- s_j \rangle}\Big).
 \end{equation*}
  
\item If $\ell$ is not regular on $\Delta$, let $K\subseteq\{1,\dots,d+1\}$ be an index set of the different poles
$\langle \ell ,s_k\rangle$, and for $k\in K$ let $m_k$ denote the order of the pole, i.e.,
\begin{displaymath}
  m_k = \#\bigl\{\, i\in\{1,\dots,d+1\} : \langle \ell ,s_i\rangle = \langle \ell ,s_k\rangle \,\bigr\}. 
\end{displaymath}

Then we have the following relation.
\begin{equation*}
  \int_{\Delta} {\langle \ell, x \rangle}^M  \d x =
 d!\vol(\Delta, \d m) \frac{M!}{(M+d)!}\sum_{k\in K} \Res_{\epsilon=0} \frac{(\epsilon + \langle
    \ell,  s_k \rangle)^{M+d}}
  {\epsilon^{m_k} {\prod\limits_{\substack{i\in K\\ i\neq k}} {(\epsilon +
      \langle \ell,  s_k-s_i\rangle )}^{m_i}} }.
\end{equation*}

\end{enumerate}
\end{theorem}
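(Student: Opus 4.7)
The plan is to specialize the Brion decomposition machinery already developed in Section~\ref{ch:bg:sec:integration-stuff} to a simplex and then extract a Taylor coefficient in an auxiliary parameter, exactly the same ``$\ell\mapsto t\ell$'' trick used in the derivation of Corollary~\ref{oneconeoneplf}. First I would observe that at a vertex $s_i$ of~$\Delta$, the tangent cone $C_i = s_i + \Cone(\Delta - s_i)$ is a simplicial cone whose rays are the edge vectors $u_j^{(i)} := s_j - s_i$ for $j\neq i$, and whose fundamental parallelepiped has volume $|\det[s_j - s_i : j\neq i]| = d!\,\vol(\Delta)$.

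For part~(1), assume $\ell$ is regular on $\Delta$, so that $\ll \ell, u_j^{(i)}\rr \neq 0$ for every vertex $s_i$ and every edge direction $u_j^{(i)}$. By Lemma~\ref{ch:bg:lemma-integration-brion} and Proposition~\ref{prop:integral-exp-simplicial}, for any scalar $t$ such that $t\ell$ is regular,
\begin{equation*}
I(\Delta, t\ell) \;=\; \sum_{i=1}^{d+1} I(s_i + C_i, t\ell) \;=\; \sum_{i=1}^{d+1} \frac{d!\,\vol(\Delta)\, e^{t\ll \ell, s_i\rr}}{t^d\,\prod_{j\neq i} \ll \ell, s_i - s_j\rr}.
\end{equation*}
The left-hand side is an entire function of $t$ with Taylor expansion $\sum_{M\geq 0} \tfrac{t^M}{M!} L^M(\Delta,\ell)$, while each summand on the right is a meromorphic function of $t$ whose Laurent series at $t=0$ is obtained by expanding $e^{t\ll \ell,s_i\rr}$ as $\sum_{n\geq 0} \tfrac{t^n \ll \ell, s_i\rr^n}{n!}$ and dividing by $t^d$. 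Equating the coefficient of $t^M$ on both sides (the pole parts must cancel in the sum because the left-hand side is holomorphic) gives exactly the claimed formula, since $M!/(M+d)! = M!/(M+d)!$ absorbs the $1/(M+d)!$ produced by the index shift.

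For part~(2), when $\ell$ is not regular, I would perturb $\ell$ to $\ell + \epsilon a$ for any $a\in\R^d$ such that $\ell + \epsilon a$ is regular on $\Delta$ for all sufficiently small nonzero $\epsilon$ (possible because non-regularity is a finite union of hyperplane conditions on $a$). Applying part~(1) to $\ell + \epsilon a$ yields a sum of $d+1$ meromorphic functions of $\epsilon$ whose total is the holomorphic function $L^M(\Delta, \ell + \epsilon a)$, so the $\epsilon^0$-coefficient of the left-hand side equals $L^M(\Delta,\ell)$. I would then regroup the $d+1$ summands on the right according to the distinct values $\ll \ell, s_k\rr$ indexed by $K$: for each $k\in K$, the $m_k$ vertices $s_i$ with $\ll \ell, s_i\rr = \ll \ell, s_k\rr$ share the same zero $\epsilon_0(k)$ of the common factor $\ll \ell + \epsilon a, s_i - s_k\rr$ (up to reparameterization), so their combined contribution becomes a residue at $\epsilon = 0$ of order $m_k$. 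Taking the limit $\epsilon\to 0$ and identifying the appropriate normalization of the perturbation so that the denominator factors $\ll \ell + \epsilon a, s_i - s_k\rr$ collapse to $(\epsilon + \ll \ell, s_k - s_i\rr)^{m_i}$ for $i\in K$, $i\neq k$, and the leading $t^d$ contributes $\epsilon^{m_k}$, produces the displayed residue formula.

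The main obstacle is the bookkeeping in part~(2): one has to verify that the combinatorial regrouping of the $d+1$ simple-pole contributions of the perturbed sum really collapses to the stated single residue per equivalence class with the correct pole order $m_k$ and the correct numerator $(\epsilon + \ll \ell, s_k\rr)^{M+d}$. The cleanest way is probably to derive the residue formula directly by treating $(\epsilon + \ll \ell, s_k\rr)^{M+d}$ as the limit of $e^{t\ll \ell+\epsilon a, s_k\rr}$-type factors and invoking the standard partial-fraction/residue identity that a rational function with a single pole of order $m_k$ at $\epsilon = 0$ has its $\epsilon^{-1}$-coefficient equal to the $(m_k-1)$-st derivative of its numerator evaluated at $0$, divided by $(m_k-1)!$. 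The regular case in part~(1) is exactly the instance $m_k = 1$ for every $k$, providing a useful consistency check.
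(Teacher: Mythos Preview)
The paper does not actually prove this theorem—it is quoted from the cited reference—so there is no ``paper's proof'' to compare to. Your argument for part~(1) is correct and is precisely the specialization of the cone–decomposition machinery of Section~\ref{sec:intplf:domain:cone} (Corollary~\ref{oneconeoneplf} applied at each vertex cone of the simplex, whose parallelepiped volume is $d!\,\vol(\Delta)$) followed by the same ``extract the $t^M$-coefficient'' trick.

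Your argument for part~(2), however, has a genuine gap. The variable $\epsilon$ in the statement is \emph{not} a perturbation parameter for $\ell$; it is the local coordinate $\epsilon = z - \langle\ell,s_k\rangle$ of a one-variable residue. Your plan to set $\ell\mapsto \ell+\epsilon a$ in part~(1) cannot reproduce the displayed formula: for a generic $a$, indices $i$ and $k$ with $\langle\ell,s_i\rangle=\langle\ell,s_k\rangle$ give $\langle\ell+\epsilon a, s_i-s_k\rangle = \epsilon\,\langle a, s_i-s_k\rangle$, which are \emph{distinct} nonzero scalar multiples of $\epsilon$, not a clean $\epsilon^{m_k}$; and the numerators $\langle\ell+\epsilon a, s_i\rangle^{M+d}$ remain different across the class. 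So the claimed ``collapse'' to $(\epsilon+\langle\ell,s_k-s_i\rangle)^{m_i}$ and $\epsilon^{m_k}$ simply does not happen from that perturbation.

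The fix is to recognize part~(1) as a one-variable residue identity in $z$. Writing $a_i=\langle\ell,s_i\rangle$, the regular formula reads
\[
\sum_{i=1}^{d+1}\frac{a_i^{M+d}}{\prod_{j\neq i}(a_i-a_j)}
\;=\;\sum_{\zeta}\Res_{z=\zeta}\frac{z^{M+d}}{\prod_{i=1}^{d+1}(z-a_i)},
\]
the sum on the right ranging over the distinct values among the $a_i$. By the residue theorem on $\mathbb P^1$, this sum equals minus the residue at infinity, which is the coefficient of $w^M$ in $\prod_i(1-a_iw)^{-1}$, i.e.\ the complete homogeneous symmetric polynomial $h_M(a_1,\dots,a_{d+1})$; in particular it is a polynomial in the $a_i$. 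Hence both sides of the claimed identity are continuous in $\ell$, the regular $\ell$'s are dense, and part~(2) follows from part~(1). Substituting $z=\epsilon+\langle\ell,s_k\rangle$ in the residue at $z=\langle\ell,s_k\rangle$ then gives exactly the stated expression: the $m_k$ indices with $a_i=a_k$ contribute $\epsilon^{m_k}$ to the denominator, and the remaining classes contribute $(\epsilon+\langle\ell,s_k-s_i\rangle)^{m_i}$.
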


These evaluations of $L^M(\Delta, \ell)$ allows us to show that integrating a power of a linear form over a simplex can be done efficiently. The next theorem is a simplified statement of Theorem 2 in \cite{baldoni-berline-deloera-koeppe-vergne:integration} and the alternative proof immediately gives itself to an algorithmic implementation. 

\begin{theorem}
\label{thm:my-proof-plf-simplex}
Fix the dimension $d$. Then there is a polynomial time algorithm for computing $L^M(\Delta, \ell)$ when both $\Delta$ and $\ell$ are encoded in binary and $M \in \N$ is encoded in unary. 
\end{theorem}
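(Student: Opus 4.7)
The plan is to use the explicit closed-form expressions of Theorem~\ref{thm:integration-simplex-plf} and verify that every arithmetic ingredient has polynomial bit complexity in the stated encodings. Throughout, $d$ is fixed, so there are only $O(1)$ vertices, $O(1)$ edges, and $O(1)$ residue terms to handle; the only quantity that grows is $M$, and the unary encoding of $M$ is exactly what makes $M+d$ a polynomially bounded integer.

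First, I would handle the regular case. Each of the $d+1$ summands has the shape
\[
\frac{\langle \ell, s_i\rangle^{M+d}}{\prod_{j\neq i}\langle \ell, s_i - s_j\rangle}.
\]
The base $\langle\ell,s_i\rangle$ is a rational of polynomial bit size in the input; since $M+d$ is polynomially bounded (because $M$ is unary), the power $\langle\ell,s_i\rangle^{M+d}$ can be formed by at most $M+d$ successive multiplications, each of polynomial bit size, giving a rational of polynomial bit size. The denominator is a product of $d$ rationals of polynomial bit size, so again is of polynomial bit size since $d$ is fixed. The factor $\tfrac{M!}{(M+d)!} = \prod_{j=1}^d \tfrac{1}{M+j}$ is a product of $d$ rationals, and $\vol(\Delta)$ is computed from a fixed-size determinant. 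Summing $d+1$ such rationals stays polynomial, proving the regular case.

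Next, for the non-regular case, I would show the residue in Theorem~\ref{thm:integration-simplex-plf}(2) can be evaluated in polynomial time by truncated power-series arithmetic in the auxiliary variable $\epsilon$. For each $k\in K$, the order $m_k$ satisfies $m_k\le d+1$, so I only need the Laurent expansion up to degree $m_k-1$. I would expand the numerator $(\epsilon+\langle\ell,s_k\rangle)^{M+d}$ by the binomial theorem, keeping only the $O(d)$ lowest-order terms in $\epsilon$; each coefficient involves a binomial $\binom{M+d}{j}$ with $j\le d$, which is polynomial in $M$ and hence of polynomial bit size. For each $i\neq k$ in $K$, I would expand $(\epsilon+\langle\ell,s_k-s_i\rangle)^{-m_i}$ as a series in $\epsilon$ using the generalized binomial theorem, truncated at degree $m_k-1$, as was done in Theorem~\ref{thm:both-parts-integrate-plf}. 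I would then multiply these $|K|\le d+1$ univariate polynomials together, truncating at degree $m_k-1$ at every step; by Lemma~\ref{lemma:poly-mult} (applied with one variable and total degree bounded by $d+1$), this costs only polynomially many rational operations. The coefficient of $\epsilon^{m_k-1}$ in the product is the desired residue, and summing over $k\in K$ gives the integral.

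The main obstacle is exactly the size of the exponent $M+d$. If $M$ were encoded in binary, the number $\langle\ell,s_i\rangle^{M+d}$ could have exponentially many bits, and even the binomial coefficients $\binom{M+d}{j}$ appearing in the residue expansion could be exponentially large; the whole scheme would then fail to be polynomial-time. The unary encoding of $M$ is precisely what removes this obstruction, because it guarantees $M+d$ is of polynomial magnitude and hence all intermediate numerators, denominators, and truncated series coefficients remain of polynomial bit size. With this observation in place, collecting the $O(1)$ summands from Theorem~\ref{thm:integration-simplex-plf} yields the claimed polynomial-time algorithm.
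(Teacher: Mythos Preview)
Your proposal is correct and follows essentially the same approach as the paper: apply the explicit formula of Theorem~\ref{thm:integration-simplex-plf}, handle the regular case directly, and in the non-regular case compute each residue by expanding $(\epsilon+\langle\ell,s_k\rangle)^{M+d}$ and each $(\epsilon+\langle\ell,s_k-s_i\rangle)^{-m_i}$ to degree $m_k-1$ via the (generalized) binomial theorem, then multiply and truncate using Lemma~\ref{lemma:poly-mult}. Your write-up is in fact a bit more explicit than the paper's about the bit-complexity bookkeeping and the role of the unary encoding of $M$, which is a welcome addition.
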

\begin{proof}
When $\ell$ is regular on $\Delta$, Theorem \ref{thm:integration-simplex-plf} makes this clear. 

When $\ell$ is not regular, at most $|K| < d$ residues must be computed. Each residue can be computed by multiplying univariate polynomials and truncating. For $k \in K$, the coefficient of $\epsilon^{m_k-1}$ needs to be computed in the series expansion of  

\[
\frac{(\epsilon + \langle
    \ell,  s_k \rangle)^{M+d}}
  { {\prod\limits_{\substack{i\in K\\ i\neq k}} {(\epsilon +
      \langle \ell,  s_k-s_i\rangle )}^{m_i}} } \text{ for each } k \in K.\] 

This can be done in polynomial time by applying Lemma \ref{lemma:poly-mult}. For a fixed $k$, let $h_i(\epsilon)$ be the series expansion of $1/(\epsilon + \ll \ell, s_k - s_i\rr)^{m_i}$ about $\epsilon = 0$ up to degree $m_k -1$ in $\epsilon$ for each $i \in K$ and $i\neq k$. This can be done using the generalized binomial theorem. Also let $h_0(\epsilon)$ be the polynomial expansion of $(\epsilon + \ll \ell, s_k \rr)^{M+d}$. The product \[h_0(\epsilon) \prod\limits_{\substack{i\in K\\ i\neq k}} h_i(\epsilon)\] truncated at degree $m_k -1$ in $\epsilon$ can be computed in polynomial time using Lemma \ref{lemma:poly-mult}. The residue is then the coefficient of $\epsilon^{m_k-1}$ in the truncated product.
\end{proof}

Algorithm \ref{alg:integratePLFPolytopeTriangulation} summarizes how to integrate a polynomial via decomposing it into a sum of powers of linear forms and by using integration formulas for simplices. Notice that the algorithm runs in polynomial time when the dimension is fixed. This is because for a polynomial of degree $D$, Equation \eqref{eq:decomp-powerlinform} produces at most $\binom{D+d}{d}\binom{D+d}{d} = O(D^{2d})$ linear forms, and each linear form can be integrated in polynomial time in $D$. Also, the number of simplices in a triangulation of a polytope is polynomial in fixed dimension.

\begin{algorithm}                      
\caption{ Integrate by decomposing a polynomial into a sum of powers of linear forms and by triangulating a polytope}
\label{alg:integratePLFPolytopeTriangulation}
\begin{algorithmic}                    
\REQUIRE $f(x) \in \Q[x_1, \dots, x_d]$, d-dimensional polytope $\PP \subset \R^d$
\ENSURE $\int_\PP f(x) \d x$
\STATE Use Equation \eqref{eq:decomp-powerlinform} to decompose each monomial in $f$ into a sum of powers of linear forms. 
\STATE Let $F$ be the resulting collection of linear forms. 
\STATE Let $T$ be a list of simplices in a triangulation of $P$
\STATE $\mathtt{sum} = 0$
\FORALL{simplices $\Delta$ in $T$}
	\FORALL{linear forms $c \langle \ell, x \rangle^M$ in $F$}
		\STATE $\alpha \gets c \cdot \int_\Delta \ll \ell, x\rr^M \d x$, computed by using the method outlined in Theorem \ref{thm:my-proof-plf-simplex}
		\STATE $\mathtt{sum}  \gets \mathtt{sum}  + \alpha$
	\ENDFOR
\ENDFOR
\RETURN $\mathtt{sum} $
\end{algorithmic}
\end{algorithm}

\subsection{Examples}
\label{section-examples-of-each}
Before continuing, let's highlight the power of encoding integral values  by rational function identities. For regular linear forms the integration formulas are given by sums of rational functions which we read from the geometry at vertices and possibly a cone decomposition method. Consider a pentagon $P$ with vertices $(0,0), (2,0), (0,2), (3,1),$ and $(1, 3)$ as in Figure \ref{fig:pentagon}.

\begin{figure}
 	\centering
	\includegraphics[width=0.3\textwidth]{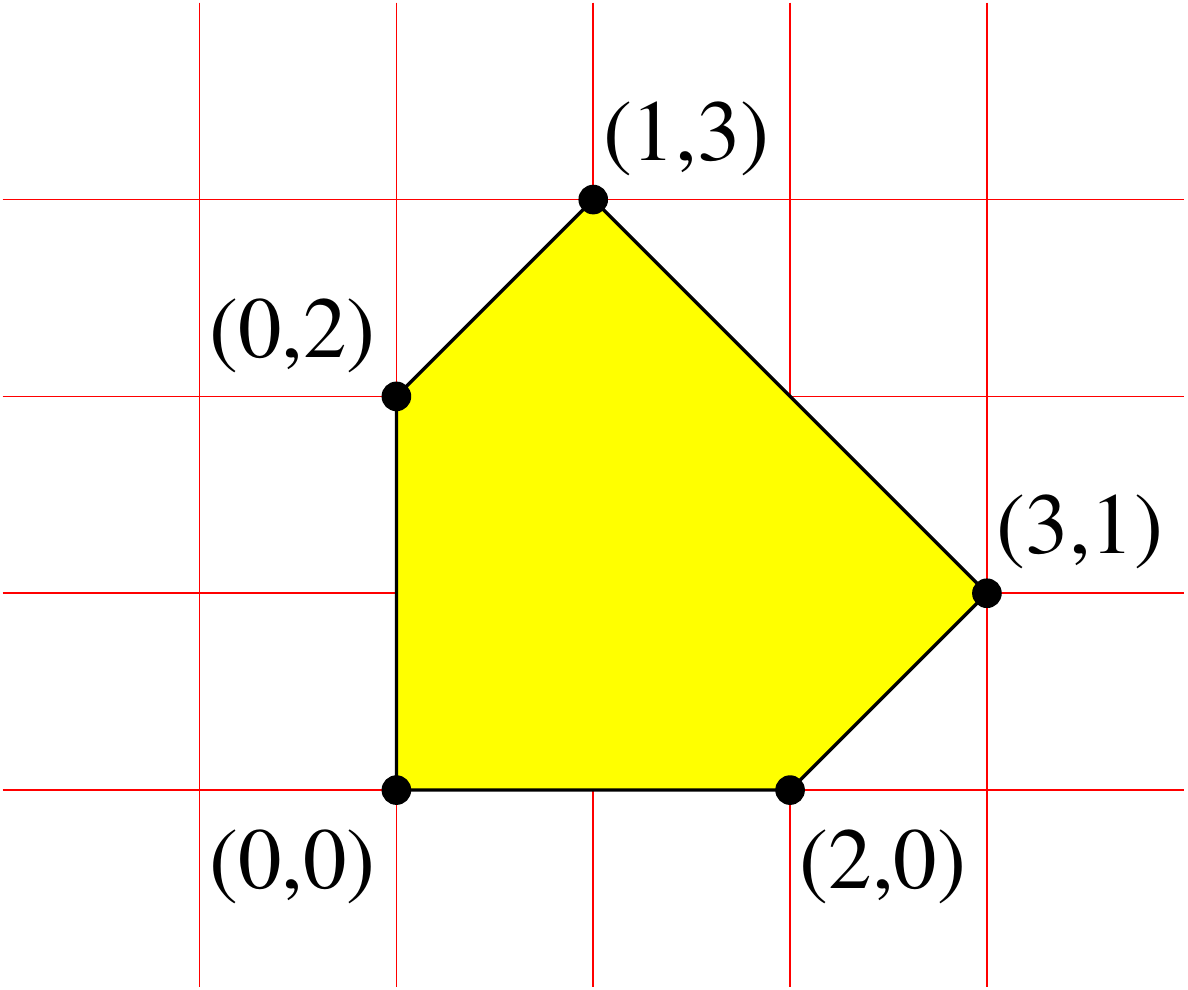}                
	\caption{A pentagon}
	\label{fig:pentagon}
\end{figure}

Then the rational function giving the value of $\int_{P} (c_1 x +c_2 y)^{M} \d{x} \d{y}$ by using the cone decomposition method is
\begin{displaymath}
	\frac{M!}{(M+2)!} \, \left( {\frac { \left( 2\,c_{{1}} \right) ^{M+2}}{c_{{1}} \left( 
-c_{{1}}-c_{{2}} \right) }}+4\,{\frac { \left( 3\,c_{{1}}+c_{{2}}
 \right) ^{M+2}}{ \left( c_{{1}}+c_{{2}} \right)  \left( 2\,c_{{1}}-2
\,c_{{2}} \right) }}+4\,{\frac { \left( c_{{1}}+3\,c_{{2}} \right) ^{M
+2}}{ \left( c_{{1}}+c_{{2}} \right)  \left( -2\,c_{{1}}+2\,c_{{2}}
 \right) }}+{\frac { \left( 2\,c_{{2}} \right) ^{M+2}}{ \left( -c_{{1}
}-c_{{2}} \right) c_{{2}}}} \right).
\end{displaymath}

This rational function expression encodes \emph{every integral} of the form $\int_{P} (c_1 x +c_2 y)^{M} \d{x} \d{y}$. For example, if we let $M=0$, then the integral is equal to the area of the pentagon, and the rational function simplifies to a number by simple high-school algebra:
\begin{displaymath}
\frac{1}{2} \left( 4\,{\frac {c_{{1}}}{-c_{{1}}-c_{{2}}}}+4\,{\frac { \left( 3\,c_{{1}}+c
_{{2}} \right) ^{2}}{ \left( c_{{1}}+c_{{2}} \right)  \left( 2\,c_{{1}
}-2\,c_{{2}} \right) }}+4\,{\frac { \left( c_{{1}}+3\,c_{{2}} \right) 
^{2}}{ \left( c_{{1}}+c_{{2}} \right)  \left( -2\,c_{{1}}+2\,c_{{2}}
 \right) }}+4\,{\frac {c_{{2}}}{-c_{{1}}-c_{{2}}}} \right)  = 6.
 \end{displaymath}

Hence the area is $6$. When $M$ and $(c_1,c_2)$ are given and $(c_1,c_2)$ is not perpendicular to any of the edge directions we can simply plug in numbers to the rational function. 
For instance, when $M=100$ and $(c_1=3,c_2=5)$ the answer is a fraction with numerator equal to
\begin{align*}
227276369386899663893588867403220233833167842959382265474194585 \\
3115019517044815807828554973991981183769557979672803164125396992
\end{align*}
and denominator equal to $1717$. When $(c_1,c_2)$ is perpendicular to
an edge direction, we encounter (removable) singularities in the rational functions,
thus using complex residues we can do the evaluation.  Note that those linear forms that 
are perpendicular to some edge direction form a measure zero set inside
a hyperplane arrangement.

Now we give an example of each method.

\subsubsection{Using the triangulation method}
Take the problem of integrating the polynomial $x+y$ over the triangle $\Delta$ with vertices $s_1=(1,1), s_2=(0,1),$ and $s_3=(1,0)$ in Figure~\ref{fig:example-triangle}.

\begin{figure}
    \centering
    \begin{subfigure}[b]{0.48\textwidth}
    		\centering
        \includegraphics[scale=1.5]{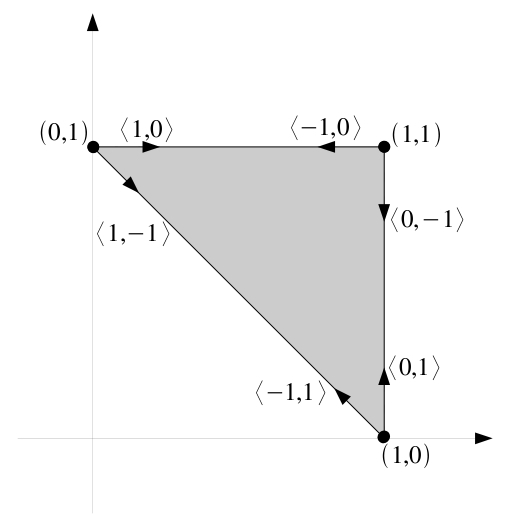}
        \caption{Triangulation method}
        \label{fig:example-triangle}
    \end{subfigure}
    ~ 
    \begin{subfigure}[b]{0.48\textwidth}
    		\centering
        \includegraphics[scale=1.5]{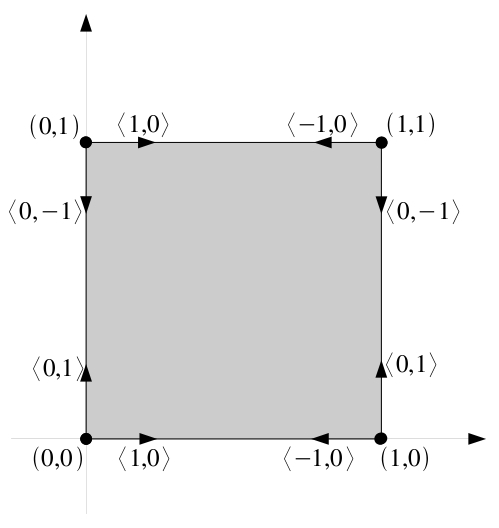}
        \caption{Cone decomposition method}
        \label{fig:example-square}
    \end{subfigure}

    \caption{Example polytopes}\label{fig:example-polytopes}
\end{figure}

The polynomial is already a power of a linear form, and the polytope is a simplex. Because $\ell = (1,1)$ is not regular (it is perpendicular to the edge spanned by $s_2$ and $s_3$), we have to build the index set $K$. Note $\langle \ell, s_1 \rangle = 2, \langle \ell, s_2 \rangle = 1,$ and  $\langle \ell, s_3 \rangle = 1$; pick $K = \{1,2\}$ with $m_1 = 1, m_2 = 2$. We proceed below with this choice, but note that we have a choice in picking the indices and we could have instead $K=\{1,3\}$. This would yield a different decomposition of the generating function. Note also that the decomposition of the power of a linear form is not necessarily unique either.
We now need to compute two values:

Vertex $s_1$: We are not dividing by zero, we can simply plug vectors into Corollary~\ref{oneconeoneplf}, \[\frac{\langle \ell, s_1 \rangle ^ 3}{\langle \ell, s_1  - s_2 \rangle ^ 2} = 8.\]

Vertex $s_2$: Here, we need to compute a residue. 

\begin{displaymath}
 \Res_{\epsilon=0}\frac{(\epsilon + \langle \ell, s_2 \rangle)^{1+2}}{\epsilon^2 (\epsilon + \langle \ell, s_2 - s_1 \rangle)} = \Res_{\epsilon=0}\frac{(\epsilon +1)^{1+2}}{\epsilon^2 (\epsilon -1)} = -4.
\end{displaymath}

Finally, $\int_{\Delta} (x+y) \d{x}\d{y} = 2! \times \frac{1}{2} \times \frac{1!}{3!} ( 8 - 4) = 2/3$.

\subsubsection{Using the cone decomposition method}
Next, integrate the polynomial $x$ over the unit square in Figure~\ref{fig:example-square} using the cone decomposition algorithm. Let $s_1 = (0,1)$, $s_2 = (0,0)$, $s_3 = (1,0)$, and $s_4 = (1,1)$. The polynomial is already a power of a linear form, so $\ell = (1,0)$. The polytope has four vertices that we need to consider, and each tangent cone is already simplicial. The linear form 
$\ell$ is not regular at any vertex. We let the reader verify that the residue-based calculation gives the value zero for the integrals on the corresponding cones at vertex $s_1$ and $s_2$. We only do in detail the  calculation for  vertex $s_3 = (1,0)$. At this vertex, the rays are $u_1 = (0,1), u_2=(-1,0)$. Because $\langle \ell, u_1 \rangle = 0$, we need a perturbation vector $\hat \epsilon$ so that when $\ell := \ell + \hat \epsilon$, we do not divide by zero on any cone (we have to check this cone and the next one). Pick $\hat \epsilon = (\epsilon, \epsilon)$. Then the integral on this cone is
	
\begin{displaymath}
 \frac{M!}{(M+d)!} \vol(\Pi_C) \Res_{\epsilon=0}\frac{(1 + \epsilon)^{1+2}}{\epsilon (-\epsilon) (1 + \epsilon)} = \frac{1!}{(1+2)!} \times 1 \times -2 = -2/6.
\end{displaymath}

Vertex $s_4 = (1,1)$: The rays are $u_1 = (-1,0), u_2= (0,-1)$. Again, we divide by zero, so we perturb $\ell$ by the same $\hat \epsilon$. The integral on this cone  is
\begin{displaymath}
 \frac{M!}{(M+d)!} \vol(\Pi_C) \Res_{\epsilon=0}\frac{(1 + 2\epsilon)^{1+2}}{\epsilon (\epsilon) (1 + \epsilon)} = \frac{1!}{(1+2)!} \times 1 \times 5 = 5/6.
\end{displaymath}

The integral $\int_P x  \d{x}\d{y} = 0 + 0 -2/6 + 5/6 = 1/2$ as it should be.

\subsection{How the software works: burst tries}
\label{section-software}

We implemented the two algorithms for integrating a polynomial (via a decomposition into powers of linear forms) over a polytope in C++ as part of the software package \latteInt \cite{latteintegrale}. Originally \latte was  developed in 2001 as software to study lattice points of convex polytopes~\cite{latte1}. The algorithms used combinations of geometric and symbolic computation. Two key data structures are rational generating functions and cone decompositions, and it was the first ever implementation of Barvinok's algorithm. \latte was improved in 2007 with various software and theoretical modifications, which increased speed dramatically.  This version was released under the name \texttt{LattE macchiato}; see \cite{koeppe:irrational-barvinok}. In 2011, \latteInt was released which included the computation of exact integrals of polynomial functions over convex polyhedra. The new integration functions are C++ implementations of the algorithms provided in \cite{baldoni-berline-deloera-koeppe-vergne:integration} with additional technical improvements. A key distinction between \latteInt and other software tools is that that the exact value of the integrals are computed since the implementation uses exact rational arithmetic. The code of this software is freely available at \cite{latteintegrale} under the GNU license.

As we see from the algorithmic descriptions above, one big step is performing manipulation of truncated power series. In this section, we discuss a data structure that improved our running time for multiplying polynomials. We focus here on how to store and multiply polynomials. For more on how on how we implemented the integration algorithms for powers of linear forms, see \cite{deloera:software-exact-integration-polynomials} and the user manual from the website \cite{latteintegrale}.

Our initial implementation used a pair of linked lists for polynomials and sums of powers of linear forms. For polynomials, each node in the lists contained a monomial. The problem with this data structure was that it was too slow  because it lacked any sort of ordering, meaning we had to traverse over every term in the list to find out if there was a duplicate. Hence when multiplying two polynomials, each resulting monomial in the product required a full list transversal in the inner for-loop of Algorithm \ref{alg:demo-poly-mult}. In \cite{baldoni-berline-deloera-koeppe-vergne:integration}, the integration over simplices was first implemented in \maple, and so there was no control over the data structures used to store the data. 
\begin{algorithm}                      
\caption{ Multiplying two polynomials}
\label{alg:demo-poly-mult}
\begin{algorithmic}                    
\REQUIRE $p_1, p_2 \in \Q[x_1, \dots, x_d]$
\ENSURE $p_1\cdot p_2$
\STATE $p \gets 0$
\FORALL{monomials $m_1$ of $p_1$}
\FORALL{monomials $m_2$ of $p_2$}
	\STATE \texttt{INSERT}$(p, m_1\cdot m_2)$
\ENDFOR
\ENDFOR
\RETURN $p$
\end{algorithmic}
\end{algorithm}

We then switched to using  \emph{burst tries}, a data structure designed to have \emph{cache-efficient storage and search}, due to the fact that they are prefix trees with sorted arrays of stored elements as leaves~\cite{trip-burst-tries-gastineaau-2006}. Such a data structure is performance-critical when computing residues, as a comparison with a linked-list implementation showed. In our implementation, each node corresponds to a particular  dimension or variable and contains the maximal and minimal values of the  exponent on this dimension.  The node either points to another node a level deeper in the tree or to a list of sorted elements.

Figure~\ref{fig:burst-trie} presents a simplified example from \cite{trip-burst-tries-gastineaau-2006} on how burst tries work. We want to save polynomials in the from $x^iy^jz^k$, let $R$ be the root of the tree. $R$ contains a list of ranges for the power on the $x$ term and points to monomials with an $x$ term that has a power in this range. For example, the 2nd column of $R$ points to all monomials with the degree of the $x$ term greater or equal to 1 and less than 2. Thus $R_1$ contains all monomials in the from $x^1y^jz^k$. $R_1$ contains one list per monomial where only the powers of $y$ and $z$ are saved with trailing zeros removed and the coefficient is at the end of the list. In this example, $R_1$ contains $5x^1y^1z^0 + 6x^1y^1z^1$.

\begin{figure}[htb]
\begin{center}
\leavevmode
\includegraphics[width=0.7\textwidth]{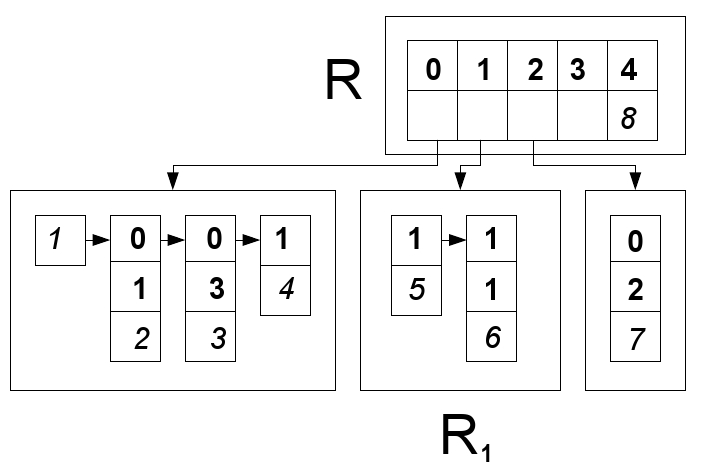}
\end{center}
\caption{Burst trie holding $1+2z + 3z^3 + 4y + 5xy + 6xyz +7x^2z^2 + 8x^4$ with a threshold of 5}
\label{fig:burst-trie}
\end{figure}

Whenever a list becomes too large (we imposed a limit of 10 elements per list), it splits on the first variable where the maximal and minimal exponents differ.  This process is called ``bursting'' and doing so ensures that finding potential duplicates is a very fast procedure, while making traversal of the entire tree take negligibly more time. For example, add $9x + 10xy^4 + 11xy^3z$ to the burst trie in Figure~\ref{fig:burst-trie}. All of these new elements are added to $R_1$. Now add $12xy^3$. This element is added to $R_1$, which now has $6$ elements, but the threshold is $5$ and so $R_1$ is bursted to $\tilde R_1$, see Figure~\ref{fig:burst-trie-bursting}. We must find the first variable in the list $y, z$ that has different exponents in $R_1$. This turns out to be $y$. Then $\tilde R_1$ now contains the exponents of $y$ from $0$ to $5$. $R_2$ now contains all monomials in the form $xyz^k$ and $R_3$ contains all the monomials in the form $xy^3z^k$. See \cite{trip-burst-tries-gastineaau-2006} for a complete introduction. 

\begin{figure}[htb]
\begin{center}
\leavevmode
\includegraphics[width=0.7\textwidth]{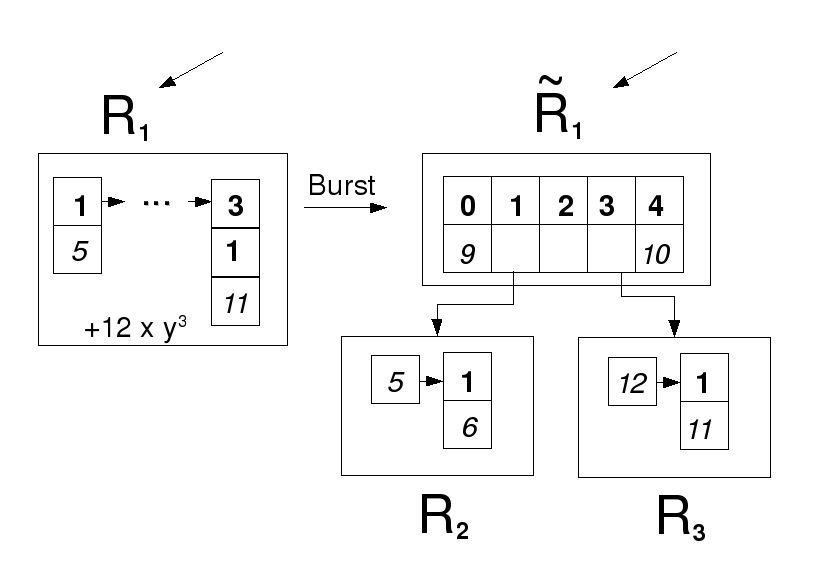}
\end{center}
\caption{$R_1$ is bursted after adding $6$ elements. $\tilde R_1$ holds $5xy + 6xyz + 9x + 10xy^4 + 11xy^3z + 12xy^3$}
\label{fig:burst-trie-bursting}
\end{figure}

\subsection{Should one triangulate or cone decompose?}
\label{section-triangulate-or-decompose}

We have developed two polynomial time algorithms in Sections \ref{sec:intplf:domain:cone} and \ref{sec:intplf:domain:simplex} for integrating a polynomial over a polytope. The two methods differ on how the domain $\PP$ is decomposed. One could triangulate the whole polytope, or integrate over each tangent cone.  However, each cone must be decomposed into simplicial cones. This is the trade-off: do one (large and possible costly) polytope triangulation, or many smaller cone triangulations. The number of simplices in a triangulation and the number of simplicial cones in a polytope decomposition can significantly differ. Depending on the polytope, choosing the right method can determine its practicality.

We highlight here a fraction of experiments performed in \cite{deloera:software-exact-integration-polynomials, extratables}. We describe two experiments. First, we compare the two integration algorithms on random polynomials and random polytopes, and second we compare the algorithms on other numerical software.

Our experimental results agree with
\cite{bueler-enge-fukuda-2000:exact-volume} in showing that triangulating the
polytope is better for polytopes that are ``almost simplicial'' while cone
decomposition is faster for simple polytopes.

\subsubsection{Integration over random polytopes}

For each $v \in \{9, 10, 11, 12, 	17, 22, 27, 32\}$, we created 50 random polytopes in dimension $7$ by taking the convex hull of random points using {\tt Polymake} \cite{polymake-software} and ensuring the resulting polytope had $v$ vertices. We call these the primal polytopes.  When zero is not in the interior of the polytope, we translated the centroid to the origin before constructing the dual polytope. Because of the construction method, most primal polytopes are simplicial and the duals are mostly simple polytopes. For each degree $D \in \{1, 2, 5, 10, 20, 30, 40, 50 \}$, we also constructed a set of $50$ random monomials in $7$ variables of degree $D$. Then each polytope was integrated over by one random monomial in degree $D$. Meaning, for each polytope class with $v$ vertices, and for each $D$, 50 integration tests were performed. The same test was then performed with the dual polytopes. 

We only report those tests where both the triangulation and cone-decomposition method finished under 600 seconds. We define the \emph{relative time difference} as the time taken by the triangulation method \emph{minus} the time taken by the cone-decomposition method, all divided by the time of the triangulation method. Note that when the triangulation method is faster we obtain a negative number. We will use this quantity throughout.

Figure \ref{fig:integration-random-graphs5} displays a histogram on three axes. The first horizontal axis is the relative time difference between the two integration methods. The second horizontal axis shows the degrees of monomials and finally the vertical axis presents the number of random polytopes in dimension $7$. The height of a particular solid bar in position $(a_k,b^*)$ tallies the number of random polytopes for which the relative time difference between the two algorithms, when integrating a monomial of degree $b^*$, was between $a_{k-1}$ and $a_k$ with $a_k$ included in that bar. Thus, the bars with negative relative time difference should be counted as experiments where triangulation is faster. 

There is one histogram for the primal polytopes and one for the dual polytopes. The row color corresponds to a degree class.
Note that for the primal polytopes, which are simplicial, the triangulation method is faster than the cone decomposition method. In contrast, the cone decomposition method is slightly better for the simple polytopes. More tables are available in \cite{deloera:software-exact-integration-polynomials, extratables}.

Our experiments on integrating monomials have the same qualitative behavior as those of \cite{bueler-enge-fukuda-2000:exact-volume} for volume computation (polynomial of degree zero): the triangulation 
method is faster for simplicial polytopes (mass on histograms is highly concentrated on negative relative time differences)  while the cone 
decomposition is faster for simple polytopes (mass on histograms is concentrated on positive relative time differences).

\begin{figure}
    \centering
    \begin{subfigure}[b]{1.0\textwidth}
    		\centering
        \includegraphics[scale=0.75]{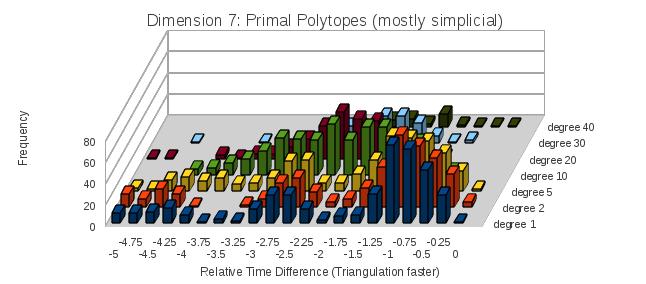}
        \label{fig:ip7}
    \end{subfigure}
    
    \begin{subfigure}[b]{1.0\textwidth}
    		\centering
        \includegraphics[scale=0.75]{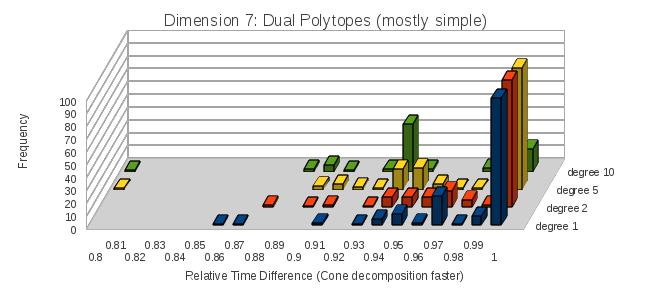}
        \label{fig:id7}
    \end{subfigure}

    \caption{Histogram of the relative time difference between the triangulation and cone-decomposition methods for integrating  over random polytopes in dimension 7}\label{fig:integration-random-graphs5}
\end{figure}

\subsubsection{Numerical methods}

There are two general classes of algorithms for finding integrals over polytopes: numerical and exact. Numerical algorithms approximate the valuation on the polytope and involve error bounds, whereas exact algorithms do not contain a theoretical error term. However, exact algorithms may contain errors when they use finite digit integers or use floating-point arithmetic. In order to sidestep this problem, \latteInt uses NTL's arbitrary length integer and rational arithmetic \cite{ntl-5.4} compiled with the GNU Multiple Precision Arithmetic Library \cite{gmp-4.1.4}. The obvious downside to exact arithmetic is speed, but this cost is necessary to obtain exact answers. In this section, we compare our exact algorithms with \cubpack, a Fortran~90 library which estimates the integral of a function (or vector of functions) over a collection of $d$-dimensional hyper-rectangles and simplices \cite{CUBPACK}. This comparison is very interesting because \cubpack uses an adaptive grid to seek better performance and accuracy. 

All integration tests with \cubpack in dimension $d$ were done with a
product of linear forms with a constant term over a random
$d$-dimensional simplex where the absolute value of any coordinate in
any vertex does not exceed 10. For example, we integrated a product of
inhomogeneous linear forms such as $(\frac{1}{5} + 2x -
\frac{37}{100}y)(2 - 5x)$ over the simplex with vertices $(10,0),
(9,9), (1,1)$ and $(0,0$.  In Table~\ref{tabel:CUBPACK-time-table}, \latte was
run 100 times to get the average running time, while \cubpack was run
1000 times due to variance.  Both the dimension and number of linear
forms multiplied to construct the integrand were varied.

\begin{table}
\centering
\caption{Average Time for \latteInt and \cubpack for integrating products of inhomogeneous linear forms over simplices.} 
\label{tabel:CUBPACK-time-table}
\tabcolsep 3pt
\footnotesize
\begin{tabular}{lcrrrrrrrrrrr}
\toprule 
 & &\multicolumn{10}{c}{Number of linear factors} \\ 
\cmidrule(r){3-12} 
$d$ & Tool & \multicolumn{1}{c}{1}  & \multicolumn{1}{c}{2}  & \multicolumn{1}{c}{3}  & \multicolumn{1}{c}{4}  & \multicolumn{1}{c}{5}  & \multicolumn{1}{c}{6}  & \multicolumn{1}{c}{7}  & \multicolumn{1}{c}{8} & \multicolumn{1}{c}{9} & \multicolumn{1}{c}{10} \\ 
\hline 
\multirow{2}{*}{2} & \latte   & \timeFaster{0.0001} & \timeFaster{0.0002} & \timeFaster{0.0005} & \timeFaster{0.0008} & \timeFaster{0.0009} & \timeFaster{0.0019} & \timeSlower{0.0038} & \timeSlower{0.0048} & \timeSlower{0.0058} & \timeFaster{0.0089} \\
                   & \cubpack & \timeSlower{0.0027} & \timeSlower{0.0014} & \timeSlower{0.0016} & \timeSlower{0.0022} & \timeSlower{0.0064} & \timeSlower{0.0052} & \timeFaster{0.0014} & \timeFaster{0.0002} & \timeFaster{0.0026} & \timeSlower{0.0213} \\
\hline 
\multirow{2}{*}{3} & \latte   & \timeFaster{0.0002} & \timeFaster{0.0005} & \timeFaster{0.0009} & \timeFaster{0.0016} & \timeFaster{0.0043} & \timeFaster{0.0073} & \timeFaster{0.0144} & \timeFaster{0.0266} & \timeFaster{0.0453} & \timeFaster{0.0748} \\
                   & \cubpack & \timeSlower{0.0134} & \timeSlower{0.0145} & \timeSlower{0.0018} & \timeSlower{0.0054} & \timeSlower{0.0234} & \timeSlower{0.0219} & \timeSlower{0.0445} & \timeSlower{0.0699} & \timeSlower{0.1170} & \timeSlower{0.2420} \\
\hline 
\multirow{2}{*}{4} & \latte   & \timeFaster{0.0003} & \timeFaster{0.0012} & \timeFaster{0.0018} & \timeSlower{0.0044} & \timeSlower{0.0121} & \timeFaster{0.0274} & \timeFaster{0.0569} & \timeFaster{0.1094} & \timeFaster{0.2247} & \timeFaster{0.4171} \\
                   & \cubpack & \timeSlower{0.0042} & \timeSlower{0.0134} & \timeSlower{0.0028} & \timeFaster{0.0019} & \timeFaster{0.0076} & \timeSlower{0.5788} & \timeSlower{4.7837} & \timeSlower{4.3778} & \timeSlower{22.3530} & \timeSlower{54.3878} \\
\hline 
\multirow{2}{*}{5} & \latte   & \timeFaster{0.0005} & \timeFaster{0.0008} & \timeTie{0.0048}    & \timeFaster{0.0108} & \timeSlower{0.0305} & \timeFaster{0.0780} & \timeFaster{0.0800} & \timeNotComputed{   0.00} & \timeNotComputed{   0.00} & \timeNotComputed{   0.00} \\
                   & \cubpack & \timeSlower{0.0013} & \timeSlower{0.0145} & \timeTie{0.0048}    & \timeSlower{0.0217} & \timeFaster{0.0027} & \timeSlower{37.0252} & \timeSlower{128.2242} & \timeNotComputed{   0.00} & \timeNotComputed{   0.00} & \timeNotComputed{   0.00} \\
\bottomrule 
\end{tabular} 
\end{table}

As shown in Table~\ref{tabel:CUBPACK-time-table}, \latteInt tends to take less time, especially when the number of forms and dimension increases.  The table does not show the high variance that \cubpack has in its run times.  For example, the 5-dimensional test case with 6 linear forms 
had a maximum running time of 2874.48 seconds, while the minimum running time was 0.05 seconds on a different random simplex.  This contrasted starkly with \latteInt, which had every test be within 0.01 (the minimum time discrepancy recognized by its timer) of every other test case.

\begin{table}
\centering
\caption{ \texttt{CUBPACK} scaling with increased relative accuracy. ``Relative Error'' is a user-specified parameter of \texttt{CUBPACK};  ``Expected Error'' is
an estimate of the absolute error, produced by \texttt{CUBPACK}'s error estimators. Finally, the ``Actual Error'' is the difference of \texttt{CUBPACK}'s result to the
exact integral computed with \latteInt.}
\label{tabel:cubpack-vs-latte}
\tabcolsep 3pt
\small
\begin{tabular}{crrrrr}
\toprule 
\multicolumn{1}{c}{Relative Error} & \multicolumn{1}{c}{Result}  & \multicolumn{1}{c}{Expected Error}  & \multicolumn{1}{c}{Actual Error}  & \multicolumn{1}{c}{\# Evaluations}  & \multicolumn{1}{c}{Time (s)}  \\ 
\midrule
$10^{-2}$ & 1260422511.762 & 9185366.414 & 94536.015 & 4467 & 0.00 \\
$10^{-3}$ & 1260507955.807 & 1173478.333 & 9091.974 & 9820 & 0.01 \\
$10^{-4}$ & 1260516650.281 & 123541.490 & 397.496 & 34411 & 0.04 \\
$10^{-5}$ & 1260517042.311 & 12588.455 & 5.466 & 104330 & 0.10 \\
$10^{-6}$ & 1260517047.653 & 1257.553 & 0.124 & 357917 & 0.31 \\
$10^{-7}$ & 1260517047.691 & 126.042 & 0.086 & 1344826 & 1.16 \\
$10^{-8}$ & 1260517047.775 & 12.601 & 0.002 & 4707078 & 4.15 \\
$10^{-9}$ & 1260517047.777 & 1.260 & $<10^{-3}$ & 16224509 & 14.09 \\
$10^{-10}$ & 1260517047.777 & 0.126 & $<10^{-3}$ & 55598639 & 48.73 \\
\bottomrule 
\end{tabular} 
\end{table}

\cubpack differs from \latteInt in that since it is based on numerical
approximations, one can ask for different levels of precision.
Table~\ref{tabel:cubpack-vs-latte} illustrates how \cubpack scales with
requested precision on a single, 4-dimensional, 10 linear form test case. It seems that \cubpack scales linearly with the inverse of the requested
precision---10 times the precision requires about 3 times the work. 
All reported tests were done by expanding the multiplication of linear forms,
and coding a Fortran~90 function to read in the resulting polynomial and
evaluate it for specific points.

\section{Integrand: products of affine functions}
\label{sec:integration-prducts-affine-functions}

In Section \ref{sec:integrand:power-of-linear-forms}, polynomial time algorithms for integrating a polynomial over a polytope was developed, when the dimension is fixed. The key step was using Equation \eqref{eq:decomp-powerlinform} to decompose a polynomial into a sum of powers of linear forms. That equation has a big problem: it can produce millions of linear forms for polynomials with modest degree, see Table \ref{tabel:count-linear-forms}. The large number of summands is our motivation for exploring an alternative decomposition.

In this section we focus on decomposing a polynomial into a sum of produces of affine functions and develop polynomial time algorithms for integrating over products of affine functions, similar to the last section. 

To build such a polynomial decomposition, we seek an application of the next theorem. 

\begin{theorem}[Handelman \cite{handelman1988}]\label{theoHan}
Assume that $g_1,\dots,g_n\in\R[x_1, \dots, x_d]$ are linear polynomials and that the semialgebraic set
\begin{equation}\label{eqK}
S=\{x\in\R^d \mid g_1(x)\geq 0,\dots,g_n(x)\geq 0\}
\end{equation}
is compact and has a non-empty interior. Then any polynomial $f\in\R[x_1, \dots, x_d]$ strictly positive on $S$ can be written as
$f(x) = \sum_{\alpha\in\N^n} c_\alpha g_1^{\alpha_1}\cdots g_n^{\alpha_n}$
for some nonnegative scalars $ c_{\alpha}$.
\end{theorem}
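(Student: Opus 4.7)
The plan is to derive Handelman's theorem from Pólya's theorem, which asserts that any homogeneous polynomial $F(y_0,\dots,y_n)$ strictly positive on the standard simplex $\Delta=\{y\ge 0:\sum_{i=0}^n y_i=1\}$ satisfies
\[
  (y_0+\cdots+y_n)^N \, F(y) \;=\; \sum_{\beta} c_\beta\, y^\beta, \qquad c_\beta\ge 0,
\]
for $N$ sufficiently large. Once Pólya is in hand, the substitution $y_i\mapsto g_i(x)$ (after an appropriate normalization) will produce the desired nonnegative expansion.

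The first step is to exploit compactness of $S$ to put the problem in ``simplex form.'' Since $S$ is bounded and the $g_i$ are affine, there exist nonnegative scalars $\lambda_1,\dots,\lambda_n$ and a positive constant $c$ so that $g_0(x) := c-\sum_{i=1}^n \lambda_i g_i(x)$ is nonnegative on $S$. Adjoining $g_0$ gives the identity $g_0 + \sum \lambda_i g_i \equiv c$ on all of $\R^d$; rescaling so $c=1$ and writing $y_0 := g_0$, $y_i := \lambda_i g_i$ yields $\sum_{i=0}^n y_i = 1$ and $y_i \geq 0$ on $S$, so the affine map $\phi(x) = (y_0(x),\dots,y_n(x))$ embeds $S$ into $\Delta$.

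Next, because $S$ has nonempty interior the affine functions $1,g_1,\dots,g_n$ span the affine functions on $\R^d$, so each coordinate $x_j$ is an affine combination of the $g_i$, and $f$ can be rewritten as a polynomial $\tilde F$ in $y_0,\dots,y_n$. Homogenizing $\tilde F$ using the relation $\sum y_i = 1$ produces a homogeneous $F(y_0,\dots,y_n)$ that agrees with $f(x)$ after substitution. By hypothesis $f>0$ on $S$, hence $F>0$ on $\phi(S)\subseteq\Delta$; after enlarging the system with redundant bounding inequalities (or, equivalently, perturbing $F$ by a small multiple of $(\sum y_i)^{\deg F}$), one arranges that $F$ is strictly positive on all of $\Delta$. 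Then Pólya's theorem applies and, using $\sum y_i \equiv 1$ after substitution, gives $f(x)=\sum_\beta c_\beta\, g_0^{\beta_0} g_1^{\beta_1}\cdots g_n^{\beta_n}$ with $c_\beta\ge 0$.

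The main obstacle is the last clean-up: the decomposition obtained this way contains the auxiliary $g_0=1-\sum\lambda_i g_i$, whereas Handelman's conclusion uses only the original $g_1,\dots,g_n$. A direct expansion of $g_0^{\beta_0}$ via the binomial theorem introduces negative coefficients, so one must argue more carefully — typically by absorbing the $g_0$-factors into the $(\sum y_i)^N$ multiplier before applying Pólya, so that $g_0$ never appears as an isolated factor. Verifying this absorption, together with the passage from positivity on $\phi(S)$ to positivity on the full simplex $\Delta$, is the technical heart of the proof; everything else is bookkeeping driven by the normalization $\sum \lambda_i g_i \equiv c$.
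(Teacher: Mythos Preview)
The paper does not prove Handelman's theorem; it is quoted from \cite{handelman1988} and used as a black box (with pointers to \cite{Castle20091285, deKlerk2015, monique2014} for exposition). So there is no ``paper's own proof'' to compare against; I can only comment on your proposal as a standalone argument.

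Your P\'olya-based strategy is a legitimate and well-known route to Handelman, but the two places you flag as the ``technical heart'' are genuine gaps that your sketch does not close. First, the suggestion to eliminate the auxiliary $g_0$ by ``absorbing the $g_0$-factors into the $(\sum y_i)^N$ multiplier'' does not work as stated: after substitution $\sum y_i \equiv 1$, that multiplier is identically~$1$ and cannot absorb anything, while a direct binomial expansion of $g_0^{\beta_0}=(1-\sum\lambda_i g_i)^{\beta_0}$ introduces signs you cannot control. The clean fix is to observe that you never needed $g_0$ at all: boundedness of $S$ means its recession cone is $\{0\}$, so by Gordan's alternative there exist $\lambda_i\ge 0$, not all zero, with $\sum_i \lambda_i a_i = 0$ (where $g_i(x)=\langle a_i,x\rangle+b_i$); evaluating at an interior point shows $\sum_i \lambda_i g_i \equiv c>0$ identically on $\R^d$. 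Thus the partition-of-unity relation holds among the original $g_i$ and no extraneous $g_0$ enters.

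Second, the passage from ``$F>0$ on $\phi(S)$'' to ``$F>0$ on all of $\Delta$'' is not handled by adding $\epsilon(\sum y_i)^{\deg F}$, since that equals $\epsilon$ on $\Delta$ and you have no lower bound on $F$ off $\phi(S)$. What does work is this: $\phi(\R^d)$ is an affine subspace of $\{y:\sum\lambda_i y_i=1\}$, cut out by linear forms $\ell_1,\dots,\ell_r$; replace your homogeneous lift $F$ by $F + M\bigl(\sum_j \ell_j^2\bigr)\bigl(\sum_i y_i\bigr)^{\deg F-2}$ for $M$ large. This agrees with $F$ on $\phi(\R^d)$ (hence still pulls back to $f$), and a compactness argument on $\Delta$ makes it strictly positive there. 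With these two repairs your outline becomes a correct proof.
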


Note that this theorem is true when $S$ is a polytope $\PP$, and the polynomials $g_i(x)$ correspond to the rows in the constraint matrix $b - Ax \geq 0$. In the case of the hypercube $\PP=[0, 1]^d$, this result was shown  earlier by Krivine \cite{Krivine1964}. See \cite{Castle20091285, deKlerk2015, monique2014} for a nice introduction to the Handelman decomposition. The Handelman decomposition is only guaranteed to exist if the polynomial is strictly greater than zero on $\PP$, and the required degree of the Handelman decomposition can grow as the minimum of the polynomial approaches zero  \cite{sankaranarayanan2013lyapunov}. Not much is known if redundant inequalities in $\PP$'s description can help. 

Let the \emph{degree} of a Handelman decomposition be $\max |\alpha|$, where the maximum is taken over all the exponent vectors $\alpha$ of $g_i(x)$ that appear in a decomposition.  For a degree $t$ decomposition, the goal is to find $c_\alpha \geq 0$ and a $s \in \R$ such that 
\[f(x) + s = \sum_{\alpha\in \Z_{\geq 0}^n \;:\; |\alpha|\le t} c_{\alpha} g^\alpha.\]
Adding the unknown constant shift $s$ to $f(x)$ has three important consequences.

\begin{enumerate}
\item $-s$ will be a lower bound for $\fmin$. So if $f$ is negative on $\PP$, $f(x)+s$ will be positive on $\PP$, allowing us to use Theorem \ref{theoHan}.
\item If the minimum value of $f$ is zero, $f$ might not have a Handelman decomposition for any $t$. By adding a shift, a decomposition is guaranteed to exist for some $t$. 
\item If the minimum value of $f$ is positive, but small, $f(x)$ might only have Handelman decompositions for large degree $t$. By adding a shift, we can find a Handelman decomposition of smaller size.
\end{enumerate} 

If one uses a large shift $s$ so that $f(x)+s$ is positive on its domain, there no known general bound for how large the Handelman degree $t$ has to be for $f(x)+s$ to have a Handelman decomposition. Likewise, it is not known that by fixing the Handelman degree $t$, a shift $s$ can always be found so that $f(x)+s$ has a Handelman decomposition, but we have not run into this problem in our experiments. For a nice review of known Handelman degree bounds for some special cases see \cite{monique2014}.

If we expand the right hand side of $f(x) + s = \sum_{|\alpha| \leq t} c_\alpha g^\alpha$ into monomials, and force the coefficients of monomials on both sides of the equality to be equal, the results would be a linear system in the $c_\alpha$ and $s$. Hence we seek a solution to the \emph{linear program}
	\begin{align}
	\label{eq:sparse-lp-handelman}
	\min & \;s + \sum_\alpha c_\alpha\\ \nonumber
	& A_Hc_\alpha = b \\ 	\nonumber
	& a_0^Tc_\alpha -s = 0\\ \nonumber
	& s \text{ free}, c_\alpha \geq 0, 
	\end{align}
where the objective has been chosen so that $-s$ is close to $f_{\mathrm{min}}$ and to force a sparse Handelman decomposition of order $t$. It is common practice to use $\norm{\cdot}_1$ as a proxy for sparse solutions \cite{elad2010sparse}. Notice that $A_H$ has ${t+ n\choose n} = O(t^{n})$ columns if the number of facts $n$ of $\PP$ is bounded, and $A_H$ has ${t + d \choose d} = O(t^d)$ rows if the dimension $d$ is fixed. Therefore, if a polynomial had a degree $t$ Handelman decomposition, then the decomposition can be found in polynomial time in $t$ when $d$ and $n$ are fixed by solving a linear program.

Consider the example $f(x) = x^2 -x$ on $[-1,1] \subset \R$, which cannot have a Handelman decomposition as it takes negative values. 
We seek a solution to \[f(x) + s = c_{2,0} (x + 1)^2 + c_{1,1} (1 - x) (x + 1) + c_{0,2}(1 - x)^2 + c_{1,0} (x + 1) + c_{0,1}(1 - x).\]
Solving the linear program results in $c_{0,2} = 3/4$, $c_{2,0}=1/4$, and $s=1$.

Algorithm \ref{alg:integrate-using-handelman-affine-functions} illustrates how the Handelman decomposition can be used to integrate $f(x)$. 

\begin{algorithm}                      
\caption{ Integrate $f(x)$ by Handelman decomposition}
\label{alg:integrate-using-handelman-affine-functions}
\begin{algorithmic}                    
\REQUIRE A polynomial $f(x) \in \Q[x_1, \dots, x_d]$ of degree $D$ that has a degree $D$ Handelman representation, and $d$-dimensional polytope $\PP \subset \R^d$
\ENSURE $\int_\PP f(x) \d x$
\STATE Solve the linear program \eqref{eq:sparse-lp-handelman} and obtain \[f(x) + s = \sum_{\alpha_1 + \cdots + \alpha_n \leq D} c_\alpha g^\alpha\]

\STATE $\mathtt{sum} \gets \int_\PP \sum_{\alpha} c_\alpha g^\alpha \d x$, by methods in Section \ref{sec:intpaf:domain:cone} or \ref{sec:intpaf:domain:simplex}
\STATE $\mathtt{vol} \gets \vol(\PP)$, using any integration method where the power of the integrand is zero
\RETURN $\mathtt{sum} - s \cdot \mathtt{vol}$
\end{algorithmic}
\end{algorithm}

With a method of decomposing a polynomial into a sum of products of affine functions possible, we describe polynomial time algorithms for integrating such integrands in the next two sections. 

\subsection{Domain: cone decomposition}
\label{sec:intpaf:domain:cone}

The next proposition outlines a method for integrating a product of affine functions over a polytope, where the polytope is decomposed into cones like the method in Section \ref{sec:intplf:domain:cone}.

\begin{theorem}[proposition 5.6 in \cite{brandon-handelman-paper}]
\label{thm:continuous-affnie-products-linear-forms-cones}
When the dimension $d$ and number of factors $n$ is fixed, the value of 
\[\int_{\PP} \frac{(\langle \ell_1, x\rangle + r_1)^{m_1} \cdots \langle (\ell_n, x\rangle + r_n)^{m_n}}{m_1!\cdots m_n!} \d x\]
can be computed in polynomial time in $M:=\sum_{i=1}^nm_i$ and the size of the input data.
\end{theorem}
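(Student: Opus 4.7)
The plan is to reduce the multivariate coefficient-extraction problem to the polynomial-time univariate machinery of Theorem~\ref{thm:both-parts-integrate-plf}. The starting identity is the standard exponential generating-function trick:
\[
\prod_{i=1}^n \frac{(\langle \ell_i,x\rangle+r_i)^{m_i}}{m_i!} \;=\; \bigl[t_1^{m_1}\cdots t_n^{m_n}\bigr]\,\prod_{i=1}^n e^{t_i(\langle \ell_i,x\rangle+r_i)},
\]
so that, integrating over $\PP$ and interchanging sum with integral,
\[
\int_\PP \prod_{i=1}^n \frac{(\langle\ell_i,x\rangle+r_i)^{m_i}}{m_i!}\,\d x \;=\; \bigl[t^{m}\bigr]\,G(t),\qquad G(t):=\int_\PP e^{\langle\tilde\ell(t),x\rangle+\tilde r(t)}\,\d x,
\]
where $\tilde\ell(t)=\sum_i t_i\ell_i$ and $\tilde r(t)=\sum_i t_i r_i$ depend linearly on $t=(t_1,\dots,t_n)$.

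Next I would collapse $t$ down to a single auxiliary variable by substituting $t_i=\tau\xi_i$ with formal parameters $\xi_1,\dots,\xi_n$. Then $G(\tau\xi)=e^{\tau\tilde r(\xi)}\,I(\PP,\tau\tilde\ell(\xi))$, and expanding in $\tau$ shows that the coefficient of $\tau^M$ is the polynomial $Q_M(\xi)=\tfrac{1}{M!}\int_\PP L_\xi(x)^M\,\d x$, where $L_\xi(x)=\sum_i\xi_i(\langle\ell_i,x\rangle+r_i)$ is an affine function of $x$ with $\xi$-dependent coefficients. Matching the two expansions of $G(\tau\xi)$ gives
\[
Q_M(\xi)=\sum_{|m|=M} \xi^{m}\int_\PP\prod_{i=1}^n\frac{(\langle\ell_i,x\rangle+r_i)^{m_i}}{m_i!}\,\d x,
\]
so \emph{every} integral of the desired kind with $\sum m_i=M$ is the coefficient of the corresponding $\xi$-monomial in the \emph{single} polynomial $Q_M(\xi)$. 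Since $Q_M$ has total degree $M$ in $n$ variables, it has only $\binom{M+n-1}{n-1}=O(M^{n-1})$ monomials, which is polynomial in $M$ for fixed $n$.

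It remains to compute $Q_M(\xi)$ explicitly. Invoking the Brion--Lasserre--Lawrence--Varchenko decomposition (Lemma~\ref{ch:bg:lemma-integration-brion}), triangulating each tangent cone into simplicial cones (polynomially many for fixed $d$), and applying Proposition~\ref{prop:integral-exp-simplicial} yields
\[
Q_M(\xi)=\sum_{s,\,C}\vol(\Pi_C)\cdot\frac{\bigl(\tilde r(\xi)+\langle\tilde\ell(\xi),s\rangle\bigr)^{M+d}}{(M+d)!\,\prod_{i=1}^d\bigl(-\langle\tilde\ell(\xi),u_i\rangle\bigr)},
\]
where the sum ranges over the simplicial cones $s+C$ produced by the decomposition and $u_1,\dots,u_d$ are the rays of~$C$. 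Individually each summand is a rational function of $\xi$, but once all cones are summed the $\xi$-poles cancel and the result is the honest polynomial $Q_M(\xi)$.

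The main obstacle is extracting the $\xi$-monomial coefficients of this sum of rational functions in polynomial time, since different cones have different denominators and a naive common-denominator approach is too costly. I would resolve this in either of two equivalent ways. The first is interpolation: evaluate $Q_M$ at $O(M^{n-1})$ generic rational points $\xi^{(\nu)}$; each evaluation reduces to integrating one fixed power of a single affine form over $\PP$ (after embedding $\PP$ at height~$1$ in $\R^{d+1}$ to turn the affine form into a linear one), which Theorem~\ref{thm:both-parts-integrate-plf} accomplishes in polynomial time, and a Vandermonde-type solve then recovers all the monomial coefficients of $Q_M$. The second is the symbolic perturbation used in the proof of Theorem~\ref{thm:both-parts-integrate-plf}: introduce $\xi\mapsto\xi+\epsilon\eta$ with $\eta$ chosen so that every denominator $\langle\tilde\ell(\xi+\epsilon\eta),u_i\rangle$ is invertible as a power series in $\epsilon$, extract the $\epsilon^0$-coefficient cone by cone via truncated Laurent series arithmetic (each such multiplication costing polynomial time by Lemma~\ref{lemma:poly-mult}), and sum. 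In either route, a polynomial number of simplicial cones, a polynomial number of $\xi$-monomials to track, and polynomial-time truncated series manipulations combine to deliver the claimed polynomial running time in $M$ and the binary input size.
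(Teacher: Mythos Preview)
Your proposal is correct, and your \emph{second} route is essentially what the paper does, while your \emph{first} (interpolation) route is a genuinely different alternative.

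The paper skips the $\tau$ intermediary entirely and works directly in $n+1$ formal variables. It substitutes $\ell\mapsto \ell_1 t_1+\cdots+\ell_n t_n+\ell_{n+1}t_{n+1}$ into the Brion decomposition $I(\PP,\ell)=\sum_{s,C}\vol(\Pi_C)\,e^{\langle\ell,s\rangle}\prod_i\frac{1}{-\langle\ell,u_i\rangle}$, where $\ell_{n+1}$ is chosen regular on all rays. This is exactly your $\xi\mapsto\xi+\epsilon\eta$ move, with $t_{n+1}$ playing the role of~$\epsilon$ and $\ell_{n+1}$ that of~$\eta$. Each denominator $\langle\ell_1 t_1+\cdots+\ell_{n+1}t_{n+1},u_i\rangle$ is then inverted as a Laurent series in $t_{n+1}$ with polynomial coefficients in $t_1,\dots,t_n$ via the generalized binomial theorem; the resulting truncated products are assembled cone by cone using Lemma~\ref{lemma:poly-mult}, and the $t_{n+1}^0$ coefficient is kept. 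So the paper carries out your perturbation idea as a single multivariate truncated-series computation rather than via the two-stage $\tau/\xi$ framing.

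Your interpolation route is not in the paper and is a legitimate alternative: it trades one symbolic $(n{+}1)$-variable series computation for $O(M^{n-1})$ numerical evaluations plus a linear solve, each evaluation being a single power-of-affine-form integral handled by Theorem~\ref{thm:both-parts-integrate-plf}. One minor caveat: the height-$1$ embedding makes $\PP\times\{1\}$ non-full-dimensional in $\R^{d+1}$, so Theorem~\ref{thm:both-parts-integrate-plf} does not apply verbatim; it is cleaner to note that multiplying $I(\PP,t\ell)$ by $e^{tr}$ and extracting the $t^M$ coefficient directly yields $\tfrac{1}{M!}\int_\PP(\langle\ell,x\rangle+r)^M\,\d x$, which needs only a trivial modification of the residue formulas.
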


\begin{proof}
We will compute the polynomial

\[ \sum_{p_1+\cdots+p_n \leq M} \left( \int_{\PP} \frac{(\langle \ell_1, x\rangle + r_1)^{p_1} \cdots (\langle \ell_n, x\rangle + r_n)^{p_n}}{p_1!\cdots p_n!}  \d x \right) t_1^{p_1}\cdots t_n^{p_n}. \]
in polynomial time in $M$ and the input data, which contains the desired value as a coefficient of a polynomial in $t_1, \dots, t_n$.

We start with the exponential integral in the indeterminate $\ell$ from Section \ref{ch:bg:sec:integration-stuff}:

\[\int_{\PP} e^{\langle \ell, x\rangle} \d x
=
\sum_{s \in V(\PP)}\sum_{C \in D_s} \mathrm{vol}(\Pi_C) e^{\ll \ell, s\rr}  \prod_{i=1}^d \frac{1}{-\ll \ell, u_i\rr}.
\]

Note that because the dimension is fixed, the number of vertices and the number of simplicial cones at each feasible cone of $\PP$ is polynomial in the input size of $\PP$.

First pick $\ell_{n+1} \in \Q^d$ so that $\ll \ell_{n+1}, u \rr \neq 0$ for every ray $u$ in the simplicial cones $D_s$ at each vertex $s$. The set of points $\ell_{n+1}$ that fail this condition have measure zero, so $\ell_{n+1}$ can be picked randomly. Next, replace $\ell$ with $\ell_1t_1 + \cdots + \ell_{n+1}t_{n+1}$. To simplify notation, let $\ve t = (t_1, \dots, t_n)$, $\ve a_s = ( \ll \ell_1, s \rr, \dots,  \ll \ell_{n}, s \rr)$, $\ve r:= (r_1, \dots,  r_n)$, $\ve b_i := (\ll \ell_1, u_i \rr,  \dots, \ll \ell_{n}, u_i \rr)$, $\beta_i := \ll \ell_{n+1}, u_i \rr$, and $\boldsymbol{\ell_x} = (\ll \ell_1, x\rr, \dots, \ll \ell_{n}, x\rr)$. Then

\[\int_{\PP} e^{ \ll \boldsymbol{\ell_x}, \ve t\rr  + \ll \ve  r , \ve t \rr + \ll \ell_{n+1}, x\rr t_{n+1}}\d x
=
\sum_{s \in V(\PP)}\sum_{C \in D_s} \mathrm{vol}(\Pi_C)   \prod_{i=1}^d \frac{e^{\ll \ve a_s, \ve t \rr}e^{\ll \ve r, \ve t\rr}e^{\ll \ell_{n+1}, s \rr t_{n+1}}}{-\ll \ve b_i, \ve t \rr - \beta_it_{n+1}}.
\]

Note that the integrand of the left hand side is $e^{\ll \ell_{n+1}, x\rr t_{n+1}} \prod_{i=1}^n e^{(\ll \ell_i, x\rr + r_i)t_i}$, which when expanded in series form contains the desired result. We will compute the series expansion of each summand in the right hand side of the above equation in $t_1, \dots, t_n$ up to total degree $M$ where the power of $t_{n+1}$ is zero, and do this in polynomial time in $M$. 

Let $h_i$ be the series expansion of $e^{(\ll \ell_i, s\rr + r_i)t_i}$ up to degree $M$ in $t_i$ for $1 \leq i \leq n$. Let $h_{n + i}$ be the series expansion of 

\[\frac{1}{-\ll \ve b_i, \ve t \rr - \beta_it_{n+1}} = \sum_{k=0}^\infty (-1)^k (\ll \ve b_i, \ve t \rr)^k (\beta_{ij} t_{n+1})^{-1-k}. \]
in $t_i, \dots, t_n$ up to total degree $M$ using the generalized binomial  theorem. Applying Lemma \ref{lemma:poly-mult} to $H_1 := \prod_{i=1}^{n+d} h_i$ results in the series expansion of 

\[\prod_{i=1}^d \frac{e^{\ll \ve a_s, \ve t \rr}e^{\ll \ve r, \ve t\rr}}{-\ll \ve b_i, \ve t \rr - \beta_it_{n+1}}.
\]
up to total degree $M$ in $t_1, \dots, t_{n}$ and where the power of $t_{n+1}$ at most ranges from $-d(M+1)$ to $-d$. 

Let $h_{n+d+1}$ be the series expansion of $e^{\ll \ell_{n+1}, s \rr t_{n+1}}$ up to degree $d(M+1)$ in $t_{n+1}$. Next, use Lemma \ref{lemma:poly-mult} one last time while treating $t_{n+1}$ as a coefficient and truncating at total degree $M$ to compute $H_2 := H_1 h_{n+d+1}$. Any term where the power of $t_{n+1}$ is not zero can be dropped because $I(\PP, \ell_1t_1 + \cdots + \ell_{n+1}t_{n+1})$ is holomorphic in $t_{n+1}$. Repeating this calculation for every simplicial cone results in 

\[ \sum_{p_1+\cdots+p_n \leq M} \left( \int_{\PP} \frac{(\langle \ell_1, x\rangle + r_1)^{p_1} \cdots (\langle \ell_n, x\rangle + r_n)^{p_n}}{p_1!\cdots p_n!}  \d x \right) t_1^{p_1}\cdots t_n^{p_n}. \]
\end{proof}

\subsection{Domain: a full dimensional simplex}
\label{sec:intpaf:domain:simplex}

In \cite{baldoni-berline-deloera-koeppe-vergne:integration}, a polynomial time algorithm for integrating a product of linear forms was developed. In this section, we will extend the original proof to the slightly more general setting of integrating a product of affine functions. 

\begin{theorem}
\label{thm:continuous-affnie-products-linear-forms-simplex}
Let the dimension $d$ and number of factors $n$ be fixed. Let $\Delta \subset \R^d$ be a full dimensional simplex with vertices $s_1, \dots, s_{d+1}$, and let $\ell_1, \dots, \ell_n \in \R^d$. Then the value of 

\[\int_\Delta \frac{(\ll \ell_1, x\rr +r_1)^{m_1}\cdots (\ll \ell_n, x\rr +r_n)^{m_n}}{m_1!\cdots m_n!} \d x\]

can be computed in polynomial time in $M:=m_1 + \cdots + m_n$ and the usual input size.
\end{theorem}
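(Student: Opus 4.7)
The plan is to adapt the generating-function argument of Theorem \ref{thm:continuous-affnie-products-linear-forms-cones} to a single simplex, exploiting that each of the $d+1$ tangent cones is already simplicial and that Brion's formula takes a particularly clean closed form (underlying Theorem \ref{thm:integration-simplex-plf}). Introduce auxiliary variables $t_1,\dots,t_n$ and consider
\[J(\ve t) := \int_{\Delta} e^{\sum_{j=1}^{n} t_j(\ll \ell_j, x\rr + r_j)}\, \d x.\]
Expanding each exponential factor as a power series, the coefficient of $t_1^{m_1}\cdots t_n^{m_n}$ in the Taylor expansion of $J$ at $\ve t = 0$ is exactly the quantity we wish to compute, because the $1/m_j!$ factors come naturally out of the expansion of $e^{t_j(\cdots)}$. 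So it suffices to compute this one Taylor coefficient in polynomial time in $M$.

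Applying Brion (Lemma \ref{ch:bg:lemma-integration-brion} plus Proposition \ref{prop:integral-exp-simplicial}) naively with $\ell = \sum_j t_j \ell_j$ fails because each vertex denominator $\prod_{k\neq i}\ll \ell, s_i - s_k\rr$ vanishes identically at $\ve t = 0$. As in Theorems \ref{thm:integration-simplex-plf} and \ref{thm:continuous-affnie-products-linear-forms-cones}, I would bypass this by perturbing: pick a generic $\ell_{n+1}\in \Q^d$ with $\ll \ell_{n+1}, s_i-s_k\rr \neq 0$ for all pairs $i\neq k$, which can be obtained by drawing a small random vector and verifying the $\binom{d+1}{2}$ inequalities. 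Introduce a further formal variable $t_{n+1}$ and work with
\[\tilde J(\ve t, t_{n+1}) := \int_{\Delta} e^{\sum_{j=1}^{n} t_j(\ll \ell_j, x\rr + r_j) + t_{n+1}\ll \ell_{n+1}, x\rr}\, \d x,\]
whose $t_1^{m_1}\cdots t_n^{m_n} t_{n+1}^{0}$-coefficient equals that of $J$. Brion's formula now gives a valid meromorphic identity
\[\tilde J = d!\,\vol(\Delta)\, e^{\sum_j t_j r_j} \sum_{i=1}^{d+1} \frac{\exp\bigl(\sum_j t_j \ll \ell_j, s_i\rr + t_{n+1}\ll \ell_{n+1}, s_i\rr\bigr)}{\prod_{k\neq i}\bigl(\sum_j t_j\ll \ell_j, s_i-s_k\rr + t_{n+1}\ll \ell_{n+1}, s_i-s_k\rr\bigr)}.\]

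For each vertex $s_i$, expand each of the $d$ denominator factors as a geometric series with the nonvanishing term $t_{n+1}\ll \ell_{n+1}, s_i-s_k\rr$ factored out,
\[\frac{1}{t_{n+1}\ll\ell_{n+1}, s_i-s_k\rr + \sum_j t_j\ll\ell_j, s_i-s_k\rr} = \sum_{p\ge 0}\frac{(-1)^p\bigl(\sum_j t_j\ll\ell_j, s_i-s_k\rr\bigr)^p}{(t_{n+1}\ll\ell_{n+1}, s_i-s_k\rr)^{p+1}},\]
and truncate at total degree $M$ in $t_1,\dots,t_n$. Similarly truncate the Taylor series of $e^{\sum_j t_j r_j}$ and of $\exp\bigl(\sum_j t_j\ll\ell_j, s_i\rr + t_{n+1}\ll\ell_{n+1}, s_i\rr\bigr)$ at total degree $M$. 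Use Lemma \ref{lemma:poly-mult} to multiply these $d+2$ truncated series in the $n+1$ variables $t_1,\dots,t_{n+1}$, truncating only in $t_1,\dots,t_n$ at each step; the exponent of $t_{n+1}$ appearing stays in the range $[-d(M+1),\,M]$, so each intermediate polynomial has $O(M^{n+d+1})$ monomials. Summing over the $d+1$ vertices produces a truncated expansion of $\tilde J$, from which the target coefficient is read off.

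The main obstacle, mirroring the power-of-linear-form case, is bookkeeping the auxiliary variable $t_{n+1}$: individual vertex summands carry genuine poles along $t_{n+1}=0$, but the full sum $\tilde J$ is entire, so the negative $t_{n+1}$-powers must cancel across vertices. This forces me to retain all negative $t_{n+1}$-powers during the symbolic arithmetic rather than truncating them away; the geometric-series analysis bounds their range, making this manageable. With $d$ and $n$ fixed, every application of Lemma \ref{lemma:poly-mult} runs in polynomial time in $M$ and the bit-length of the input data, and summing over the $d+1$ vertex contributions yields the claimed polynomial-time algorithm.
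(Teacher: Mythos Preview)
Your argument is correct but takes a different route from the paper. You essentially rerun the cone-decomposition proof of Theorem~\ref{thm:continuous-affnie-products-linear-forms-cones} on the $d+1$ (already simplicial) tangent cones of~$\Delta$, which forces you to introduce the auxiliary variable~$t_{n+1}$, pick a regular perturbation $\ell_{n+1}$, track negative $t_{n+1}$-powers, and rely on their cancellation across the $d+1$ vertex summands. The paper instead starts from the simplex-specific holomorphic expansion
\[
\int_\Delta e^{\ll \ell, x \rr}\d x = d!\,\vol(\Delta) \sum_{k\in \N^{d+1}} \frac{\ll \ell, s_1\rr^{k_1} \cdots \ll \ell, s_{d+1} \rr^{k_{d+1}} }{(|k| + d)!},
\]
substitutes $\ell = \sum_j t_j\ell_j$, and multiplies by $e^{\ll \ve r,\ve t\rr}$. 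Because this series has no poles in $\ve t$, no perturbation or extra variable is needed: one simply truncates the sum at $|k|\le M$, expands each $\ll \boldsymbol{\ell_{s_i}}, \ve t\rr^{k_i}$ as a homogeneous polynomial, and multiplies by the truncated $e^{\ll \ve r,\ve t\rr}$ via Lemma~\ref{lemma:poly-mult}. Your approach is perfectly valid and shows that the simplex case is really a special case of the cone method; the paper's approach buys a cleaner algorithm with one fewer variable and no residue bookkeeping, which is the point of isolating the simplex case as a separate theorem. One small imprecision in your write-up: the $t_{n+1}$-part of the numerator exponential must be expanded to degree $d(M+1)$, not~$M$, so that it can cancel the $t_{n+1}^{-d(M+1)}$ terms from the $d$ geometric series.
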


\begin{proof}
Similar to the proof of \ref{thm:continuous-affnie-products-linear-forms-cones}, we instead compute the polynomial 
\[sum_{p_1 + \cdots p_n \leq M} \left(\int_\Delta \frac{(\ll \ell_1, x\rr +r_1)^{p_1}\cdots (\ll \ell_n, x\rr +r_n)^{p_n}}{p_1!\cdots p_n!} \d x \right) t_1^{p_1}\cdots t_n^{p_n}.
\]
We start with Lemma 8 in \cite{baldoni-berline-deloera-koeppe-vergne:integration} and write

\begin{equation}
\label{eq:lemma8HowToIntSimplex}
\int_\Delta e^{\ll \ell, x \rr}\d x = d! \vol(\Delta) \sum_{k\in \N^{d+1}} \frac{\ll \ell, s_1\rr^{k_1} \cdots \ll \ell, s_{d+1} \rr^{k_{d+1}} }{(|k| + d)!},
\end{equation}
where $|k|:=k_1 + \cdots k_{d+1}$. Replace $\ell$ with $\ell_1t_1 + \cdots + \ell_n t_n$ and multiply both sizes by $e^{r_1t_1 + \cdots +  r_nt_n}$ in Equation \eqref{eq:lemma8HowToIntSimplex}. 
To simplify notation, let  $\ve t = (t_1, \dots, t_n)$, $\ve r = (r_1, \dots, r_n)$, $\boldsymbol{\ell_{s_i}} = ( \ll \ell_1, s_i \rr, \dots, \ll \ell_n, s_i \rr)$, then we have

\begin{equation}
\label{eq:lemma8HowToIntSimplex2}
\int_\Delta e^{ \ll \ell_1, x \rr t_1 + \cdots + \ll \ell_n, x\rr t_n + \ll \ve r, \ve t\rr} \d x
= d! \vol(\Delta) e^{\ll \ve r, \ve t\rr} \sum_{k\in \N^{d+1}} \frac{\ll \boldsymbol{\ell_{s_1}}, \ve t \rr^{k_1} \cdots \ll \boldsymbol{\ell_{s_{d+1}}}, \ve t \rr^{k_{d+1}} }{(|k| + d)!}.
\end{equation} 
Notice that the left hand now becomes 

\[ \int_\Delta e^{(\ll \ell_1, x \rr + r_1)t_n + \cdots (\ll \ell_n, x \rr + r_n)t_n }\d x = \sum_{(p_1,\dots, p_n) \in \N^{n}} \left(\int_\Delta \frac{(\ll \ell_1, x\rr +r_1)^{p_1}\cdots (\ll \ell_n, x\rr +r_n)^{p_n}}{p_1!\cdots p_n!} \d x \right) t_1^{p_1}\cdots t_n^{p_n}, \]
which contains the desired polynomial. Hence we seek to compute the series expansion of the right hand side of Equation \eqref{eq:lemma8HowToIntSimplex2} up to total degree $M$ in $t_1, \dots, t_n$. 

Let $g_i(t_i)$ be the series expansion of $e^{r_it_i}$ about $t_i=0$ up to degree $M$ in $t_i$ for $1 \leq i \leq n$. Using Lemma \ref{lemma:poly-mult}, let $H_1$ be the product $d!\vol(\Delta)\cdot g_1(t_1) \cdots g_n(t_n)$ truncated at total degree $M$ in $t_1, \dots t_n$, which is done in polynomial time in $M$. 

Notice that the sum in Equation \eqref{eq:lemma8HowToIntSimplex2} only needs to run for $k$ such that $k_1 + \cdots + k_n \leq M$. This produces $\binom{M + n}{ n }$ summands, which is $O(M^n)$ as $n$ is fixed. Let 
\[h_k(\ve t) :=\frac{\ll \boldsymbol{\ell_{s_1}}, \ve t \rr^{k_1} \cdots \ll \boldsymbol{\ell_{s_{d+1}}}, \ve t \rr^{k_{d+1}} }{(|k| + d)!}, \]
where $|k| \leq M$. The expansion of each $\ll \boldsymbol{\ell_{s_i}}, \ve t \rr^{k_i}$ is a homogenous polynomial of degree $k_i$, which has $\binom{k_i + n -1}{n-1} = O(k_i^{n-1})$ terms. Multiplying everything out in $h_k(\ve t)$ results is at most $O(M^{(n-1)(d+1)})$ terms. Hence $h_k(\ve t)$ can be expanded into a polynomial in polynomial time in $M$.

Using Lemma \ref{lemma:poly-mult} again, the product $H_1 \sum_{k \in \N^n, |k| \leq M} h_k(\ve t)$, truncated at total degree $M$ can be computed in polynomial time. 
\end{proof}

\subsection{Benefits of Handelman decomposition}
\label{sec:handelman-example}

In Sections  \ref{sssec:handelman-same-form}, \ref{sssec:handelman-few-terms}, \ref{sssec:handelman-sparse}, we discuss three benefits of the Handelman decomposition: our algorithms for integrating one Handelman term have reusable computations, the decomposition can have few terms, and it can have sparse LP solutions.

\subsubsection{Handelman terms have the same form}
\label{sssec:handelman-same-form}
The polynomial time algorithms outlined in Theorems \ref{thm:both-parts-integrate-plf} and \ref{thm:my-proof-plf-simplex} compute the integral of just one power of a linear form, while the algorithms outlined in Theorems \ref{thm:continuous-affnie-products-linear-forms-cones} and \ref{thm:continuous-affnie-products-linear-forms-simplex} compute all the integrals of produces of affine functions up to a given degree. We stress the fact that every term in a Handelman decomposition of a polynomial $f(x)$ is in the same form: $g_1^{\alpha_1} \cdots g_n^{\alpha_n}$ where the $g_i$ corresponds to the rows of the polytope's $\PP$ inequality constraints $b - Ax \geq 0$. The only part that changes between Handelman terms in the powers $\alpha_1, \dots, \alpha_n$. This means, the integral of every Handelman term can be computed via one series computation from Theorems \ref{thm:continuous-affnie-products-linear-forms-cones} and \ref{thm:continuous-affnie-products-linear-forms-simplex}. Hence one big benefit of using the Handelman decomposition over a power of a linear form decomposition, is that the computation in Theorems \ref{thm:continuous-affnie-products-linear-forms-cones} and \ref{thm:continuous-affnie-products-linear-forms-simplex} can be applied to many Handelman terms at once.

\subsubsection{Handelman decompositions can have fewer terms}
\label{sssec:handelman-few-terms}
Next we compare between decomposing a random polynomial into a sum of powers of a linear form using Equation (\ref{eq:decomp-powerlinform}) and the Handelman method. We constructed a set of random polynomials in dimensions 3, 4, and 5 of total degree ranging from 3 to 8. For each dimension and degree pair, five random polynomials where constructed. Each polynomial was $20\%$ dense, meaning, $20\%$ of the coefficients of each polynomial out of the possible $\binom{d+D}{d}$ were nonzero. For the Handelman decomposition, the polytope $\PP$ was picked to be the box $[-1, 1]^d$, and the objective function enforces a sparse decomposition and minimizes $s$. Figure \ref{fig:percent-improvement-handelman-plf} illustrates how much better the Handelman decomposition can be over the power of linear form formula. The figure plots the average percent change between the number of terms in each method. 

For example, looking at the degree 8 polynomials, the Handelman decomposition had about $40\%$ fewer terms than the power of linear forms formula among the 15 test polynomials (five in dimension 3, 4, and 5, each). Among the 15 test polynomials, the best case had about $50\%$ fewer terms while the worst case has about $30\%$ fewer terms. 

\begin{figure}[h!]
  \caption{Percent improvement of the number of terms from a Handelman decomposition versus the number of terms from the power of linear form formula. Bars reflect min and max percent improvements.}
  \centering
    \includegraphics[width=0.90\textwidth]{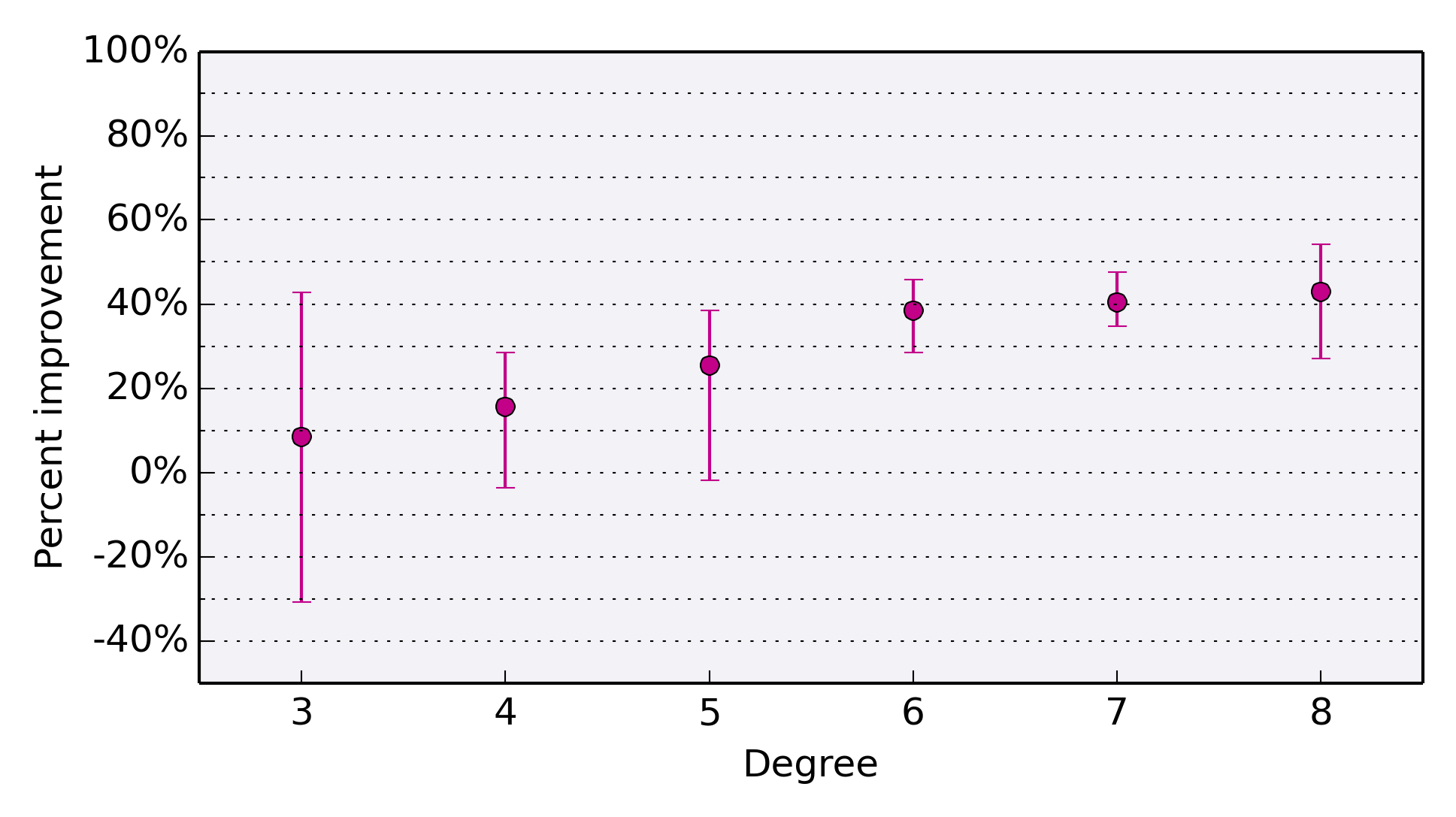}
    \label{fig:percent-improvement-handelman-plf}
\end{figure}

\begin{figure}[h!]
  \caption{Average number of terms between the Handelman decomposition and power of linear form decomposition.}
  \centering
    \includegraphics[width=0.90\textwidth]{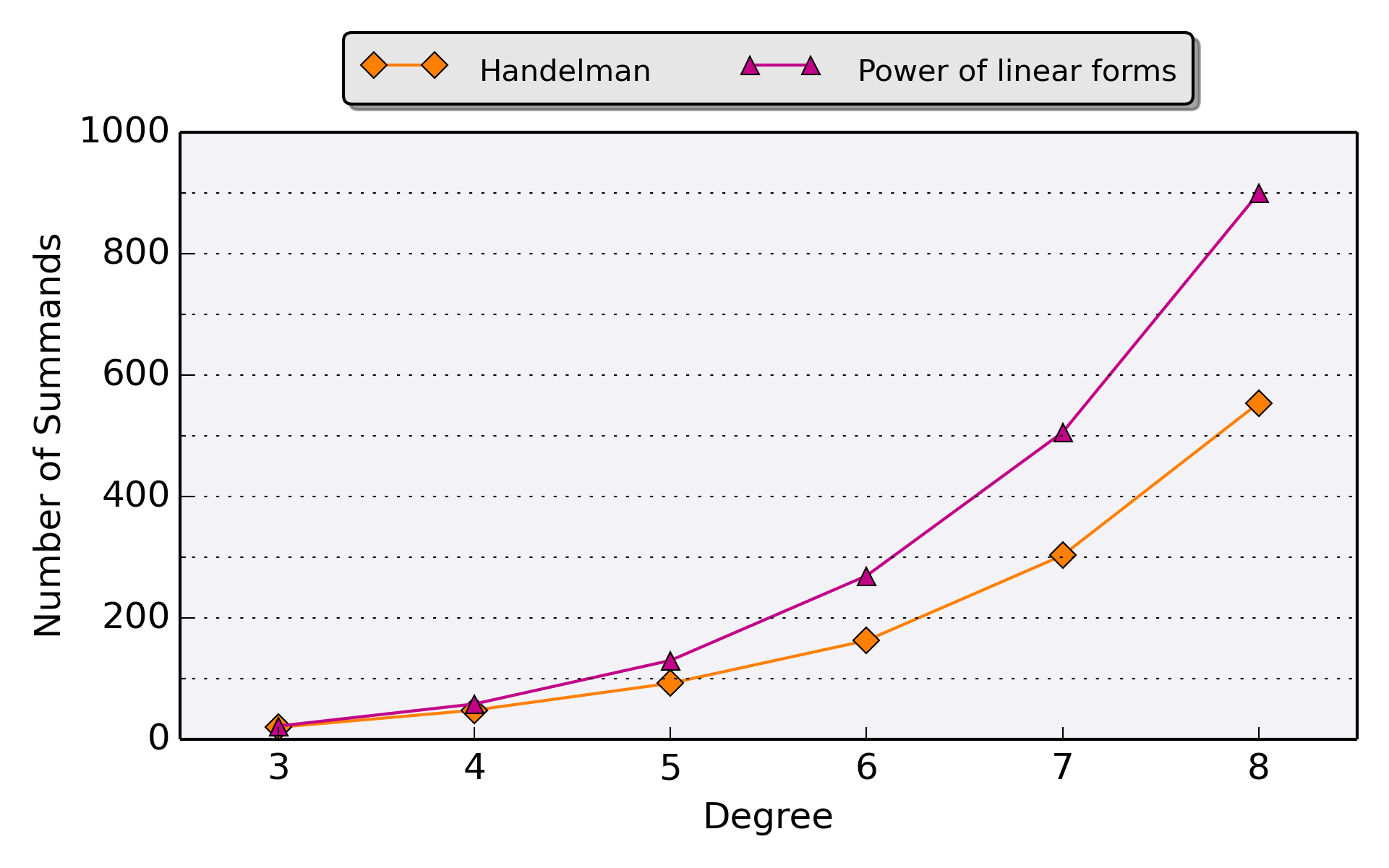}
    \label{fig:raw-improvement-handelman-plf}
\end{figure}

\begin{figure}[h!]
  \caption{Percent improvement of the average number of terms from a Handelman decomposition found by solving a linear program where the objective contains a sparsity term versus the number of nonzeros of a generic basic feasible solution. Bars reflect min and max percent improvements.}
  \centering
    \includegraphics[width=0.90\textwidth]{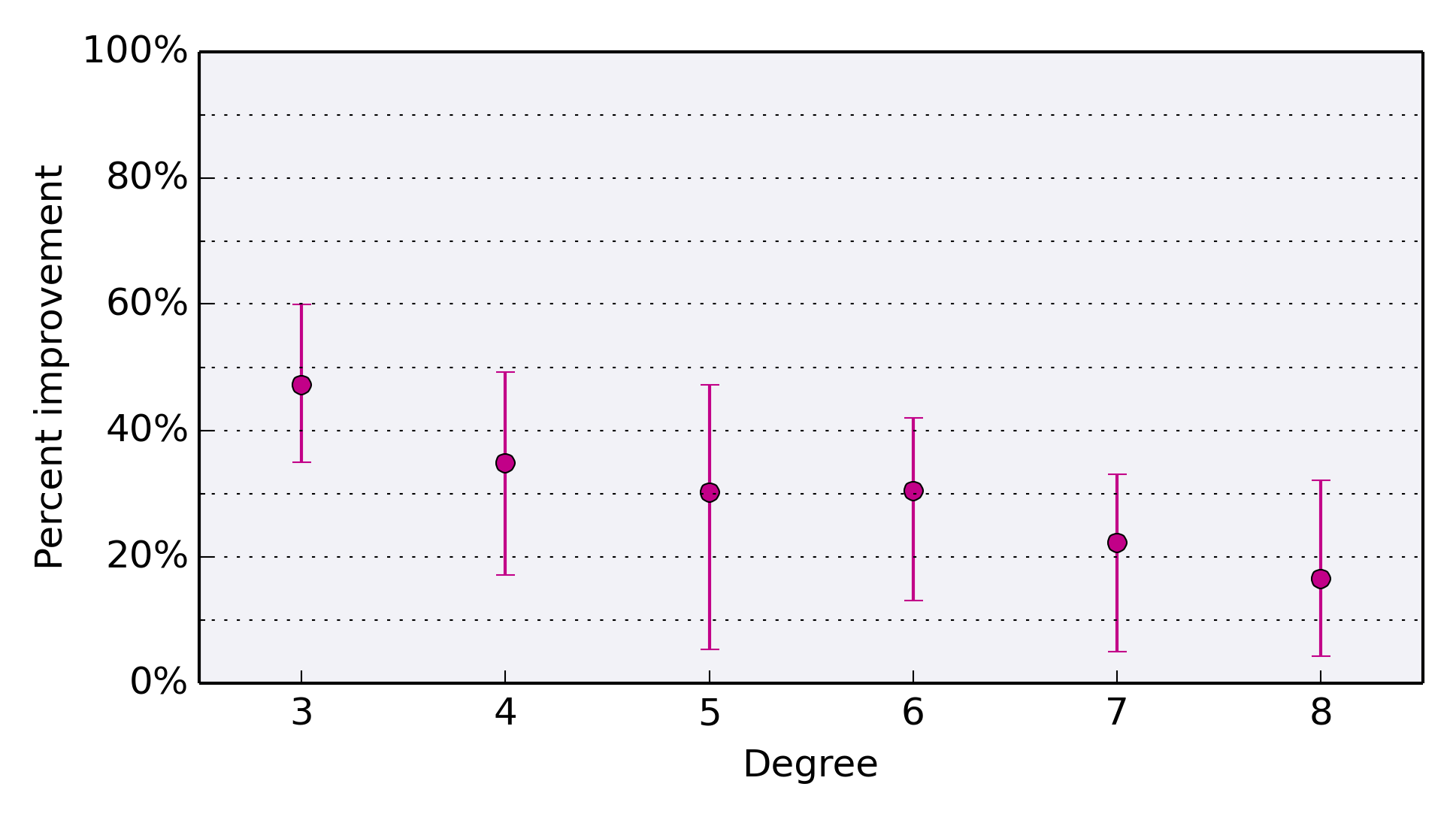}
    \label{fig:percent-improvement-handelman-sparsity}
\end{figure}

One interesting fact is that there are one or two examples in degree 3, 4, and 5 where the Handelman decomposition had a few more terms than the power of linear form decomposition (resulting in a negative percent improvement). However, Figure \ref{fig:raw-improvement-handelman-plf} reveals that the difference in the number of terms between both methods is small in these dimensions. Hence the benefit of the Handelman decomposition is less important for the low degree polynomials. However, the Handelman decomposition also discovers a lower bound to shift $f(x)$ to make it nonnegative on $\PP$.

\subsubsection{Handelman decompositions can be sparse}
\label{sssec:handelman-sparse}
Finally, we consider the effect of including a sparse term in the objective function of the linear program in Equation \eqref{eq:sparse-lp-handelman}. Figure \ref{fig:percent-improvement-handelman-sparsity} shows the average percent improvement between the number of linear forms found from our linear program and the number of nonzeros a generic basic feasible solution would have. For example, in degree $8$ the Handelman linear program solutions contained about $20\%$ fewer terms than a generic basic feasible solution would have, and the sparsest solution contained $30\%$ fewer terms while the densest solution had $5\%$ fewer terms than a generic basic solution.  Every linear program is degenerate. This shows that it could be worthwhile to build an objective function that controls sparsity. Of course, any objective function that is not just $\min s$ can result in $-s$ being a poor lower bound for $\fmin$.

One troubling fact with searching for a Handelman decomposition of order $t$ on a polytope $\PP$ with $n$ facets is that a linear program of size $\binom{t+d}{d} \times \binom{t+n}{n}$ needs to be solved. This brute force search quickly becomes impractical for large dimensions or with complicated polytopes. One way to reduce this cost when $n$ is large is to contain $\PP$ within a larger simpler polytope like a box or simplex, or to decompose $\PP$ into simpler polytopes. Another idea is to use row and column generation methods for solving the linear program. 

Table \ref{table:handelman-formula-comparison} lists the key points made about our two polynomial decomposition methods.

\begin{table}
\begin{tabularx}{\linewidth}{>{\parskip1ex}X@{\kern4\tabcolsep}>{\parskip1ex}X}
\toprule
\hfil\bfseries Handelman decomposition
&
\hfil\bfseries Decomposition using Equation \eqref{eq:decomp-powerlinform}
\\\cmidrule(r{3\tabcolsep}){1-1}\cmidrule(l{-\tabcolsep}){2-2}

\begin{itemize}[leftmargin=*]
\item harder to find decomposition
\item fewer terms
\item domain dependent
\item only exists for positive polynomials on $\PP$, but this can be worked around
\item $s$ gives lower bound on $\fmin$ on $\PP$ 
\end{itemize}
&

\begin{itemize}[leftmargin=*]
\item easy to find decomposition
\item many terms
\item domain independent
\item works on any polynomial
\end{itemize}

\\\bottomrule
\end{tabularx}
\caption{Comparison between two polynomial decompositions}
\label{table:handelman-formula-comparison}
\end{table}

\chapter{Polynomial Optimization}
\label{ch:polynomialOptimization}

In this chapter, we focus on optimizing a polynomial $f(x)$ over a polytope $P$. In particular, we explore the discrete and continuous optimization problems:

\begin{equation*}
\begin{split}
\max & \; f(x) \\
 & x \in \PP \cap \Z^d, 
\end{split}
\end{equation*}
and
\begin{equation*}
\begin{split}
\max & \; f(x) \\
 & x \in \PP.
\end{split}
\end{equation*}

We denote this maximum by $\fmax$. The exact optimization problems are hard. When the dimension is allowed to vary, approximating the optimal objective value of polynomial programming problems is still hard. See Section \ref{ch:bg:sh:poly-complexity} for a review of the complexity for these problems. Hence in this chapter, we develop  approximation algorithms for these optimization problems that run in polynomial time when the dimension is fixed. 

Our methods are general and do not assume $f$ is convex nor that it has any other special properties. At first, it will be necessary to assume $f(x)$ is nonnegative on $P$, but this restriction can be resolved. At the heart of our methods is the need to efficiently evaluate (when the dimension is fixed) $\sum_{x \in \PP\cap \Z^d} f(x)^k$ or $\int_{\PP} f(x)^k\d x$ for a given $k \in \N$. There are many references in the literature on how to compute these sums and integrals, see \cite{Baldoni2011WeightedEhrhart, baldoni-berline-deloera-koeppe-vergne:integration, bronstein2005symbolic, CUBPACK, deloera:software-exact-integration-polynomials}. Our focus is on methods that use \emph{generating functions} \cite{barvinokzurichbook}. 

When the dimension is fixed, there are many other polynomial time approximation algorithms for polynomial programming. For instance Parrilo \cite{parrilo2003SDP}, building on work by Lasserre \cite{Lasserre01globaloptimization} and Nesterov \cite{nesterov2000sqFunctinalSystems}, developed such methods using sums of squares optimization. These methods were further developed in \cite{anjos2011handbook, marshall2008positive}. Other methods for polynomial optimization have been developed in \cite{Behrends2015, Claudia2014, deKlerk2006210, Thanh2013, tawarmalani2002convexification, baron}.

Handelman's theorem (see Section \ref{sec:integration-prducts-affine-functions}) has also been used to optimize a polynomial in \cite{laurentsurvey, monique2014, ParriloSturmfels2003, sankaranarayanan2013lyapunov}. Using Handelman to maximize a polynomial directly would require finding a Handelman decomposition of large degree, which becomes more difficult as degree increases. When we use Handelman's theorem, it is only for producing a small degree Handelman decomposition of $f(x)$ to use with the polynomial approximation theorems.

Section \ref{ch:po:intopt} is a overview of \cite{deloera-hemmecke-koeppe-weismantel:intpoly-fixeddim} for optimizing a polynomial over $\PP \cap \Z^d$. As a side note, the authors of \cite{deloera-hemmecke-koeppe-weismantel:mixedintpoly-fixeddim-fullpaper} used the methods in \cite{deloera-hemmecke-koeppe-weismantel:intpoly-fixeddim} to develop efficient algorithms for optimizing a polynomial over the continuous domain $\PP$ and a mixed-integer domain $\PP \cap (\R^{d_1} \times \Z^{d_2})$. Then in Section \ref{ch:po:contopt}, a dedicated algorithm for the continuous optimization problem is developed, which was developed in \cite{brandon-handelman-paper}.

\section{Prior work: integer optimization of polynomials}
\label{ch:po:intopt}
The \emph{summation method} for optimization uses the elementary relation \[\max\{s_1, \dots, s_N\} = \lim_{k \rightarrow \infty} \sqrt[k]{s_1^k | \cdots + s_N^k}\] which holds for any finite set $S = \{s_1, \dots, s_N \}$ of nonnegative real numbers. This relation can be viewed as an approximation result for $\ell_k$-norms. Let $f(x) \in \Q[x_1, \dots, x_d]$ be a polynomial in $d$ variables that is nonnegative over $\PP$, which is a full-dimensional polytope in $\R^d$. Our goal is to solve the problem

\begin{equation}
\label{equ:discreteOpt}
\begin{split}
\max & \; f(x) \\
 & x \in \PP \\
 & x_i \in \Z.
\end{split}
\end{equation}

This is an NP-hard problem, so instead we seek lower and upper bounds for the maximum. Because $f$ is nonnegative on $\PP \cap \Z^d$, let $\norm{f}_k^k := \sum_{x \in \PP \cap \Z^d} f(x)^k$, and $\norm{f}_\infty = \max_{x \in \PP \cap \Z^d} f(x)$. Using the above norm-limit idea, we get the bounds

\[
\norm{f}_k/\sqrt[k]{N} \leq \norm{f}_\infty \leq \norm{f}_k,
\]
where $N := | \PP \cap \Z^d|$.

\begin{example}
Let $f(x) = x_1^2x_2 - x_1x_2$ with $x_1 \in [1,3]\cap \Z$ and $x_2\in [1,3] \cap \Z$. Then $f$ takes $N=9$ nonnegative values on its domain: $\{0, 2, 6, 0, 4, 12, 0, 6, 18\}$. Computing the bounds for different $k$ gives: 

\begin{center}
\begin{tabular}{ c  c c }

  $k$ & $\norm{f}_k/\sqrt[k]{9}$ & $\norm{f}_k$\\
  \hline
  10 & 14.47 & 18.03  \\
  20 & 16.12 & 18.00  \\
  30 & 16.72 & 18.00  \\
  40 & 17.03 & 18.00  \\    
\end{tabular}
\end{center}

Because $f$ has integer coefficients, for $k=40$ the bounds $17.03 \leq \fmax \leq 18$ implies $\fmax = 18$.

\end{example}

This example illustrates that $\fmax$ can be approximated by computing $\norm{f}_k$. However, to obtain a polynomial time algorithm, enumerating the lattice points in $\PP \cap \Z^d$ must be avoided. When the domain is a box, $\norm{f}_k$ can be computed by first expanding $g(x):= f(x)^k$, and then for each resulting monomial, $c x_1^{m_1}\cdots x_d^{m_d}$, computing $\sum_{x \in \PP \cap \Z^d} c x_1^{m_1}\cdots x_d^{m_d}$ by repeated application of Bernoulli's formula (which is also sometimes called Faulhaber's formula). The latter is the explicit polynomial in $n$ of degree $p+1$ for $F(n,p):=\sum_{j=1}^n j^p$, see \cite{ConwayBookOfNumbers}.

\begin{example}
Let $f(x) = x_1^2x_2 - x_1x_2$ with $x_1 \in [-5,6]\cap \Z$ and $x_2\in [1,3] \cap \Z$. Find  $\norm{f}_1$.

We will compute the sum monomial-by-monomial. The degree of the variables are $1$ and $2$, so $F(n,1)=(n^2 + n)/2$ and $F(n,2)=(2n^3 + 3n^2 +n)/6$. Notice that the domain of $x_1$ does not start at $1$, so we will need to evaluate $F(n,1)$ and $F(n,2)$ at different points and subtract or add.

\textit{summing the first monomial:} 
\begin{align*}
\sum_{x_1=-5}^{6} \sum_{x_2=1}^{3} x^2_1x_2 &= \sum_{x_1=-5}^{6} x_1^2 \sum_{x_2=1}^{3} x_2 \\
&= (F(5,2) + F(6,2)) F(3,1)\\
&= (55 + 91) 6 \\
&= 876
\end{align*}

\textit{summing the second monomial:} 
\begin{align*}
\sum_{x_1=-5}^{6} \sum_{x_2=1}^{3} -x_1x_2 &= -\sum_{x_1=-5}^{6} x_1 \sum_{x_2=1}^{3} x_2 \\
&= -(-F(5,1) + F(6,1)) F(3,1)\\
&= -(-15 + 21) 6\\
&= -36
\end{align*}

Thus, $\norm{f}_1 = 876 - 36 = 840$.

\end{example}

A lower and upper bounds for the discrete case are reviewed below.

\begin{theorem}[Theorem 1.1 in \cite{deloera-hemmecke-koeppe-weismantel:intpoly-fixeddim}]
\label{thm:first-fptas}
Let the number of variables $d$ be fixed. Let $f(x)$  be a polynomial of maximum total degree $D$ with integer coefficients, and let $\PP$ be a convex rational polytope defined by linear inequalities in $d$ variables. Assume $f(x)$ is nonnegative over $\PP$. There is an increasing sequence of lower bounds $\{L_k\}$ and a decreasing sequence of upper bounds $\{U_k\}$ to the optimal value of 
\[ \max  f(x) \text{ such that }  x \in \PP \cap \Z^d\]
that has the following properties:
\begin{enumerate}
\item The bounds are given by
\[ L_k := \sqrt[k]{\frac{\sum\limits_{\alpha \in \PP\cap \Z^d} f(\alpha)^k}{ |\PP \cap \Z^d|}} \leq \max\{f(\alpha) \mid \alpha \in \PP \cap \Z^d\} \leq  \sqrt[k]{\sum\limits_{\alpha \in \PP\cap \Z^d} f(\alpha)^k} := U_k. \]
\item $L_k$ and $U_k$ can be computed in time polynomial in
$k$, the input size of $\PP$ and $f$, and the total degree $D$. 
\item The bounds satisfy the following inequality:
\[U_k - L_k \leq \fmax \left(\sqrt[k]{|\PP\cap \Z^d|} - 1\right). \]
\item In addition, for $k = (1 + 1/\epsilon)\log(|\PP\cap \Z^d|)$, $L_k$ is a $(1-\epsilon)$-approximation to the optimal value $\fmax$ and it can be computed in time polynomial in the input size, the total degree $D$, and $1/\epsilon$. Similarly, $U_k$ gives a $(1 + \epsilon)$- approximation to $\fmax$. Moreover, with the same complexity, one can also find a feasible lattice point that approximates an optimal solution with similar quality.
\end{enumerate}
\end{theorem}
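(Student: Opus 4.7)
The plan hinges on the elementary identity $\fmax = \lim_{k\to\infty}\sqrt[k]{\norm{f}_k^k}$, which is available because $f$ is assumed nonnegative on $\PP\cap\Z^d$. All four conclusions will follow from analyzing how fast this limit converges together with how efficiently the power sum $\sum_{\alpha\in\PP\cap\Z^d} f(\alpha)^k$ can be evaluated.

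Parts (1) and (3) are essentially arithmetic. Writing $N:=|\PP\cap\Z^d|$, the sandwich $\fmax^k \le \sum_{\alpha}f(\alpha)^k \le N\fmax^k$ is immediate from $f\ge 0$, and taking $k$-th roots yields $\fmax/N^{1/k} \le L_k \le \fmax \le U_k \le N^{1/k}\fmax$. Since $L_k = U_k/N^{1/k}$ by construction, the gap satisfies $U_k-L_k = U_k(1-N^{-1/k}) \le \fmax(N^{1/k}-1)$, which is part~(3). Monotonicity of $\{L_k\}$ (increasing) and $\{U_k\}$ (decreasing) is the standard power-mean inequality applied to the multiset $\{f(\alpha)\}_{\alpha\in\PP\cap\Z^d}$.

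The main obstacle is part~(2): computing $\norm{f}_k^k$ in time polynomial in $k$. My strategy reduces to the tools developed earlier in the dissertation. First, expand $f(x)^k$ as a sum of monomials; since $\deg(f^k)=Dk$ there are $O((Dk)^d)$ of them, enumerable and stored using the burst-trie data structure of Section~\ref{section-software}. Next, decompose each monomial into powers of linear forms via Equation~\eqref{eq:decomp-powerlinform}, producing $O((Dk)^{2d})$ terms overall, which is polynomial in $k$ when $d$ is fixed. Finally, for each term $c\ll\ell,x\rr^m$ I would compute $\sum_{\alpha\in\PP\cap\Z^d}\ll\ell,\alpha\rr^m$ by running Barvinok's dual algorithm (Algorithm~\ref{alg:barvinok-dual}) to obtain the short rational form of $S(\PP,t\ell)$, then extracting the coefficient of $t^m/m!$ in the Laurent expansion about $t=0$; the required truncated series multiplications run in polynomial time by Lemma~\ref{lemma:poly-mult}. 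Summing the contributions yields $\norm{f}_k^k$, while $N$ is recovered as $S(\PP,0)$. The comparisons with $\fmax$ can be carried out exactly on the $k$-th powers, which are rational, so no precision issues arise.

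Part~(4) is then a direct calibration of $k$. To guarantee $L_k\ge(1-\epsilon)\fmax$ it suffices that $N^{1/k}\le 1/(1-\epsilon)$, and because $\log\frac{1}{1-\epsilon}\ge\epsilon$ the choice $k=\lceil(1+1/\epsilon)\log N\rceil$ works; the symmetric estimate $\log(1+\epsilon)\ge\epsilon/(1+\epsilon)$ gives $U_k\le(1+\epsilon)\fmax$ for the same~$k$. Since $\log N$ is polynomially bounded in the binary encoding of~$\PP$, this $k$ is polynomial in the input size and $1/\epsilon$, so part~(2) delivers the claimed FPTAS running time. To produce a feasible lattice point of near-optimal value in the same complexity, I would use the $L_k$ evaluator as a bisection oracle: iteratively halve $\PP$ along a coordinate hyperplane, keeping the half whose restricted $L_k$ matches the global estimate; after polynomially many halvings the search narrows to a single lattice point certified to approximate an optimizer.
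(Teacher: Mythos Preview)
The dissertation does not give its own proof of this theorem: it is quoted verbatim as Theorem~1.1 of \cite{deloera-hemmecke-koeppe-weismantel:intpoly-fixeddim} and treated as prior work. The only part the dissertation elaborates on is the computational step~(2), and there it explicitly contrasts two methods: the original paper computes $\sum_{\alpha\in\PP\cap\Z^d} g(\alpha)$ for $g=f^k$ by differentiating the multivariate generating function $\sum_\alpha g(\alpha)z^\alpha$ at $z=\mathbf 1$, which the dissertation calls ``hard to implement''; the dissertation then advertises a simpler alternative based on decomposing $f^k$ into powers of linear forms (or Handelman terms) and summing each via Barvinok's short rational functions, culminating in Proposition~\ref{prop:discrete-affnie-products-linear-forms}.

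Your proposal is correct, and for part~(2) you have independently landed on the dissertation's alternative route rather than the original differentiation approach of \cite{deloera-hemmecke-koeppe-weismantel:intpoly-fixeddim}. Your elementary derivations of (1), (3), and the calibration of $k$ in (4) are standard and match what one finds in the cited source. The one place where your sketch is thinner than the original is the last clause of (4), finding a near-optimal lattice point: your coordinate-hyperplane bisection is a reasonable heuristic, but the cited paper uses a more careful argument (iteratively restricting to faces or fixing coordinates while monitoring the power sum) to guarantee polynomial termination; since the dissertation does not reproduce that argument either, your level of detail is consistent with what the paper provides.
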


Therefore, computing an approximation for Problem \eqref{equ:discreteOpt} via Theorem \ref{thm:first-fptas} requires summing the evaluation of a polynomial over the lattice points of a polytope. In order to compute $\sum_{\alpha \in \PP \cap \Z^d} f(\alpha)^k$, the authors in \cite{deloera-hemmecke-koeppe-weismantel:intpoly-fixeddim} suggest computing the expansion $g(x) := f(x)^k$ and then evaluating the generating function for $\sum_{\alpha \in \PP \cap \Z^d} g(\alpha)z^\alpha$ at $z=1$ by taking derivatives of the generating function for $\sum_{\alpha \in \PP \cap \Z^d} g(\alpha)z^\alpha$. They show this can be done in polynomial time when the dimension is fixed; however, is hard to implement. We will illustrate a simpler method that is similar to the ideas of Chapter \ref{ch:Integration}.

To use our generating functions for summing a polynomial over the lattice points of a polytope, the polynomial $g(x)= f(x)^k$ is first decomposed. Again, there are two options, $g(x)$ can be decomposed into a sum of powers of linear forms (via Equation \eqref{eq:decomp-powerlinform}) or a sum of products of affine functions (via Handelman's theorem in Section \ref{sec:integration-prducts-affine-functions}). Both of these decompositions can be constructed in polynomial time. 

What follows is the author's original method for computing the sum of a Handelman term. The method also produces an algorithm for summing a power of a linear form over a polytope by setting $n=1$ and $r_1=0$.

\begin{proposition}
\label{prop:discrete-affnie-products-linear-forms}
When the dimension $d$ and number of factors $n$ is fixed, the value of 
\[\sum_{x \in \PP\cap \Z^d} \frac{(\langle \ell_1, x\rangle + r_1)^{m_1} \cdots (\langle \ell_n, x\rangle + r_n)^{m_n}}{m_1!\cdots m_n!} \]
can be computed in polynomial time in $M:=\sum_{i=1}^nm_i$ and the size of the input data.
\end{proposition}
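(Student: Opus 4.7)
My plan is to mirror the proof of Theorem~\ref{thm:continuous-affnie-products-linear-forms-cones}, the continuous analogue, but with Barvinok's discrete generating function (Algorithm~\ref{alg:barvinok-dual}) in place of Brion's polyhedral decomposition of the exponential integral. First I would encode the desired sum as a Taylor coefficient of the generating function
\[
H(t_1,\dots,t_n) \;=\; e^{\sum_{i=1}^n r_i t_i}\,S\!\left(\PP,\; \sum_{i=1}^n t_i \ell_i\right),
\]
since $\prod_i e^{(\langle \ell_i,x\rangle + r_i)t_i}$ is the exponential generating function of $\prod_i (\langle \ell_i,x\rangle + r_i)^{m_i}/m_i!$. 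Thus it suffices to compute the Taylor expansion of $H$ at $t=0$ up to total degree $M$ in polynomial time and then read off the coefficient of $t_1^{m_1}\cdots t_n^{m_n}$, simultaneously for all $(m_1,\dots,m_n)$ with $\sum_i m_i = M$.

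By Algorithm~\ref{alg:barvinok-dual}, in fixed dimension $d$ one can write
\[
S(\PP,\ell) \;=\; \sum_{k} \epsilon_k\,\frac{e^{\langle \ell,v_k\rangle}}{\prod_{j=1}^d \bigl(1-e^{\langle \ell,u_{kj}\rangle}\bigr)}
\]
over polynomially many simplicial unimodular cones with apices $v_k$ and primitive rays $u_{k1},\dots,u_{kd}$. The obstacle, just as in the continuous case, is that substituting $\ell=\sum_i t_i\ell_i$ makes each individual summand singular at $t=0$, because the linear forms $\sum_i t_i\langle \ell_i,u_{kj}\rangle$ may vanish identically or at $t=0$. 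These singularities cancel in the full sum, since $S(\PP,0)=|\PP\cap\Z^d|$ is finite, but to operate on each cone individually I would regularize exactly as in the continuous proof: pick an auxiliary $\ell_{n+1}\in\Q^d$ with $\langle \ell_{n+1},u_{kj}\rangle\neq 0$ for every ray across every Barvinok cone (possible since there are only polynomially many rays and the bad set is a finite union of hyperplanes), introduce a fresh formal variable $t_{n+1}$, and substitute $\ell=\sum_{i=1}^n t_i\ell_i + t_{n+1}\ell_{n+1}$. The desired expansion of $H$ is then recovered by extracting the $t_{n+1}^0$-part after summing the per-cone Laurent series in $t_{n+1}$.

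For each Barvinok cone I would then compute the regularized summand as follows. The numerator $e^{\langle \sum_i t_i\ell_i + t_{n+1}\ell_{n+1},v_k\rangle + \sum_i r_i t_i}$ is a product of ordinary exponentials and expands to any prescribed total degree in $(t_1,\dots,t_{n+1})$ using Lemma~\ref{lemma:poly-mult}. For each denominator factor $1-e^{A_{kj}}$ with $A_{kj} = t_{n+1}\langle \ell_{n+1},u_{kj}\rangle + \sum_i t_i\langle \ell_i,u_{kj}\rangle$, the nonzero $t_{n+1}$-coefficient lets me factor $A_{kj}=t_{n+1}\beta_{kj}(1+Y_{kj})$ with $\beta_{kj}=\langle \ell_{n+1},u_{kj}\rangle\neq 0$ and $Y_{kj}=\beta_{kj}^{-1}t_{n+1}^{-1}\sum_i t_i\langle \ell_i,u_{kj}\rangle$; combining the geometric series for $1/(1+Y_{kj})$ with the Bernoulli-series identity $\frac{1}{1-e^A}=-\frac{1}{A}\sum_{p\ge 0}\frac{B_p}{p!}A^p$ yields a Laurent expansion of $1/(1-e^{A_{kj}})$ in $t_{n+1}$ whose coefficients are polynomials in $t_1,\dots,t_n$ truncated at total degree $M$. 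Multiplying the $d$ denominator expansions and the numerator via Lemma~\ref{lemma:poly-mult}, and finally summing the $t_{n+1}^0$-coefficient across all Barvinok cones, yields the Taylor expansion of $H$ up to total degree $M$.

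The main technical obstacle, just as in the proof of Theorem~\ref{thm:continuous-affnie-products-linear-forms-cones}, is the careful bookkeeping of truncation orders in $t_{n+1}$: each of the $d$ denominator factors can contribute $t_{n+1}^{-k-1}$ terms for $k$ up to $M$ (paired with a homogeneous $t$-polynomial of degree $k$), so the numerator must be Taylor-expanded to $t_{n+1}$-degree at least $d(M+1)$ to correctly recover the $t_{n+1}^0$ coefficient of the product. Because Barvinok's algorithm produces only polynomially many cones when $d$ is fixed, and because every truncated series manipulation (product, Bernoulli tail, geometric expansion) is polynomial in $M$ by Lemma~\ref{lemma:poly-mult}, the overall procedure runs in polynomial time in $M$ and in the binary input size of $\PP$, the $\ell_i$, and the $r_i$.
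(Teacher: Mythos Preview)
Your proposal is correct and follows essentially the same route as the paper's proof: both use Barvinok's dual decomposition of $S(\PP,\ell)$, regularize by introducing an auxiliary $\ell_{n+1}$ and variable $t_{n+1}$ so that no ray is orthogonal, factor each denominator as $\frac{A}{1-e^{A}}\cdot\frac1A$ and expand using the Bernoulli generating function together with the generalized binomial (geometric) series, track the $t_{n+1}$-degree up to $d(M+1)$, and appeal to Lemma~\ref{lemma:poly-mult} for the truncated products. The only differences are cosmetic (your $Y_{kj}$ notation versus the paper's explicit split into four factors), not substantive.
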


In the proof below, instead of computing the value for a fixed sequence of powers $m_1, \dots, m_n$, we will compute the polynomial

\[ \sum_{ p_1+\cdots + p_{n} \leq M} \left(\sum_{x \in \PP \cap \Z^d} \frac{\langle (\ell_1, x\rangle + r_1)^{p_1} \cdots (\ll \ell_n, x\rr + r_n)^{p_n}}{p_1!\cdots p_{n}!}\right) t_1^{p_1}\cdots t_{n}^{p_{n}},\]

which includes the desired value as the coefficient of the monomial $t_1^{m_1}\cdots t_{n}^{m_{n}}$. This is ideal when using a Handelman decomposition because terms in the Handelman decomposition only differs in the exponents $m_1, \dots, m_n$. When $n=1$, the extra terms that are computed are unavoidable. The following proof will use a tangent cone decomposition of the polytope $\PP$.

\begin{proof}
Because the dimension $d$ is fixed, we can use Barvinok's Algorithm to write down

\[\sum_{x \in \PP \cap \Z^d} e^{\ll \ell, x\rr} = \sum_{i \in I} \epsilon_i \frac{e^{\langle \ell, v_i \rangle}}{\prod_{j=1}^d ( 1 - e^{\langle \ell, u_{ij}\rangle})}\] 

where $\epsilon_i \in \{-1,1\}, v_i \in \Z^d$, and $u_{ij} \in \Z^d$, and  $I$ is some index set whose size is polynomially bounded in the input size of $\PP$. The $v_i$ correspond to the integer point of an unimodular cone and the $u_{ij}$ are the cone's rays. This equality only holds when $\ell$ is an indeterminate. If $\ell$ is in $\Q^d$ and orthogonal to some $u_{ij}$, some of the above fractions are singular. However, the singularity is removed in the sum as the expression is holomorphic in $\ell$. See Section \ref{ch:bg:sec:integration-stuff} for a review of these statements. 

To work around the possibility that one of the fractions is singular when $\ell$ is evaluated, we will replace $\ell$ with $\ell_1t_1 +\cdots+ \ell_nt_n + \ell_{n+1}t_{n+1}$ where $\ell_{n+1}$ has been picked so that $\ll \ell_{n+1}, u_{ij} \rr \neq 0$ for all $u_{ij}$. Notice that the set of $\ell_{n+1}$ that fail this condition have measure zero, and so $\ell_{n+1}$ can be picked randomly. After doing this, we get

\[\sum_{x \in \PP \cap \Z^d} e^{\langle \ell_1, x \rangle t_1+ \cdots + \langle \ell_{n+1}, x \rangle t_{n+1} } =\sum_{i \in I} \epsilon_i \frac{e^{\langle \ell_1, v_i \rangle t_1+ \cdots + \langle \ell_{n+1}, v_i \rangle t_{n+1}}}{\prod_{j=1}^d ( 1 - e^{\langle \ell_1, u_{ij}\rangle t_1 + \cdots + \langle \ell_{n+1}, u_{ij}\rangle t_{n+1}})},\]
where no fraction is singular. 

To make the notation cleaner, let $\ve t = (t_1, \dots, t_n)$, $\ve a_i = (\langle \ell_1, v_i \rangle, \dots, \langle \ell_n, v_i \rangle)$, $\ve r = (r_1, \dots, r_n)$, $\ve b_{ij} = (\langle \ell_1, u_{ij}\rangle, \dots, \langle \ell_n, u_{ij}\rangle)$, $\beta_{ij} :=  \langle \ell_{n+1}, u_{ij}\rangle$, and $\boldsymbol{\ell}_x = (\ll \ell_1, x\rr, \dots, \ll \ell_n, x\rr)$. After multiplying both sides by $e^{\ll \ve r, \ve t\rr}$, we have:

\begin{equation}
\label{equ:proof-discrete-gen-fun}
\sum_{x \in \PP \cap \Z^d} e^{\ll \boldsymbol{\ell}_x, \ve t\rr + \ll \ve r, \ve t \rr + \ll \ell_{n+1}, x \rr t_{n+1} } = \sum_{i \in I} \epsilon_i \frac{e^{\ll \ve a_i, \ve t \rr}e^{\ll \ve r, \ve t \rr }e^{\ll \ell_{n+1}, v_i\rr t_{n+1}}}{\prod_{j=1}^d ( 1 - e^{\ll \ve b_{ij}, \ve t \rr + \beta_{ij} t_{n+1}})}. 
\end{equation}

The left hand side of Equation \ref{equ:proof-discrete-gen-fun} is 

\[ \sum_{x \in \PP \cap \Z^d} \prod_{i=1}^n e^{\ll \ell_i, x\rr t_i + r_i} e^{\ll \ell_{n+1}, x\rr t_{n+1}} \]
Replacing each exponential function with its Taylor series and expanding the product results in the series expansion in $t_1, \dots, t_{n+1}$ of the left hand side of Equation \ref{equ:proof-discrete-gen-fun}. Truncating the series at total degree $M$ contains exactly the desired polynomial plus monomials that have a $t_{n+1}$ factor which can be dropped. Hence the proof is complete once the series expansion of the right had side of Equation \ref{equ:proof-discrete-gen-fun} can be done in polynomial time. Next write each summand as a product of four terms:

\begin{multline}
 \frac{e^{\ll \ve a_i, \ve t \rr}e^{\ll \ve r, \ve t \rr }e^{\ll \ell_{n+1}, v_i\rr t_{n+1}}}{\prod_{j=1}^d ( 1 - e^{\ll \ve b_{ij}, \ve t \rr + \beta_{ij} t_{n+1}})}
=  \left(e^{\ll \ve a_i + \ve r, \ve t \rr}\right)
\left(\prod_{j=1}^d \frac{(\ll \ve b_{ij}, \ve t \rr + \beta_{ij} t_{n+1})}{ ( 1 - e^{\ll \ve b_{ij}, \ve t \rr + \beta_{ij} t_{n+1}})}\right) \\ \times
\left(\prod_{j=1}^d \frac{1}{\ll \ve b_{ij}, \ve t \rr + \beta_{ij} t_{n+1}}\right) 
e^{\ll \ell_{n+1}, v_i\rr t_{n+1}}. 
\end{multline}

Let $h_j$ be the series expansion of $e^{(\ll \ell_j, v_i\rr + r_j)t_j}$ in $t_i$ up to degree $M$ for $ 1 \leq j \leq n$. Let $h_{n+j}$ be the series expansion of 

\[\frac{(\ll \ve b_{ij}, \ve t \rr + \beta_{ij} t_{n+1})}{ ( 1 - e^{\ll \ve b_{ij}, \ve t \rr + \beta_{ij} t_{n+1}})}\]
up to total degree $M$ in $t_1, \dots, t_{n+1}$ for $ 1 \leq j \leq d$. This can be done in polynomial time as this is the generating function for the Bernoulli numbers, see Lemma \ref{lemma:bernoulli-numbers}. By Lemma \ref{lemma:poly-mult}, the product $H_1 := \prod_{j=1}^{n+d} h_j$  truncated at total degree $M$ is done in polynomial time in $M$ as $n$ and $d$ are fixed. 

Using the generalized binomial theorem we have,
\[\frac{1}{\ll \ve b_{ij}, \ve t \rr + \beta_{ij} t_{n+1}} = \sum_{k=0}^\infty (-1)^k (\ll \ve b_{ij}, \ve t \rr)^k (\beta_{ij} t_{n+1})^{-1-k}. \]
Notice that $(\ll \ve b_{ij}, \ve t \rr)^k$ is a polynomial in $t_1,\dots,t_n$ of total degree $k$. Let $h_{n+d+j}$ represent this sum truncated at $k=M$ for $1 \leq j \leq d$. Note that $h_{n+d+j}$ is a polynomial of total degree $M$ in $t_1, \dots, t_n$, and the power of $t_{n+1}$ ranges from $-1-M$ to $-1$ at most. 

Treating $t_{n+1}$ as a coefficient (that is, ignoring its power when computing the degree), we compute the product $H_2 := H_1 \cdot \prod_{j=1}^d h_{n+d+j}$ using Lemma \ref{lemma:poly-mult}. $H_2$ is a polynomial in $t_1, \dots, t_n$ of total degree $M$, and the power of $t_{n+1}$ at most ranges from $-d(M+1)$  to $M-d$. 

Finally, let $h_{n+2d+1}$ be the series expansion of $e^{\ll \ell_{n+1}, v_i\rr t_{n+1}}$ in $t_{n+1}$ up to degree $d(M+1)$. Using Lemma \ref{lemma:poly-mult} to compute $H_3 := H_2 \cdot h_{n+2d+1}$ results in polynomial in of total degree $M$ in $t_1, \dots, t_n$, and the power of each $t_{n+1}$ is nonnegative. 

Dropping terms where the power of $t_{n+1}$ is positive, and repeating the calculation $|I|$ times results in

\[ \sum_{ p_1+\cdots p_{n} \leq M} \left(\sum_{x \in \PP \cap \Z^d} \frac{\langle (\ell_1, x\rangle + r_1)^{p_1} \cdots (\ll \ell_n, x\rr + r_n)^{p_n} }{p_1!\cdots p_{n}!}\right) t_1^{p_1}\cdots t_{n+1}^{p_{n}}.\]

\end{proof}

For completeness, we explicitly show how to compute the Bernoulli number generating function. 

\begin{lemma}
\label{lemma:bernoulli-numbers}
Let $\ve t = (t_1, \dots, t_n)$, and $\ve c = (c_1, \dots, c_n)$. The series expansion of 
\[\frac{\ll \ve c, \ve t\rr}{ 1 - e^{\ll \ve c, \ve t\rr}}\]
up to total degree $M$ in $t_1, \dots, t_{n}$ can be done in polynomial time in $M$ when $n$ is fixed.
\end{lemma}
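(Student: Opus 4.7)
The plan is to reduce the multivariate computation to the classical univariate Bernoulli generating function by setting $z = \ll \ve c, \ve t\rr$. Recall that
\[
\frac{z}{1 - e^{z}} \;=\; -\frac{z}{e^{z}-1} \;=\; -\sum_{k=0}^{\infty} \frac{B_k}{k!}\, z^{k},
\]
where $B_k$ denotes the $k$-th Bernoulli number. Since $(\ll \ve c, \ve t\rr)^k$ is itself a homogeneous polynomial of degree exactly $k$ in $t_1,\dots,t_n$, truncating the univariate series at $k=M$ and substituting yields exactly the truncation of the multivariate series at total degree $M$. So the task splits into three subtasks, each of which I claim runs in polynomial time in $M$ when $n$ is fixed.

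First, I would compute the rational numbers $B_0,B_1,\dots,B_M$ using the standard recurrence $B_0 = 1$ and $\sum_{j=0}^{N} \binom{N+1}{j} B_j = 0$ for $N \geq 1$. This requires $O(M^2)$ elementary rational operations, and a standard bound (e.g., via the von Staudt--Clausen theorem for denominators together with $|B_{2k}| \leq 4 k! / (2\pi)^{2k}$ for large $k$) shows the bit length of each $B_k$ is polynomial in $k$. Second, for each $k \in \{0,\dots,M\}$, I would expand $(\ll \ve c, \ve t\rr)^k$ by the multinomial theorem as
\[
(\ll \ve c, \ve t\rr)^k \;=\; \sum_{\alpha \in \Z_{\geq 0}^n,\; |\alpha|=k} \binom{k}{\alpha}\, c^{\alpha}\, t^{\alpha}.
\]
Since $n$ is fixed, this homogeneous polynomial has $\binom{k+n-1}{n-1} = O(k^{n-1})$ monomials, so it can be written down in polynomial time. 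Third, I would form the weighted sum $-\sum_{k=0}^{M} \frac{B_k}{k!}\,(\ll \ve c, \ve t\rr)^k$, which yields the truncated series at total degree $M$.

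For the total complexity: the final polynomial has at most $\binom{M+n}{n}=O(M^n)$ monomials, the computation at each level $k$ handles $O(M^{n-1})$ of them, and summing across the $M+1$ levels costs $O(M^{n})$ monomial updates. Together with the $O(M^2)$ cost of producing the Bernoulli numbers, the total number of arithmetic operations is polynomial in $M$, and all coefficients have polynomially bounded bit size. The only step that might look delicate is bounding the size of the $B_k$, but this is the well-known estimate cited above; once that is in hand, there is no genuine obstacle, and no appeal to Lemma~\ref{lemma:poly-mult} is even required since each $(\ll \ve c, \ve t\rr)^k$ is expanded directly rather than by iterated multiplication.
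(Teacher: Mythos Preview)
Your proof is correct and takes essentially the same approach as the paper: both reduce to the univariate Bernoulli generating function, substitute $z=\langle\ve c,\ve t\rangle$, truncate at degree~$M$, and note that for fixed~$n$ the expansion of each $(\langle\ve c,\ve t\rangle)^k$ has polynomially many monomials. The only cosmetic differences are that the paper computes $B_0,\dots,B_M$ via the Akiyama--Tanigawa algorithm rather than the binomial recurrence (both $O(M^2)$), and that you spell out the multinomial expansion and bit-size bounds more explicitly than the paper does.
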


\begin{proof}
From \cite{ConwayBookOfNumbers}, we start with

\[\frac{x}{1 - e^{-x}} = \sum_{k=0}^\infty B_k \frac{(-x)^k}{k!}, \]
where the $B_k$ are the Bernoulli numbers of the first kind. 

Then expanding the polynomial
\[ - \sum_{k=0}^M B_k \frac{(\ll \ve c, \ve t\rr)^k}{k!} \]
in $t_1, \dots, t_n$ yields the degree $M$ truncation of the series expansion of 
\[\frac{\ll \ve c, \ve t\rr}{ 1 - e^{\ll \ve c, \ve t\rr}}.\]
Because $n$ is fixed, the expansion can be done in polynomial time in $M$. $B_k$ can be computed in time $O(k^2)$ by the Akiyama--Tanigawa algorithm, which is reproduced in Algorithm \ref{alg:Akiyama-Tanigawa}.
\end{proof}

\begin{algorithm}                      
\caption{Akiyama--Tanigawa algorithm for first Bernoulli numbers \cite{KanekoBernoulli}}
\label{alg:Akiyama-Tanigawa}
\begin{algorithmic}                    
\REQUIRE $k$
\ENSURE $B_k$

\STATE Let $A$ be an array of length $k+1$ with index starting at zero
\FOR{$i$ from 0 to $k$}
	\STATE $A[i] \gets 1/(i+1)$
	\FORALL{$j$ from $i$ by $-1$ to 1}
		\STATE $A[j-1] \gets j\cdot ( A[j-1] - A[j])$
	\ENDFOR
\ENDFOR
\IF{$k = 1$}
	\RETURN $-A[0]$
\ENDIF
\RETURN $A[0]$
\end{algorithmic}
\end{algorithm}

\section{Continuous Optimization}
\label{ch:po:contopt}
This section addresses the problem of approximating the continuous optimization problem

\begin{equation}
\label{equ:continuousOpt}
\begin{split}
\max & \; f(x) \\
 & x \in \PP \\
 & x_i \in \R,
\end{split}
\end{equation}
using the same style of bounded developed in Section \ref{ch:po:intopt}. Theorem \ref{theorem:continuous-lkuk-bounds} is the continuous analog of Theorem \ref{thm:first-fptas} for optimizing a polynomial $f$ over the continuous domain $\PP$. This section is devoted to its proof, and is a highlight of \cite{brandon-handelman-paper}.

We note that Theorem \ref{theorem:continuous-lkuk-bounds} involves integrating the polynomial $f(x)$. Integral kernel or moment methods produce an approximation to $\fmax$ via computing $\int f(x)\d \mu_k$ where the measure $\mu_k$ usually involves a special subset of  nonnegative functions, like sum-of-squares polynomials. Such  methods have been developed in \cite{deKlerk2015, lasserre2009momentsBook, Lasserre01globaloptimization, lasserre2002semidefinite, lasserre2011NewLook, Lasserre2000929}. Our method is slightly different as our measure is always the standard Lebesgue measure, while our integrand is $f(x)^k$.

\begin{theorem}
\label{theorem:continuous-lkuk-bounds}
Let the number of variables $d$ be fixed. Let $f(x)$  be a polynomial of maximum total degree $D$ with rational coefficients, and let $\PP$ be a full-dimensional convex rational polytope defined by linear inequalities in $d$ variables. Assume $f(x)$ is nonnegative over $\PP$. Then there is an increasing sequence of lower bounds $\{L_k\}$ and a decreasing sequence of upper bounds $\{U_k\}$ for $k \geq k_0$ to the optimal value of 
$ \max  f(x) \text{ for }  x \in \PP$
such that the bounds have the following properties:
\begin{enumerate}
\item For each $k > 0$, the lower bound is given by
\[ L_k := \sqrt[k]{\frac{\int_{\PP} f(x)^k \d x}{ \vol(\PP)}}. \] 

\item Let $M$ be the maximum width of $\PP$ along the $d$ coordinate directions, $\epsilon' := \frac{d}{d+k}$, and let $\mathcal{L}$ be a Lipschitz constant of $f$ satisfying
\[ |f(x) - f(y)| \leq \mathcal{L} \norm{x-y}_\infty \text{ for } x,y  \PP.\]
Then for all $k$ such that $k \geq k_0$ where $k_0 = \max\{1, d(\frac{\fmax}{M\mathcal{L}} -1)\}$, the upper bound is given by

\[U_k := \left( \frac{\int_\PP f(x)^k \d x}{\vol(\PP)} \right)^{1/(d+k)}  \left(\frac{M\mathcal{L}}{\epsilon'}\right)^{d/(d+k)} \frac{1}{(1-\epsilon')^{k/(d+k)}}.
\]
\item The bounds $L_k$ and $U_k$ can be computed in time polynomial in $k$, the input size of $\PP$ and $f$. 

\item Let $\epsilon > 0$, and $U$ be an arbitrary initial upper bound for $\fmax$. Then there is a $k \geq k_0$ such that 
\begin{enumerate} 
\item $U_k - L_k \leq \epsilon\fmax$ where $k$ depends polynomially on $1/\epsilon$, linearly in $U/M\mathcal{L}$, and logarithmically on $UM\mathcal{L}$, and 
\item for this choice of $k$, $L_k$ is a $(1-\epsilon)$--approximation to the optimal value $\fmax$ and $U_k$ is a $(1 + \epsilon)$--approximation to $\fmax$.
\end{enumerate}
\end{enumerate}
\end{theorem}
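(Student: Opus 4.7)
The lower bound claim in item (1) and the monotonicity of $\{L_k\}$ follow immediately from standard integration inequalities. Since $0 \le f(x) \le \fmax$ on $\PP$, integrating $f^k \le \fmax^k$ over $\PP$ yields $L_k \le \fmax$. To get $L_k \le L_{k+1}$, view $\d\mu := \d x/\vol(\PP)$ as a probability measure and apply Jensen's inequality to the convex map $u \mapsto u^{(k+1)/k}$: this gives $\bigl(\int f^k \d\mu\bigr)^{(k+1)/k} \le \int f^{k+1}\d\mu$, i.e., $L_k \le L_{k+1}$.

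The main content of the theorem is item (2), the validity of the upper bound $U_k$. The plan is to combine the Lipschitz inequality with a volume estimate obtained from a scaled copy of $\PP$. Let $x^* \in \PP$ be a maximizer of $f$. For any $r>0$ and any $x \in B_r(x^*) := \{x : \|x - x^*\|_\infty \le r\}$, the Lipschitz bound gives $f(x) \ge \fmax - \mathcal{L} r$. For the geometric piece, note that by convexity the scaled polytope $\PP_t := (1-t)x^* + t\PP$ is contained in $\PP$ for any $t \in (0,1]$, and since $\PP$ has $\ell_\infty$-width at most $M$, every $y \in \PP_t$ satisfies $\|y - x^*\|_\infty \le tM$. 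Hence $\PP_t \subseteq B_{tM}(x^*)\cap \PP$, and taking $r=tM$ produces
\begin{equation*}
\vol\bigl(B_r(x^*)\cap \PP\bigr) \;\ge\; (r/M)^d\,\vol(\PP), \qquad\text{for } 0<r\le M.
\end{equation*}
Restricting the integral of $f^k$ to this ball then gives $\int_\PP f(x)^k\d x \ge (\fmax - \mathcal{L} r)^k (r/M)^d \vol(\PP)$. Setting $r = s\fmax/\mathcal{L}$ with $s \in (0,1)$ and rearranging yields
\begin{equation*}
\fmax \;\le\; \left(\tfrac{\int_\PP f^k \,\d x}{\vol(\PP)}\right)^{1/(k+d)} \left(\tfrac{M\mathcal{L}}{s}\right)^{d/(k+d)} \left(\tfrac{1}{1-s}\right)^{k/(k+d)}.
\end{equation*}
Choosing $s = \epsilon' = d/(d+k)$, which is the first-order critical point of the right-hand side in $s$, recovers exactly $U_k$. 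The admissibility condition $r \le M$ in the volume estimate translates into $s\fmax/\mathcal{L} \le M$, which for our choice of $s$ is equivalent to $k \ge d(\fmax/(M\mathcal{L})-1)$, yielding the hypothesis $k \ge k_0$.

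For item (3), computing $L_k$ and $U_k$ reduces to evaluating $\int_\PP f^k \d x$ plus cheap bookkeeping. I will expand $f^k$ into a polynomial of total degree $Dk$ by iterated truncated multiplication via Lemma \ref{lemma:poly-mult} (no truncation is actually needed; setting the truncation degree to $Dk$ suffices and the work remains polynomial in $k$ when $d$ is fixed). I then integrate the resulting polynomial using either Algorithm \ref{alg:integratePolytopeTangentCone} or \ref{alg:integratePLFPolytopeTriangulation} from Chapter \ref{ch:Integration}, which run in polynomial time in fixed dimension. A valid Lipschitz constant $\mathcal{L}$ can be computed in polynomial time from the standard estimate $\mathcal{L} \le \sum_i \max_{\PP} |\partial f/\partial x_i|$, bounding each partial derivative over a containing box in terms of the coefficients of $f$ and the entries of $\PP$'s vertex data.

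Item (4) follows by bounding the ratio $U_k/L_k$. A direct computation gives
\begin{equation*}
U_k/L_k \;=\; \left(\tfrac{(d+k)\,M\mathcal{L}}{d\,L_k}\right)^{d/(k+d)} \left(\tfrac{d+k}{k}\right)^{k/(k+d)}.
\end{equation*}
For the second factor I will use the concavity bound $(1+x)^{\alpha} \le 1 + \alpha x$ for $\alpha \in (0,1)$, giving $((d+k)/k)^{k/(k+d)} \le 1 + d/(d+k)$. For the first factor I write it as $\exp\bigl(\tfrac{d}{d+k}\log((d+k)M\mathcal{L}/(d L_k))\bigr)$ and use $e^y \le 1 + 2y$ for $y$ bounded. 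Combining and using any crude lower bound on $L_k$ (e.g.\ $L_k \ge L_{k_0}$, or a trivial estimate in terms of $f$'s smallest positive value on a subregion), one finds $U_k/L_k - 1 \le \epsilon$ whenever $k$ is of order $(d/\epsilon)\log(UM\mathcal{L}/\epsilon)$, plus the additive requirement $k \ge k_0 = O(dU/(M\mathcal{L}))$ coming from item (2). Since $U_k - L_k \le \fmax (U_k/L_k - 1)$, this yields the stated $(1\pm \epsilon)$ approximation.

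\textbf{Main obstacle.} The key nontrivial step is the geometric volume estimate $\vol(B_r(x^*)\cap\PP) \ge (r/M)^d \vol(\PP)$; the rest is essentially Lipschitz-plus-Hölder bookkeeping together with the integration technology already developed in Chapter \ref{ch:Integration}. The scaled-polytope trick is what makes the constant in this estimate independent of the facet geometry of $\PP$, depending only on the width $M$, which is crucial for getting the clean closed-form $U_k$.
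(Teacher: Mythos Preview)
Your approach is essentially the paper's. For item~(2) you reach the same key inequality more directly: the paper first applies H\"older's inequality to $\int_{\PP} f\cdot[\PP^{\epsilon'}]$ to obtain $\bigl(\int_\PP f^k\bigr)^{1/k} \ge (1-\epsilon')\fmax\,\vol(\PP^{\epsilon'})^{1/k}$, whereas you get the equivalent bound immediately from the pointwise estimate $f^k \ge ((1-s)\fmax)^k$ on the superlevel set. The scaled-polytope volume bound $\vol(B_r(x^*)\cap\PP)\ge (r/M)^d\vol(\PP)$ and the optimization $s=\epsilon'=d/(d+k)$ are identical to the paper's argument.

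Your treatment of item~(4), however, is substantially thinner than the paper's. The paper does not use $e^y\le 1+2y$ or invoke a ``crude lower bound on $L_k$''; it writes down an explicit $k$ as the maximum of four concrete terms and then runs an $\epsilon/3$-argument, supported by three auxiliary lemmas, to show that this $k$ forces
\[
U^{d/(d+k)}\Bigl(\tfrac{M\mathcal L(d+k)}{d}\Bigr)^{d/(d+k)}\Bigl(\tfrac{d+k}{k}\Bigr)^{k/(d+k)}-1\le\epsilon .
\]
In particular, the paper handles the factor $L_k^{-d/(d+k)}$ appearing in $U_k/L_k$ by replacing it with $U^{d/(d+k)}$, which is what produces the stated logarithmic and linear dependence on $U$. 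Your proposal to bound $L_k$ from below by $L_{k_0}$ or by ``$f$'s smallest positive value on a subregion'' would introduce dependence on quantities not listed in the theorem statement and would not, without further work, deliver the claimed rates in $1/\epsilon$, $U/(M\mathcal L)$, and $\log(UM\mathcal L)$.
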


\begin{proof}
A polynomial time algorithm for computing $\int_P f(x)^k \d x$ is developed in Chapter \ref{ch:Integration}. 

We now focus on proving the bounds. The lower bound $L_k$ is immediate. For the upper bound, we take inspiration from the standard proof that $\lim_{p \rightarrow \infty} \norm{f}_p = \norm{f}_\infty$. Let  
$\epsilon' > 0$
, $\PP^{\epsilon'}:= \{ x \in \PP \mid f(x) \geq (1-\epsilon')\fmax\}$ and $[\PP^{\epsilon'}]$ denote the characteristic function of $\PP^{\epsilon'}$. Then we have 
\[(1-\epsilon')\fmax \cdot \vol(\PP^{\epsilon'}) \leq \int_{\PP^{\epsilon'}} f(x) \d x = \int_{\PP} f(x)[\PP^{\epsilon'}] \d x \leq \left( \int_\PP f(x)^k \d x \right)^{1/k} \vol(\PP^{\epsilon'})^{1/q}\]
where the last inequality comes from H\"older's inequality with $\frac{1}{k} + \frac{1}{q} = 1$.

Rearranging the first and last expressions results in 
\[ \left( \int_\PP f(x)^k \d x \right)^{1/k} \geq (1-\epsilon')\fmax \cdot \vol(\PP^{\epsilon'})^{1/k}.\]

We now seek a lower bound for $\vol(\PP^{\epsilon'})$. Because $f$ is a polynomial and $\PP$ is compact, $f$ has a Lipschitz constant $\mathcal{L}$ ensuring $|f(x) - f(y)| \leq \mathcal{L} \norm{x-y}_\infty$ for $x,y \in \PP$. Let $x^*$ denote an optimal point where $f(x^*) = \fmax$ and let \[B_\infty(x^*,r) := \{x \in \R^d : \norm{x-x^*}_\infty \leq r \}\] be a closed  ball of radius $r$ centered at $x^*$. Then for all $x$ in the ball $B_\infty(x^*, \epsilon' \fmax / \mathcal{L})$, $f(x) \geq (1-\epsilon')\fmax$. Therefore,

\[\vol(P^{\epsilon'}) = \vol\{x \in P \mid f(x) \geq (1-\epsilon')\fmax\} \geq \vol\{x \in P \cap B_\infty(x^*,\epsilon' \fmax / \mathcal{L}) \}. \]

\begin{figure}
        \centering
        \begin{subfigure}[b]{0.3\textwidth}
                \includegraphics{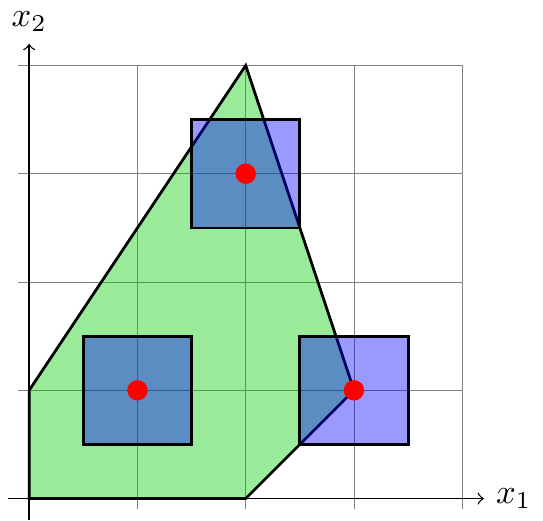}
                \caption{}
                \label{fig:pentagon-without}
        \end{subfigure}%
        \hspace{1in} 
        \begin{subfigure}[b]{0.3\textwidth}
                \includegraphics{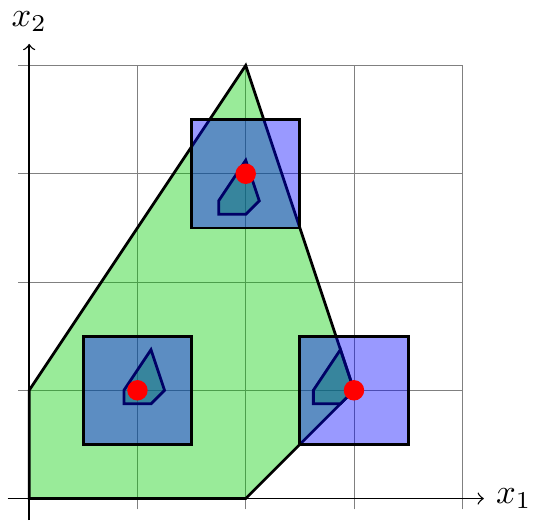}
                \caption{}
                \label{fig:pentagon-with}
        \end{subfigure}
\caption{(A) Pentagon with three possible points $x^*$ that maximize $f(x)$ along with the $\ell_\infty$ balls $B_\infty(x^*, 1/2)$. It might not be true that the ball intercepts the polytope. (B) However, a scaled pentagon can be contained in the ball and the original polytope.}
\label{fig:pentagons-scaled-balls}
\end{figure}

Notice that the ball $B(x^*,\epsilon' \fmax / \mathcal{L})$ may not be contained in $\PP$, see Figure \ref{fig:pentagon-without}. We need to lower bound the fraction of the ball that intercepts $\PP$ where $x^*$ could be any point in $\PP$. To do this, we will show how a scaled $\PP$ can be contained in $\PP^{\epsilon'}$ as in Figure \ref{fig:pentagon-with}. Consider the invertible linear transformation $\Delta : \R^d \rightarrow \R^d$ given by $\Delta x := \delta(x-x^*) + x^*$ where $\delta > 0$ is some unknown scaling term. Notice that this function is simply a scaling about the point $x^*$ that fixes the point $x^*$. We need to pick $\delta$ such that $\Delta \PP \subseteq \PP \cap B_\infty(x^*,\epsilon' \fmax / \mathcal{L})$.

\begin{itemize}
\item If $\Delta \PP \subseteq B_\infty(x^*,\epsilon' \fmax / \mathcal{L})$, then for $x \in \PP$
\begin{align*}
\norm{\Delta x - x^*}_\infty &= \norm{\delta (x-x^*) + x^* -x^*}_\infty \\
&= \delta \norm{ x- x^*}_\infty \\
&\leq \frac{\epsilon' \fmax}{\mathcal{L}}.  \\
\end{align*}

Let $M$ be the maximum width of $\PP$ along the $d$ coordinate axes, then 
 $0 < \delta \leq \frac{\epsilon' \fmax}{M\mathcal{L}}$ is a sufficient condition for  $\Delta \PP \subseteq B_\infty(x^*, \epsilon' \fmax/\mathcal{L})$.

\item Let $x \in \PP$, and write $x$ and $x^*$ as a convex combination of the vertices of $\PP$. That is, let $x=\sum_{i=1}^N \alpha_i v_i$ and $x^*=\sum_{i=1}^N \beta_i v_i$ where $\sum_{i=1}^N \alpha_i = 1$, $\sum_{i=1}^N \beta_i = 1$, $\alpha_i \geq 0$, and $\beta_i \geq 0$. Then 

\begin{align*}
\Delta x &= \delta \left(\sum_{i=1}^N \alpha_i v_i- \sum_{i=1}^N \beta_i v_i\right) + \sum_{i=1}^N \beta_i v_i \\
&= \sum_{i=1}^N (\delta \alpha_i + \beta_i - \delta \beta_i) v_i \\
\end{align*}

No matter what $\delta$ is, $\sum_{i=1}^N (\delta \alpha_i + \beta_i - \delta \beta_i) = 1$. Forcing $\delta \alpha_i +  (1 - \delta)\beta_i \geq 0$ for each $i$ as $x$ and $x^*$ varies over the polytope $\PP$ is equivalent to $\delta \leq 1$. 

Hence, $0 < \delta \leq 1$ a sufficient condition for $\Delta \PP \subseteq \PP$. 

\end{itemize}

Therefore, $\vol\{x \in P \cap  B_\infty(x^*,\epsilon' \fmax / \mathcal{L}) \} \geq \vol(\Delta \PP) = \left(\min\{1,\frac{\epsilon' \fmax}{M\mathcal{L}}\}\right)^d \vol(\PP),$ and finally, 

\[ \left( \int_\PP f(x)^k \d x \right)^{1/k} \geq (1-\epsilon')\fmax \cdot \vol(\PP)^{1/k} \left(\min\{1,\frac{\epsilon' \fmax}{M\mathcal{L}}\}\right)^{d/k}.\]

As the above inequality is true for all $0 < \epsilon' \leq 1$, we want to pick $\epsilon'$ to maximize the function $\phi(\epsilon') := (1-\epsilon') \left(\min\{1,\frac{\epsilon' \fmax}{M\mathcal{L}}\}\right)^{d/k}$. The maximum of $(1-\epsilon') \left(\frac{\epsilon' \fmax}{M\mathcal{L}}\right)^{d/k}$ occurs at $\epsilon' = \frac{d}{d+k}$. Hence the maximum of $\phi(\epsilon')$ occurs at $\epsilon' = \frac{d}{d+k}$ if this is less than $\frac{M\mathcal{L}}{\fmax}.$ Otherwise if $\epsilon' \geq \frac{M\mathcal{L}}{\fmax}$, the maximum of $\phi(\epsilon')$ is at $\epsilon' = \frac{M\mathcal{L}}{\fmax}$. Therefore  the maximum of $\phi(\epsilon')$ occurs at $\epsilon' =  \min\{\frac{d}{d+k}, \frac{M\mathcal{L}}{\fmax}\}$. Enforcing $\frac{d}{d+k} \leq \frac{M\mathcal{L}}{\fmax}$ is equivalent to $d(\frac{\fmax}{M\mathcal{L}} - 1) \leq k$. With this choice of $\epsilon'$ and value restriction on $k$, solving for $\fmax$ yields the desired formula for $U_k$.

The proof of part (4) is completed in the next three lemmas.
\end{proof}

\begin{lemma}
\label{lemma:loge}
$\frac{1}{\log(1+\epsilon)} \leq 1 + \frac{1}{\epsilon}$ for $0 < \epsilon \leq 1.$
\end{lemma}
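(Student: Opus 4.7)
The inequality $\frac{1}{\log(1+\epsilon)} \leq 1 + \frac{1}{\epsilon}$ is equivalent, after multiplying both sides by the positive quantity $\epsilon \log(1+\epsilon)$ and rearranging, to
\[
\epsilon \leq (1+\epsilon)\log(1+\epsilon).
\]
So the plan is to define the auxiliary function $g(\epsilon) := (1+\epsilon)\log(1+\epsilon) - \epsilon$ and show $g(\epsilon) \geq 0$ on the relevant range.

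First I would check the boundary: clearly $g(0) = 0$. Next, I would differentiate: $g'(\epsilon) = \log(1+\epsilon) + (1+\epsilon)\cdot \tfrac{1}{1+\epsilon} - 1 = \log(1+\epsilon)$, which is nonnegative for all $\epsilon \geq 0$. Hence $g$ is monotonically nondecreasing on $[0,\infty)$ and starts at $0$, so $g(\epsilon) \geq 0$ throughout. In particular, this gives the claim on $0 < \epsilon \leq 1$ (and in fact on all of $(0,\infty)$, so the restriction $\epsilon \leq 1$ is not actually needed here but is consistent with how the lemma will be applied).

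The only minor thing to be careful about is reversing the rearrangement: since $0 < \epsilon \leq 1$ implies $\log(1+\epsilon) > 0$, dividing back through by $\epsilon \log(1+\epsilon)$ preserves the inequality direction, and we recover $\tfrac{1}{\log(1+\epsilon)} \leq 1 + \tfrac{1}{\epsilon}$ as required. There is no real obstacle in this argument; the proof is essentially a one-line monotonicity check after the right rewriting.
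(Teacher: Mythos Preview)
Your proof is correct and follows essentially the same approach as the paper: both rewrite the inequality as $(1+\tfrac{1}{\epsilon})\log(1+\epsilon) \geq 1$ (equivalently, $(1+\epsilon)\log(1+\epsilon) \geq \epsilon$) and verify it via monotonicity from the boundary value at $\epsilon = 0$. If anything, your version is slightly more explicit, since you actually compute $g'(\epsilon) = \log(1+\epsilon)$, whereas the paper simply asserts that its auxiliary function is increasing.
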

\begin{proof}
It is enough to show that $\phi(\epsilon):= (1 + \frac{1}{\epsilon})\log(1+\epsilon) \geq 1$ for $0 < \epsilon \leq 1$. This follows because $\phi(\epsilon)$ is increasing on $0 < \epsilon \leq 1$ and $\lim_{\epsilon \rightarrow 0} \phi(\epsilon) = 1$. 
\end{proof}


\begin{lemma}
\label{lemma:logdelta}
For every $\delta > 0$, there is a $c_\delta > 0$ such that \[\frac{\log(z+1)}{z+1} \leq c_\delta (z+1)^{- \frac{1}{1+\delta}} \] for all $z > 0$.

In particular, if $\delta = 0.1$, then $c_\delta$ can be set to $4.05$.
\end{lemma}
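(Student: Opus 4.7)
The plan is to substitute $u = z+1$ and reduce the inequality to the standard claim that $\log u$ grows more slowly than any positive power of $u$. Specifically, the desired inequality is equivalent to
\[
\log u \leq c_\delta \, u^{\alpha}, \qquad \alpha := \frac{\delta}{1+\delta},
\]
for all $u > 1$, since multiplying both sides of the original inequality by $u = z+1$ and noting $1 - \frac{1}{1+\delta} = \frac{\delta}{1+\delta}$ gives exactly this form. Because $\alpha > 0$, the function $g(u) := u^{-\alpha} \log u$ satisfies $g(1) = 0$ and $g(u) \to 0$ as $u \to \infty$, so it attains its maximum on $[1,\infty)$.

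The main step is to locate this maximum by calculus. Differentiating,
\[
g'(u) = \frac{1 - \alpha \log u}{u^{\alpha+1}},
\]
so the unique critical point on $(1,\infty)$ is $u^* = e^{1/\alpha}$, and the maximum value is
\[
g(u^*) = \frac{1/\alpha}{(e^{1/\alpha})^\alpha} = \frac{1}{\alpha e} = \frac{1+\delta}{\delta e}.
\]
Hence the choice
\[
c_\delta := \frac{1+\delta}{\delta e}
\]
makes $\log u \leq c_\delta u^\alpha$ hold for every $u \geq 1$, which is exactly what is required after undoing the substitution.

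For the numerical claim, plugging in $\delta = 0.1$ yields $c_\delta = \frac{1.1}{0.1 \cdot e} = \frac{11}{e} \approx 4.0467$, so the constant $4.05$ suffices. There is no real obstacle here; the only thing to be careful about is checking the endpoint behavior to ensure the critical point really is the global maximum on $(1,\infty)$ (it is, since $g$ vanishes at $u = 1$ and at infinity and is smooth), and to verify that $u^* = e^{(1+\delta)/\delta} > 1$, which is automatic for $\delta > 0$.
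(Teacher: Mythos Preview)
Your proof is correct. The substitution $u=z+1$ and the reduction to bounding $g(u)=u^{-\alpha}\log u$ are exactly right, and the calculus giving the optimal constant $c_\delta=\frac{1+\delta}{\delta e}$ is clean; for $\delta=0.1$ this indeed gives $11/e\approx 4.0467<4.05$.

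The paper takes a slightly different route. For the general statement it argues non-constructively: since $(z+1)^{1-\frac{1}{1+\delta}}/\log(z+1)\to\infty$, a suitable constant $c_\delta$ exists. Then, for the specific value $\delta=0.1$, it verifies directly that $\phi(z):=4.05(z+1)^{1/11}-\log(z+1)$ stays positive by locating its unique critical point $z^*=(11/4.05)^{11}-1$ and checking $\phi(z^*)>0$. Your approach is more informative in that it produces the sharp constant $\frac{1+\delta}{\delta e}$ uniformly in $\delta$ with a single optimization, whereas the paper's argument splits into an existence step plus a separate numerical verification. Both ultimately rest on the same calculus fact that $\log u$ is dominated by any positive power $u^\alpha$, so the difference is one of presentation rather than substance.
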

\begin{proof}
Because \[\lim_{z \rightarrow \infty} \frac{(z+1)^{1 - \frac{1}{1+\delta}}}{\log(1+z)} = \infty,\]
there exist a $c > 0$ and $z_0 \geq 0$ such that $ \log(z+1) \leq c(z+1)^{1 - \frac{1}{1+\delta}}$ for $z \geq z_0$. As $(z+1)^{1 - \frac{1}{1+\delta}}$ is positive for $z \geq 0$, there exist a sufficiently large constant, $c_\delta$, such that $ \log(z+1) \leq c_\delta(z+1)^{1 - \frac{1}{1+\delta}}$ for $z \geq 0$.

Now let $\delta = 0.1$. We show the minimum of $\phi(z) := 4.05(z+1)^{1/11} - \log(z+1)$ is positive. Notice that $\phi(0) > 0$, and $\lim_{z \rightarrow \infty} \phi(z) = \infty$. The only zero of $\phi'(z)$ occurs at $z^* = \left(\frac{11}{4.05}\right)^{11} -1$. Finally, $\phi(z^*) > 0$.

\end{proof}

\begin{lemma}
\label{lemma:epsilon-polynom-in-k}
Let $\epsilon > 0$, and $U$ be an arbitrary initial upper bound for $\fmax$. Then there is a $k \geq 1$ such that $U_k - L_k \leq \epsilon\fmax$ where $k$ depends polynomially on $1/\epsilon$, linearly in $U/M\mathcal{L}$, and logarithmically on $UM\mathcal{L}$. 
Moreover, for this choice of $k$, $L_k$ is a $(1-\epsilon)$--approximation to the optimal value $\fmax$ and $U_k$ is a $(1 + \epsilon)$--approximation to $\fmax$. 
\end{lemma}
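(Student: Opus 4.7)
The plan is to establish $U_k \le (1+\epsilon)\fmax$ and $L_k \ge (1-\epsilon)\fmax$ separately; together these give $U_k - L_k \le 2\epsilon \fmax$, and rescaling $\epsilon$ by a factor of $2$ finishes the first claim, while the second claim (the $(1\pm\epsilon)$-approximation property) is then a direct restatement.

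For the upper bound, I would start from the definition and rewrite
\[
U_k \;=\; L_k^{k/(d+k)}\,\fmax^{d/(d+k)}\left(\frac{M\mathcal{L}(d+k)}{d\,\fmax}\right)^{d/(d+k)}\left(1+\frac{d}{k}\right)^{k/(d+k)}.
\]
Dividing by $\fmax$, using $L_k \le \fmax$, and applying $\log(1+x)\le x$ to the last factor yields
\[
\log\frac{U_k}{\fmax} \;\le\; \frac{d}{d+k}\log\frac{eM\mathcal{L}(d+k)}{d\,\fmax}.
\]
By Lemma \ref{lemma:loge}, it is enough to force this quantity below $\epsilon/(1+\epsilon)$. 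Setting $t=d+k$ and $A=eM\mathcal{L}/(d\,\fmax)$, the condition becomes $\log A + \log t \le t\epsilon/(d(1+\epsilon))$. Splitting the requirement symmetrically in the two logarithms, one piece ($\log A$) is handled by taking $t$ at least a constant multiple of $(d/\epsilon)\log A$; the other ($\log t$) is handled by invoking Lemma \ref{lemma:logdelta}, which implies that $\log t / t$ is dominated by any chosen negative power of $t$, so $\log t \le t\epsilon/(2d(1+\epsilon))$ is satisfied once $t$ is polynomial in $d/\epsilon$.

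For the lower bound, I would revisit the intermediate inequality that already appears in the proof of part (2), namely
\[
L_k^{\,k}\,\vol(\PP) \;\ge\; ((1-\epsilon')\fmax)^{k}\,\vol(\Delta\PP) \;\ge\; ((1-\epsilon')\fmax)^{k}\,\bigl(\min\{1,\,\epsilon'\fmax/(M\mathcal{L})\}\bigr)^{d}\vol(\PP).
\]
Extracting $k$-th roots gives $L_k \ge (1-\epsilon')\fmax\,\min\{1,\epsilon'\fmax/(M\mathcal{L})\}^{d/k}$. Choosing $\epsilon'=\epsilon/4$ and imposing $k \ge (cd/\epsilon)\log(cM\mathcal{L}/(\epsilon\fmax))$ for a small absolute constant $c$ then yields $L_k \ge (1-\epsilon)\fmax$.

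Combining the two conditions and the floor $k\ge k_0 \le dU/(M\mathcal{L})$ (valid because $\fmax\le U$) gives a $k$ that is at most linear in $U/(M\mathcal{L})$, polynomial in $1/\epsilon$, and logarithmic in $M\mathcal{L}/\fmax$. The statement replaces the last factor by $\log(UM\mathcal{L})$ using the bit-size of the input data to bound $\fmax$ from below; extracting this clean form, rather than the natural $\log(M\mathcal{L}/\fmax)$ that comes out of the algebra, will be the main technical obstacle.
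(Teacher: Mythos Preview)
Your route—proving $U_k\le(1+\epsilon)\fmax$ and $L_k\ge(1-\epsilon)\fmax$ separately and then combining—is the reverse of the paper's. The paper establishes $U_k-L_k\le\epsilon\fmax$ \emph{directly} and only afterwards reads off the two approximation statements from $L_k\le\fmax\le U_k$. Concretely, it factors
\[
U_k - L_k \;=\; L_k\bigl(U_k/L_k - 1\bigr) \;\le\; \fmax\bigl(U_k/L_k - 1\bigr),
\]
observes that $U_k/L_k = L_k^{-d/(d+k)}\bigl(\tfrac{M\mathcal{L}(d+k)}{d}\bigr)^{d/(d+k)}\bigl(\tfrac{d+k}{k}\bigr)^{k/(d+k)}$, replaces $L_k^{-d/(d+k)}$ by $U^{d/(d+k)}$, and then runs an $\epsilon/3$ argument: an explicit choice
\[
k=\Bigl\lceil\max\Bigl\{d\bigl(\tfrac{U}{M\mathcal{L}}-1\bigr),\ \tfrac{d}{(\epsilon+1)^{1/3}-1},\ 3d\log(UM\mathcal{L})\bigl(1+\tfrac1\epsilon\bigr),\ O\bigl((1+\tfrac1\epsilon)^{1+\delta}\bigr)\Bigr\}\Bigr\rceil
\]
forces each of $\bigl(\tfrac{d+k}{k}\bigr)^{k/(d+k)}$, $(UM\mathcal{L})^{d/(d+k)}$, and $\bigl(\tfrac{d+k}{d}\bigr)^{d/(d+k)}$ to be at most $(1+\epsilon)^{1/3}$, using Lemmas~\ref{lemma:loge} and~\ref{lemma:logdelta}. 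This gives $U_k/L_k\le 1+\epsilon$ and hence $U_k-L_k\le\epsilon\fmax$.

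The practical payoff is that the paper's factoring dissolves your ``main technical obstacle'': because it bounds the ratio $U_k/L_k$ rather than $U_k/\fmax$, no $\fmax$ ever lands in the denominator of a logarithm, and $\log(UM\mathcal{L})$ appears directly in the formula for $k$ without any appeal to bit-size lower bounds on $\fmax$. Your separate-bounds approach is not wrong in principle, but it forces you to confront a difficulty that the paper's structure simply avoids. (The paper's replacement $L_k^{-d/(d+k)}\le U^{d/(d+k)}$, which tacitly needs $L_k\ge 1/U$, is itself a step worth scrutinizing—but that is the route taken there.)
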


\begin{proof}


Let \[ k = \left \lceil{\max\left\{d\left(\frac{U}{M\mathcal{L}}-1\right), \frac{d}{(\epsilon + 1)^{1/3} -1}, 3d \log(UM\mathcal{L})\left( 1 + \frac{1}{\epsilon}\right), O\left(\left(1 + \frac{1}{\epsilon}\right)^{1+\delta}\right)\right\}}\right \rceil,\] where the last term is understood to mean for every $\delta > 0$, there exist a $c_\delta > 0$ so that $k$ should be larger than $d\left( (3c_\delta)^{1+\delta} \left( 1 + \frac{1}{\epsilon}\right)^{1+\delta} - 1\right)$. In particular, if $\delta = 0.1$, then $c_\delta$ could be $4.05$.

The first term ensures the formula for $U_k$ holds because \[k \geq d\left(\frac{U}{M\mathcal{L}}-1\right) \geq d\left(\frac{\fmax}{M\mathcal{L}}-1\right).\] In terms of a algorithmic implementation, other upper bounds for $d\left(\frac{\fmax}{M\mathcal{L}}-1\right)$ can be used; for instance, because $\fmax - \fmin \leq M\mathcal{L}$, $d\left(\frac{\fmax}{M\mathcal{L}}-1\right) \leq \frac{d\fmin}{M\mathcal{L}} \leq \frac{df(x_0)}{M\mathcal{L}}$ where $x_0$ is any point in the domain. 

Notice that $\frac{1}{(\epsilon + 1)^{1/3} -1} = \frac{1}{\epsilon} + 1 + O(\epsilon)$, so $k$ is bounded by a polynomial in $1/\epsilon$. We proceed with an ``$\epsilon/3$'' argument.

Because $k \geq \frac{d}{(\epsilon + 1)^{1/3} -1} $, after rearranging we have

\[ (\epsilon + 1)^{1/3} \geq  \frac{d}{k} + 1 = \frac{d+k}{k}. \]

Using the logarithm produces the bounds
\begin{equation}
\label{eq:k-bounds-part1}
\frac{\log(\epsilon + 1)}{3} \geq  \log\left( \frac{d+k}{k}\right) \geq \frac{k}{d+k}  \log\left( \frac{d+k}{k}\right). 
\end{equation}

Next, because $k \geq 3d \log(UM\mathcal{L})\left( 1 + \frac{1}{\epsilon}\right)$, we can rearrange this to

\[ k \geq 3d \log(UM\mathcal{L})\left( 1 + \frac{1}{\epsilon}\right) \geq \frac{3d \log(UM\mathcal{L})}{\log(\epsilon + 1)} \geq \frac{d[ \log(UM\mathcal{L}) - \log(\epsilon + 1)/3]}{\log(\epsilon + 1)/3} \]

where the second inequality comes from Lemma \ref{lemma:loge}, and the third inequality comes from the fact that $\log(\epsilon+1) > 0$. After further rearranging of the first and last expressions, we get 

\begin{equation}
\label{eq:k-bounds-part2}
 \frac{\log(\epsilon + 1)}{3} \geq \frac{d}{d+k}\log(UM\mathcal{L}). 
\end{equation}

Finally, fix $\delta > 0$, then because $k \geq d\left( (3c_\delta)^{1+\delta} \left( 1 + \frac{1}{\epsilon}\right)^{1+\delta} - 1\right)$, rearranging terms results in 

\[\frac{k}{d}+1 \geq (3c_\delta)^{1+\delta} \left(1+\frac{1}{\epsilon}\right)^{1+\delta}. \]

Then Lemma \ref{lemma:loge} implies

\[\frac{k}{d}+1 \geq \frac{(3c_\delta)^{1+\delta}}{\log(1+\epsilon)^{1+\delta}},\] and after solving for the log term gives

\[ \frac{\log(1+\epsilon)}{3} \geq  \frac{c_\delta}{\left(\frac{k}{d}+1\right)^{\frac{1}{1+\delta}}}.\]

Then Lemma \ref{lemma:logdelta} with $z = k/d$ yields

\begin{equation}
\label{eq:k-bounds-part3}
 \frac{\log(1+\epsilon)}{3} \geq \frac{\log(1 + \frac{k}{d})}{1+\frac{k}{d}} = \frac{d}{d+k}\log\left(\frac{d+k}{d}\right).
\end{equation}
 
%
%
%
%
%
%
%
%

Using Equations \eqref{eq:k-bounds-part1}, \eqref{eq:k-bounds-part2}, and \eqref{eq:k-bounds-part3} results in the inequality

\begin{align*}
\log(\epsilon + 1) &\geq \frac{d}{d+k} \log\left(\frac{d+k}{d}\right)  +\frac{d}{d+k}\log(UM\mathcal{L}) + \frac{k}{d+k}  \log\left( \frac{d+k}{k}\right) \\
&= \frac{d}{d+k} \log\left(\frac{UM\mathcal{L}(d+k)}{d} \right) + \frac{k}{d+k}\log\left(\frac{d+k}{k}\right),  
\end{align*}
which implies 

\[ \epsilon \geq   U^{\frac{d}{d+k} } \left(\frac{M\mathcal{L}(d+k)}{d} \right)^{\frac{d}{d+k}}   \left( \frac{d+k}{k}\right)^{\frac{k}{d+k}}  - 1. \]

Finally, because $\frac{1}{d+k}-\frac{1}{k} = \frac{1}{k}\cdot \frac{d}{d+k}$ we have that

\begin{align*}
U_k - L_k  &= \left(  \left(\frac{\int_\PP f^k \d x}{\vol(\PP)}\right)^{\frac{1}{d+k}} \left(\frac{M\mathcal{L}}{\epsilon'} \right)^{\frac{d}{d+k}}   \left( \frac{1}{1-\epsilon'}\right)^{\frac{k}{d+k}}   \right) - \left(\frac{\int_\PP f^k \d x}{\vol(\PP)}\right)^{1/k} \\
& = \left(\frac{\int_\PP f^k \d x}{\vol(\PP)}\right)^{1/k} \left(  \left(\frac{\int_\PP f^k \d x}{\vol(\PP)}\right)^{\frac{1}{d+k}-\frac{1}{k}} \left(\frac{M\mathcal{L}}{\frac{d}{d+k}}  \right)^{\frac{d}{d+k}}   \left(  \frac{1}{1-d/(d+k)}\right)^{\frac{k}{d+k}}   - 1\right)  \\
& \leq \fmax \left(  \left(\frac{\int_\PP f^k \d x}{\vol(\PP)}\right)^{\frac{1}{d+k}-\frac{1}{k}} \left(\frac{M\mathcal{L}}{\frac{d}{d+k}}  \right)^{\frac{d}{d+k}}   \left(  \frac{1}{1-d/(d+k)}\right)^{\frac{k}{d+k}}   - 1\right)  \\
&\leq \fmax \left(  U^{\frac{d}{d+k}} \left(\frac{M\mathcal{L}(d+k)}{d}  \right)^{\frac{d}{d+k}}   \left( \frac{d+k}{k}\right)^{\frac{k}{d+k}}   - 1\right) \\
&\leq \fmax \cdot \epsilon.
\end{align*}

$L_k$ is a $(1-\epsilon)$--approximation to $\fmax$ because

\[ \fmax \leq U_k = L_k + (U_k - L_k) \leq L_k +\epsilon \fmax \]

and $U_k$ is a $(1+\epsilon)$--approximation to $\fmax$ because

\[ U_k - \epsilon \fmax \leq U_k + (L_k - U_k) = L_k \leq \fmax.\]

\end{proof}

An important input to our approximation bounds is a Lipschitz constant $\mathcal{L}$ satisfying $|f(x) - f(y)| \leq \mathcal{L} \norm{x-y}_\infty$ for all $x, y \in \PP$. One natural way to compute this constant is by maximizing $\norm{\nabla f(x)}_1$ on $\PP$. However this is a difficult problem in general. Instead, one can compute a potentially larger constant by following the next lemma, which produces a Lipschitz constant of polynomial size and in linear time.

\begin{lemma}[Lemma 11 in \cite{deloera-hemmecke-koeppe-weismantel:mixedintpoly-fixeddim-fullpaper}] 
\label{lemma:lemma_11}
Let $f$ be a polynomial in $d$ variables with maximum total degree $D$. Let $c$ denote the largest absolute value of a coefficient of $f$. Then there exists a Lipschitz constant $\mathcal{L}$ such that $|f(x) - f(y)| \leq \mathcal{L} \norm{x-y}_\infty$ for all $|x_i|, |y_i| \leq M$. The constant $\mathcal{L}$ is $O(D^{d+1}cM^D)$, and is computable in linear time in the number of monomials of $f$.
\end{lemma}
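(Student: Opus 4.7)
The plan is to produce the Lipschitz constant by a direct upper bound on $\|\nabla f\|_1$ over the cube $[-M,M]^d$, which by the mean value theorem (and Hölder's inequality applied to $\langle \nabla f(\xi), x-y\rangle$) controls $|f(x)-f(y)|$ in the $\ell_\infty$ norm. Specifically, I would write $f(x) - f(y) = \int_0^1 \langle \nabla f(y + t(x-y)),\, x-y\rangle \d t$, so that
\begin{equation*}
|f(x) - f(y)| \le \Bigl(\sup_{\xi \in [-M,M]^d} \|\nabla f(\xi)\|_1\Bigr) \cdot \|x-y\|_\infty.
\end{equation*}
Thus it suffices to bound $\|\nabla f(\xi)\|_1$ uniformly on the cube, and to show the bound can be obtained by a linear scan over the monomials of $f$.

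Next, I would expand $f(x) = \sum_{\alpha} a_\alpha x^\alpha$ over exponent vectors $\alpha \in \Z_{\ge 0}^d$ with $|\alpha|\le D$. Each partial derivative is $\partial f / \partial x_i = \sum_{\alpha:\alpha_i\ge 1} a_\alpha \alpha_i\, x^{\alpha - e_i}$. For $\xi \in [-M,M]^d$ and assuming $M\ge 1$ (otherwise replace $M$ by $\max(1,M)$, which only improves the bound since the estimate is trivially sharp at $M=0$), each monomial contributes at most $c \cdot D \cdot M^{D-1}$ in absolute value, using $|\alpha_i|\le D$, $|a_\alpha|\le c$, and $|x^{\alpha-e_i}|\le M^{D-1}$. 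Summing over the $\binom{D+d}{d} = O(D^d)$ possible monomials and then over the $d$ coordinates gives
\begin{equation*}
\|\nabla f(\xi)\|_1 \;\le\; d \cdot \binom{D+d}{d} \cdot D \cdot c \cdot M^{D-1} \;=\; O(D^{d+1} c\, M^D),
\end{equation*}
where the $d$-factor is absorbed in $O(\cdot)$ since $d$ is fixed in the theorems where this lemma is applied, and we have bounded $M^{D-1}\le M^D$ in the $M\ge 1$ regime.

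Finally, for the algorithmic claim I would note that the bound above is essentially a sum of per-monomial contributions $|a_\alpha|\cdot |\alpha|\cdot M^{|\alpha|-1}$, each of which can be computed by a constant number of arithmetic operations once $M$ and the exponent are known. Iterating once over the list of monomials of $f$ therefore produces a valid $\mathcal{L}$ in time linear in the number of monomials, with no dependence on the potentially much larger value $\binom{D+d}{d}$ when $f$ is sparse.

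The only subtle point---and the place I would be most careful---is the treatment of the regime $M<1$, where $M^{D-1}$ is actually a sharper bound than $M^D$ and where powers of $M$ shrink. I would handle this once at the beginning by replacing $M$ by $\max(1,M)$, so that all subsequent monomial estimates $|x^\beta|\le M^{|\beta|}\le M^D$ are monotone in the degree; this only loosens the constant and preserves the stated asymptotic form. With that preprocessing the rest of the argument is a clean triangle-inequality computation.
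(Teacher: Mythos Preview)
The paper does not actually prove this lemma; it merely quotes it from \cite{deloera-hemmecke-koeppe-weismantel:mixedintpoly-fixeddim-fullpaper} and uses it as a black box. So there is nothing to compare against directly.

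That said, your argument is correct and is essentially the standard one. The integral mean-value step together with the Hölder pairing $|\langle \nabla f(\xi), x-y\rangle| \le \|\nabla f(\xi)\|_1 \|x-y\|_\infty$ is exactly right. Your per-monomial bookkeeping is also right: swapping the order of summation in $\sum_i \sum_{\alpha:\alpha_i\ge 1} |a_\alpha|\alpha_i M^{|\alpha|-1}$ gives precisely $\sum_\alpha |a_\alpha|\,|\alpha|\,M^{|\alpha|-1}$, which both justifies the $O(D^{d+1}cM^D)$ asymptotic (via $\binom{D+d}{d}=O(D^d)$ for fixed~$d$) and makes the linear-time scan over the monomial list immediate. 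Your remark that the $d$-factor is absorbed because $d$ is fixed in the applications matches how the lemma is used in the thesis. The $M<1$ patch via $\max(1,M)$ is harmless: it yields a valid Lipschitz constant of polynomial encoding size, which is all the downstream theorems need.
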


Before closing this section, we remark that Theorem \ref{theorem:continuous-lkuk-bounds} could also be applied for global maximization. If a polynomial has a global maximum, then there is a $M > 0$ such that the maximum is contained in the box $\PP := [-M, M]^d$. If $M$ can be computed or estimated, this reduces the unbounded problem to maximizing $f(x)$ over $\PP$. 

\subsection{An example}

We now illustrate our bounds on an example. Let $f(x) = x_1^2x_2 - x_1x_2$ with $x_1 \in [1,3]$ and $x_2\in [1,3]$. Note that $f(x)$ is nonnegative on its domain $\PP = [1,3]^2$. 

For the Lipschitz constant, we could easily maximize $\norm{\nabla f(x)}_1$ on $\PP$ which for this problem is $21$. However let us use the  bound produced by Lemma \ref{lemma:lemma_11} which gives $\mathcal{L} = 33$.

Using our new bounds from Theorem \ref{theorem:continuous-lkuk-bounds}, we have $L_k \leq \fmax \leq U_k$
where
\[ L_k:=\left( \frac{\int_\PP f(x)^k \d x}{\vol(P)} \right)^{1/k}\] and \[U_k := \left( \frac{\int_\PP f(x)^k \d x}{\vol(\PP)} \right)^{1/(d+k)}  \left(\frac{M\mathcal{L}}{\epsilon'}\right)^{d/(d+k)} \frac{1}{(1-\epsilon')^{k/(d+k)}}.
\]
Here, $M = 2$, $\epsilon' = \frac{2}{2+k}$, and $\mathcal{L} = 33$. Then for different values of $k$, the bounds are:
\begin{center}
\begin{tabular}{ c  c c }

  $k$ & $L_k$ & $U_k$\\
  \hline
  10 & 11.07 & 23.40  \\
  20 & 13.22 & 20.75  \\
  30 & 14.27 & 19.84  \\
  40 & 14.91 & 19.38  \\    
\end{tabular}
\end{center}

Next we want to apply Lemma \ref{lemma:epsilon-polynom-in-k} when $\epsilon=0.1$. Using $U = 19.38$, we see that $k$ has to be at least 
\[ k = \lceil \max\{-1.4, 62.0, 472.2,  434.1\}\rceil\] to guarantee that $U_k - L_k \leq \fmax \cdot \epsilon = 1.8$. However for this example   $k=126$ suffices: $U_{126} - L_{126} = 1.7$.

The integration of $f(x)^k$ can easily be done with the fundamental theorem of calculus because the domain is a box. For general rational full-dimensional polytopes, any method in Chapter \ref{ch:Integration} can be used.  

\subsection{Summary of the algorithm}

Notice that both Theorem \ref{thm:first-fptas} and \ref{theorem:continuous-lkuk-bounds} require the polynomial $f(x)$ to be nonnegative on the domain $\PP$. A shift, $s \in \R$, could be added to $f(x)$ so that $f(x) + s$ is nonnegative on $\PP$. We see that any $s$ such that  $s \geq -\fmin$ will work. 

To find such a shift $s$, we could follow the suggestion in \cite{deloera-hemmecke-koeppe-weismantel:mixedintpoly-fixeddim-fullpaper} and use linear programming to find the range of each variable $x_i$ and compute $f(x) \geq -rCM^D = -s$, where $r$ is the number of monomials of $f(x)$, $C$ is the largest absolute value of each coefficient in $f(x)$, $D$ is the total degree of $f(x)$, and $M$ is the largest bound on the variables. Another way is to compute a Handelman decomposition for $f(x)$ on $\PP$.

Algorithm \ref{alg:knorm-handelman} gives the final connection between optimizing a polynomial, computing integrals, and the Handelman decomposition. 

\begin{algorithm}
\caption{Computing $\int_\PP (f(x)+s)^k\d x$ via Handelman for Theorem \ref{theorem:continuous-lkuk-bounds}}
\label{alg:knorm-handelman}
\begin{flushleft}
Input: A polynomial $f(x) \in \Q[x_1, \dots, x_d]$ of degree $D$, polytope $\PP$, and $k$.

Output: $s$ such that $f(x)+s \geq 0$ on $\PP$, and $\int_\PP (f(x)+s)^k \d x$.
\end{flushleft}

\begin{enumerate}
\item Let $t \leftarrow D$
\item  Find $s \in \R$ and $c_\alpha \geq 0$ such that $f(x)+s = \sum_{|\alpha| \leq t} c_\alpha g^\alpha$ by solving a linear program
\item Expand $h(x) := (\sum_{|\alpha| \leq t} c_\alpha g^\alpha)^k$ to get a new sum of products of affine functions
\item Integrate each Handelman monomial in $h(x)$ by the methods in Section \ref{sec:integration-prducts-affine-functions}
\end{enumerate}
\end{algorithm}

\begin{theorem}
Fix the dimension $d$ and the number of facets $n$ in $\PP$. If $f(x)$ has a Handelman decomposition of order bounded by $D$, then Algorithm \ref{alg:knorm-handelman} runs in time polynomial in $k$, $D$, and the input size of $f$ and $\PP$.
\end{theorem}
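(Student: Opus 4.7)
The plan is to bound the running time of Algorithm~\ref{alg:knorm-handelman} step by step, treating $d$ and $n$ as fixed constants and showing each remaining step is polynomial in $k$, $D$, and the binary encoding of $f$ and $\PP$. The three nontrivial steps are (i)~solving the Handelman linear program, (ii)~computing the $k$-th power of the decomposition, and (iii)~integrating the resulting Handelman monomials. The crucial observation throughout is that the Handelman basis $\{g^\beta\}$ contains only $\binom{kD+n}{n}=O((kD)^n)$ multi-indices of total order at most $kD$, which is polynomial in $k$ and $D$ when $n$ is fixed.

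For Step~2, the LP~\eqref{eq:sparse-lp-handelman} with $t=D$ has $\binom{D+n}{n}=O(D^n)$ non-negative variables $c_\alpha$ (plus the scalar $s$) and $\binom{D+d}{d}=O(D^d)$ equality constraints obtained by matching coefficients of the monomials on both sides of $f(x)+s=\sum_{|\alpha|\le D}c_\alpha g^\alpha$. Since $d$ and $n$ are fixed and by hypothesis a feasible Handelman decomposition of order at most $D$ exists, a rational optimal solution can be produced in polynomial time in $D$ and the coefficient encoding of $f$. For Step~3, I would not expand $h(x)=(f(x)+s)^k$ back into the monomial basis of $\Q[x_1,\dots,x_d]$ but instead keep it in the Handelman basis. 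Writing $h(x)=\sum_{|\beta|\le kD}d_\beta\, g^\beta$ and grouping contributions by $\beta=\alpha^{(1)}+\cdots+\alpha^{(k)}$, the coefficients $d_\beta$ are obtained by iterated multiplication: set $h_1:=\sum_{|\alpha|\le D}c_\alpha g^\alpha$ and $h_j:=h_{j-1}\cdot h_1$, each time truncating at total degree~$kD$ in the $n$ formal variables $g_1,\dots,g_n$. Applying Lemma~\ref{lemma:poly-mult} with the $g_i$ playing the role of the variables shows that each of the $k-1$ multiplications costs $O((kD)^{2n})$ rational operations, so all of Step~3 is polynomial in $k$ and $D$.

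For Step~4, every term in the expanded $h(x)$ is a single Handelman monomial $g_1^{\beta_1}\cdots g_n^{\beta_n}$ with $|\beta|\le kD$, so Theorem~\ref{thm:continuous-affnie-products-linear-forms-cones} (or Theorem~\ref{thm:continuous-affnie-products-linear-forms-simplex} after a triangulation) computes $\int_\PP g^\beta\,\d x$ in time polynomial in $kD$. As remarked in Section~\ref{sssec:handelman-same-form}, a single invocation of that proof actually yields the entire generating polynomial encoding the integrals of all Handelman monomials of total order up to $kD$ at once, so all $O((kD)^n)$ required integrals become available simultaneously; taking the inner product with the coefficient vector $(d_\beta)$ from Step~3 gives $\int_\PP (f(x)+s)^k\d x$. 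The main obstacle I anticipate is the bookkeeping for Step~3: a naive expansion through the monomial basis of $\Q[x_1,\dots,x_d]$ after taking $k$-th powers can have degree as large as $kD\cdot\deg g_i$ and would blow up the coefficient size. Staying in the Handelman basis $\{g^\beta\}_{|\beta|\le kD}$ and truncating via Lemma~\ref{lemma:poly-mult} at each multiplication is what keeps everything polynomial, and this is the only nontrivial algorithmic choice behind the theorem.
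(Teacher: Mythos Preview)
Your proposal is correct and follows the same three-step skeleton as the paper's proof: the LP has size $O(D^d)\times O(D^n)$, the expansion of $(f+s)^k$ in the Handelman basis has $\binom{kD+n}{n}=O((kD)^n)$ terms, and each such term is integrated in time polynomial in $kD$ via the product-of-affine-functions theorems. Your write-up is in fact more careful than the paper's terse argument, since you make explicit how Step~3 is carried out (iterated multiplication in the formal variables $g_1,\dots,g_n$ with Lemma~\ref{lemma:poly-mult}) and you note that a single invocation of Theorem~\ref{thm:continuous-affnie-products-linear-forms-cones} already produces all needed integrals at once; the paper only records the final term counts without spelling out these implementation points.
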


\begin{proof}
Because $f(x)$ had a Handelman decomposition of order bounded by $D$, let $t=D$. The linear program that is solved has a matrix with dimension at most $O(D^d) \times O(D^n)$. Expanding $(\sum_{|\alpha| \leq t} c_\alpha g^\alpha)^k$ has at most ${ kD + n \choose n} = O((kD)^n)$ Handelman terms. Finally, each term can be integrated in time polynomial in $kD$. 
\end{proof}

Notice that Steps (2) and (3) in Algorithm \ref{alg:knorm-handelman} can be swapped, resulting in the same time complexity. For example, $h(x) := (f(x) +s)^k$ could first be expanded, and then a Handelman decomposition could be found for $h(x)$.

\chapter{Top coefficients of the Ehrhart polynomial for knapsack polytopes}
\label{ch:knapsack}

This chapter is mostly a highlight of \cite{baldoni-et-al:denumerant-full-paper}. Let $\a=[\alpha_1,\alpha_2,\ldots, \alpha_{N},\alpha_{N+1}]$ be a sequence of positive integers.  If $t$ is a non-negative integer, we denote by $E(\a; t)$ the number of solutions in non-negative integers of the equation \[\sum_{i=1}^{N+1} \alpha_i x_i=t.\]  In other words, $E(\a; t)$ is the same as the number of partitions of the number $t$ using the parts $\alpha_1,\alpha_2,\ldots, \alpha_{N},\alpha_{N+1}$ (with repetitions allowed). 

We will use the notation $f(a_1, \dots a_n; x_1, \dots, x_m)$ to stress that we will think of the $a_i$ as parameters to the function $f$ while the $x_i$ are the important variables. 
The combinatorial function $E(\a; t)$ was called by J.~Sylvester the \emph{denumerant}.  The denumerant $E(\a;t)$ has a beautiful structure: it has been known since the times of Cayley and Sylvester that $E(\a; t)$ is in fact a \emph{quasi-polynomial}, i.e., it can be written in the form $E(\a;  t)= \sum_{i=0}^{N} E_{i}(t)t^{i}$, where $E_i(t)$ is a periodic function of $t$. In other words, there exists a positive integer $Q$ such that for $t$ in the coset $q+Q\Z$, the function $E(\a;  t)$ coincides with a polynomial function of $t$.  This chapter presents an algorithm to compute individual coefficients of this function and explores their periodicity. Sylvester and Cayley first showed that the coefficients $E_i(t)$ are periodic functions having period equal to the least common  multiple of $\alpha_1,\ldots,\alpha_{d+1}$ (see \cite{beckgesselkomatsu,bellET} and references therein).  In 1943,  E.\,T.~Bell gave a simpler proof and remarked that the period $Q$ is  in the worst case given by the least common multiple of the $\alpha_i$,  but in general it can be smaller. A classical observation that goes  back to I.~Schur is that when the list $\a$ consist of relatively  prime numbers, then asymptotically

$$ E(\a;  t) \approx \frac{t^N}{N!\, \alpha_1\alpha_2\cdots \alpha_{N+1}} \quad \text{as the number} \ t \rightarrow \infty. $$

\begin{example}\label{example1}
Let $\a=[6,2,3].$  Then on each of the cosets $q+6\Z$, the  function
$E(\a;  t)$ coincides with a polynomial $E^{[q]}(t)$. Here are the
corresponding polynomials. 
\begin{align*}
E^{[0]}(t)&=\tfrac{1}{72}t^2+\tfrac{1}{4}t+1, &
E^{[1]}(t)&=\tfrac{1}{72}t^2+\tfrac{1}{18}t-\tfrac{5}{72}, \\
E^{[2]}(t)&=\tfrac{1}{72}t^2+\tfrac{7}{36}t+\tfrac{5}{9}, & 
E^{[3]}(t)&=\tfrac{1}{72}t^2+\tfrac{1}{6}t+\tfrac{3}{8}, \\
E^{[4]}(t)&=\tfrac{1}{72}t^2+\tfrac{5}{36}t+\tfrac{2}{9}, & 
E^{[5]}(t)&=\tfrac{1}{72}t^2+\tfrac{1}{9}t+\tfrac{7}{72}. 
\end{align*}

Then the number of nonnegative solutions in $x \in \Z^3$ to $6x_1 + 2x_2 + 3x_3 = 10$ is given by \[E^{[10 \bmod 6]}(10) = E^{[4]}(10) = 3.\]
\end{example}

It is interesting to note that the coefficients of the polynomial $E^{[t \bmod 6]}(t)$ may not be integer, but the evaluation will always be integer as they count integer solutions to an equation. Also, the coefficient of the highest degree (the $t^{N}$ term) is constant. In other examples, more  $t^m$ terms could be constant. 

Naturally, the function $E(\a;  t)$ is equal to $0$ if $t$ does not belong to the lattice $\sum_{i=1}^{N+1} \Z \alpha_i\subset \Z$ generated by the integers $\alpha_i$. Note that if $g$ is the greatest common divisor of the $\alpha_i$ (which can be computed in polynomial time), and $\a/g=[\frac{\alpha_1}{g},\frac{\alpha_2}{g},\ldots, \frac{\alpha_{N+1}}{g}]$ the formula $E(\a;  gt)=E(\a/g, t)$ holds, hence we assume that the numbers $\alpha_i$ span $\Z$ without changing the complexity of the problem. In other words, we assume that the greatest common divisor of the $\alpha_i$ is equal to $1$. With this assumption, there is a large enough integer $F$ such that for any $t \geq F$, $E(\a)(t)>0$ and  there is a largest $t$ for which $E(\a;  t)=0$.

The focus of this chapter is on developing the polynomial time algorithm that is stated in Theorem \ref{theo:knap-main}. 

\begin{theorem}
\label{theo:knap-main}
Given any fixed integer~$k$, there is a polynomial time algorithm to compute the highest $k+1$ degree
terms of the quasi-polynomial $E(\a; t)$, that is
$$\mathrm{Top}_kE(\a; t)=\sum_{i=0}^k E_{N-i}(t) t^{N-i}.$$
The coefficients are recovered as \emph{step polynomial} functions of $t$.
\end{theorem}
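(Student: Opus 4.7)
The plan is to combine a residue-style generating-function expansion with a Brion--Barvinok cone decomposition, and to exploit the fact that only bounded-depth data is needed to recover the top $k+1$ coefficients. This will make the algorithm polynomial in the input size even though $N$ varies.

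First, I would write the denumerant as the Hilbert series
\[
\sum_{t\ge 0}E(\a;t)\,z^{t}=\prod_{i=1}^{N+1}\frac{1}{1-z^{\alpha_{i}}},
\]
so that $E(\a;t)$ is a sum of residues of $z^{-t-1}\prod_{i}(1-z^{\alpha_{i}})^{-1}$ at the roots of unity. At a primitive $m$-th root $\zeta$ the pole has order $n_{m}(\a):=\#\{i:m\mid\alpha_{i}\}$ and contributes $\zeta^{-t}$ times a polynomial in $t$ of degree $n_{m}(\a)-1$. Only those $m$ with $n_{m}(\a)\ge N-k$ can feed into the top $k+1$ coefficients; this ``relevant'' set is contained in the divisors of the gcd's of the $(N-k)$-subsets of $\a$, which for fixed $k$ is polynomially bounded. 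Grouping residues by which primitive $m$-th root of unity they sit on will produce the step-polynomial dependence of the answer on~$t$.

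Next, for each relevant pole I would compute the residue contribution geometrically. The knapsack polytope is the $N$-simplex with vertices $v_{i}=(t/\alpha_{i})\ve e_{i}$ and simplicial tangent cones $C_{i}$; Brion's identity (Lemma~\ref{brion-exp}) gives
\[
E(\a;t)=\sum_{i=1}^{N+1} S\bigl(v_{i}+C_{i},0\bigr),
\]
where the value at $\ell=0$ is interpreted as a Laurent residue exactly as in Section~\ref{ch:bg:sec:integration-stuff}. Barvinok's signed decomposition then breaks each $C_{i}$ into unimodular cones whose unique fundamental-parallelepiped lattice points are step functions of~$t$, making each residue contribution a finite rational expression in the cone generators.

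The main obstacle is that the cones $C_{i}$ live in dimension $N$, which is not fixed, so the pieces above are not individually polynomial in the input. The resolution, and the technical core of the proof, is a truncation argument: for a pole of order $p$ only the first $k+1$ Laurent coefficients in the local parameter matter for the top $k+1$ Ehrhart coefficients, and the corresponding products of rational series can be truncated at total degree $k$ by Lemma~\ref{lemma:poly-mult}. Combined with the structural fact that a pole of order $p\ge N-k+1$ forces all but at most $k$ of the $\alpha_{i}$ to share the divisor $m$---so the decomposition of the relevant piece of each cone effectively takes place in a space of dimension $\le k$, which is fixed---this yields polynomial bounds on the whole procedure. Assembling the pieces expresses each top coefficient $E_{N-j}(t)$ as a polynomial in fractional parts $\{t\beta\}$ with $\beta\in\Q$, i.e.\ the desired step-polynomial representation.
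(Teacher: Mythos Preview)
Your first paragraph is exactly right and matches the paper: the residue formula at roots of unity, the observation that only poles of order $\ge N+1-k$ can influence the top $k+1$ coefficients, and the indexing of those poles by gcd's of $(N-k+1)$-element sublists of~$\a$ (with a M\"obius-type inclusion--exclusion to make the union disjoint) are the opening moves of the paper's argument.

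The gap is in paragraphs two and three. Applying Brion's identity to the $N$-dimensional knapsack simplex gives you $N+1$ tangent cones, each of full dimension~$N$, and Barvinok's decomposition on those cones is only polynomial when the ambient dimension is fixed. You acknowledge this, but your proposed fix---that the relevant piece of each cone ``effectively takes place in a space of dimension $\le k$''---is asserted, not demonstrated, and the Brion route gives no obvious mechanism for it. Truncating a Laurent series at order~$k$ controls the degree in~$t$, but it does not by itself reduce the dimension in which Barvinok must be run; you still have to produce a short rational-function representation of the residue, and that is the hard part.

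The paper does not go through Brion on the simplex. Instead it stays with the one-variable residue formula and, for each relevant gcd~$f$, factors the local contribution as
\[
\CF(\a,f,T;x)=\underbrace{\prod_{i:\,f\mid\alpha_i}\frac{1}{1-\e^{\alpha_i x}}}_{\text{easy: Bernoulli}}\ \cdot\ \underbrace{\sum_{\zeta^f=1}\frac{\zeta^{-T}}{\prod_{j:\,f\nmid\alpha_j}(1-\zeta^{\alpha_j}\e^{\alpha_j x})}}_{=:\ \mathcal S(\a,f,T;x)} .
\]
The second factor involves only the indices $J=\{j:f\nmid\alpha_j\}$, and $|J|\le k$. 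The key technical step (Theorem~\ref{th:as-lattice-genfun}) is to recognize $\mathcal S$ as the lattice-point generating function of the standard orthant $\R_{\ge0}^J$ with respect to a sublattice $\Lambda(\a,f)\subset\Z^J$ of index~$f$, shifted by $-T\ve s$ where $\ve s$ comes from a B\'ezout relation. \emph{Now} Barvinok's decomposition is applied to this $|J|$-dimensional cone, which is legitimate because $|J|\le k$ is fixed. The step-polynomial dependence on~$T$ then drops out of the fractional parts $\{T s_i\}$ in the unimodular-cone formula~\eqref{eq:M-uni}. This lattice reinterpretation of the character sum over $f$-th roots of unity is the missing ingredient in your outline; without it the dimension reduction you need does not follow from the structural fact you correctly identified.
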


As noted in Section \ref{sec:bg:knap}, computing $E(\a;  t)$ for a given~$t$, is $\#P$-hard, and computing the polynomial $E^{[q]}(t)$ is NP-hard. Despite this difficulty, when $N+1$ is fixed, the entire quasi-polynomial $E(\a;  t)$ can be computed in polynomial time \cite{barvinokwood, kannanfrobenius}. The above theorem instead allows $N+1$ to vary but fixes $k$, computing just the top $k$ terms of the quasi-polynomial $E(\a;  t)$ in polynomial time.

 Note that the number~$Q$ of cosets for $E(\a;  t)$ can be exponential in the binary encoding size of the problem, and thus it is impossible to list, in polynomial time, the polynomials~$E^{[q]}(t)$ for all the cosets~$q+Q\Z$.  That is why to obtain a polynomial time algorithm, the output is presented in the format of \emph{step polynomials}, 
which we define next. 

  \begin{enumerate}
  \item 
    Let $\fractional{s}:=s-\floor{s} \in [0,1)$ 
    for $s\in\R$, where 
    $\floor{s}$ denotes the largest integer smaller or equal
    to~$s$. The function $\fractional{s+1}=\fractional{s}$ is a periodic
    function of $s$ modulo $1$.
  \item If $r$ is rational with denominator $q$, the function $T\mapsto
    \fractional{rT}$ is a function of $T\in \R$ periodic modulo~$q$.  A
    function of the form $T\mapsto \sum_i c_i \fractional{r_iT}$ will be called a
    \emph{(rational) step linear function}.  If all the $r_i$ have a common denominator $q$,
    this function is periodic modulo~$q$.
   
  \item Then consider the algebra generated over~$\Q$ by
    such functions on~$\R$. An element $\phi$ of this algebra can be written
    (not in a unique way) as
    $$ \phi(T) = \sum_{l=1}^L c_l \prod_{j=1}^{J_l} \fractional{r_{l,j} T}^{n_{l,j}}.$$
    Such a function $\phi(T)$ will be called a \emph{(rational) step polynomial}.
  \item We  will say that the step polynomial $\phi$ is of \emph{degree} (at most) $u$ if $\sum_j
    n_{l,j}\leq u$ for each index~$l$ occurring in the formula for
    $\phi$. As a side note, this notion of degree only induces a filtration, not a grading, on the algebra of step polynomials, because there exist
      polynomial relations between step linear functions and therefore several
      step-polynomial formulas with different degrees may represent the same
      function.
    We will say that $\phi$ is of \emph{period} $q$ if all the rational
    numbers $r_j$ have common denominator $q$.
  \end{enumerate}

In Example~\ref{example1}, instead of the $Q=6$ polynomials $E^{[0]}(t),
\dots, E^{[5]}(t)$ that we wrote down, we could write a single closed formula,
where the coefficients of powers of~$t$ are step polynomials in~$t$:
$$ {\frac {1}{72}}\,{t}^{2}+\left( \frac{1}{4}-\frac{\{-\frac{t}{3}\}}{6}-\frac{\{\frac{t}{2}\}}{6}\right) \, t + \left (1-\frac{3}{2}\, \{-\tfrac{t}{3}\}- \frac{3}{2}\,\fractional{\tfrac{t}{2}}+\frac{1}{2}\, \left( \fractional{-\tfrac{t}{3}} \right) ^{2}+ \fractional{-\tfrac{t}{3}}\fractional{\tfrac{t}{2}}+\frac{1}{2}\, \left( \fractional{\tfrac{t}{2}} \right) ^{2} \right).$$
For larger $Q$, one can see that this step polynomial representation is much more economical than writing
the individual polynomials for each of the cosets of the period~$Q$.


\section{The residue formula for $E(\a;  t)$}

Let us begin fixing some notation.
If $\phi(z)\,\d{z}$ is a meromorphic one form  on $\C$, with a pole at $z=\zeta$, we write
$$\Res_{z=\zeta}\phi(z)\,\d{z}=\frac{1}{2\pi i}\int_{C_\zeta} \phi(z) \,\d{z},$$ 
where $C_\zeta$ is a small circle around the pole $\zeta$.
 If $\phi(z)=\sum_{k\geq k_0} \phi_k z^k$ is a Laurent series in $z$, we denote by $\res_{z=0}$ the coefficient of $z^{-1}$ of $\phi(z)$.
  Cauchy's formula implies that
 $\res_{z=0} \phi(z)=\Res_{z=0} \phi(z)\,\d{z}$.

Let $\a=[\alpha_1,\alpha_2,\ldots, \alpha_{N+1}]$  be a list of integers. Define
 $$F(\a;  z):=\frac{1}{\prod_{i=1}^{N+1}(1-z^{\alpha_i})}.$$

 Denote by  $\mathcal P=\bigcup_{i=1}^{N+1}\{\,\zeta\in \C: \zeta^{\alpha_i}=1\,\}$
 the set of poles of the meromorphic function $F(\a)$ and  by $p(\zeta)$ the order of
 the pole $\zeta$ for $\zeta\in \mathcal P$.

Note that because the $\alpha_i$ have greatest common divisor~$1$, we
have $\zeta=1$ as a pole of order ${N+1}$, and the other poles have
order strictly smaller.

\begin{theorem}[Theorem 2.1 in \cite{baldoni-et-al:denumerant-full-paper}]
\label{theo:sum}
Let $\a=[\alpha_1,\alpha_2,\ldots, \alpha_{N+1}]$  be a list of  integers with 
greatest common divisor equal to $1$, and let  $$F(\a; z):=\frac{1}{\prod_{i=1}^{N+1}(1-z^{\alpha_i})}.$$
If $t$ is a non-negative integer, then
\begin{equation}
  E(\a; t)=-\sum_{\zeta\in \mathcal P} \Res_{z=\zeta} z^{-t-1}F(\a; z)\,\d{z}
  \label{eq:Ea-as-sum-of-residues}
\end{equation}
and the $\zeta$-term 
of this sum is a quasi-polynomial function of $t$ with degree less than or
equal to $p(\zeta)-1$. 

\end{theorem}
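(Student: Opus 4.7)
The plan is to first recognize $F(\a; z)$ as the ordinary generating function for $E(\a; t)$, then apply the residue theorem on an expanding contour to convert the residue at the origin into a sum of residues at the non-zero poles, and finally analyze the local behavior at each pole $\zeta$ to obtain the quasi-polynomial structure.

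First, expand each factor as a geometric series for $|z|<1$:
\[
F(\a;z) \;=\; \prod_{i=1}^{N+1} \frac{1}{1-z^{\alpha_i}} \;=\; \prod_{i=1}^{N+1} \sum_{k_i\geq 0} z^{\alpha_i k_i} \;=\; \sum_{t\geq 0} E(\a;t)\, z^t,
\]
since the coefficient of $z^t$ counts exactly the non-negative integer solutions to $\sum \alpha_i x_i = t$. Consequently, $E(\a;t)$ is the coefficient of $z^{-1}$ in the Laurent expansion of $z^{-t-1}F(\a;z)$ about $z=0$, i.e.\ $E(\a;t) = \Res_{z=0} z^{-t-1}F(\a;z)\,\d z$.

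Next, I would apply the residue theorem on the circle $C_R$ of radius $R$ enclosing $z=0$ and every $\zeta\in\mathcal P$. The key estimate is a decay bound at infinity: for $|z|=R$ large, $|1-z^{\alpha_i}|\geq R^{\alpha_i}-1$, hence
\[
\bigl|z^{-t-1}F(\a;z)\bigr|\cdot 2\pi R \;\leq\; \frac{2\pi R}{R^{t+1}\prod_i (R^{\alpha_i}-1)} \;=\; O\!\bigl(R^{-t-\sum_i\alpha_i}\bigr)\to 0
\]
as $R\to\infty$, since $t\geq 0$ and $\sum\alpha_i>0$. The integral on $C_R$ therefore vanishes in the limit, so the sum of all finite residues is zero, yielding
\[
E(\a;t) \;=\; \Res_{z=0} z^{-t-1}F(\a;z)\,\d z \;=\; -\sum_{\zeta\in\mathcal P}\Res_{z=\zeta} z^{-t-1}F(\a;z)\,\d z.
\]
This proves Equation~\eqref{eq:Ea-as-sum-of-residues}.

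It remains to check the quasi-polynomial structure of each $\zeta$-term. For $\zeta\neq 0$, write $F(\a;z)=g_\zeta(z)/(z-\zeta)^{p(\zeta)}$ with $g_\zeta$ analytic and non-vanishing at $\zeta$. Since $z^{-t-1}$ is holomorphic at $\zeta$,
\[
\Res_{z=\zeta} z^{-t-1}F(\a;z)\,\d z \;=\; \frac{1}{(p(\zeta)-1)!}\,\frac{\d^{\,p(\zeta)-1}}{\d z^{\,p(\zeta)-1}}\bigl[z^{-t-1}g_\zeta(z)\bigr]_{z=\zeta}.
\]
Applying the Leibniz rule and the identity $\tfrac{\d^k}{\d z^k} z^{-t-1}\big|_{z=\zeta} = (-t-1)(-t-2)\cdots(-t-k)\,\zeta^{-t-1-k}$, every term in the resulting sum factors as $\zeta^{-t}$ times a polynomial in $t$ of degree at most $p(\zeta)-1$. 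The hard part here is really just bookkeeping: one must observe that $\zeta$ is a root of unity (since $\zeta^{\alpha_i}=1$ for any index~$i$ with $\zeta\in\{w:w^{\alpha_i}=1\}$), so $t\mapsto \zeta^{-t}$ is periodic in~$t$. Thus the $\zeta$-contribution equals a periodic function of $t$ multiplied by a polynomial in $t$ of degree at most $p(\zeta)-1$, which is precisely a quasi-polynomial of degree $\leq p(\zeta)-1$, completing the proof.
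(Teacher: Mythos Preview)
Your proof is correct and follows the standard route: recognize $F(\a;z)$ as the ordinary generating function for $E(\a;t)$, extract the coefficient as a residue at the origin, push the contour to infinity using the decay estimate, and then read off the quasi-polynomial structure from the local expansion at each root-of-unity pole. The paper itself does not supply a proof of this theorem; it merely cites it as Theorem~2.1 of \cite{baldoni-et-al:denumerant-full-paper}, so there is no in-paper argument to compare against. Your argument is exactly the classical one and would serve as a complete proof here.
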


\subsection{Poles of high and low order}

Given an integer $0\leq k\leq N$,
we partition  the set of poles $\mathcal P$ in two disjoint sets according to
the order of the pole:
$$\mathcal P_{> N-k}=\{\,\zeta: p(\zeta)\geq N+1-k\,\},\qquad
\mathcal P_{\leq N-k} =\{\,\zeta: p(\zeta)\leq {N}-k\,\}.$$

\begin{example}\label{ex:poles}

\hfill
  \begin{enumerate}
  \item  Let $\a= [98, 59, 44, 100]$, so $N=3$, and let $k=1$.
    Then $\mathcal P_{> N-k}$ consists of poles of order greater than~$2$. Of
    course $\zeta=1$ is a pole of order $4$.  Note that $\zeta=-1$ is a pole of
    order $3$.  So $\mathcal P_{> N-k}=\{\,\zeta : \zeta^2=1\,\}$.
  \item Let $\a= [6,2,2,3,3]$, so $N=4$, and let $k=2$.
    Let $\zeta_6 = \e^{2\pi i/6}$ be a primitive 6th root of unity. 
    Then $\zeta_6^6 = 1$ is a pole of order~$5$, $\zeta_6$ and $\zeta_6^5$ are poles of
    order~1, and $\zeta_6^2$, $\zeta_6^3 = -1$, $\zeta_6^4$ are poles of
    order~3. Thus $\mathcal P_{> N-k}=\mathcal P_{> 2}$ is the union of $\{\,\zeta :
    \zeta^2=1\,\} = \{-1,1\} $ and $\{\,\zeta : \zeta^3=1\,\} = \{
    \zeta_6^2, \zeta_6^4, \zeta_6^6 = 1\}$.
  \end{enumerate}
\end{example}

According to the disjoint decomposition $\mathcal P=\mathcal P_{\leq N-k}\cup \mathcal P_{> N-k}$, we write
\begin{align*}
E_{\mathcal P_{> N-k}}(t)&=-\sum_{\zeta\in \mathcal P_{> N-k}} \Res_{z=\zeta}
z^{-t-1}F(\a; z)\,\d{z} \\
\intertext{and}  
E_{\mathcal P_{\leq N-k}}(t)&=-\sum_{\zeta\in \mathcal P_{\leq N-k}} \Res_{z=\zeta} z^{-t-1}F(\a; z)\,\d{z}.
\end{align*}
The following proposition is a direct consequence of Theorem \ref{theo:sum}.
 \begin{proposition}
We have
$$E(\a; t)=E_{\mathcal P_{> N-k}}(t)+E_{\mathcal P_{\leq N-k}}(t),$$
where the function  $E_{\mathcal P_{\leq N-k}}(t)$ is a quasi-polynomial function in the variable $t$ of degree  strictly less than $N-k$.
\end{proposition}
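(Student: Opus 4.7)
The plan is to obtain both parts of the proposition as an essentially immediate consequence of Theorem~\ref{theo:sum}, so the proof should be very short.

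First I would invoke the residue formula \eqref{eq:Ea-as-sum-of-residues} from Theorem~\ref{theo:sum}, which expresses $E(\a;t)$ as the sum $-\sum_{\zeta\in\mathcal P}\Res_{z=\zeta} z^{-t-1}F(\a;z)\,\d z$ over all poles of $F(\a;z)$. Since $\mathcal P = \mathcal P_{>N-k}\sqcup \mathcal P_{\leq N-k}$ is a disjoint union by definition, I can split this single sum into two sums indexed over the two pieces. By the very definitions of $E_{\mathcal P_{>N-k}}(t)$ and $E_{\mathcal P_{\leq N-k}}(t)$, this yields the identity $E(\a;t) = E_{\mathcal P_{>N-k}}(t) + E_{\mathcal P_{\leq N-k}}(t)$.

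For the degree bound, I would use the second assertion of Theorem~\ref{theo:sum}: each individual term $-\Res_{z=\zeta} z^{-t-1}F(\a;z)\,\d z$ is a quasi-polynomial in $t$ of degree at most $p(\zeta)-1$. For every $\zeta\in\mathcal P_{\leq N-k}$, we have $p(\zeta)\leq N-k$ by the definition of this set, so the corresponding term has degree at most $N-k-1$. Summing finitely many quasi-polynomials, each of degree at most $N-k-1$, produces a quasi-polynomial of degree at most $N-k-1$, which is strictly less than $N-k$. This gives the asserted degree bound on $E_{\mathcal P_{\leq N-k}}(t)$.

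The main thing to justify is that a sum of quasi-polynomials is again a quasi-polynomial whose degree does not exceed the maximum of the summand degrees — which is immediate from the definition (the periods combine by taking a common multiple, and the pointwise sum of polynomials of degree $\leq d$ has degree $\leq d$). There is no real obstacle here; the work has already been done in Theorem~\ref{theo:sum}, and this proposition is essentially a book-keeping consequence that sets up the distinction between ``top'' and ``low-order'' contributions used to prove Theorem~\ref{theo:knap-main}.
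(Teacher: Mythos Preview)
Your proposal is correct and matches the paper's approach exactly: the paper simply states that this proposition is a direct consequence of Theorem~\ref{theo:sum} without writing out any further details, and your argument spells out precisely those details (splitting the residue sum over the disjoint union $\mathcal P=\mathcal P_{>N-k}\sqcup\mathcal P_{\leq N-k}$ and invoking the degree bound $p(\zeta)-1\leq N-k-1$ on each term).
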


Thus  for the purpose of computing ${\rm Top}_kE(\a; t)$ it is sufficient to compute the function
$E_{\mathcal P_{> N-k}}(t)$. Notice that in order to compute $E_{\mathcal P_{> N-k}}(t)$, we need to compute a residue for every $\zeta \in \mathcal P_{> N-k}$. It turns out many of these residue calculations can be grouped together. We will do this by looking at a poset structure of $\mathcal P_{> N-k}$ and by using generating functions. We discus this in the next two sections.

%

\section{Using the poset structure of the $\alpha_i$} \label{posetpoles}

We first rewrite our set ${\mathcal P_{> N-k}}$.  Note
that if $\zeta$ is a pole of order $\geq p$, this means that there exist at
least $p$ elements $\alpha_i$ in the list $\a$ so that $\zeta^{\alpha_i}=1$. But if
$\zeta^{\alpha_i}=1$ for a set~$I\subseteq\{1,\dots,N+1\}$ of indices $i$, this is
equivalent to the fact that $\zeta^f=1$, for $f$ the greatest common divisor
of the elements $\alpha_i, i\in I$.

Now let $\mathcal I_{> N-k}$ be the set of subsets of $\{1, \dots, N+1\}$ of cardinality greater than $N-k$. Note that when $k$ is fixed,
the cardinality of $\mathcal I_{> N-k}$ is a polynomial function of~$N$.
For each subset $I\in
\mathcal I_{> N-k}$, define $f_I$ to be the greatest common divisor of the corresponding sublist $\alpha_i$, $i\in I$.  Let $\CG_{>N-k}(\a)=\{\,f_I : I \in \mathcal I_{>
  N-k}\,\}$ be the set of integers so obtained and let $G(f)\subset\C^\times$ be the group of $f$-th roots of unity, 
$$G(f)=\{\,\zeta\in \C: \zeta^f=1\,\}.$$ The set $\{\, G(f) : f \in
\CG_{>N-k}(\a)\,\}$ forms a poset $\tilde{P}_{>N-k}$ (partially ordered set)
with respect to reverse inclusion.  That is, $G(f_i) \preceq_{\tilde{P}_{>N-k}} G(f_j)$ if $G(f_j) \subseteq G(f_i)$ (the $i$ and $j$ become swapped). Notice $G(f_j) \subseteq G(f_i) \Leftrightarrow f_j $ divides $f_i$. Even if $\tilde{P}_{>N-k}$ has a unique minimal element, we add an element $\hat{0}$ such that $\hat{0} \preceq G(f)$ and call this new poset $P_{>N-k}$.

In terms of the group $G(f)$ we have thus $\mathcal P_{>
  N-k}=\bigcup_{f\in \CG_{>N-k}(\a)} G(f)$. This is, of course, not a
disjoint union, but using the inclusion--exclusion principle, we can write
the indicator function of the set  $\mathcal P_{> N-k}$ as a linear combination
of indicator functions of the sets $G(f)$:
$$[\mathcal P_{> N-k}]=\sum_{f\in \CG_{>N-k}(\a)} \mu_{>N-k}(f) [G(f)],$$ where $\mu_{>N-k}(f) := -\mu'_{>N-k}(\hat{0},G(f))$ and $\mu'_{>N-k}(x,y)$ is the standard M\"obius function for the poset $P_{>N-k}$:
\begin{align*}
\mu'_{>N-k}(s,s) &= 1 && \forall s \in P_{>N-k}, \\
\mu'_{>N-k}(s,u) &= -\sum\limits_{s \preceq t \prec u} \mu'_{>N-k}(s,t) && \forall s \prec u\text{ in } P_{>N-k}.
\end{align*}
For simplicity,  $\mu_{>N-k}$ will be called the \emph{M\"obius function} for the poset $P_{>N-k}$ and will be denoted simply by $\mu(f)$. We also have the relationship
\begin{align*}
\mu(f) &= -\mu'_{>N-k}(\hat{0},G(f)) \\
&= 1 + \sum\limits_{\hat{0} \prec G(t) \prec G(f)} \mu'_{>N-k}(\hat{0},G(t))\\
&= 1 - \sum\limits_{\hat{0} \prec G(t) \prec G(f)} -\mu'_{>N-k}(\hat{0},G(t))\\
&= 1 - \sum\limits_{\hat{0} \prec G(t) \prec G(f)} \mu(t).
\end{align*}
\begin{example}[Example~\ref{ex:poles}, continued]~
  \begin{enumerate}
  \item Here we have $\mathcal I_{>N-k} = \mathcal I_{>2} = \bigl\{
    \{\oldstylenums1, \oldstylenums2, \oldstylenums3\},
    \{\oldstylenums1,\oldstylenums2,\oldstylenums4\},
    \{\oldstylenums1,\oldstylenums3,\oldstylenums4\},
    \{\oldstylenums2,\oldstylenums3,\oldstylenums4\},\allowbreak
    \{\oldstylenums1,\oldstylenums2,\oldstylenums3,\oldstylenums4\} \bigr\}$ 
    and $\mathcal G_{>N-k}(\a) = \{ 1, 1, 2, 1, 1\} = \{1,2\}$.  Accordingly, 
    $\mathcal P_{> N-k} = G(1) \cup G(2)$.  The poset $P_{>2}$ is
    
    \begin{figure}[ht]
        \centering
    \includegraphics{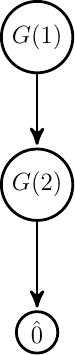}
%
\end{figure}

    The arrows denote subsets, that is $G(1) \subset G(2)$ and $\hat{0}$ can be identified with the unit circle. The M\"obius function~$\mu$ is simply given by $\mu(1) = 0$, $\mu(2) = 1$, and so
    $[\mathcal P_{>N-k}] = [G(2)]$.
  \item Now $\mathcal I_{>N-k} = \mathcal I_{>2} = \bigl\{
    \{\oldstylenums1, \oldstylenums2, \oldstylenums3\},
    \{\oldstylenums1,\oldstylenums2,\oldstylenums4\},\dots,\allowbreak
    \{\oldstylenums3,\oldstylenums4,\oldstylenums5\},\allowbreak
    \{\oldstylenums1, \oldstylenums2, \oldstylenums3, \oldstylenums4\},\allowbreak
    \{\oldstylenums1, \oldstylenums2, \oldstylenums3, \oldstylenums5\},\allowbreak
    \{\oldstylenums1,\oldstylenums2,\oldstylenums4, \oldstylenums5\},\allowbreak
    \{\oldstylenums1,\oldstylenums3,\oldstylenums4, \oldstylenums5\},\allowbreak
    \{\oldstylenums2,\oldstylenums3,\oldstylenums4, \oldstylenums5\},\allowbreak
    \{\oldstylenums1,\oldstylenums2,\oldstylenums3,\oldstylenums4,\oldstylenums5\}
    \bigr\}$  
    and thus $\mathcal G_{>N-k}(\a) = \{ 2, 3, 1, 1 \} \allowbreak =
    \allowbreak \{1,2,3\}$. Hence
    $\mathcal P_{> N-k} = G(1) \cup G(2)\cup G(3) = \{1\} \cup \{-1,1\} \cup
    \{\zeta_3, \zeta_3^2, 1\}$, where $\zeta_3 = \e^{2\pi i/3}$ is a primitive
    3rd root of unity. 
    
    \begin{figure}[ht]
    \centering
    \includegraphics{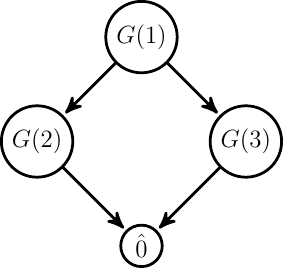}
%
\end{figure}    
    
    The M\"obius function~$\mu$ is then $\mu(3)=1$, $\mu(2)=1$,
    $\mu(1)=-1$, and thus 
    $[\mathcal P_{>N-k}] = -[G(1)] + [G(2)] + [G(3)]$.
  \end{enumerate}
\end{example}


\begin{theorem}
\label{theorem:knap:mobius-function}
Given a list $\a=[\alpha_1, \dots, \alpha_{N+1}]$ and a fixed integer $k$, then the values for the M\"obius function for the poset $P_{>N-k}$ can be computed in polynomial time.
\end{theorem}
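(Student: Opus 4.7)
The plan is to construct the poset $P_{>N-k}$ explicitly in polynomial time and then evaluate the M\"obius recurrence directly on it. The key observation is that when $k$ is fixed, the poset has only polynomially many elements: the index set $\mathcal{I}_{>N-k}$ consists of all $I \subseteq \{1,\dots,N+1\}$ with $|I| > N-k$, and
\[
|\mathcal{I}_{>N-k}| = \sum_{j=N-k+1}^{N+1} \binom{N+1}{j} = O(N^k),
\]
which is polynomial in the input size. Consequently $|\mathcal{G}_{>N-k}(\a)| \leq |\mathcal{I}_{>N-k}|$ is also polynomial.

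First I would enumerate $\mathcal{I}_{>N-k}$ and, for each $I$, compute $f_I = \gcd_{i \in I}\alpha_i$ by the Euclidean algorithm, which runs in polynomial time in the bit-length of the $\alpha_i$. Collecting the distinct outputs produces $\mathcal{G}_{>N-k}(\a)$. Next, for each ordered pair $f, f' \in \mathcal{G}_{>N-k}(\a)$, test divisibility $f \mid f'$ (again polynomial); recall that $G(f) \subseteq G(f')$ iff $f \mid f'$, so these tests fully determine the covering relations of $P_{>N-k}$, and hence its Hasse diagram.

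With the poset in hand, sort $\mathcal{G}_{>N-k}(\a)$ in any linear extension compatible with divisibility (for example, in increasing magnitude, since $f \mid f'$ implies $f \leq f'$). Then traverse the list in this order and use the recurrence stated after the definition of $\mu$:
\[
\mu(f) \;=\; 1 \;-\; \sum_{\substack{t \in \mathcal{G}_{>N-k}(\a)\\ t \mid f,\; t \neq f}} \mu(t),
\]
with the convention that for minimal elements of $\mathcal{G}_{>N-k}(\a)$ (those having no proper divisor in $\mathcal{G}_{>N-k}(\a)$), the sum is empty and $\mu(f) = 1$. Each evaluation requires at most $|\mathcal{G}_{>N-k}(\a)|$ lookups and additions, so the full table of $\mu$-values is produced in $O(|\mathcal{G}_{>N-k}(\a)|^2) = O(N^{2k})$ arithmetic operations.

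There is no deep obstacle in this argument; the only step that needs care is justifying the polynomial bounds. The combinatorial hardness of M\"obius inversion is entirely controlled by the size of the underlying poset, and fixing $k$ makes that size polynomial. The computation of each $f_I$ via gcds, the divisibility tests, and the arithmetic in the recurrence all happen over rationals whose bit-lengths remain polynomial in the input, so the overall algorithm runs in polynomial time as claimed.
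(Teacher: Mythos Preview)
Your overall strategy---enumerate $\mathcal{G}_{>N-k}(\a)$ via gcds of all large sublists, build the divisibility relations, and evaluate the M\"obius recurrence---is exactly the paper's approach, and your complexity bound $O(N^{2k})$ matches. There is, however, a directional error in your recurrence that would produce incorrect values. The poset $\tilde P_{>N-k}$ is ordered by \emph{reverse} inclusion: $G(f_i)\preceq G(f_j)$ iff $G(f_j)\subseteq G(f_i)$, i.e.\ iff $f_j\mid f_i$. Thus $G(t)\prec G(f)$ means $f\mid t$ (not $t\mid f$), and the recurrence should read
\[
\mu(f)=1-\sum_{\substack{t\in\mathcal{G}_{>N-k}(\a)\\ f\mid t,\ t\neq f}}\mu(t),
\]
so that the base cases (covers of $\hat 0$) are the elements with no proper \emph{multiple} in $\mathcal{G}_{>N-k}(\a)$, and the traversal should go in \emph{decreasing} order. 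Your version gives, for instance in the paper's example $\mathcal{G}=\{1,2,3\}$, the values $\mu(1)=1,\ \mu(2)=\mu(3)=0$ instead of the correct $\mu(2)=\mu(3)=1,\ \mu(1)=-1$. Once the direction is fixed, your argument is complete and essentially identical to the paper's, which implements the same recursion via a memoized depth-first traversal.
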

\begin{proof}
First find the greatest common divisor  of all sublists of  the list $\a$ with size greater than $N-k$. Let $V$ be the set of integers obtained from all such greatest common divisors. We note that each node of the poset $P_{>N-k}$ is a group of roots of unity $G(v)$. But it
is labeled by a non-negative integer $v$.

Construct an array $M$ of size $|V|$ to keep the value of the M\"obius function. Initialize $M$ to hold the M\"obius values of infinity: $M[v] \leftarrow \infty$ for all $v \in V$. Then call Algorithm \ref{alg:findMobius} below with $\text{findM\"obius}(1,V,M)$.

\begin{algorithm}   
\caption{ findM\"obius($n$, $V$, $M$)}

\begin{algorithmic}[1]                    
\REQUIRE $n$: the label of node $G(n)$ in the poset $\tilde{P}_{>N-k}$
\REQUIRE $V$: list of numbers in the poset $\tilde{P}_{>N-k}$
\REQUIRE $M$: array of current M\"obius values computed for $P_{>N-k}$
\ENSURE updates the array $M$ of M\"obius values
\IF{$M[n] < \infty$}
	\RETURN
\ENDIF
\STATE $L \leftarrow \{\, v \in V  : n \mid v\,\} \setminus \{n\}$
\IF{ $L = \emptyset $}
	\STATE $M[n] \leftarrow 1$
	\RETURN
\ENDIF
\STATE $M[n] \leftarrow 0$
\FORALL {$v \in L$}
	\STATE findM\"obius($v, L, M$)
	\STATE $M[n] \leftarrow M[n] + M[v]$
\ENDFOR
\STATE $M[n] \leftarrow 1- M[n]$

\end{algorithmic}
\label{alg:findMobius}
\end{algorithm}

Algorithm \ref{alg:findMobius} terminates because the number of nodes $v$ with $M[v] = \infty$ decreases to zero in each iteration. 
To show correctness, consider a node $v$ in the poset $P_{N-k}$. If $v$ covers $\hat{0}$, then we must have $M[v] = 1$ as there is no other $G(w)$ with  $G(f) \subset G(w)$. Else if $v$ does not cover $\hat{0}$, we set $M[v]$ to be 1 minus the sum $\sum\limits_{w :\; v \mid w} M[w]$ which guarantees that the poles in $G(v)$ are only counted once because $\sum\limits_{w :\; v \mid w} M[w]$  is how many times $G(v)$ is a subset of another element that has already been counted.

The number of sublists of $\a$ considered is $\binom{N}{1}+\binom{N}{2}+\cdots+\binom{N}{k} = O(N^k)$, which is a polynomial for $k$ fixed. For each sublist, the greatest common divisor of a set of integers is computed in polynomial time. Hence $|V| = O(N^k)$. Notice that lines $4$ to $14$ of Algorithm \ref{alg:findMobius} are executed at most $O(|V|)$ times as once a $M[v]$ value is computed, it is never recomputed. The number of additions on line $12$ is $O(|V|^2)$ while the number of divisions on line $4$ is also $O(|V|^2)$. Hence this algorithm finds the M\"obius function in $O(|V|^2) = O(N^{2k})$ time where $k$ is fixed.
\end{proof}

Let us define  for  any positive integer $f$  $$E(\a, f; t)=-\sum_{\zeta:\  \zeta^f=1}  \Res_{z=\zeta} z^{-t-1} F(\a; z)\,\d{z}.$$

\begin{proposition}\label{knap-poset-prop}
  Let $k$ be a fixed integer, then 
\begin{equation}\label{eq:Emu}
   E_{\mathcal P_{> N-k}}(t)=\sum_{f\in \CG_{>N-k}(\a)}
     \mu(f)  E(\a,f; t).
   \end{equation}
\end{proposition}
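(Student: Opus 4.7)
The plan is to apply the indicator function identity $[\mathcal P_{> N-k}]=\sum_{f\in \CG_{>N-k}(\a)} \mu(f) [G(f)]$ (established in the paragraphs immediately preceding the proposition via M\"obius inversion on the poset $P_{>N-k}$) directly to the residue sum defining $E_{\mathcal P_{> N-k}}(t)$. Concretely, I would begin by writing
\[
E_{\mathcal P_{> N-k}}(t) \;=\; -\sum_{\zeta \in \mathbb{C}^\times} [\mathcal P_{>N-k}](\zeta)\, \Res_{z=\zeta} z^{-t-1} F(\a;z)\,\d z,
\]
where the sum is really supported on the finite set $\mathcal P$ of poles of $F(\a;z)$, so there are only finitely many nonzero terms.

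Next, I would substitute the inclusion-exclusion expression $[\mathcal P_{> N-k}]=\sum_{f\in \CG_{>N-k}(\a)} \mu(f) [G(f)]$ into the right-hand side and swap the two finite sums. This gives
\[
E_{\mathcal P_{> N-k}}(t) \;=\; \sum_{f\in \CG_{>N-k}(\a)} \mu(f)\left(-\sum_{\zeta : \zeta^f=1}\Res_{z=\zeta} z^{-t-1} F(\a;z)\,\d z\right),
\]
and by the very definition of $E(\a,f;t)$ the inner parenthesis is precisely $E(\a,f;t)$, yielding the claimed formula.

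There is no real obstacle here: the content of the proposition is entirely packaged in the M\"obius identity for the indicator functions, which was already proved above by analyzing the poset $P_{>N-k}$ of groups of roots of unity. The only minor point to be careful about is that poles $\zeta \notin \mathcal P_{>N-k}$ contribute zero on both sides: on the left by construction, and on the right because such a $\zeta$ either fails $\zeta^f = 1$ for every $f \in \CG_{>N-k}(\a)$, or the signed count $\sum_{f:\zeta^f=1}\mu(f)$ vanishes, which is exactly the statement of the M\"obius identity evaluated at~$\zeta$. Thus linearity of the residue in the meromorphic one-form together with the indicator decomposition is all that is required.
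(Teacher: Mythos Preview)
Your proposal is correct and matches the paper's approach exactly: the paper states this proposition without proof because it is an immediate consequence of the indicator-function identity $[\mathcal P_{> N-k}]=\sum_{f\in \CG_{>N-k}(\a)} \mu(f) [G(f)]$ established just above, combined with the definitions of $E_{\mathcal P_{>N-k}}(t)$ and $E(\a,f;t)$. Your write-up simply makes this substitution and sum-swap explicit.
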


\subsection{Partial summary so far}
As a partial summary, for a fixed integer $k$, our original goal is to compute \[\mathrm{Top}_kE(\a; t)=\sum_{i=0}^k E_{N-i}(t) t^{N-i}, \] in polynomial time where $E_{N-i}(t)$ are step polynomials. So far we have shown that this is equal to $E_{\mathcal P_{> N-k}}(t)$, which in tern is equal to $E_{\mathcal P_{> N-k}}(t)=\sum_{f\in \CG_{>N-k}(\a)} \mu(f)  E(\a,f; t).$ The set $\CG_{>N-k}(\a)$ and the values $\mu(f)$ for $f\in \CG_{>N-k}(\a)$ can be computed in polynomial time in the input size of $\a$ when $k$ is fixed. The next section illustrates how to compute $E(\a,f; t)$ efficiently.  


\section{Polyhedral  reinterpretation of the generating function $E(\a,f; t)$}

To complete the proof of Theorem \ref {theo:knap-main} we need only to prove the following proposition.

\begin{proposition}
   For any integer $f\in \CG_{>N-k}(\a)$, the coefficient functions of the
     quasi-polynomial function $E(\a,f; t)$ and hence $E_{\mathcal P_{>
         N-k}}(t)$ are computed in polynomial time as step polynomials of~$t$.
\end{proposition}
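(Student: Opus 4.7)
The plan is to compute each term $E(\a,f;t)$ by localizing the residue at each pole $\zeta$ with $\zeta^f=1$ via the substitution $z=\zeta e^{-s}$, and then performing the sum over roots of unity by a geometric (polyhedral) reinterpretation amenable to Barvinok's algorithm.

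First, under the substitution $z=\zeta e^{-s}$ we have $z^{-t-1}\,dz=-\zeta^{-t}e^{ts}\,ds$, and $1-z^{\alpha_i}=1-\zeta^{\alpha_i}e^{-\alpha_i s}$. Hence each summand in $E(\a,f;t)$ becomes
\[
\Res_{s=0}\,\frac{\zeta^{-t}\,e^{ts}}{\prod_{i=1}^{N+1}\bigl(1-\zeta^{\alpha_i}e^{-\alpha_i s}\bigr)}\,ds.
\]
Partition $\{1,\dots,N+1\}$ according to $\zeta$ as $I_1(\zeta)=\{i:\zeta^{\alpha_i}=1\}$ and $I_2(\zeta)=\{i:\zeta^{\alpha_i}\neq 1\}$. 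For $i\in I_1(\zeta)$ factor $1-e^{-\alpha_i s}=\alpha_i s\cdot\frac{1-e^{-\alpha_i s}}{\alpha_i s}$, so that the integrand is a product of a simple pole factor $\frac{1}{s^{|I_1(\zeta)|}\prod_{i\in I_1(\zeta)}\alpha_i}$ times a function holomorphic at $s=0$. Since $|I_1(\zeta)|\leq N+1$, and our interest is only in poles of order $>N-k$, after this rearrangement the residue reduces to the coefficient of $s^{|I_1(\zeta)|-1}$ in the Laurent expansion of the holomorphic factor, which must be expanded only to order $\leq N$.

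Next, I would compute this expansion by the standard series-product trick. Expand $e^{ts}$, each Todd-type factor $\frac{\alpha_i s}{1-e^{-\alpha_i s}}$ (which is the exponential generating function for Bernoulli numbers, cf.\ Lemma~\ref{lemma:bernoulli-numbers}), and each holomorphic factor $\frac{1}{1-\zeta^{\alpha_i}e^{-\alpha_i s}}$ for $i\in I_2(\zeta)$ via the generalized binomial series, each truncated at degree $\leq N$. By Lemma~\ref{lemma:poly-mult} in one variable, these products can be multiplied and truncated in polynomial time in $N$. This yields a representation
\[
-\Res_{z=\zeta}z^{-t-1}F(\a;z)\,dz \;=\; \zeta^{-t}\sum_{j=0}^{N-1} c_j(\zeta)\,t^j,
\]
in which each $c_j(\zeta)$ is an explicit rational expression in $\zeta^{\alpha_1},\dots,\zeta^{\alpha_{N+1}}$.

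The main obstacle is to then carry out the sum $\sum_{\zeta^f=1}$ in polynomial time, because $f$ can be exponentially large in the bit length of $\a$. The plan is to invoke the polyhedral reinterpretation announced in the section title: rewrite $\zeta=e^{2\pi i j/f}$ with $j\in\{0,\dots,f-1\}$, so that $\zeta^{-t}c_j(\zeta)$ becomes a Laurent polynomial in $e^{2\pi i/f}$ whose exponents are integer linear functions of $j$ and $t$. The required sum over $j=0,\dots,f-1$ is then a sum of exponentials over the lattice points of a one-dimensional lattice cell, which can be encoded as a short rational generating function. Using Barvinok's fast unimodular cone decomposition to evaluate these generating functions (exactly as in the algorithms reviewed in Section~\ref{ch:bg:sec:integration-stuff}), each such sum is computable in polynomial time, and the orthogonality relation $\frac{1}{f}\sum_{\zeta^f=1}\zeta^k=[f\mid k]$ produces the fractional-part step linear functions $\fractional{-t/f}$, $\fractional{\alpha_i t/f}$, etc., that will appear in the answer. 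Multiplying these together yields a step polynomial of degree $\leq k$ in $t$, and combining with the polynomial part in $t$ from the Laurent expansion gives $E(\a,f;t)$ as an explicit step polynomial.

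Finally, summing the contributions $\mu(f)E(\a,f;t)$ over the polynomially many elements $f\in\CG_{>N-k}(\a)$ (using Proposition~\ref{knap-poset-prop} and Theorem~\ref{theorem:knap:mobius-function}) gives $E_{\mathcal P_{>N-k}}(t)=\mathrm{Top}_k E(\a;t)$ in polynomial time as a step polynomial of $t$, completing the proof of Theorem~\ref{theo:knap-main}.
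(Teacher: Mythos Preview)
Your outline gets the easy part right (the change of variables $z=\zeta e^{-s}$ and the truncated power-series expansion in $s$) but has a genuine gap at the key step: the summation over $\zeta^f=1$.

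After the expansion you correctly obtain $\zeta^{-t}\sum_j c_j(\zeta)\,t^j$ with each $c_j(\zeta)$ a \emph{rational} expression in $\zeta$ whose denominator contains powers of $(1-\zeta^{\alpha_i})$ for $i\in I_2(\zeta)$.  Your claim that ``$\zeta^{-t}c_j(\zeta)$ becomes a Laurent polynomial in $e^{2\pi i/f}$'' is therefore false: the denominators do not disappear, and the orthogonality relation $\tfrac1f\sum_{\zeta^f=1}\zeta^m=[f\mid m]$ cannot be applied to a rational function of $\zeta$ without first expanding it.  Treating the sum over $j=0,\dots,f-1$ as a ``one-dimensional lattice cell'' is harmless in dimension one (every cone is unimodular), but it gives you no handle on the rational summand; Barvinok's algorithm, as you invoke it, sums \emph{exponentials} over lattice points, not rational functions.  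A related issue is that your partition $I_1(\zeta),I_2(\zeta)$ depends on the individual root $\zeta$, so the shape of $c_j(\zeta)$ varies across the sum, which blocks a uniform treatment.

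The paper's key idea, which your proposal is missing, is to partition instead by $J=\{i:f\nmid\alpha_i\}$ (uniform in $\zeta$) and to reinterpret the entire root-of-unity sum as the lattice generating function of a cone in $\R^{|J|}$, namely
\[
\sum_{\zeta^f=1}\frac{\zeta^{-T}}{\prod_{i\in J}(1-\zeta^{\alpha_i}e^{\xi_i})}
\;=\;f\cdot M\bigl(-T\ve s,\;\R^{J}_{\geq0},\;\Lambda(\a,f);\;\vexi\bigr),
\]
where $\Lambda(\a,f)=\{\ve y\in\Z^J:\langle\a_J,\ve y\rangle\in f\Z\}$ (Theorem~\ref{th:as-lattice-genfun}).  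The crucial point is that $|J|\leq k$ is \emph{fixed}, so Barvinok's unimodular decomposition applies in polynomial time to the cone $\R^{J}_{\geq0}$ relative to $\Lambda(\a,f)$.  Each unimodular piece yields an explicit meromorphic function whose dependence on $T$ is through $\{T s_i\}$ in the basis $\ve g_i^\coneU$, which is exactly where the step-linear functions come from.  Only after this $|J|$-dimensional reinterpretation does the restriction $\vexi=\a_J x$ and the final series multiplication in $x$ (with the remaining Todd-type factors $\prod_{i:f\mid\alpha_i}(1-e^{\alpha_i x})^{-1}$) produce the step polynomial in polynomial time.  This is Theorem~\ref{E} and Corollary~\ref{add} in the paper; your proposal would need this geometric step to go through.
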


By Proposition \ref{knap-poset-prop} we know we need to compute the value of $E(\a,f; t)$. Our goal now is to 
demonstrate that this function can be thought of as the generating function of the lattice points inside
a convex cone. This is a key point to guarantee good computational bounds. Before we can do that
we review some preliminaries on generating functions of cones. We recall the notion of generating 
functions of cones; see also \cite{so-called-paper-1}.

\subsection{Review of exponential summation with real shift $\ve s$}
We will define a \emph{lattice} as the subgroup of all linear combinations of a basis of $\R^r$ with integer coefficients.  This material is similar to Section \ref{ch:bg:sec:integration-stuff}, but for a general lattice $\Lambda$ instead of just $\Z^r$. 

Let $V=\R^r$ provided with a lattice $\Lambda$, and let $V^*$ denote the dual
space. A \emph{(rational) simplicial cone}  $\coneC{} =  \R_{\geq0}\ve w_1+\dots+\R_{\geq0}\ve w_r$ is
a cone generated by $r$ linearly independent vectors~$\ve w_1,\dots,\ve w_r$ of~$\Lambda$.
We consider the semi-rational affine cone $\ve s+\coneC{}$, $\ve s\in V$. 
Let $\vexi\in V^*$ be a dual vector such that $\ll\vexi,\ve w_i\rr <0, \ 1\leq i\leq r.$ Then the sum
$$S(\ve s+\coneC{},\Lambda; \vexi)=\sum_{\ve n \in   (\ve s+\coneC{})\cap\Lambda}
\e^{\langle \vexi,\ve n\rangle}$$ is summable and defines an analytic function of $\vexi$.  It is well known that
this function extends to a meromorphic function of $\vexi\in V^*_\C$. We still
denote this meromorphic extension by $S(\ve s+\coneC{},\Lambda; \vexi)$.

\begin{example}\label{ex:dim1}
  Let $V=\R$  with lattice $\Z$,  $\coneC{}=\R_{\geq 0}$, and $s\in  \R$.
  Then $$S(s+\R_{\geq0},\Z; \xi)=\sum_{n\geq s} \e^{n \xi}=\e^{\ceil{s}\xi}\frac{1}{1-\e^{\xi}}.$$
  Using the function 
  $\fractional{x}=x - \floor{x}$, we find $\ceil{s} = s + \fractional{-s}$ and
  can write
  \begin{equation}\label{eq:dim1}
    \e^{-s\xi}S(s+\R_{\geq0},\Z; \xi)=\frac{\e^{\fractional{-s}\xi}}{1-\e^{\xi}}. 
  \end{equation}
\end{example}

Recall the following result:

\begin{theorem}
 Consider the semi-rational affine cone $\ve s+\coneC{}$ and the lattice $\Lambda$. The series $S(\ve s+\coneC{},\Lambda; \vexi)$ is a meromorphic function of $\vexi$  such that $\prod_{i=1}^r \ll \vexi,\ve w_i\rr  \cdot\allowbreak S(\ve s+\coneC{},\Lambda; \vexi)$ is
  holomorphic in a neighborhood of $\ve0$.
\end{theorem}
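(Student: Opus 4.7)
My plan is to reduce to the one-dimensional case of Example~\ref{ex:dim1} by decomposing the lattice~$\Lambda$ into finitely many cosets of the sublattice $\Lambda_0 = \Z\ve w_1 + \cdots + \Z\ve w_r$, and then to express the summation over each coset as an $r$-fold product of one-dimensional geometric series. The crucial point will be that each such factor has a simple pole at $\vexi=\ve 0$ along the hyperplane $\ll\vexi,\ve w_i\rr=0$, and this pole is exactly cancelled after multiplication by $\ll\vexi,\ve w_i\rr$.

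First I would observe that, because $\ve w_1,\dots,\ve w_r$ are a basis of $V$ and lie in $\Lambda$, the index $[\Lambda : \Lambda_0]$ is finite. Choose coset representatives $\ve\lambda_1,\dots,\ve\lambda_m$ so that $\Lambda = \bigsqcup_{j=1}^m (\ve\lambda_j + \Lambda_0)$. For each $j$, write $\ve\lambda_j - \ve s = \sum_{i=1}^r \alpha_{ij}\ve w_i$ with $\alpha_{ij}\in\R$. A point $\ve\lambda_j + \sum_i k_i \ve w_i$ (with $k_i\in\Z$) lies in $\ve s+\coneC{}$ iff $k_i + \alpha_{ij} \geq 0$ for every~$i$, i.e.\ iff $k_i \geq \lceil -\alpha_{ij}\rceil$. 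Therefore, assuming $\ll\vexi,\ve w_i\rr<0$ for every $i$ so that each geometric series converges absolutely,
\begin{equation*}
S(\ve s+\coneC{},\Lambda;\vexi)
= \sum_{j=1}^m e^{\ll\vexi,\ve\lambda_j\rr}\prod_{i=1}^r \sum_{k_i\geq \lceil -\alpha_{ij}\rceil} e^{k_i \ll\vexi,\ve w_i\rr}
= \sum_{j=1}^m e^{\ll\vexi,\ve\lambda_j\rr}\prod_{i=1}^r \frac{e^{\lceil -\alpha_{ij}\rceil\ll\vexi,\ve w_i\rr}}{1-e^{\ll\vexi,\ve w_i\rr}},
\end{equation*}
exactly as in Example~\ref{ex:dim1}. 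This identity of analytic functions on the open set $\{\vexi : \ll\vexi,\ve w_i\rr<0,\ \forall i\}$ exhibits $S(\ve s+\coneC{},\Lambda;\vexi)$ as the restriction of a meromorphic function on $V^*_\C$, with possible poles only along the hyperplanes $\ll\vexi,\ve w_i\rr=0$.

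Now I multiply by $\prod_{i=1}^r \ll\vexi,\ve w_i\rr$. The key elementary fact is that $z\mapsto z/(1-e^{z})$ extends to a holomorphic function on a neighborhood of $z=0$ (its Taylor series produces the Bernoulli numbers). Hence each factor
\begin{equation*}
\ll\vexi,\ve w_i\rr \cdot \frac{e^{\lceil -\alpha_{ij}\rceil\ll\vexi,\ve w_i\rr}}{1-e^{\ll\vexi,\ve w_i\rr}}
\end{equation*}
is holomorphic on a neighborhood of~$\ve 0$ in $V^*_\C$, and the exponential prefactor $e^{\ll\vexi,\ve\lambda_j\rr}$ is entire. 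The full expression
\begin{equation*}
\prod_{i=1}^r \ll\vexi,\ve w_i\rr \cdot S(\ve s+\coneC{},\Lambda;\vexi)
= \sum_{j=1}^m e^{\ll\vexi,\ve\lambda_j\rr}\prod_{i=1}^r \ll\vexi,\ve w_i\rr\cdot \frac{e^{\lceil -\alpha_{ij}\rceil\ll\vexi,\ve w_i\rr}}{1-e^{\ll\vexi,\ve w_i\rr}}
\end{equation*}
is therefore a finite sum of products of functions holomorphic near $\ve 0$, hence holomorphic near $\ve 0$, completing the proof.

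The main obstacle is the careful bookkeeping for a general lattice~$\Lambda$ and a real (not necessarily rational) shift~$\ve s$: one must keep track of the ceilings $\lceil -\alpha_{ij}\rceil$ introduced by the fact that $\ve\lambda_j - \ve s$ need not have integer coordinates in the basis $\{\ve w_i\}$. Once the coset decomposition is set up properly, the rest is a direct product-of-geometric-series computation and a pole-cancellation observation.
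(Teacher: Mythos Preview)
Your proof is correct. The paper, however, does not actually prove this theorem: it is introduced with the phrase ``Recall the following result'' and stated without proof, being treated as background (with the detailed analysis deferred to the unimodular case and Barvinok's signed decomposition that follow).

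Your coset-decomposition argument is the standard elementary route and is well suited to the statement as written, since it exhibits the poles precisely along the hyperplanes $\ll\vexi,\ve w_i\rr=0$ determined by the \emph{original} generators of~$\coneC{}$. By contrast, the machinery the paper develops afterward (Formula~\eqref{eq:M-uni} for unimodular cones together with Barvinok's decomposition in Formula~\eqref{formula:M}) would express $S(\ve s+\coneC{},\Lambda;\vexi)$ as a signed sum of terms whose poles lie along $\ll\vexi,\ve g_j^\coneU\rr=0$ for the generators of the unimodular pieces~$\coneU$; extracting from that the holomorphy of $\prod_i\ll\vexi,\ve w_i\rr\cdot S$ near~$\ve 0$ would require an additional cancellation argument. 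So your direct approach is both simpler and more to the point for this particular statement.
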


Let $\ve t\in \Lambda$. Consider the translated cone $\ve t+\ve
s+\coneC{}$ of
$\ve s+\coneC{}$ by $\ve t$. Then we have the covariance formula
\begin{equation}\label{eq:covariance}
  S(\ve t+\ve s+\coneC{},\Lambda; \vexi)=\e^{\ll\vexi,\ve t\rr }  S(\ve s+\coneC{},\Lambda; \vexi).
\end{equation}


Because of this formula, 
it is convenient to introduce the following function.
\begin{definition} \label{def:useful} 
  Define the function $$M(\ve s,\coneC{},\Lambda; \vexi):=\e^{-\ll \vexi, \ve s\rr }
  S(\ve s+\coneC{},\Lambda; \vexi).$$
\end{definition}
Thus the function $\ve s\mapsto M(\ve s,\coneC{},\Lambda; \vexi)$ is a
function of $\ve s\in V/\Lambda$ (a periodic function of $\ve s$) whose values are
meromorphic functions of $\vexi$.
It is interesting to introduce this modified function since, as seen in
Equation \eqref{eq:dim1} in Example~\ref{ex:dim1}, its dependence in  $\ve s$
is via step linear functions of $\ve s.$ 

There is a very special and important case when the function $M(\ve s,\coneC{},\Lambda; \vexi)=\e^{-\ll \vexi, \ve s\rr }
  S(\ve s+\coneC{},\Lambda; \vexi)$ is easy to write down. A \emph{unimodular} cone, is a
cone $\coneU$ whose primitive generators $\ve g_i^\coneU$ form a basis of the
lattice $\Lambda$.  We introduce the following notation.
\begin{definition}
  Let $\coneU{}$ be a unimodular cone with primitive generators $\ve g_i^\coneU$ and
  let $\ve s\in V$. 
  Then, write  $\ve s=\sum_i s_i \ve g_i^\coneU$, with $s_i\in \R$, and         define
  $$\smallstep{\ve s}_\coneU{}=\sum_i \fractional{-s_i} \ve g_i^\coneU.$$
\end{definition}
Thus 
$\ve s+\smallstep{\ve s}_\coneU{}=\sum_i \ceil{s_i} \ve g_i^\coneU$.  Note that if $\ve t\in \Lambda$, 
then $\smallstep{(\ve s+\ve t)}_\coneU{}=\smallstep{\ve s}_\coneU{}$. Thus,
$\ve s\mapsto
\smallstep{\ve s}_\coneU{}$ is a function on $V/\Lambda$  with value in $V$. 
For any $\vexi\in V^*$, we then find
\begin{equation*}
S(\ve s+\coneU,\Lambda; \vexi)= \e^{\ll \vexi,\ve
  s\rr}\e^{\ll\vexi,\smallstep{\ve s}_\coneU\rr}\frac{1}{\prod_j(1-\e^{\ll \vexi,\ve g_j^\coneU\rr})}
\end{equation*}
and thus
\begin{equation}\label{eq:M-uni}
M(\ve s,\coneU,\Lambda; \vexi)= \e^{\ll\vexi,\smallstep{\ve
    s}_\coneU\rr}\frac{1}{\prod_j(1-\e^{\ll \vexi,\ve g_j^\coneU\rr})}. 
\end{equation}

For a general cone $\coneC{}$,
we can decompose its indicator function $[\coneC{}]$ as a signed sum of
indicator functions of unimodular cones, 
$\sum_\coneU \epsilon_\coneU [\coneU]$, modulo indicator functions of
cones containing lines. As shown 
by Barvinok (see \cite{bar} for the original source and \cite{barvinokzurichbook} for a great new exposition),  
if the dimension~$r$ of~$V$ is fixed, this decomposition can be computed in polynomial time.
Then we can write 
$$S(\ve s+\coneC{},\Lambda; \vexi)=\sum_\coneU \epsilon_\coneU\, S(\ve s+\coneU,\Lambda; \vexi).$$
Thus we obtain, using Formula (\ref{eq:M-uni}), 
\begin{equation}\label{formula:M}
M(\ve s, \coneC{},\Lambda; \vexi)=\sum_{\coneU} \epsilon_\coneU\, \e^{ \ll
  \vexi,\smallstep{\ve s}_\coneU\rr } \frac{1}{\prod_j (1-\e^{\ll \vexi,\ve g_j^\coneU\rr })}.
\end{equation}
Here $\coneU$ runs through all the unimodular cones occurring in the decomposition of $\coneC{}$, and
the $\ve g_j^\coneU\in \Lambda$ are the corresponding generators of  the unimodular cone $\coneU.$

\begin{remark}\label{rem:change-to-standard-lattice}
For computing explicit examples, it is convenient to make a change of variables that
leads to computations in the standard lattice~$\Z^r$.  Let $B$ be the matrix
whose columns are the generators of the lattice~$\Lambda$; then
$\Lambda=B\Z^r$.  
\begin{align*}
  M(\ve s,\coneC{},\Lambda; \vexi)
  &= \e^{-\ll \vexi, \ve s\rr } \sum_{\ve n \in (\ve s+\coneC{})\cap B\Z^r}
  \e^{\langle \vexi,\ve n\rangle} \\
  &= \e^{-\ll B^\T \vexi, B^{-1} \ve s\rr} \sum_{\ve x \in  B^{-1}(\ve
    s+\coneC{})\cap\Z^r} \e^{\langle B^\T\vexi,\ve x\rangle} 
  = M( B^{-1}\ve s,B^{-1} \coneC{}, \Z^r; B^\T\vexi).
\end{align*}
\end{remark}

\subsection{Rewriting $E(\a,f; t)$}
Currently, we have
\begin{align*}
\mathrm{Top}_kE(\a; t)&= E_{\mathcal P_{> N-k}}(t) = \sum_{i=0}^k E_{N-i}(t) t^{N-i}\\
&=\sum_{f\in \CG_{>N-k}(\a)} \mu(f)  E(\a,f; t) \\
&= \sum_{f\in \CG_{>N-k}(\a)} \mu(f)  \left(-\sum_{\zeta:\  \zeta^f=1}  \Res_{z=\zeta} z^{-t-1} F(\a; z)  \right).
\end{align*}

In the next few sections, $E(\a,f; t)$ will be rewritten in terms of lattice points of simplicial cones. This will require some suitable manipulation of the initial form of $E(\a,f; t)$. 

To start with, we want to write the Ehrhart polynomial $\sum_{i=0}^k E_{N-i}(t)t^{N-i}$ as a function of two variables $t$ and $T$: $\sum_{i=0}^k E_{N-i}(T)t^{N-i}$. That is, we use $T$ to denote the variable of the periodic coefficients and $t$ to be the variable of the polynomial. To do this, define the  function
\[\CE(\a,f; t,T)=-\res_{z=\zeta} z^{-t-1} \zeta^{t}\sum_{\zeta \colon   \zeta^f=1} \frac{\zeta^{-T}}{\prod_{i=1}^{N+1} (1-z^{\alpha_i})}.\]
Notice that the $T$ variable is periodic modulo $f$. By evaluating  at $T=t,$ we obtain
\begin{equation}\label{eval}
E(\a,f; t)=\CE(\a,f; t,T) \big|_{T=t}.
\end{equation}
It will be helpful to perform a change of variables and let $z=\zeta \e^x,$ then changing coordinates in residue and computing $\d{z}=z\, \d{x}$  we have that

$$\CE(\a,f; t,T)=-\res_{x=0} \e^{-tx} \sum_{\zeta:\  \zeta^f=1} \frac{\zeta^{-T}}{\prod_{i=1}^{N+1} (1-\zeta^{\alpha_i}\e^{\alpha_i x})}.$$

\begin{definition}\label{FE} Let $k$ be fixed.
For $f\in \mathcal G_{>N-k}(\a)$,   define
$$\CF(\a,f,T; x):=\sum_{\zeta:\  \zeta^f=1} \frac{\zeta^{-T}}{\prod_{i=1}^{N+1} (1-\zeta^{\alpha_i}\e^{\alpha_i x})},$$
and 
$$E_i(f; T):=-\res_{x=0}\frac{(- x)^i}{i!} \CF(\a,f,T; x).$$
\end{definition}
Then $$\CE(\a,f; t,T)=-\res_{x=0} \e^{-tx} \CF(\a,f,T; x).$$

The  dependence in $T$ of $\CF(\a,f,T; x)$ is through $\zeta^T$. As
$\zeta^f=1$,  the function  $\CF(\a,f,T; x)$ is a periodic function of $T$
modulo $f$ whose values are meromorphic functions of $x$.  Since the pole in $x$ is of order at most $N+1$, we can rewrite $\CE(\a,f; t,T)$ in terms of  $E_i(f; T)$ and prove:

\begin{theorem}\label{E} Let $k$ be fixed.
Then for  $f\in \mathcal G_{>N-k}(\a)$  we  can write
$$\CE(\a,f; t,T)=\sum_{i=0}^N  t^i E_i(f; T)$$ with $E_i(f; T)$  a step
polynomial  of degree  less than or equal to $N-i$ and  periodic  of $T$ modulo
$f$.  This step polynomial can be computed in polynomial time.
\end{theorem}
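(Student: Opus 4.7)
The plan has three structural steps followed by a computational one, with the last being the main obstacle.

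First, I would plug the Taylor series $e^{-tx} = \sum_{i \ge 0}(-tx)^i/i!$ directly into the defining residue of $\CE(\a,f;t,T) = -\res_{x=0} e^{-tx}\CF(\a,f,T;x)$. This gives formally $\CE(\a,f;t,T) = \sum_{i \ge 0} t^i E_i(f;T)$ with $E_i(f;T) = -\res_{x=0}\frac{(-x)^i}{i!}\CF(\a,f,T;x)$. The sum truncates at $i=N$ because $\CF$ is a finite sum of products of $N+1$ meromorphic factors $(1-\zeta^{\alpha_j}e^{\alpha_j x})^{-1}$, each at most simply polar at $x=0$, so the Laurent expansion of $\CF$ begins at $x^{-(N+1)}$ at worst. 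The periodicity $E_i(f;T)=E_i(f;T+f)$ is immediate since $\zeta^{-(T+f)} = \zeta^{-T}$ whenever $\zeta^f = 1$.

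Next, to obtain a step-polynomial representation I would convert to a polyhedral computation via character orthogonality, which yields
\[\frac{1}{f}\CF(\a,f,T;x) \;=\; \sum_{\ve n \in \Z_{\ge 0}^{N+1},\ \sum_i\alpha_in_i \equiv T \;(\bmod\, f)} e^{(\sum_i\alpha_in_i)x}.\]
Since $\gcd(\a)=1$, fix $\ve v\in\Z^{N+1}$ with $\sum_i\alpha_iv_i=1$ and set $\Lambda_f := \{\ve m\in\Z^{N+1} : f \mid \sum_i\alpha_im_i\}$, a sublattice of $\Z^{N+1}$ of index $f$. A brief covariance calculation then gives
\[\CF(\a,f,T;x) \;=\; f\cdot M\!\bigl(-T\ve v,\,\R_{\ge 0}^{N+1},\,\Lambda_f;\,x\a\bigr),\]
realizing $\CF$ as the modified exponential generating function of a shifted rational simplicial cone evaluated along the one-parameter line $\vexi = x\a$.

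Applying Barvinok's signed decomposition of $\R_{\ge 0}^{N+1}$ into cones $\coneU$ unimodular for $\Lambda_f$, formula~\eqref{eq:M-uni} writes each summand as $\exp\bigl(x\,\a\cdot\smallstep{-T\ve v}_{\coneU}\bigr)\big/\prod_j\bigl(1-\exp(x\,\a\cdot \ve g_j^{\coneU})\bigr)$, in which the $T$-dependence enters only through the step-linear function $\a\cdot\smallstep{-T\ve v}_{\coneU}$ of period dividing~$f$. Generically each summand has pole of order $N+1$ at $x=0$ (with the non-generic case handled by a standard perturbation of $\vexi$ and a limit), so extracting the coefficient of $x^{-1-i}$ requires Taylor-expanding the analytic part to order $N-i$. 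The numerator exponential contributes at order $m$ the quantity $(\a\cdot\smallstep{-T\ve v}_{\coneU})^m/m!$, a step polynomial in $T$ of degree $m$, while the regularized denominator Taylor coefficients are $T$-independent constants. Summing across $\coneU$ shows that $E_i(f;T)$ is a step polynomial in $T$ of degree at most $N-i$ and period dividing~$f$.

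The hard part is the polynomial-time claim, since Barvinok's decomposition is polynomial only in fixed dimension while here the ambient dimension $N+1$ varies. The key leverage is that, for the ultimate Theorem~\ref{theo:knap-main}, only the top $k+1$ coefficients matter: by Proposition~\ref{knap-poset-prop} it suffices to compute $E(\a,f;t)$ for $f\in\mathcal{G}_{>N-k}(\a)$, and for each such $f$ the Taylor expansions above need only be carried out to order $k$. Rather than decomposing the full $(N+1)$-dimensional cone, I would exploit the one-parameter nature of $\vexi=x\a$ and split each $\zeta$-summand along the index set $I(\zeta) = \{j : \zeta^{\alpha_j}=1\}$ versus its complement, reducing the residue computation to a Barvinok decomposition on a fixed-dimensional cone attached to~$f$. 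Combined with truncated polynomial multiplication via Lemma~\ref{lemma:poly-mult}, this yields polynomial-time complexity. Verifying that this dimensional reduction is both correct and efficient is the technical heart of the argument.
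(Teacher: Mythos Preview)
Your structural steps are sound and match the paper: Taylor-expanding $e^{-tx}$ gives the decomposition into $E_i(f;T)$, the pole order of $\CF$ at $x=0$ is at most $N+1$ so the sum truncates, the periodicity in $T$ is clear, and the step-polynomial structure follows once you have a unimodular decomposition and formula~\eqref{eq:M-uni}. You also correctly identify that the $(N{+}1)$-dimensional polyhedral interpretation you set up in your second step cannot be fed to Barvinok directly.

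The gap is in your proposed fix. Splitting ``each $\zeta$-summand'' along $I(\zeta)=\{j:\zeta^{\alpha_j}=1\}$ is the wrong split for two reasons: it varies with $\zeta$, so you cannot factor the full character sum against a single low-dimensional cone; and if you process the $\zeta$'s individually you incur $f$ computations, which is not polynomial. The paper's move is to split uniformly by $J=J(\a,f)=\{j:f\nmid\alpha_j\}$, which is the same for every $\zeta$ with $\zeta^f=1$. This factors $\CF(\a,f,T;x)=\mathcal{B}(\a,f;x)\cdot\mathcal{S}(\a,f,T;x)$, where $\mathcal{B}=\prod_{j\notin J}(1-e^{\alpha_jx})^{-1}$ is completely $\zeta$-free and $\mathcal{S}$ carries the entire character sum but involves only the indices in $J$. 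The crucial arithmetic fact you never state is that $|J|\le k$: since $f\in\mathcal{G}_{>N-k}(\a)$, at least $N{-}k{+}1$ of the $\alpha_j$ are divisible by $f$. Now the character-orthogonality/lattice argument you gave in your second step, done for $\mathcal{S}$ alone, lands in $\R^J$ with the lattice $\Lambda(\a,f)=\{\ve y\in\Z^J:\sum_{j\in J}y_j\alpha_j\in f\Z\}$, and a \emph{single} Barvinok decomposition in dimension $\le k$ suffices. Multiplying the resulting truncated series by the Bernoulli-type expansion of $\mathcal{B}$ and by $e^{-tx}$ then gives each $E_i(f;T)$ in polynomial time.
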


It is now clear that once we have proved Theorem \ref{E}, then the proof of Theorem \ref {theo:knap-main} will follow. Writing everything out, for  $m$ such that $0\leq m\leq N$, the coefficient of  $t^{m}$ in the Ehrhart quasi-polynomial is given by
\begin{equation}\label{eq:Ehrhartcoeff}
 E_m(T)= - \res_{x=0}\frac{(-x)^{m}}{m!}\sum_{f\in \CG_{>m}(\a) } \mu(f)\sum_{\zeta: \  \zeta^f=1}\frac{\zeta^{-T}}{\prod_i(1-\zeta^{\alpha_i}\e^{\alpha_i x})}.
\end{equation}
As an example, we see that $E_N$ is indeed independent of $T$ because
$\CG_{>N}(\a) = \{1\}$; thus  $E_N$ is a constant. 
We now concentrate on writing the function 
$\CF(\a,f,T; x)$ more explicitly.

\begin{definition}\label{def:H}
For a list $\a$ and integers $f$ and $T$, define   meromorphic functions of $x\in \C$  by:
$${\mathcal B}(\a,f; x):=\frac{1}{\prod_{i\colon f\mid \alpha_i}(1-\e^{\alpha_i x})},$$
$${\mathcal S}(\a,f,T; x):=\sum_{\zeta:  \  \zeta^f=1} \frac{\zeta^{-T}}{\prod_{i : f\nmid \alpha_i} (1-\zeta^{\alpha_i}\e^{\alpha_i x})}.$$
\end{definition}
Thus we have
$$\CF(\a,f,T; x)={\mathcal B}(\a,f; x)\, {\mathcal S}(\a,f,T; x),$$
and
\begin{align*}
\sum_{i=0}^k E_{N-i}(T) t^{N-i}
&=\sum_{f\in \CG_{>N-k}(\a)} \mu(f)  \CE(\a,f; t,T) \\
&= \sum_{f\in \CG_{>N-k}(\a)} \mu(f)  \left(-\res_{x=0} \e^{-tx} {\mathcal B}(\a,f; x)\cdot {\mathcal S}(\a,f,T; x)\right).
\end{align*}

To compute the residue, we need to compute the series expansion about $x=0$. This can be done by multiplying the series expansions of $e^{-tx}$, ${\mathcal B}(\a,f; x),$ and ${\mathcal S}(\a,f,T; x)$. The series expansion of ${\mathcal B}(\a,f; x)$ is easy to do because it is related to the Bernoulli numbers, see Remark \ref{remark:knap:bernoulli}. We do not want to work with ${\mathcal S}(\a,f,T; x)$ in its current form because it contains (possibly irrational) roots of unity.

In the next section, the expression we obtained will allow us to compute $\CF(\a,f,T)$ by relating ${\mathcal S}(\a,f,T)$ to a generating function of a cone.  This cone will have fixed dimension when $k$ is fixed.

\begin{remark}
\label{remark:knap:bernoulli}
Let $x$ and $\epsilon$ be two variables, and $a, b \in \R$. In particular, $a$ or $b$ could be zero, but not at the same time. Then the Laurent series expansion of $\frac{1}{1 - \e^{ax + b\epsilon}}$ can be computed using the generating function for the Bernoulli numbers. Note that

\[\frac{1}{1 - \e^{ax + b\epsilon}} = \frac{ax + b\epsilon}{1 - \e^{ax + b\epsilon}} \times \frac{1}{(ax + b\epsilon)}.\]
The series expansion of the first term in the product can be done using Lemma \ref{lemma:bernoulli-numbers}. If $a \neq 0$, the second term can be expanded using the Binomial theorem:

\[\frac{1}{(ax + b\epsilon)} = \sum_{i=0}^\infty (ax)^{-1-k}(b\epsilon)^k\]
\end{remark}

\subsection{${\mathcal S}(\a,f,T; x)$ as the generating function of a cone in fixed dimension}

To this end, let $f$ be an integer from $\mathcal G_{>N-k}(\a)$.  By
definition, $f$ is the greatest common divisor of a sublist of $\a$.  Thus
the greatest common divisor of $f$ and the elements of $\a$ which are
\emph{not} a multiple of $f$ is still equal to~$1$.
Let $J=J(\a,f)$ be the set of indices $i\in\{1,\dots,N+1\}$ such that $\alpha_i$ is
indivisible by~$f$, i.e., $f \nmid \alpha_i$.  Note that 
$f$ by definition is the greatest common divisor of all except at most $k$
of the integers $\alpha_j$.  Let $r$
denote the cardinality of~$J$; then $r\leq k$. Let $V_J=\R^J$ and let $V_J^*$ denote
the dual space. We will use the standard basis of $\R^J,$ and we denote by $\R^J_{\geq 0}$ the standard cone of elements in $\R^J$ having non-negative coordinates.  We also define the sublist $\a_J = [\alpha_i]_{i\in J}$ of elements
of~$\a$ indivisible by~$f$ and view it as a vector in $V_J^*$ via the standard basis. 
\begin{definition}
  For an integer $T$, define the meromorphic function of
  $\vexi\in V_J^*$, 
  $$Q({\a},f,T; \vexi):=\sum_{\zeta:\  \zeta^{f}=1}
  \frac{\zeta^{-T}}{\prod_{j\in J(\a,f)} (1-\zeta^{\alpha_j}\e^{\xi_j})}.$$
\end{definition}
\begin{remark}
  \label{rem:restrict-to-SafT}
  Observe that $Q({\a},f,T)$ can be restricted at $\vexi=\a_J x$,
  for $x\in\C$ generic, to give $\mathcal S(\a,f,T; x).$
\end{remark}


We  find that $Q({\a},f,T; \vexi)$ is the discrete generating function of  an affine shift of the standard cone $\R^J_{\geq 0}$ relative to a certain lattice in $V_J$
which we define as:
\begin{equation}\label{lattice}
  \Lambda({\a},f):=\biggl\{\,\ve y \in \Z^J : \langle \a_J, \ve y\rangle = \sum_{j\in J} y_j\alpha_j\in\Z f\,\biggr\}. 
\end{equation}
Consider the map $\phi\colon \Z^J \to \Z/\Z f$, $\ve y\mapsto \langle \a, \ve
y\rangle + \Z f$.  Its kernel is the lattice~$\Lambda(\a,f)$. Because the
greatest common divisor of $f$ and the elements of $\a_J$ is~$1$, by Bezout's
theorem there exist $s_0\in\Z$ and $\ve s\in\Z^J$ such that $1=\sum_{i\in J}
s_i \alpha_i+s_0 f$.  Therefore, the map~$\phi$ is surjective, and therefore the
index $|\Z^J:\Lambda(\a,f)|$ equals~$f$.

\begin{theorem}\label{th:as-lattice-genfun}
  Let $\a=[\alpha_1,\dots,\alpha_{N+1}]$ be a list of positive integers and $f$ be the
  greatest common divisor of a sublist of~$\a$.  
  Let $J=J(\a,f) = \{\, i
  : f \nmid \alpha_i \,\}.$ 
  Let $s_0\in\Z$ and $\ve s\in\Z^J$ such that $1=\sum_{i\in J} s_i
  \alpha_i+s_0 f$ using Bezout's theorem.
  Consider $\ve s=(s_i)_{i\in J}$ as an element of $V_J = \R^J.$ Let $T$ be an integer,  and  $\vexi=(\xi_i)_{i\in J} \in V_J^*$ with  $ \xi_i <0.$ Then $$Q({\a},f,T; \vexi)=f\, \e^{\ll \vexi,T \ve s\rr }\sum_{\ve n \in   (-T\ve s+ \R_{\geq 0}^J)\cap\Lambda(\a,f)}\e^{\langle \vexi,\ve n\rangle}$$\end{theorem}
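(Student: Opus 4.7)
The plan is to expand each factor in $Q(\a,f,T;\vexi)$ as a geometric series, swap the two summations, apply the standard orthogonality of characters of the cyclic group $\Z/f\Z$, and finally perform a shift of the summation index using the Bezout relation. The hypothesis $\xi_i < 0$ is precisely what makes the geometric expansion convergent, since $|\zeta^{\alpha_j}\e^{\xi_j}|=\e^{\xi_j}<1$.

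\textbf{Step 1: Expand into a lattice sum.} For each $j\in J$ expand
\[
\frac{1}{1-\zeta^{\alpha_j}\e^{\xi_j}} = \sum_{y_j=0}^\infty \zeta^{\alpha_j y_j}\e^{\xi_j y_j},
\]
multiply across $j\in J$ and sum over $\zeta^f=1$; absolute convergence (from $\xi_j<0$) lets me exchange the sums to obtain
\[
Q(\a,f,T;\vexi) \;=\; \sum_{\ve y\in\Z_{\geq 0}^J} \e^{\ll \vexi,\ve y\rr} \sum_{\zeta:\zeta^f=1}\zeta^{\ll\a_J,\ve y\rr-T}.
\]

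\textbf{Step 2: Apply character orthogonality.} The inner sum equals $f$ when $f\mid \ll\a_J,\ve y\rr - T$ and vanishes otherwise, so
\[
Q(\a,f,T;\vexi) \;=\; f\sum_{\substack{\ve y\in\Z_{\geq 0}^J\\ \ll\a_J,\ve y\rr\equiv T\,(\mathrm{mod}\,f)}} \e^{\ll\vexi,\ve y\rr}.
\]

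\textbf{Step 3: Shift by $T\ve s$.} By Bezout, $\ll\a_J,T\ve s\rr = T - Ts_0 f \equiv T\pmod f$. Substitute $\ve n = \ve y - T\ve s$; then $\ve y\in\Z_{\geq 0}^J$ becomes $\ve n\in -T\ve s+\R_{\geq 0}^J$ (with $\ve n\in\Z^J$ since $T\ve s\in\Z^J$), and the congruence becomes $\ll\a_J,\ve n\rr\equiv 0\pmod f$, which is exactly the defining condition of $\Lambda(\a,f)$. Pulling out the factor $\e^{\ll\vexi,T\ve s\rr}$ yields the claimed identity.

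The only subtle point is the interchange of summation in Step~1 (justified by absolute convergence on the region $\xi_i<0$), followed by meromorphic continuation to all of $V_J^*$ as usual. Everything else is bookkeeping: the character orthogonality supplies the projection onto~$\Lambda(\a,f)$, and the Bezout-chosen shift $T\ve s$ realizes the affine translate $-T\ve s+\R_{\geq 0}^J$ of the standard cone. This puts $Q(\a,f,T;\vexi)$ in the exact form~$f\,\e^{\ll\vexi,T\ve s\rr}S(-T\ve s+\R_{\geq 0}^J,\Lambda(\a,f);\vexi)$, which (via Remark~\ref{rem:restrict-to-SafT} and Remark~\ref{rem:change-to-standard-lattice}) is what is needed to feed $\mathcal{S}(\a,f,T;x)$ into the fixed-dimension Barvinok machinery, since $|J|\le k$.
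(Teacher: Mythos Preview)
Your proof is correct and follows essentially the same argument as the paper: expand each factor as a geometric series (justified by $\xi_j<0$), interchange the sums, use orthogonality of the $f$-th roots of unity to isolate the congruence $\langle\a_J,\ve y\rangle\equiv T\pmod f$, and then shift by $T\ve s$ via the Bezout relation to land in the lattice $\Lambda(\a,f)$. The paper's proof is slightly terser but the steps and ideas are identical.
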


\begin{remark}
The function  $Q(\a,f,T; \vexi)$ is a function of $T$ periodic modulo $f$.
Since $f \Z^J$ is contained in $\Lambda(\a,f)$, the element $f \ve s$ is in
the lattice  $\Lambda(\a,f)$, and we see that the right hand side is also a
periodic function of $T$  modulo $f$. 
\end{remark}

\begin{proof}[Proof of Theorem~\ref{th:as-lattice-genfun}]
  Consider $\vexi\in V_J^*$ with $\xi_j<0$. 
  Then we can write the equality
  $$\frac{1} {\prod_{j\in J} (1-\zeta^{\alpha_j}\e^{\xi_j})}=\prod_{j\in J} 
  \sum_{n_j=0}^{\infty} \zeta^{n_j\alpha_j} \e^{n_j\xi_j}.$$
  So $$Q(\a,f,T; \vexi)=
  \sum_{\ve n \in \Z_{\geq0}^J} \Bigl(\sum_{\zeta \colon \zeta^{f}=1} \zeta^{\sum_j
    n_j\alpha_j-T}\Bigr)\e^{\sum_{j\in J} n_j\xi_j}.$$

We note that $\sum_{\zeta: \ \zeta^{f}=1}\zeta^m$ is zero except if $m\in \Z f$, when
this sum is equal to~$f$.  Then we obtain that
$Q({\a},f,T)$ is the sum over $\ve n\in \Z_{\geq0}^J$ such that     $\sum_j n_j \alpha_j-T\in \Z f$.
The equality $1=\sum_{j\in J} s_j \alpha_j+s_0 f$ implies that
 $T\equiv \sum_{j} t s_j\alpha_j $ modulo $f$, and  the condition
 $\sum_j n_j \alpha_j-T\in \Z f$ is equivalent to the condition
$\sum_{j}(n_j-Ts_j)\alpha_j \in \Z f$.

We see that the point  $\ve n - T\ve s$ is in the 
lattice $\Lambda(\a,f)$ as well as in the cone $-T\ve s+\R^J_{\geq0}$ (as $n_j\geq
0$). Thus the claim.
\end{proof}

By definition of the meromorphic functions $ S(-T \ve
  s+\R^J_{\geq0},\Lambda(\a,f); \; \vexi) $    and 
  $M(-T\ve s, \R_{\geq 0}^J,\Lambda(\a,f); \; \vexi), $
we obtain the following equality.

\begin{corollary}
$$Q({\a},f,T; \vexi)= f\ M(-T\ve s, \R_{\geq 0}^J,\Lambda(\a,f); \; \vexi).$$
\end{corollary}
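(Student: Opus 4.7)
The plan is to show that this corollary is essentially an immediate consequence of Theorem~\ref{th:as-lattice-genfun} combined with the definitions of $S$ and $M$. The hard work has already been done in establishing the generating-function interpretation of $Q(\a,f,T;\vexi)$; what remains is simply to recognize the right-hand side of that theorem as the normalized meromorphic summation $M$ attached to the shifted cone $-T\ve s + \R_{\geq 0}^J$ with respect to the lattice $\Lambda(\a,f)$.

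First I would unfold the definition of $M$ given in Definition~\ref{def:useful}. Applied with shift $\ve s_0 := -T\ve s$, cone $\coneC{} := \R_{\geq 0}^J$, and lattice $\Lambda := \Lambda(\a,f)$, it reads
\[
M(-T\ve s,\R_{\geq 0}^J,\Lambda(\a,f);\vexi)
= \e^{-\ll \vexi,\,-T\ve s\rr}\, S\bigl(-T\ve s + \R_{\geq 0}^J,\,\Lambda(\a,f);\,\vexi\bigr).
\]
Since $\xi_j < 0$ for all $j \in J$, the summation defining $S$ converges, and unfolding that definition gives
\[
S\bigl(-T\ve s + \R_{\geq 0}^J,\,\Lambda(\a,f);\,\vexi\bigr)
= \sum_{\ve n \in (-T\ve s + \R_{\geq 0}^J)\cap \Lambda(\a,f)} \e^{\ll \vexi,\ve n\rr}.
\]
Combining the two lines yields
\[
M(-T\ve s,\R_{\geq 0}^J,\Lambda(\a,f);\vexi)
= \e^{\ll \vexi,\,T\ve s\rr} \sum_{\ve n \in (-T\ve s + \R_{\geq 0}^J)\cap \Lambda(\a,f)} \e^{\ll \vexi,\ve n\rr}.
\]

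Next I would multiply by $f$ and compare with the explicit formula for $Q(\a,f,T;\vexi)$ proved in Theorem~\ref{th:as-lattice-genfun}. That theorem asserts precisely
\[
Q(\a,f,T;\vexi) = f\,\e^{\ll \vexi,\,T\ve s\rr} \sum_{\ve n \in (-T\ve s + \R_{\geq 0}^J)\cap \Lambda(\a,f)} \e^{\ll \vexi,\ve n\rr},
\]
which matches the expression derived above for $f\cdot M(-T\ve s,\R_{\geq 0}^J,\Lambda(\a,f);\vexi)$. Thus the identity holds in the region $\{\vexi : \xi_j < 0 \text{ for all } j \in J\}$ where both sides are given by convergent series. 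By meromorphic continuation in $\vexi$, already used to define both $S$ and $Q$ on all of $V_J^*$, the identity extends to an equality of meromorphic functions on $V_{J,\C}^*$, completing the proof.

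There is essentially no obstacle here: the result is a bookkeeping step that packages the generating-function calculation of Theorem~\ref{th:as-lattice-genfun} into the $M$-notation of Definition~\ref{def:useful}. The only small point to be careful about is the sign convention in $M$ (the factor $\e^{-\ll \vexi,\ve s\rr}$ applied at $\ve s = -T\ve s$ yields $\e^{+\ll \vexi,T\ve s\rr}$), which exactly produces the prefactor appearing in Theorem~\ref{th:as-lattice-genfun}.
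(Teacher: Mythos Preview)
Your proposal is correct and matches the paper's approach exactly: the paper simply states that the corollary follows by the definitions of $S$ and $M$ applied to the conclusion of Theorem~\ref{th:as-lattice-genfun}, which is precisely what you unfold in detail.
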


Using Remark~\ref{rem:restrict-to-SafT} we thus obtain by restriction to
$\vexi=\a_J x$ the following equality.

\begin{corollary} \label{add}
  $$\CF(\a,f,T; x) =f\,  M(-T\ve s,\R_{\geq0}^J,\Lambda(\a,f); \; \a_J x)\prod_{j\colon f \mid \alpha_j}\frac{1}{1-\e^{\alpha_jx}}.$$
\end{corollary}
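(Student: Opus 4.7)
The plan is to observe that Corollary \ref{add} is essentially a direct substitution: it combines the factorization $\CF = \mathcal{B}\cdot \mathcal{S}$ from Definition \ref{def:H} with the identification of $\mathcal{S}$ as the restriction of $Q$ to the line $\vexi = \a_J x$ (Remark \ref{rem:restrict-to-SafT}), and with the identity $Q(\a,f,T;\vexi) = f\, M(-T\ve s,\R_{\geq 0}^J,\Lambda(\a,f);\vexi)$ established in the corollary immediately preceding Corollary \ref{add}.

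So first I would write $\CF(\a,f,T;x) = \mathcal{B}(\a,f;x)\cdot\mathcal{S}(\a,f,T;x)$ by definition. The factor $\mathcal{B}(\a,f;x) = \prod_{j:f\mid \alpha_j}\frac{1}{1-\e^{\alpha_j x}}$ is already in the form that appears on the right-hand side of Corollary \ref{add}, so the remaining task is to rewrite $\mathcal{S}(\a,f,T;x)$ as $f\,M(-T\ve s, \R_{\geq 0}^J, \Lambda(\a,f); \a_J x)$.

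For this, I would specialize the general identity $Q(\a,f,T;\vexi) = f\, M(-T\ve s, \R_{\geq 0}^J, \Lambda(\a,f); \vexi)$ to $\vexi = \a_J x$. Since the definition of $Q$ is
\[Q(\a,f,T;\vexi) = \sum_{\zeta:\zeta^f=1} \frac{\zeta^{-T}}{\prod_{j\in J}(1-\zeta^{\alpha_j}\e^{\xi_j})},\]
substituting $\xi_j = \alpha_j x$ produces exactly $\mathcal{S}(\a,f,T;x)$, which is the content of Remark \ref{rem:restrict-to-SafT}. Combining these two factorizations yields the corollary.

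There is essentially no obstacle here, since every ingredient has been proved: Theorem \ref{th:as-lattice-genfun} provides the lattice-point reinterpretation of $Q$, Definition \ref{def:useful} packages the covariance factor $\e^{-\ll\vexi,\ve s\rr}$ into $M$, and Definition \ref{def:H} gives the factorization $\CF = \mathcal{B}\cdot\mathcal{S}$. The one detail worth checking is that the restriction $\vexi = \a_J x$ is permissible despite $M$ being meromorphic; this is harmless because the identity $Q = f\,M$ holds as meromorphic functions on $V_J^*$ and restricts to a meromorphic identity on the line $\{\a_J x : x \in \C\}$, yielding equality of meromorphic functions of $x$.
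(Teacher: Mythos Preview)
Your proposal is correct and follows essentially the same approach as the paper: the paper simply says ``Using Remark~\ref{rem:restrict-to-SafT} we thus obtain by restriction to $\vexi=\a_J x$ the following equality,'' which is exactly the combination of $\CF = \mathcal{B}\cdot\mathcal{S}$, the restriction $\mathcal{S}(\a,f,T;x) = Q(\a,f,T;\a_J x)$, and the preceding corollary $Q = f\,M$ that you spell out.
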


\subsection{Unimodular decomposition in the dual space}

The cone $\R_{\geq0}^J$ is in general not unimodular with respect to the
lattice $\Lambda(\a,f)$.  By decomposing $\R_{\geq0}^J$ in cones
$\coneU$ that are unimodular with respect to~$\Lambda(\a,f)$, modulo cones
containing lines, we can write
$$M\bigl(-T\ve s,\R_{\geq0}^J,\Lambda(\a, f)\bigr)=\sum_\coneU \epsilon_\coneU M(-T\ve
s,\coneU,\Lambda),$$
where $\epsilon_\coneU\in\{\pm1\}$. 
This decomposition can be computed using Barvinok's algorithm in polynomial
time for fixed~$k$ because the dimension $|J|$ is at most $k$.

\begin{remark}
For this particular cone and lattice, this decomposition modulo cones
containing lines is best done using the 
``dual'' variant of Barvinok's algorithm, as introduced
in \cite{BarviPom}.  This is in contrast to the
``primal'' variant described in \cite{Brion1997residue,koeppe-verdoolaege:parametric}; see also
\cite{so-called-paper-2} for an exposition of Brion--Vergne decomposition and
its relation to both decompositions. 
To explain this, let us determine the index of the cone~$\R_{\geq0}^J$ in the
lattice $\Lambda=\Lambda(\a,f)$;  the worst-case complexity of the signed cone
decomposition is bounded by a polynomial in the logarithm of this index. 

Let $B$ be a matrix whose columns form a basis of~$\Lambda$, so
$\Lambda=B\Z^J$.  Then $|\Z^J:\Lambda| = \mathopen|\det B\mathclose| = f$. By
Remark~\ref{rem:change-to-standard-lattice}, we find
$$
M(-T\ve s,\R_{\geq0}^J,\Lambda; \; \vexi)
= M(-T B^{-1}\ve s,B^{-1} \R_{\geq0}^J, \Z^J; \; B^\T\vexi).
$$
Let $\coneC$ denote the cone $B^{-1} \R_{\geq0}^J$, which is generated by the
columns of~$B^{-1}$.  Since $B^{-1}$ is not integer in general, we find
generators of~$\coneC$ that are primitive vectors of~$\Z^J$ by scaling each of
the columns by an integer.  Certainly $\mathopen|\det B\mathclose| B^{-1}$ is
an integer matrix, and thus we find that the index of the cone~$\coneC$ is
bounded above by $f^{r-1}$.  We can easily determine the exact index as
follows.  For each $i\in J$, the generator $\ve e_i$ of the original cone~$\R_{\geq0}^J$
needs to be scaled so as to lie in the lattice~$\Lambda$.  The smallest
multiplier $y_i\in\Z_{>0}$ such that $\langle \a_J, y_i \ve e_i\rangle \in \Z
f$ is $y_i = \lcm(\alpha_i, f) / \alpha_i$.  Thus the index of~$\R^J_{\geq0}$ in $\Z^J$ is the
product of the $y_i$, and finally the index of~$\R^J_{\geq0}$ in~$\Lambda$ is
$$\frac1{|\Z^r:\Lambda|}\prod_{i\in J} \frac{\lcm(\alpha_i, f)}{\alpha_i}
= \frac1f \prod_{i\in J} \frac{\lcm(\alpha_i, f)}{\alpha_i}.
$$

Instead we consider the dual cone, $\coneC^\circ = \{\, \veeta \in V^*_J : \ll
\veeta,\ve y\rr \geq 0\text{ for $\ve y\in \coneC$} \,\}$.  We have
$\coneC^\circ = B^{\T} \R^J_{\geq0}$.  Then the index of the dual cone
$\coneC^\circ$ equals~$\mathopen|\det B^\T\mathclose| = f$, which is much smaller than $f^{r-1}$.

Following \cite{DyerKannan97}, we now compute a decomposition of
$\coneC^\circ$ in cones~$\coneU^\circ$ that are unimodular with respect to~$\Z^J$,
modulo lower-dimensional cones,
\begin{alignat*}{2}
[\coneC^\circ] &\equiv \sum_\coneU \epsilon_\coneU [\coneU^\circ] &\quad& \text{(modulo
  lower-dimensional cones)}.
\intertext{Then the desired decomposition follows:}
[\coneC] &\equiv \sum_\coneU \epsilon_\coneU [\coneU] && \text{(modulo
  cones with lines)}.
\end{alignat*}
Because of the better bound on the index of the cone on the dual side, the
worst-case complexity of the signed decomposition algorithm is reduced.  This
is confirmed by computational experiments.

\end{remark}

\begin{remark}
\label{remark:epsilon-term}
  Although we know that the meromorphic function $M(-T\ve s,\R_{\geq0}^J,\Lambda(\a,f); \; \vexi)$
restricts via $\vexi=\a_J x$ to a meromorphic function of a single variable $x$,
it may happen that the individual functions 
$M(-T\ve s,\coneU,\Lambda(\a,f); \; \vexi)$ do not restrict.  In other words,
the line $\a_J x$ may be entirely contained in the set of
poles. If this is  the case, we can  compute  (in polynomial time) a regular
vector $\vebeta\in\Q^J$ so that, for $\epsilon\neq 0, $ the deformed vector $(\a_J+\epsilon \vebeta)x$ is not a pole of any of the  functions $M(-T\ve s, \coneU,\Lambda(\a,f); \; \vexi)$ occurring.  
We then consider the meromorphic functions $\epsilon \mapsto M(-T\ve s, \coneU,\Lambda(\a,f); \; (\a_J+\epsilon \vebeta)x)$ and their Laurent
expansions at $\epsilon=0$ in the variable~$\epsilon$. We then add the constant terms of these expansions (multiplied by $\epsilon_\coneU$). This is the value of
$M(-T\ve s, \R_{\geq0}^J,\Lambda(\a,f); \; \vexi)$ 
at the point $\vexi=\a_J x$.
\end{remark}

\subsection{The periodic dependence in \boldmath$T$}

Now let us analyze the dependence in $T$ of 
the functions $M(-T\ve s,\coneU,\Lambda(\a,f))$, where $\coneU$ is a unimodular cone.  
Let the generators be $\ve g_i^\coneU$, so the elements $\ve g_i^\coneU$ form a basis
of the lattice $\Lambda(\a,f)$. Recall that the lattice $f \Z^r$ is contained
in $\Lambda(\a,f)$. Thus as $\ve s\in \Z^r$, we have 
$\ve s=\sum_i s_i \ve g_i^\coneU$ with $fs_i\in \Z$
and hence
$\smallstep{T\ve s}_\coneU=\sum_i \fractional{-Ts_i} \ve g_i^\coneU$
 with $\fractional{-Ts_i}$  a function of $T$ periodic modulo~$f$.

Thus the function $T\mapsto \smallstep{T\ve s}_\coneU$ is a  step linear function, modulo $f$, with value in $V$.
We then write
$$M(-T\ve s,\coneU,\Lambda(\a,f); \vexi)= \e^{\langle\vexi,\fractional{T \ve
    s}_\coneU\rangle}\prod_{j=1}^r \frac{1}{1-\e^{\ll\vexi,\ve g_j\rr }}.$$
Recall that by Corollary \ref{add},
$$\CF(\a,f,T; x) =f\,  M(-T\ve s,\R_{\geq0}^J,\Lambda(\a,f); \; \a_J x)\prod_{j\colon f \mid \alpha_j}\frac{1}{1-\e^{\alpha_jx}}.$$
Thus this is a meromorphic function of the variable $x$ of the form: $$\sum_\coneU \e^{l_\coneU(T)x} \frac{h(x)}{x^{N+1}},$$
where $h(x)$ is
holomorphic in $x$ and $l_\coneU(T)$ is a step linear function of $T$, modulo $f$.
Thus to compute
$$E_i(f; T)=\res_{x=0}\frac{(- x)^i}{i!} \CF(\a,f,T; x)$$
we only have to expand the function $x\mapsto \e^{l_\coneU(T)x}$  up to   the
power $x^{N-i}$.  This expansion can be done in polynomial time.
We thus see that, as stated in Theorem~\ref{E},
$E_i(f; T)$ is a step polynomial of degree less than or equal to $N-i$,
which is periodic  of $T$ modulo~$f$.  This completes the proof of
Theorem~\ref{E} and thus the proof of Theorem~\ref{theo:knap-main}.


\section{Periodicity of coefficients}

By Schur's result, it is clear that the  coefficient $E_N(t)$ of the highest degree term is just an explicit constant. Our analysis of the high-order
  poles of the generating function associated to $E(\a; t)$ allows us
  to decide what is the highest-degree coefficient of $E(\a; t)$ that
  is not a constant function of $t$ (we will also say that the coefficient is \emph{strictly periodic}).
  
\begin{theorem} \label{theo:firstperiodico}
Given a list of non-negative integer numbers $\a=[\alpha_1, \dots,
  \alpha_{N+1}]$, let $\ell$ be the greatest integer for which there exists a sublist $\a_J$ with $|J|=\ell$, such that its greatest common divisor is not $1$. Then for $k\geq \ell$ the coefficient of degree $k$ is a constant while the coefficient of degree $\ell-1$ of the quasi-polynomial $E(\a; t)$  is strictly periodic. Moreover, if the numbers $\alpha_i$ are given with their prime factorization, then detecting $\ell$ can be done in polynomial time.

\end{theorem}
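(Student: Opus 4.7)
The plan has three parts corresponding to the three claims. For the first claim (constancy of $E_k$ for $k \geq \ell$), I would apply formula \eqref{eq:Ehrhartcoeff} with $m = k$. The set $\CG_{>k}(\a)$ is the set of greatest common divisors of sublists of size $>k\geq \ell$; by maximality of $\ell$ every such sublist has gcd equal to $1$, so $\CG_{>k}(\a) = \{1\}$ and one checks $\mu(1)=1$. With $f=1$ the inner sum over roots of unity collapses to $\zeta=1$, the factor $\zeta^{-T}$ disappears, and $E_k$ is given by the $T$-free residue $-\res_{x=0} \frac{(-x)^k}{k!} \prod_i \frac{1}{1-\e^{\alpha_i x}}$.

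For the strict periodicity of $E_{\ell-1}$, I would return to the pole expansion \eqref{eq:Ea-as-sum-of-residues} of Theorem~\ref{theo:sum} and isolate the coefficient of $t^{\ell-1}$ pole by pole. Each pole $\zeta$ contributes a quasi-polynomial of degree at most $p(\zeta)-1$ in $t$ (times $\zeta^{-t}$), so only poles with $p(\zeta) \geq \ell$ matter. For $\zeta \neq 1$ the set $\{\,i:\zeta^{\alpha_i}=1\,\} = \{\,i : \mathrm{ord}(\zeta)\mid \alpha_i\,\}$ is a sublist of size $p(\zeta)$ with gcd divisible by $\mathrm{ord}(\zeta) > 1$, so maximality of $\ell$ forces $p(\zeta)\leq \ell$; hence only those $\zeta \neq 1$ with $p(\zeta)=\ell$ exactly can affect the coefficient of $t^{\ell-1}$. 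Substituting $z=\zeta\e^{w}$ in the residue and reading off the principal part $F(\a;\zeta \e^{w}) = c_\zeta w^{-\ell} + O(w^{-\ell+1})$ with
\[
c_\zeta = \frac{(-1)^\ell}{\prod_{i:\zeta^{\alpha_i}=1}\alpha_i \prod_{i:\zeta^{\alpha_i}\neq 1}(1-\zeta^{\alpha_i})} \neq 0,
\]
a short expansion of $\e^{-(t+1)w}$ yields
\[
E_{\ell-1}(T) = \kappa + \frac{(-1)^\ell}{(\ell-1)!}\sum_{\substack{\zeta\neq 1 \\ p(\zeta)=\ell}} c_\zeta\, \zeta^{-T},
\]
where $\kappa$ absorbs the contribution of $\zeta=1$ (which is a polynomial of degree $N$ in $t$ and so contributes only a constant to the coefficient of $t^{\ell-1}$). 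The main obstacle is ensuring the sum does not cancel identically in $T$. I would overcome this by first noting that the sum is non-empty (take $\zeta$ to be a primitive $g$-th root of unity, where $g>1$ is the gcd of a witnessing size-$\ell$ sublist, so $p(\zeta)=\ell$ exactly), and then invoking the linear independence of distinct characters $T\mapsto \zeta^{-T}$ of $\Z$: since every $c_\zeta$ in the sum is nonzero, no nontrivial combination of these characters can vanish identically on $\Z$.

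For the polynomial-time detection of $\ell$ from the prime factorizations, I would prove the identity
\[
\ell = \max_{p} \bigl|\{\, i : p\mid \alpha_i\,\}\bigr|,
\]
where $p$ ranges over the primes dividing some $\alpha_i$. The inequality $\leq$ holds because a sublist with gcd $>1$ must have all of its elements divisible by some common prime $p$, bounding its size by $|\{i:p\mid\alpha_i\}|$; the reverse inequality is immediate since $\{i : p\mid\alpha_i\}$ itself is a sublist with gcd divisible by $p$. Given the prime factorizations, there are at most $O((N+1)\log\max_i \alpha_i)$ distinct primes to consider, and each count $|\{i:p\mid\alpha_i\}|$ is computed in linear time, so the whole procedure runs in polynomial time in the input size.
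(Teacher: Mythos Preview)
Your proof is correct. Parts 1 and 3 match the paper: constancy of $E_k$ for $k\geq\ell$ via $\CG_{>k}(\a)=\{1\}$ in \eqref{eq:Ehrhartcoeff}, and the polynomial-time detection via the identity $\ell=\max_p|\{i:p\mid\alpha_i\}|$, which is exactly the column-count on the exponent matrix in Theorem~\ref{thm:mobiusFan}.

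For the strict periodicity of $E_{\ell-1}$, however, you take a genuinely different and more direct route. The paper works from the grouped formula \eqref{eq:Ehrhartcoeff}: it first establishes that the poset $\CG_{>\ell-1}(\a)$ is a fan with pairwise coprime atoms (so $\mu(f)=1$ for $f\neq1$), then proves a Fourier-period lemma (Lemma~\ref{lemma:FourierPeriod}) and computes the Fourier coefficient $c_n$ of each $f$-term for $n$ coprime to $f$ (Lemma~\ref{lemma:fterm}), and finally combines the coprimality of the $f$'s to deduce that $E_{\ell-1}$ has minimal period $\prod_{f\neq1}f$. You instead return to the raw pole expansion of Theorem~\ref{theo:sum}, observe that for $\zeta\neq1$ the maximality of $\ell$ forces $p(\zeta)\leq\ell$, isolate the leading residue coefficient $c_\zeta$ explicitly, and finish with linear independence of the distinct characters $T\mapsto\zeta^{-T}$ on $\Z$. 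Your argument bypasses the poset/M\"obius machinery and the Fourier-period lemma entirely; the price is that you only conclude non-constancy, whereas the paper's route yields the exact minimal period of $E_{\ell-1}$. For the theorem as stated, your approach suffices and is more elementary.
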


\begin{example} We apply the theorem above to investigate the 
question of periodicity of the denumerant coefficients
in the case of the classical partition problem
$E([1,2,3,\dots,m]; t)$. It is well known that this coincides with the
classical problem of finding the number of partitions of the integer
$t$ into at most $m$ parts, usually denoted $p_m(t)$ (see
\cite{andrewsbook}).  In this case, Theorem \ref{theo:firstperiodico}
predicts indeed that the highest-degree  coefficient
of the partition function $p_m(t)$ which is non-constant  is the coefficient of the term of degree $\lceil m/2
\rceil$.  This follows from the theorem because the even numbers in
the set $\{1,2,3,\dots,m\}$ form the largest sublist with gcd two. \end{example}

Now that we have the main algorithmic result we can prove some consequences to the description of the periodicity of the coefficients.  In this section, we determine the largest $i$  with a non-constant coefficient $E_i(t)$ and we give a polynomial time algorithm for computing it. This will complete the proof of Theorem \ref{theo:firstperiodico}.

\begin{theorem}\label{thm:mobiusFan}
Given as input a list of integers $\a=[\alpha_1, \dots, \alpha_{N+1}]$ with
their prime factorization $\alpha_i = p_1^{a_{i1}}p_2^{a_{i2}} \cdots
p_n^{a_{in}}$, there is a polynomial time algorithm to find all of the
largest sublists where the greatest common divisor is not
one. Moreover, if $\ell$ denotes the size of the largest sublists with greatest common divisor different from one, 
then (1) there are polynomially many such sublists, (2) the poset $\tilde{P}_{>\ell-1}$  is a fan (a poset with a maximal element and adjacent atoms), and
 (3) the M\"obius function for $P_{>\ell-1}$ is $\mu(f) =1$ if $G(f) \neq G(1)$ and $\mu(1) = 1 - (|\CG_{>\ell-1}(\a)|-1)$.
 \end{theorem}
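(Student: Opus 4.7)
My plan is to reduce everything to counting, for each prime $p$ that appears in some factorization, the number $c_p := |\{i : p \mid \alpha_i\}|$ and the index set $I_p := \{i : p \mid \alpha_i\}$. Since a sublist $J$ has $\gcd>1$ iff some prime divides every $\alpha_j$ for $j \in J$ (iff $J \subseteq I_p$ for that prime), the size of the largest sublist with $\gcd \neq 1$ is exactly $\ell = \max_p c_p$. The primes appearing are explicitly listed in the given factorizations of the $\alpha_i$, and their total number is bounded by the input size, so computing every $c_p$, hence $\ell$, and the primes achieving the maximum is a straightforward polynomial-time step. I expect the hardest part of the proof to be the structural claim (2), not the algorithm itself.

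For the characterization of the largest sublists and part (1), my argument is: if $J$ is any sublist with $|J| = \ell$ and $\gcd$ equal to some $f > 1$, then choosing any prime $p \mid f$ gives $J \subseteq I_p$ and $|I_p| \geq \ell$, which together with the maximality $c_p \leq \ell$ forces $J = I_p$ and $c_p = \ell$. Thus the largest sublists with $\gcd>1$ are exactly the sets $I_p$ for primes $p$ satisfying $c_p = \ell$, and in particular there are at most polynomially many of them.

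The key structural step is to show $\tilde{P}_{>\ell-1}$ is a fan with maximum $G(1)$. Since $G(1) \subseteq G(f)$ for every $f$, $G(1)$ is the unique maximum. To see the remaining elements are pairwise incomparable atoms, I will take two elements $f, f' \in \CG_{>\ell-1}(\a)$ with $f, f' > 1$ and suppose $f \mid f'$; writing $f = f_I$ and $f' = f_{I'}$ with $|I| = |I'| = \ell$ (every sublist of size $>\ell$ has $\gcd = 1$ by definition of $\ell$), divisibility gives $I \cup I' \subseteq \{i : f \mid \alpha_i\}$. But the right-hand set is itself a sublist with $\gcd$ divisible by $f > 1$, so its cardinality is at most $\ell$, which forces $I = I'$ and $f = f_I = f_{I'} = f'$. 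This incomparability step is what I expect to be the main obstacle, but it follows directly from the maximality of $\ell$. Consequently, every $G(f)$ with $f > 1$ sits as an atom immediately below $G(1)$ in $P_{>\ell-1}$.

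Part (3) then falls out of the M\"obius recursion on this fan. For each $f > 1$, the only element strictly below $G(f)$ in $P_{>\ell-1}$ is $\hat{0}$, so $\mu'(\hat{0}, G(f)) = -\mu'(\hat{0}, \hat{0}) = -1$ and hence $\mu(f) = 1$. For $f = 1$, every other element sits strictly between $\hat{0}$ and $G(1)$, so
$$\mu'(\hat{0}, G(1)) = -\mu'(\hat{0},\hat{0}) - \sum_{f > 1} \mu'(\hat{0}, G(f)) = -1 + (|\CG_{>\ell-1}(\a)| - 1),$$
and therefore $\mu(1) = -\mu'(\hat{0}, G(1)) = 1 - (|\CG_{>\ell-1}(\a)| - 1)$, which completes the proof.
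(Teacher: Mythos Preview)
Your proof is correct and follows essentially the same approach as the paper: both identify the largest sublists via the primes $p$ maximizing $|\{i : p \mid \alpha_i\}|$ (the paper phrases this as the columns of the exponent matrix with the most nonzero entries), and both establish the fan structure by the same maximality argument---if $f \mid f'$ with $f,f'>1$, then the union of their size-$\ell$ index sets still has $\gcd$ divisible by $f>1$, forcing equality. Your M\"obius computation is spelled out more explicitly than the paper's, but the content is identical.
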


\begin{proof} Consider the matrix $A=[a_{ij}]$. Let $c_{i_1},
\dots, c_{i_k}$ be column indices of $A$ that denote the columns that contain the largest number of non-zero elements among the columns. Let $\a^{(c_{i_j})}$  be the sublist of $\a$ that corresponds to the rows of $A$ where column $c_{i_j}$ has a non-zero entry. Each $\a^{(c_{i_j})}$ has greatest common divisor different from one. If $\ell$ is the size of the largest sublist of $\a$  with greatest common divisor different from one, then there are $\ell$ many $\alpha_i$'s that share a common prime. Hence each column $c_{i_1}$ of $A$ has $\ell$ many non-zero elements. Then each $\a^{(c_{i_j})}$ is a largest sublist where the greatest common divisor is not one. Note that more than one column index $c_i$ might produce the same sublist $\a^{(c_{i_j})}$. The construction of $A$, counting the non-zero elements of each column, and forming the sublist indexed by each $c_{i_j}$ can be done in polynomial time in the input size. 

To show the poset $\tilde{P}_{>\ell-1}$  is a fan, let $\CG =\{1, f_1, \dots, f_m \}$ be the set of greatest common divisors of
sublists  of size $>\ell -1$. Each $f_i$ corresponds to a greatest common divisor of a sublist $\a^{(i)}$ of $\a$ with size $\ell$. We cannot have $f_i \mid f_j$ for $i \neq j$ because if $f_i \mid f_j$, then  $f_i$ is also the greatest common divisor of $\a^{(i)} \cup \a^{(j)}$, a contradiction to the maximality of $\ell$. Then the M\"obius function is $\mu(f_i) = 1$, and $\mu(1) = 1-m.$ 

As an aside, $\gcd(f_i, f_j) = 1$ for all $f_i \neq f_j$ as if $\gcd(f_i, f_j) \neq 1$, then we can take the union of the sublist that produced $f_i$ and $f_j$ thereby giving a larger sublist with greatest common divisor not equal to one, a contradiction. 
\end{proof}

\begin{example} $[2^2 7^4 41^1, 2^1 7^2 11^1, 11^4, 17^3]$ gives the matrix
\[
\begin{pmatrix}
2 & 4 & 0 & 0 & 1 \\
1 & 2 & 1 & 0 & 0\\
0 & 0 & 4 & 0 & 0\\
0 & 0 & 0 & 3 & 0
\end{pmatrix}
\]
where the columns are the powers of the primes indexed by $(2, 7, 11, 17, 41)$. We see the largest sublists that have gcd not equal to one are $[2^2 7^4 41^1, 2^1 7^2 11^1]$ and $[2^1 7^2 11^1, 11^4]$. Then $\CG =\{1, 2^17^2, 11\}$. The poset $P_{>1}$ is
\begin{center}

 \includegraphics{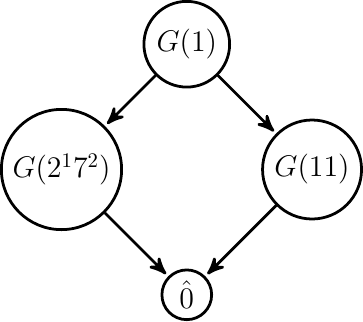}
%
\end{center}
and $\mu(1) = -1$, $\mu(11) = \mu(2^17^2) = 1$.
\end{example}

\begin{proof}[Proof of Theorem~\ref{theo:firstperiodico}]
 Let $\ell$ be the greatest integer for which there exists a sublist $\a_J$ with $|J|=\ell$, such that its gcd $f$ is not $1$. Then for $m \geq \ell$ the coefficient of  degree $m$, $E_m(T)$, is constant because in Equation \eqref{eq:Ehrhartcoeff},  $\CG_{>m}(\a) = \{1\}$. Hence $E_m(T)$ does not depend on $T$. We now focus on $E_{\ell -1}(T)$. To simplify Equation \eqref{eq:Ehrhartcoeff}, we first compute the $\mu(f)$ values.
 
  \begin{lemma}
For $\ell$ as in Theorem \ref{theo:firstperiodico}, the poset  $ \CG_{>\ell-1}(\a)$ is a fan, with one maximal element $1$ and adjacent elements $f$ which are pairwise coprime. In particular, $\mu(f)=1$ for $f\neq 1$.
    \end{lemma}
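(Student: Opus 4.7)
The plan is to exploit the maximality of $\ell$ to pin down the structure of $\CG_{>\ell-1}(\a)$ and then read off both the poset shape and the M\"obius values.

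First I would describe the elements of $\CG_{>\ell-1}(\a)$ explicitly. By definition, every $f\in\CG_{>\ell-1}(\a)$ is the gcd of a sublist $\a_J$ with $|J|\geq \ell$. If $|J|>\ell$ and $\gcd(\a_J)\neq 1$, then $\a_J$ itself would be a sublist of size larger than $\ell$ with non-trivial gcd, contradicting the definition of~$\ell$. Hence any $f\neq 1$ in $\CG_{>\ell-1}(\a)$ must arise as the gcd of a sublist of size \emph{exactly} $\ell$. So $\CG_{>\ell-1}(\a) = \{1, f_1, \dots, f_m\}$, where each $f_i>1$ is realized by some sublist $\a_{J_i}$ with $|J_i|=\ell$.

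Next I would show the $f_i$'s are pairwise coprime, which is the key step (and essentially already appears as the aside in Theorem~\ref{thm:mobiusFan}). Suppose for contradiction that some prime $p$ divides both $f_i$ and $f_j$ with $i\neq j$. Then $p$ divides every $\alpha_k$ for $k\in J_i \cup J_j$. If $J_i \neq J_j$, then $|J_i \cup J_j|>\ell$, so $\gcd(\a_{J_i\cup J_j})$ is a multiple of $p$ and hence $\neq 1$, contradicting the maximality of~$\ell$. If $J_i = J_j$, then $f_i = \gcd(\a_{J_i}) = \gcd(\a_{J_j}) = f_j$, contradicting $i\neq j$. Thus $\gcd(f_i,f_j)=1$, and in particular no $f_i$ divides any other $f_j$.

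Now I would read off the poset structure. In the reverse-inclusion order on $\{G(f) : f\in\CG_{>\ell-1}(\a)\}$, we have $G(f)\preceq G(g)$ iff $g\mid f$. Since $1$ divides every $f_i$, the element $G(1)$ is the unique maximum. Since no $f_i$ divides any $f_j$ for $i\neq j$, the elements $G(f_1),\dots,G(f_m)$ are pairwise incomparable. After adjoining $\hat 0$, the poset $P_{>\ell-1}$ consists of $\hat 0$ at the bottom, the atoms $G(f_1),\dots,G(f_m)$ in the middle, and the top element $G(1)$, which is exactly the fan structure claimed.

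Finally, the M\"obius values follow immediately from the definition. Each $G(f_i)$ covers $\hat 0$, so $\mu'(\hat 0, G(f_i)) = -\mu'(\hat 0,\hat 0) = -1$, giving $\mu(f_i) = -\mu'(\hat 0, G(f_i)) = 1$ for $i=1,\dots,m$. The main obstacle is really the coprimality argument; once that is in place, the rest is a direct unfolding of definitions. I do not expect any computational difficulty in this proof.
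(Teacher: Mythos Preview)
Your proof is correct and follows essentially the same approach as the paper: the key step in both is that if two distinct $f_i, f_j$ shared a common prime factor, the union $J_i \cup J_j$ would give a sublist of size strictly greater than $\ell$ with non-trivial gcd, contradicting the maximality of~$\ell$. Your write-up is more detailed than the paper's (which compresses this to two sentences and leaves the poset and M\"obius claims implicit), but the underlying argument is the same.
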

    \begin{proof}
        Let $\a_{J_1}$, $\a_{J_2}$ be two  sublists of length $\ell$ with gcd's $f_1\neq f_2$ both not equal to $1$. If $f_1$ and $f_2$ had a nontrivial common divisor $d$, then the list $\a_{J_1\cup J_2}$ would have a gcd not equal to $1$, in contradiction with its length being strictly greater than $\ell$.
    \end{proof}
 
Next we recall a fact about Fourier series and use it to show that each
term in the summation over $f\in \CG_{>\ell-1}(\a)$ in Equation
\eqref{eq:Ehrhartcoeff} has smallest period equal to~$f$.

\begin{lemma}\label{lemma:FourierPeriod}
        Let $f$ be a positive integer and let $\phi(t)$ be a periodic function on $\Z/f\Z$  with Fourier expansion
        \[
        \phi(t)=\sum_{n=0}^{f-1} c_n \e^{2i\pi \inlinefrac{nt}{f}}.
        \]
     If $c_n\neq 0$ for some $n$ which is coprime to $f$ then $\phi(t)$ has smallest period equal to $f$.
    \end{lemma}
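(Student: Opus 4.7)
The plan is to proceed by contradiction using the uniqueness of Fourier coefficients. Suppose $\phi$ has smallest period $d$, so in particular $d \mid f$ and $d \leq f$. The goal is to show $d = f$ under the hypothesis that some $c_n$ with $\gcd(n,f) = 1$ is nonzero.

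First, I would translate the periodicity $\phi(t+d) = \phi(t)$ into a relation among the Fourier coefficients. Substituting the Fourier expansion on both sides gives
\[
\sum_{n=0}^{f-1} c_n \bigl(\e^{2i\pi nd/f} - 1\bigr)\e^{2i\pi nt/f} = 0
\]
for all $t \in \Z/f\Z$. Since the characters $\{\e^{2i\pi nt/f}\}_{n=0}^{f-1}$ form an orthogonal basis for functions on $\Z/f\Z$, their linear independence forces
\[
c_n\bigl(\e^{2i\pi nd/f} - 1\bigr) = 0 \quad\text{for every } n \in \{0,1,\dots,f-1\}.
\]

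Next I would use the assumption that $c_n \neq 0$ for some $n$ coprime to $f$. For that particular $n$, the factor $\e^{2i\pi nd/f}-1$ must vanish, so $f \mid nd$. Because $\gcd(n,f) = 1$, this implies $f \mid d$, and combined with $d \mid f$ we conclude $d = f$. Thus the smallest period of $\phi$ is exactly $f$.

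There is essentially no obstacle here: the only subtlety is the invocation of linear independence of characters on $\Z/f\Z$, which is standard (it follows from the Vandermonde structure of the discrete Fourier transform matrix, or equivalently from the orthogonality relations for the group $\Z/f\Z$). The proof is a short three-line argument once the Fourier coefficients of $\phi(\,\cdot\,+d)$ are identified as $c_n \e^{2i\pi nd/f}$.
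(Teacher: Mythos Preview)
Your proof is correct and follows essentially the same approach as the paper's: both argue by contradiction via uniqueness of Fourier coefficients on $\Z/f\Z$. The only cosmetic difference is that the paper re-expands $\phi$ in the Fourier basis of the smaller period $m=f/q$ and observes that $c_n=0$ unless $q\mid n$, whereas you compare $\phi(t+d)$ with $\phi(t)$ directly to obtain $c_n(\e^{2i\pi nd/f}-1)=0$; the arithmetic conclusion ($\gcd(n,f)=1$ forces the period to be $f$) is identical.
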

    \begin{proof}
       Assume  $\phi(t)$ has period $m$ with $f=qm$ and $q>1$. We write its Fourier series as a function of period $m$.
      $$
        \phi(t)=\sum_{j=0}^{m-1} c'_j \e^{2i\pi \inlinefrac{jt}{m}}= \sum_{j= 0}^{m-1}c'_j \e^{2i\pi \inlinefrac{(jq)t}{f}}.
        $$
     By uniqueness of the Fourier coefficients, we have $c_n=0$  if $n$ is not a multiple of $q$ (and $c_{qj}=c'_j$). 
     
We claim that if $n$ is coprime to $f$, then $n$ is not a multiple of $q$. This is true by considering the contrapositive. 
     
     Hence, $c_n = 0$ if $n$ is coprime to $f$, a contradiction.
    \end{proof} 
Theorem~\ref{theo:firstperiodico} is thus the consequence of the following lemma.

   \begin{lemma}\label{lemma:fterm}
     Let $f\in \CG_{>\ell-1}(\a)$.
     The term in the summation over $f$ in \eqref{eq:Ehrhartcoeff} 
     has smallest period $f$ as a function of $T$.
   \end{lemma}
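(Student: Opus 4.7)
The plan is to apply the Fourier periodicity criterion of Lemma~\ref{lemma:FourierPeriod} to the $f$-term
\[
\phi_f(T) := -\mu(f)\res_{x=0}\frac{(-x)^{\ell-1}}{(\ell-1)!} \sum_{\zeta:\zeta^f=1} \frac{\zeta^{-T}}{\prod_i(1-\zeta^{\alpha_i}\e^{\alpha_i x})}.
\]
The case $f=1$ is trivial: $\phi_1$ is then a constant and $f=1$ is vacuously its smallest period. For $f\neq 1$ the preceding lemma gives $\mu(f)=1$, so I would parametrize $\zeta=\e^{2\pi i k/f}$ for $k=0,\dots,f-1$ and pull the residue inside the finite sum to obtain a Fourier expansion $\phi_f(T)=\sum_{k=0}^{f-1} a_k\,\e^{-2\pi i kT/f}$.

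Then the key step is to show $a_k\neq 0$ for every $k$ coprime to $f$; this suffices because $\gcd(k,f)=\gcd(f-k,f)$ matches the hypothesis of Lemma~\ref{lemma:FourierPeriod} after a trivial reindexing. For such a $k$ the root $\zeta$ is a primitive $f$-th root of unity, so the set of indices $i$ for which the factor $1-\zeta^{\alpha_i}\e^{\alpha_i x}$ vanishes at $x=0$ is precisely $J_f:=\{\,i:f\mid\alpha_i\,\}$. The main structural input—and the step where the hypothesis on $\ell$ really enters—is the equality $|J_f|=\ell$: it is at least $\ell$ because $f$ is the gcd of some length-$\ell$ sublist, and at most $\ell$ by the maximality of $\ell$ in Theorem~\ref{theo:firstperiodico}, which forbids any longer sublist with a common divisor exceeding~$1$. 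This counting is where I expect the only real subtlety; everything else is a Laurent-series bookkeeping.

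Once $|J_f|=\ell$ is in hand, the integrand has a pole of order exactly $\ell$ at $x=0$, so multiplication by $(-x)^{\ell-1}$ leaves a simple pole and the residue is read off from leading Taylor coefficients. Expanding $1-\e^{\alpha_i x}=-\alpha_i x + O(x^2)$ for $i\in J_f$ and evaluating the other factors at $x=0$ yields
\[
a_k \;=\; \frac{1}{(\ell-1)!\,\prod_{i\in J_f}\alpha_i\,\prod_{i\notin J_f}(1-\zeta^{\alpha_i})}.
\]
Each factor here is nonzero: $\alpha_i>0$ for $i\in J_f$, and for $i\notin J_f$ we have $\zeta^{\alpha_i}\neq 1$ because $\zeta$ has exact order $f$ and $f\nmid\alpha_i$. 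Hence $a_k\neq 0$ for every $k$ coprime to $f$, and Lemma~\ref{lemma:FourierPeriod} then yields that the smallest period of $\phi_f(T)$ equals $f$, which is exactly the claim.
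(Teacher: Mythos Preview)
Your proof is correct and follows essentially the same route as the paper's: write the $f$-term as a finite Fourier series in $T$, use the maximality of $\ell$ to show that for $k$ coprime to $f$ exactly $\ell$ factors in the denominator vanish at $x=0$, compute the resulting simple-pole residue explicitly, observe it is nonzero, and invoke Lemma~\ref{lemma:FourierPeriod}. Your explicit formula for $a_k$ agrees with the paper's up to a harmless sign convention, and your justification of $|J_f|=\ell$ is exactly the ``crucial observation'' the paper highlights.
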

 \begin{proof}
    For $f=1$, the statement is clear. Assume $f \neq 1$. We observe that the $f$-term in \eqref{eq:Ehrhartcoeff} is a periodic function (of period $f$) which is \emph{given as the sum of its Fourier expansion} and is written as $\sum_{n=0}^{f-1}c_n \e^{-2i\pi \inlinefrac{nT}{f}}$  where
    $$
    c_n=- \res_{x=0}\frac{(-x)^{\ell -1}}{(\ell-1)!\,\prod_j\bigl(1-\e^{-2i\pi \inlinefrac{n \alpha_j}{f}}\e^{\alpha_j x}\bigr)}.
    $$
    Consider a coefficient for which  $n$  is coprime to $f$. We decompose the product according to whether $f$ divides $\alpha_j$ or not. The crucial observation is that there are exactly $\ell$ indices $j$ such that $f$ divides $\alpha_j$, because of the maximality assumption on $\ell$.   Therefore $x=0$ is a simple pole and the residue is readily computed. We obtain
    $$
   c_n= \frac{(-1)^{\ell-1}}{(\ell-1)!}
   \cdot\frac{1}{\prod_{j: f \nmid \alpha_j}\bigl(1-\e^{2i\pi \inlinefrac{n \alpha_j}{f} }\bigr)}
   \cdot\frac{1}{\prod_{j: f\mid\alpha_j}\alpha_j }.
    $$
    Thus $c_n\neq 0$ for an $n$ coprime with $f$. By Lemma \ref{lemma:FourierPeriod}, each $f$-term has minimal period $f$. 
 \end{proof} 
  As the various numbers~$f$ in  $\CG_{>\ell-1}(\a)$ different from $1$ are
  pairwise coprime and the corresponding terms have minimal period~$f$, $E_{\ell-1}(T)$ has minimal
  period $\prod\limits_{f \in \CG_{>\ell-1}(\a)}f > 1$.
  This completes the proof of Theorem~\ref{theo:firstperiodico}.
\end{proof}


\section{Summary of the algorithm  to compute top coefficients} 

In this section, we give a detailed outline of the main algorithm. Given a sequence of integers $\a$ of length ${N+1}$, we wish to compute the top $k+1$ coefficients of the quasi-polynomial $E(\a; t)=\sum_{i=0}^{N} E_{i}(t) t^{i}$ of degree $N$. Note that $\a$ is not a set and is allowed to have repeated numbers.  Recall that
$$E(\a; t)=\sum_{i=0}^{N} E_{i}(t) t^{i}$$
where $E_{i}(t)$ is a periodic function of $t$ modulo some period $q_i$.  We
assume that greatest common divisor of the list $\a$ is~$1$.

\begin{enumerate}
\item For every subsequence of $\a$ of length greater than $N-k$, compute the greatest common divisor. Assign these values to the set $\CG_{>N-k}(\a)$ (ignoring duplicates). Note that $1 \in \CG_{>N-k}(\a).$
\item For each $f \in \CG_{>N-k}(\a)$, compute $\mu(f)$ by Theorem \ref{theorem:knap:mobius-function}.
\end{enumerate}

Recall some notation
\begin{itemize}
\item $J=J(\a,f) = \{\, i \in\{1,\dots,N+1\}   : f \nmid \alpha_i \,\}$,  and $\a_J =[\alpha_i]_{i \in J}$
\item $s_0\in\Z$ and $\ve s\in\Z^J$ such that $1=\sum_{i\in J} s_i \alpha_i+s_0 f$ (as a superscript, we mean $\Z^J = \Z^{|J|}$),
\item $\Lambda({\a},f):=\biggl\{\,\ve y \in \Z^J : \langle \a_J, \ve y\rangle = \sum_{j\in J} y_j\alpha_j\in\Z f\,\biggr\}$.
\end{itemize}
 Then we seek to compute
\begin{align*}
\sum_{i=0}^k E_{N-i}(T) t^{N-i}
&=\sum_{f\in \CG_{>N-k}(\a)} \mu(f)  {\mathcal E}(\a,f; t, T) \\
&= \sum_{f\in \CG_{>N-k}(\a)} \mu(f)  \left( - \Res_{x=0} e^{-tx} {\mathcal F}(\a, f, T; x) \right) \\
&= \sum_{f\in \CG_{>N-k}(\a)} \mu(f)  \left( - \Res_{x=0} e^{-tx} \cdot f \cdot  M(-T\ve s,\R_{\geq0}^J,\Lambda(\a,f); \; \a_J x) \cdot \prod_{j\colon f \mid \alpha_j}\frac{1}{1-\e^{\alpha_jx}}\right). \\
\end{align*}

We return to the algorithm by describing how to compute the term in the big parentheses above for every fixed $f$ value.

\begin{enumerate}
\setcounter{enumi}{2}
\item To compute $s \in \Z^J$, let $A$ be the $(|J|+1) \times 1$ matrix given by $A = (\a_J, f)^T$. Compute the Hermite Normal Form of $A$ \cite{schrijver}. Let $H \in \Z^{|J| +1 \times 1}$ be the Hermite Normal Form of $A$, and let $U \in \Z^{|J| +1 \times |J| +1 }$ be a unimodular matrix such that $UA = H$. Note that $H$ will have the form $H = (1, 0, \dots, 0)^T$. If $u:=(u_1, \dots, u_{|J|}, u_{|J|+1})$ denotes the first row of $U$, then  $u\cdot A = 1$. So $\ve s \in \Z^J$ is the first $|J|$ elements of $u$.
\item To compute a basis for $\Lambda({\a},f),$ we again use the Hermite Normal Form. Let $A$ be the column matrix of $\a_J$. Let $H \in \Z^{J \times 1},$ and $U \in \Z^{J \times J}$ be the Hermite Normal Form of $A$ and unimodular matrix $U$, respectively, such that $UA=H$. $H$ will have the form $H=(h, 0, 0, \dots, 0)^T$, where $h$ is the smallest positive integer that can be written as an integer combination of $\a_J$. To make sure $h$ is a multiple of $f$, scale the first row of $U$ by $\frac{f}{\gcd(h, g)}$. Let $\tilde U$ be this scaled $U$ matrix. Then $\hat U A = (\lcm(h,f), 0, \dots, 0)^T$, where $\lcm$ is the lowest common multiple, and so the rows of $\tilde U$ form a basis for $\Lambda({\a},f).$ Let $B = \tilde U^T$, then $\Lambda({\a},f) = B\Z^J$.
\item Let $B$ be the matrix whose columns are the generators of the lattice~$\Lambda$ as in the last step. Then $B^{-1}\R_{\geq0}^J$ is the $|J|$-dimensional cone in $\R^J$ generated by the nonnegative combinations of the columns of $B^{-1}$. Using Barvinok's \emph{dual}  algorithm \cite{bar, barvinokzurichbook, BarviPom, Brion1997residue}, the cone $B^{-1}\R_{\geq0}^J$ can be written as a signed sum of unimodular cones. Let $U_B$ be the set of resulting unimodular cones and $\epsilon_\coneU \in \{-1, 1\}$ for each $u \in U_B$. Also, for $\coneU \in U_B$, let $g_i^u$ be the corresponding rays of the cone $\coneU$ for $i=1, \dots, |J|$. Then
\begin{align*}
M(-T\ve s,\R_{\geq0}^J,\Lambda(\a,f); \; \a_J x) &= M(-T B^{-1} \ve s,B^{-1}\R_{\geq0}^J,\Z^J; \; B^{T}\a_J x)\\
&= \sum_{\coneU \in U_B} \epsilon_\coneU M(-T B^{-1} \ve s, \coneU, \Z^J; \; B^{T}\a_J x)\\
&= \sum_{\coneU \in U_B} \epsilon_\coneU\, \e^{ \ll
  B^{T}\a_J x,\{T B^{-1}\ve s\}_\coneU\rr } \frac{1}{\prod_j (1-\e^{\ll B^{T}\a_J x,\ve g_j^\coneU\rr })}.
\end{align*}
To compute $\{T B^{-1}\ve s\}_\coneU$, write $B^{-1}\ve s$ in the basis given by the rays of cone $\coneU$: $B^{-1}\ve s = \gamma_1 g_1^u + \cdots \gamma_{|J|} g_{|J|}^U$. Then $\{T B^{-1}\ve s\}_\coneU$ is the vector $(\{\gamma_1 T\}, \dots, \{\gamma_{|J|} T\})$. Note that $T$ is a symbolic variable, so every calculation involving the function $\{x\}$ must be done symbolically in some data structure. 

Although $M(-T\ve s,\R_{\geq0}^J,\Lambda(\a,f); \; \a_J x)$ is well defined at $\a_Jx$, it may happen that $\ll B^{T}\a_J x,\ve g_j^\coneU\rr = 0$ for some rays $g_i^\coneU$ at some cone $\coneU$. That is, one of the cones could be singular at $\a_Jx$. To fix this, as noted in Remark \ref{remark:epsilon-term}, we replace $\a_J$ with $\a_J + \epsilon \beta$ such that $\ll B^{T}\beta x,\ve g_j^\coneU\rr \neq 0$. The set of $\beta$ that fail this condition has measure zero, so $\beta$ can be picked randomly. After computing the limit as $\epsilon$ goes to zero, the new variable $\epsilon$ is eliminated. To compute the limit, it is enough to find the series expansion of $\epsilon$. Positive powers of $\epsilon$ in the series can be ignored because they will vanish is the limit, while negative powers of $\epsilon$ can also be dropped because they are guaranteed to cancel out in the summation over $U_B$. Hence it is enough to compute the series expansion in $\epsilon$ and only keep the coefficient of $\epsilon^0$. Then finally, 

\[M(-T\ve s,\R_{\geq0}^J,\Lambda(\a,f); \; \a_J x) 
= \sum_{\coneU \in U_B} \epsilon_\coneU\ \Res_{\epsilon=0} \frac{1}{\epsilon}  \e^{ \ll
  B^{T}(\a_J + \beta \cdot \epsilon) x,\{T B^{-1}\ve s\}_\coneU\rr } \frac{1}{\prod_j (1-\e^{\ll B^{T}(\a_J + \beta \cdot \epsilon) x,\ve g_j^\coneU\rr })}
  \]
  
\item The series expansion of 
\[ f \cdot  \left(\sum_{\coneU \in U_B} \epsilon_\coneU\ \Res_{\epsilon=0} \frac{1}{\epsilon}  \e^{ \ll
  B^{T}(\a_J + \beta \cdot \epsilon) x,\{T B^{-1}\ve s\}_\coneU\rr } \frac{1}{\prod_j (1-\e^{\ll B^{T}(\a_J + \beta \cdot \epsilon) x,\ve g_j^\coneU\rr })} \right) \prod_{j\colon f \mid \alpha_j}\frac{1}{1-\e^{\alpha_jx}}\]
can be computed by first finding the Laurent series expansion at $x=0$ and at $\epsilon=0$ of each of the terms
\begin{itemize}
\item $\e^{ \ll
  B^{T}\a_J x,\{T B^{-1}\ve s\}_\coneU\rr }$,
\item $\e^{ \ll
  B^{T}\beta \cdot \epsilon x,\{T B^{-1}\ve s\}_\coneU\rr }$,
\item  $\frac{1}{1-\e^{\ll B^{T}(\a_J + \beta \cdot \epsilon) x,\ve g_j^\coneU\rr }}$, and
\item $\frac{1}{1-\e^{\alpha_jx}}$, 
\end{itemize}
by using the Taylor expansion of $e^x$ and Remark \ref{remark:knap:bernoulli}, and then by multiplying each series together. This Laurent series  starts at $x^{-N-1}$, and so the coefficient of $x^{-N-1+i}$ in this Laurent series  contributes to $E_{N-i}(T)$ (after being multiplied by $\mu(f) \cdot \frac{(-1)^i}{i!}$). Therefore it is enough to compute at most the first $k$ terms of any partial Laurent series.
\end{enumerate}


\section{Experiments} 
\label{experiments}
This chapter closes with an extensive collection of computational experiments (Section~\ref{experiments}). 
We constructed a dataset of over 760 knapsacks and show our new algorithm is the fastest available method for computing 
the top $k$ terms in the Ehrhart quasi-polynomial.  Our implementation of the
new algorithm is made available as a part of the free software
\latteintegrale \cite{latteintegrale}, version 1.7.2.\footnote{Available under the GNU General Public
  License at \url{https://www.math.ucdavis.edu/~latte/}.}

We first wrote a preliminary implementation of  our algorithm in \maple, which we call \mapleKnapsack in the following.
Later we developed a faster implementation in C++, which is referred to as \latteKnapsack in the following (we use the term 
knapsack to refer to the Diophantine problem  $\alpha_1x_1+\alpha_2 x_2+\cdots+\alpha_{N} x_{N}+\alpha_{N+1}x_{N+1}=t$).
Both implementations are released as part of the software package 
\latteintegrale \cite{latteintegrale}, version 1.7.2.\footnote{Available under the GNU General Public
  License at \url{https://www.math.ucdavis.edu/~latte/}.
  The \maple code \mapleKnapsack is also available separately at \url{https://www.math.ucdavis.edu/~latte/software/packages/maple/}.
} 

We report on two different benchmarks tests: 
\begin{enumerate}
\item We test the performance of the implementations 
\mapleKnapsack%
  \footnote{Maple usage:
    \maplecode{coeff\_Nminusk\_knapsack($\langle\mathit{knapsack\
        list}\rangle$, t, $\langle\mathit{k\ value}\rangle$)}.} and 
  \latteKnapsack \footnote{Command line usage:
    \shellcode{dest/bin/top-ehrhart-knapsack -f $\langle\mathit{knapsack\
        file}\rangle$ -o $\langle\mathit{output\ file}\rangle$ -k
      $\langle\mathit{k\ value}\rangle$}.},
  and also the implementation of the algorithm from \cite{so-called-paper-1},
  which refer to as \coneApx \footnote{Command line usage: \shellcode{dest/bin/integrate
      --valuation=top-ehrhart --top-ehrhart-save=\allowbreak$\langle\mathit{output\
        file}\rangle$ --num-coefficients=$\langle\mathit{k\ value}\rangle$
      $\langle\mathit{LattE\ style\ knapsack\ file}\rangle$}.},
	  on a collection of over $750$ knapsacks. The latter algorithm can compute the weighted Ehrhart quasi-polynomials for simplicial polytopes, and hence it is more general than the algorithm we present in this chapter, but this is the only other available algorithm for computing coefficients
 directly. Note that the implementations of the \mapleKnapsack algorithm and the main computational part of the \coneApx algorithm are in \maple, making comparisons between the two easier. 

 \item Next, we run our algorithms on a few knapsacks that have been studied in the literature. We chose these examples because some of these problems are considered difficult in the literature.  We also present a comparison with other available software that can also compute information of the denumerant $E_\a(t)$:
   the codes \emph{CTEuclid6} \cite{guocexin} and \emph{pSn}
 \cite{sillszeilberger}.\footnote{Both codes can be downloaded from the
   locations indicated in the respective papers.  Maple scripts that
   correspond to our tests of these codes are 
   available at
   \url{https://www.math.ucdavis.edu/~latte/software/denumerantSupplemental/}.} 
 These codes use mathematical ideas that are different from those used in this chapter. 
\end{enumerate}

All computations were performed on a 64-bit Ubuntu machine with 64 GB of RAM and eight Dual Core AMD Opteron 880 processors.

\subsection{\mapleKnapsack vs.\@ \latteKnapsack vs.\@ \coneApx}

Here we compare our two implementations with the \coneApx algorithm from \cite{so-called-paper-1}. We constructed a test set of  768 knapsacks. For each $3 \leq d \leq 50$, we constructed four families of knapsacks:
\begin{description}
	\item[random-3] Five random knapsacks in dimension $d-1$ where $a_1 = 1$ and the other coefficients, $a_2, \dots, a_{d}$, are $3$-digit random numbers picked uniformly
	\item[random-15] Similar to the previous case, but with a $15$-digit random number
	\item[repeat] Five knapsacks in dimension $d-1$ where $\alpha_1=1$ and all the other $\alpha_i$'s are the same $3$-digit random number. These produce few poles and have a simple poset structure. These are among the simplest knapsacks that produce periodic coefficients.
	\item[partition] One knapsack in the form $\alpha_i = i$ for $ 1 \leq i \leq d$.
\end{description}

For each knapsack, we successively compute the highest degree terms of the
quasi-polyno\-mial, with a time limit of $200$ CPU~seconds for each coefficient.  
Once a term takes longer than $200$ seconds to compute, we skip the remaining
terms, as they are harder to compute than the previous ones.  
We then count the maximum number of terms of the quasi-polynomial, starting from the
highest degree term (which would, of course, be trivial to compute), that can be
computed subject to these time limits.  
Figures \ref{fig:random3}, \ref{fig:random15}, \ref{fig:repeat},
\ref{fig:partition} show these maximum numbers of terms for the random-3,
random-15, repeat, and partition knapsacks, respectively. 
For example, in Figure \ref{fig:random3}, for each of the five random 3-digit
knapsacks in ambient dimension $50$, the \latteKnapsack method computed at
most $6$ terms of an Ehrhart polynomial, the \mapleKnapsack computed at most
four terms, and the \coneApx method computed at most the trivially computable highest
degree term. 

In each knapsack family, we see that each algorithm has a ``peak'' dimension
where after it, the number of terms that can be computed subject to the time limit quickly decreases; for the \latteKnapsack method, this is around dimension $25$ in each knapsack family. In each family, there is a clear order to which algorithm can compute the most: \latteKnapsack computes the most coefficients, while the \coneApx method computes the least number of terms.  In Figure \ref{fig:repeat}, the simple poset structure helps every method to compute more terms, but the two \maple scripts seem to benefit more than the \latteKnapsack method. 

Figure \ref{fig:partition} demonstrates the power of the LattE implementation. Note that a knapsack of this particular form in dimension $d$ does not start to have periodic terms until around $d/2$. Thus even though half of the coefficients are only constants we see that the \mapleKnapsack code cannot compute past a few periodic term in dimension $10$--$15$ while the \latteKnapsack method is able to compute the entire polynomial.

In  Figure \ref{fig:ratioRandom15} we plot the average speedup ratio between
the \mapleKnapsack and \coneApx implementations along with the maximum and
minimum speedup ratios (we wrote both algorithms  in \maple). The ratios are
given by the time it takes \coneApx to compute a term, divided by the time it
takes \mapleKnapsack to compute the same term, where both times are between
$0$ and $200$ seconds. For example, among all the terms computed in dimension
$15$ from random $15$-digit knapsacks, the average speedup between the two
methods was $8000$, the maximum ratio was $20000$, and the minimum ratio was
$200$. We see that in dimensions $3$--$10$, there are a few terms for which
the \coneApx method was faster than the \mapleKnapsack method, but this only
occurs for the highest degree terms. Also, after dimension $25$, there is
little variance in the ratios because the \coneApx method is only computing
the trivial highest term. Similar results hold for the other knapsack families, and so their plots are omitted. 

\begin{figure}
  \centering
    \includegraphics[width=0.95\textwidth]{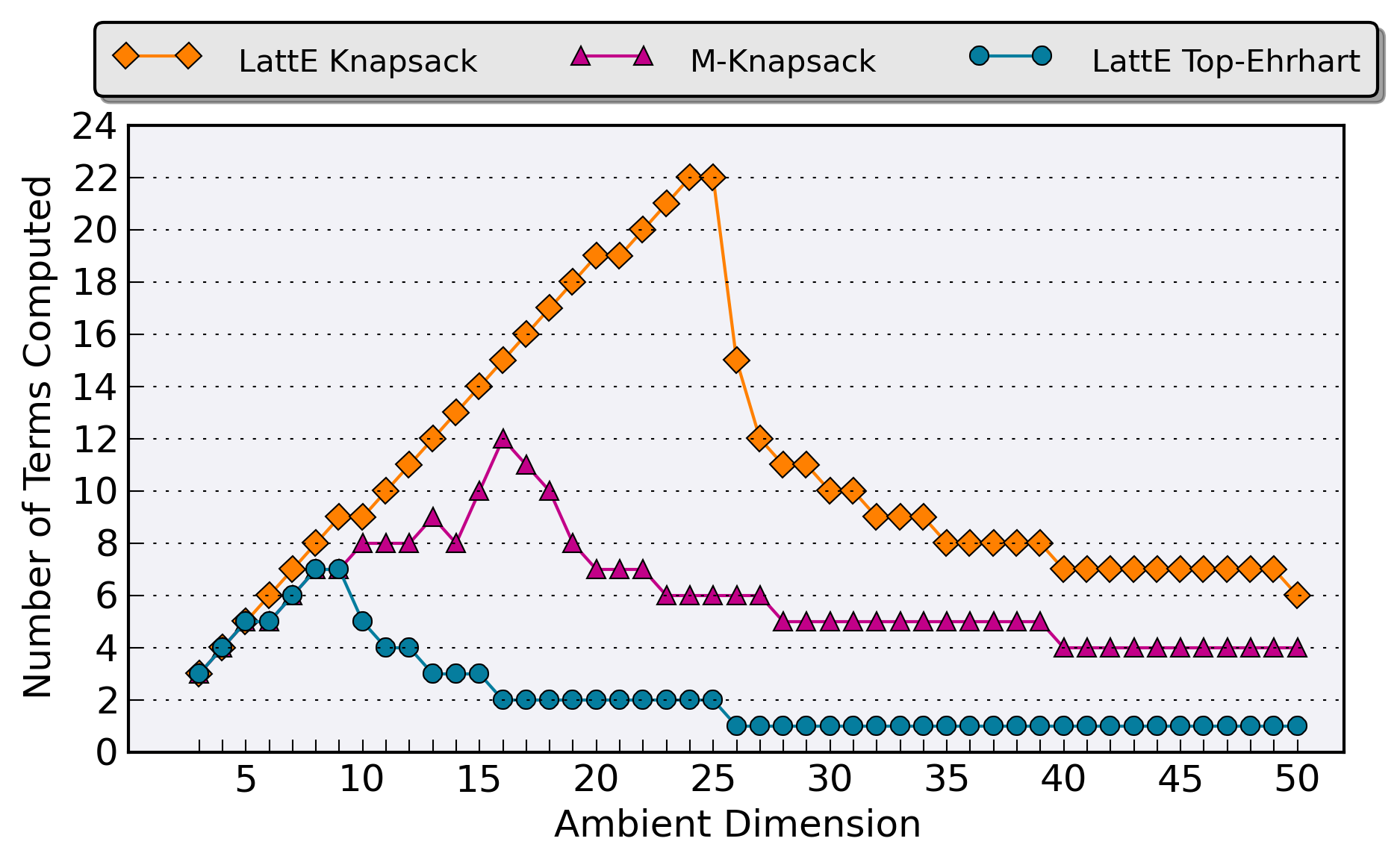}
 \caption{Random 3-digit knapsacks: Maximum number of coefficients each algorithm can compute where each coefficient takes less than 200 seconds.}
 \label{fig:random3}
\end{figure}

\begin{figure}
  \centering
    \includegraphics[width=0.95\textwidth]{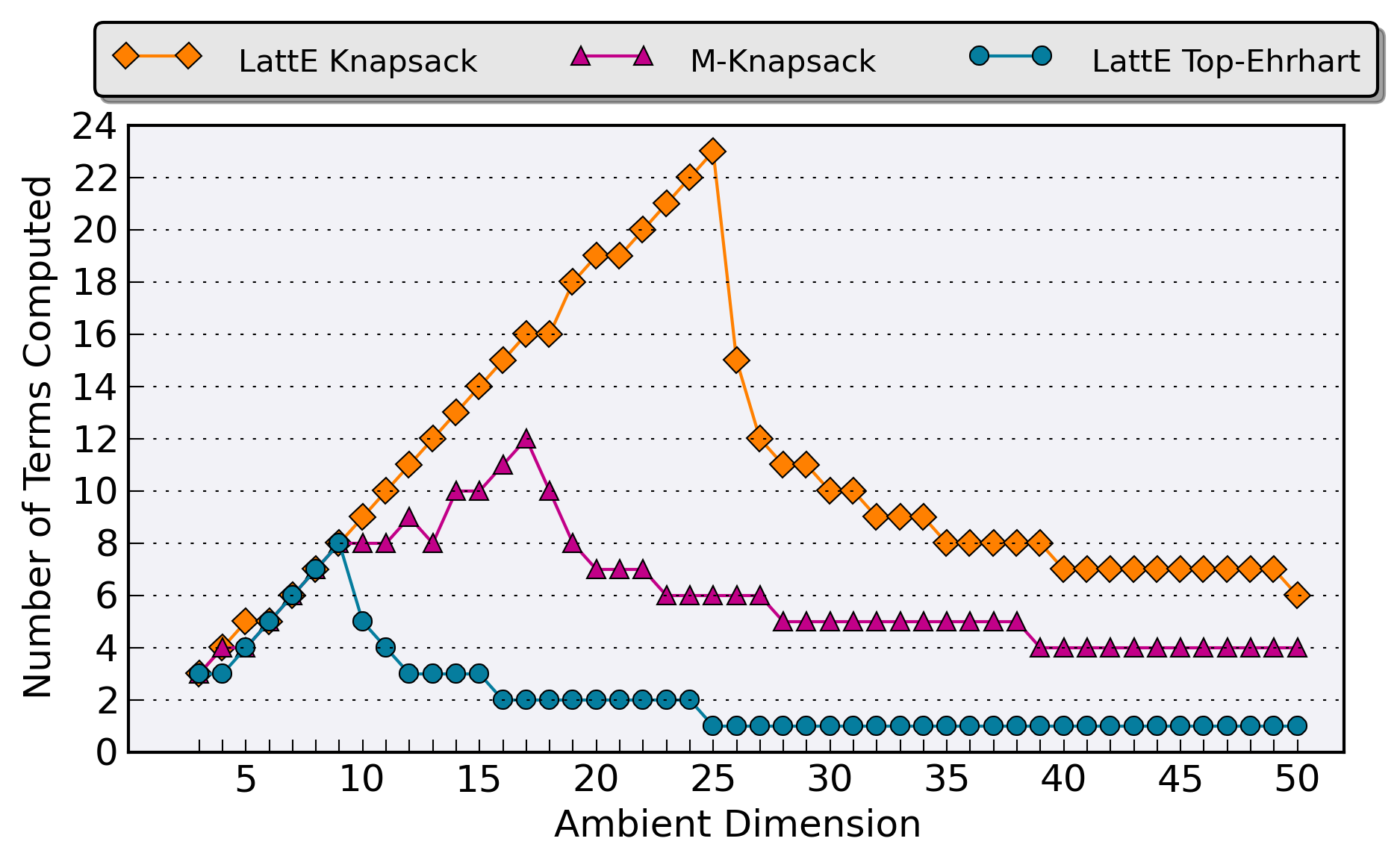}
 \caption{Random 15-digit knapsacks: Maximum number of coefficients each algorithm can compute where each coefficient takes less than 200 seconds.}
 \label{fig:random15}
\end{figure}

\begin{figure}
  \centering
    \includegraphics[width=0.95\textwidth]{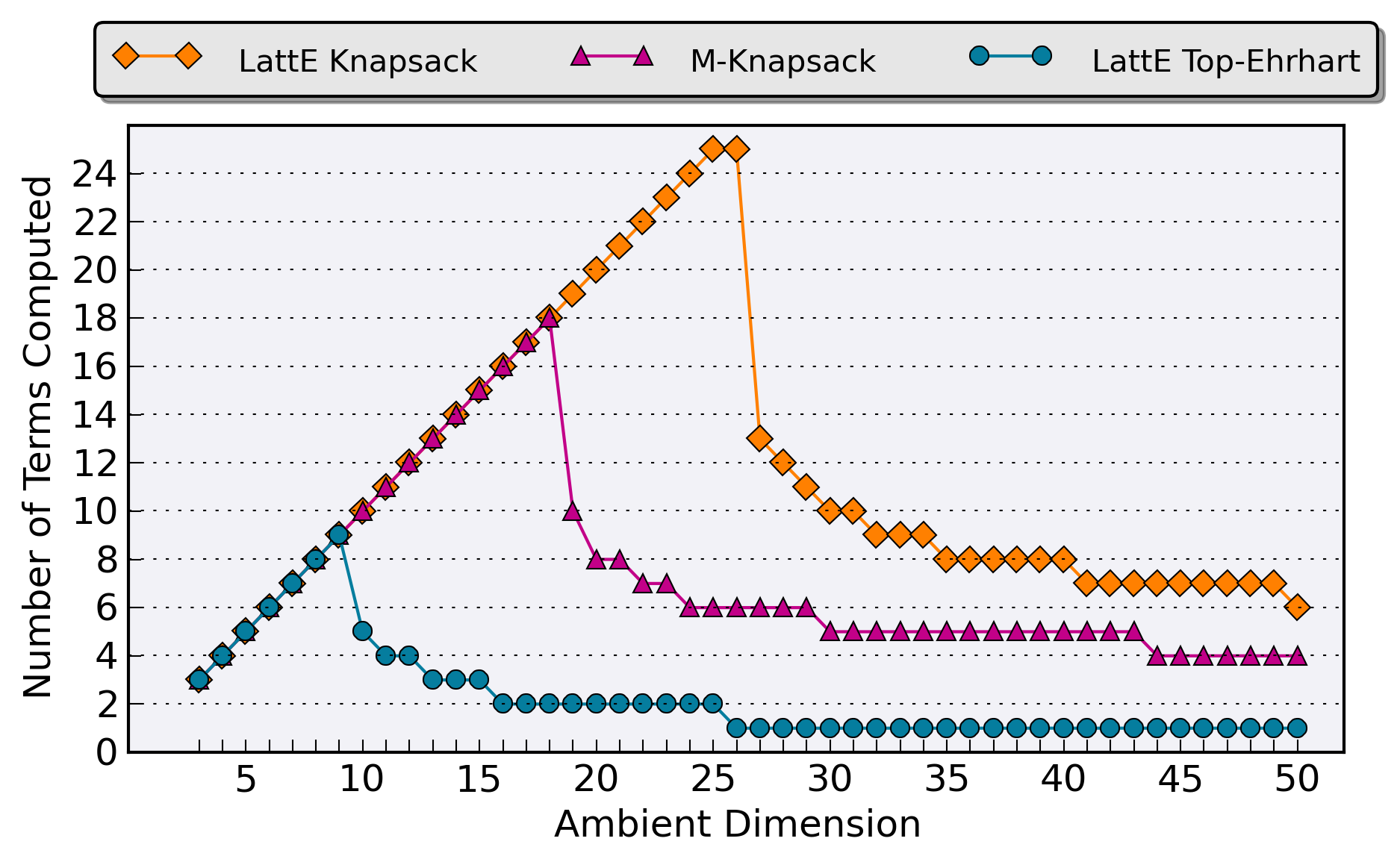}
 \caption{Repeat knapsacks: Maximum number of coefficients each algorithm can compute where each coefficient takes less than 200 seconds.}
    \label{fig:repeat}
\end{figure}

\begin{figure}
  \centering
    \includegraphics[width=0.95\textwidth]{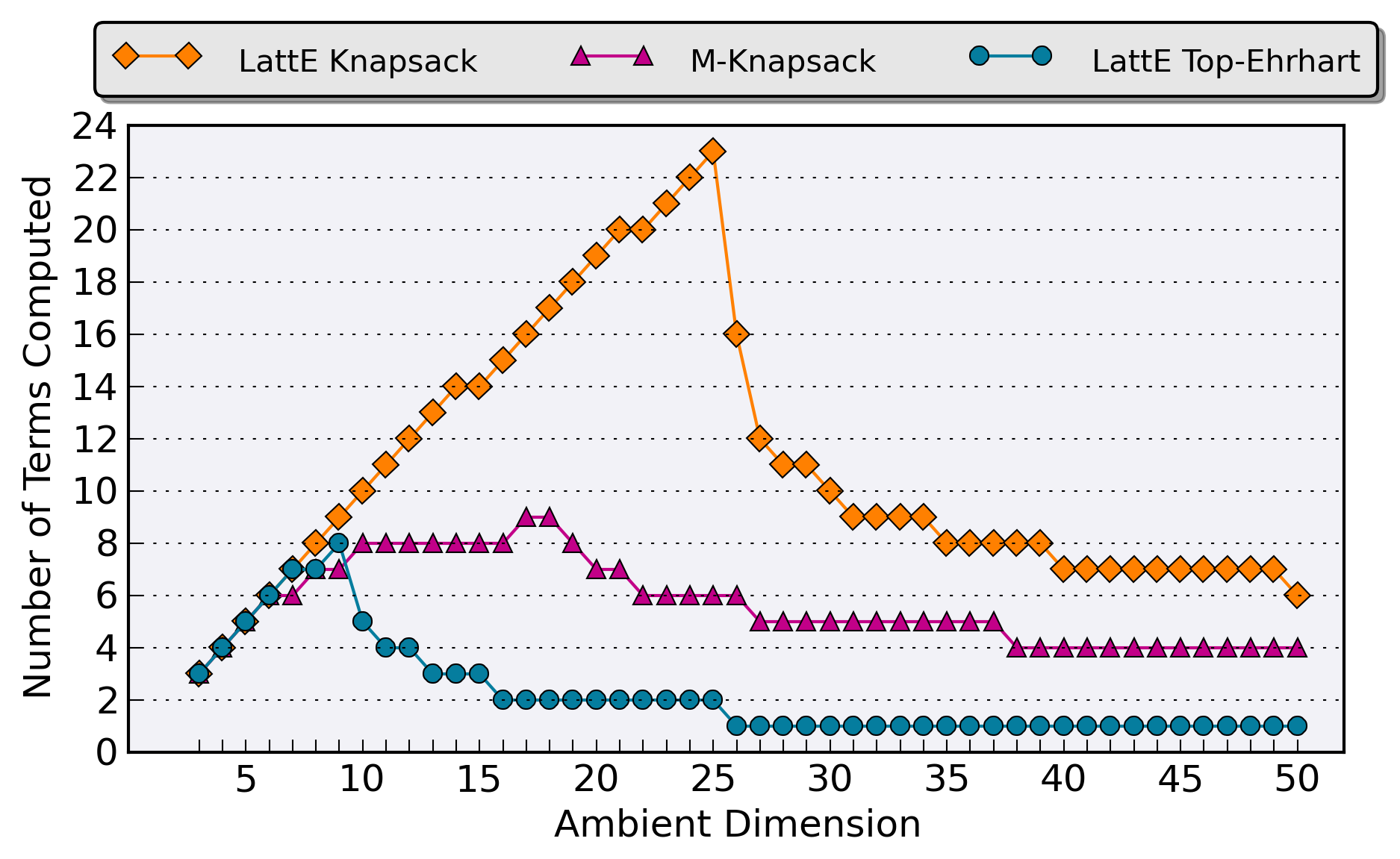}
 \caption{Partition knapsacks: Maximum number of coefficients each algorithm can compute where each coefficient takes less than 200 seconds.}
    \label{fig:partition}
\end{figure}

\begin{figure}
  \centering
    \includegraphics[width=0.95\textwidth]{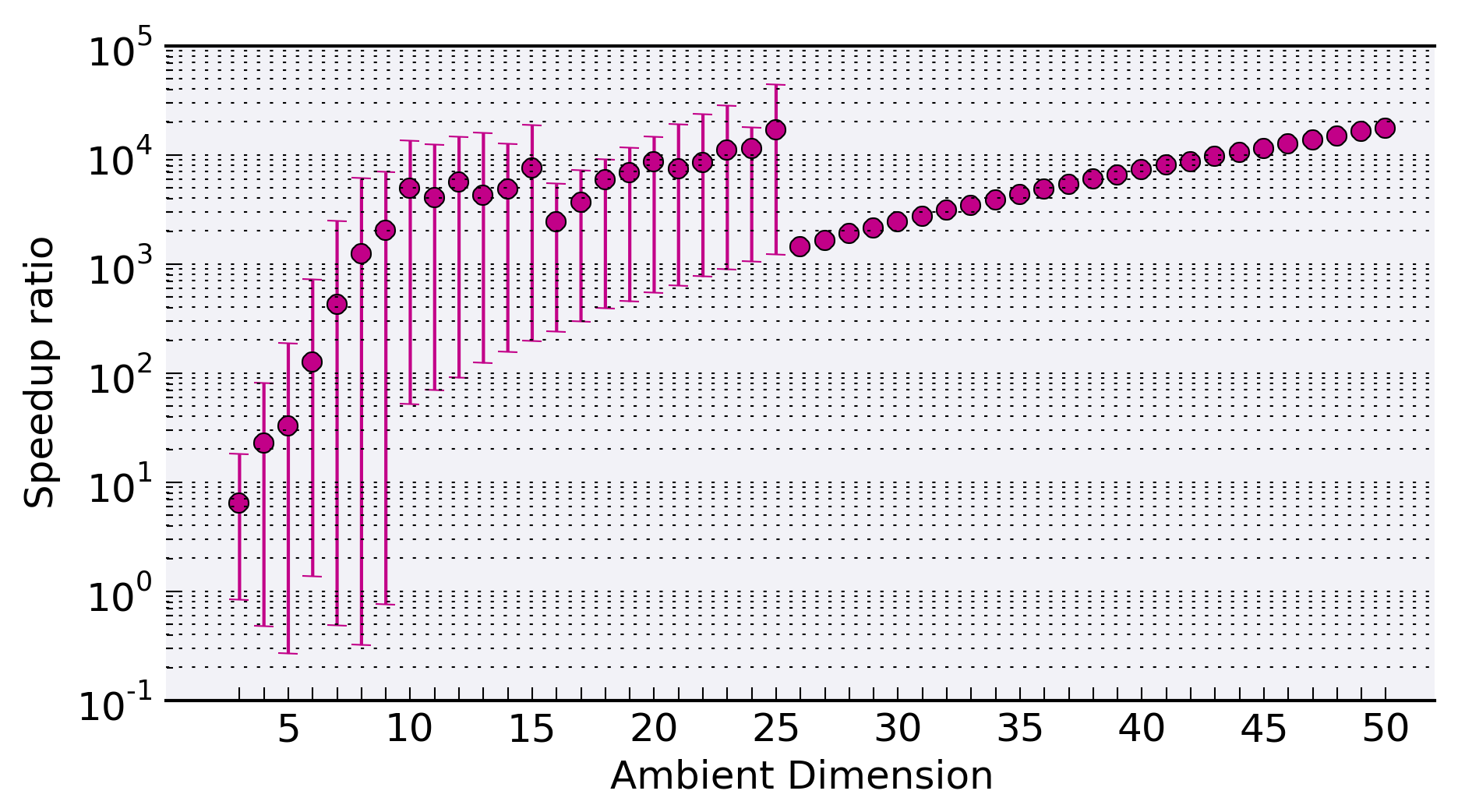}
 \caption{Average speedup ratio (dots) between the \mapleKnapsack and \coneApx codes along with maximum and minimum speedup ratio bounds (vertical lines) for the random 15-digit knapsacks.}
 \label{fig:ratioRandom15}
\end{figure}

\subsection{Other examples}

Next we focus on ten problems listed in Table \ref{tab:examples}. Some of these selected problems have been studied before in the literature \cite{aardallenstra,latte1,GuoceXin2004,guocexin}. Table \ref{tab:coneApxCTE} shows the time in seconds to compute the entire denumerant using the \mapleKnapsack, \latteKnapsack and \coneApx codes with two other algorithms: \emph{CTEuclid6} and \emph{pSn}.

 The \emph{CTEuclid6} algorithm \cite{guocexin}  computes the lattice point
 count of a polytope, and supersedes an earlier algorithm in
 \cite{GuoceXin2004}.\footnote{Maple usage:
   \maplecode{CTEuclid($F(\a; x)/x^b$, t, [x])}; where $b = \alpha_1 + \cdots
   + \alpha_{N+1}$.}  
Instead of using Barvinok's algorithm to construct unimodular cones, the main idea used by the \emph{CTEuclid6} algorithm to find the constant term in the generating function $F(\a; z)$ relies on recursively computing partial fraction decompositions to construct the series. Notice that the \emph{CTEuclid6} method only computes the number of integer points in one dilation of a polytope and not the full Ehrhart polynomial. We can estimate how long it would take to find the Ehrhart polynomial using an interpolation method by computing the time it takes to find one lattice point count times the periodicity of the polynomial and degree. Hence, in Table \ref{tab:coneApxCTE}, column ``one point'' refers to the running time of finding one lattice point count, while column ``estimate'' is an estimate for how long it would take to find the Ehrhart polynomial by interpolation. We see that the \emph{CTEuclid6} algorithm is fast for finding the number of integer points in a knapsack, but this would lead to a slow method for finding the Ehrhart polynomial. 
 
 The \emph{pSn} algorithm of \cite{sillszeilberger} computes the entire
 denumerant by using a partial fraction decomposition based
 method.\footnote{Maple usage:
   \maplecode{QPStoTrunc(pSn($\langle\mathit{knapsack\
       list}\rangle$,n,$j$),n)}; where $j$ is the smallest value in $\{100,
   200, \allowbreak 500, \allowbreak 1000, \allowbreak 2000, 3000\}$ that produces an answer.} More precisely the quasi-polynomials are represented as a function $f(t)$ given by $q$ polynomials $f^{[1]}(t), f^{[2]}(t), \dots,f^{[q]}(t)$ 
such that $f(t)=f^{[i]}(t)$ when $t \equiv i \pmod{q}$. To find the coefficients of the $f^{[i]}$ their method finds the  first few terms of the Maclaurin 
expansion of the partial fraction decomposition to find enough evaluations of those polynomials
and then recovers the coefficients of each the $f^{[i]}$ as a result of solving a linear system. This algorithm goes back to Cayley and it was  implemented in \maple. Looking at Table \ref{tab:coneApxCTE}, we see that the \emph{pSn} method is competitive with \latteKnapsack for knapsacks $1, 2, \dots, 6$, and beats \latteKnapsack in knapsack $10$. However, the \emph{pSn} method is highly sensitive to the number of digits in the knapsack coefficients, unlike our \mapleKnapsack and \latteKnapsack methods. For example,  the knapsacks $[1,2,4,6,8]$ takes 0.320 seconds to find the full Ehrhart polynomial, $[1,20,40, 60, 80]$ takes 5.520 seconds, and $[1, 200, 600, 900, 400]$ takes 247.939 seconds. Similar results hold for other three-digit knapsacks in dimension four. However, the partition knapsack $[1,2,3,\dots, 50]$ only takes 102.7 seconds. Finally, comparing the two \maple scripts, the \coneApx method outperforms the \mapleKnapsack method.

Table \ref{tab:coneApxCTE}  ignores one of the main features of our algorithm: that it can compute just the top $k$ terms of the Ehrhart polynomial. In Table \ref{tab:top3and4times}, we time the computation for finding the top three and four terms of the Ehrhart polynomial on the knapsacks in Table \ref{tab:examples}. We immediately see that our \latteKnapsack method takes less than one thousandth of a second in each example. Comparing the two \maple scripts, \mapleKnapsack greatly outperforms \coneApx. Hence, for a fixed $k$, the \latteKnapsack is the fastest method.

In summary, the \latteKnapsack is the fastest method for computing the top $k$ terms of the Ehrhart polynomial. The \latteKnapsack method can also compute the full Ehrhart polynomial in a reasonable amount of time up to  around dimension $25$, and the number of digits in each knapsack coefficient does not significantly alter performance. However, if the coefficients each have one or two digits, the \emph{pSn} method is faster, even in large dimensions. 

\begin{table}[t]
\centering
\caption{Ten selected instances}\label{tab:examples}
\begin{tabular}{ll}  
\toprule
Problem & Data                                      \\
\midrule
 \#1    & $[8,12,11]$                               \\
 \#2    & $[5,13,2,8,3]$                            \\
 \#3    & $[5,3,1,4,2]$                             \\
 \#4    & $[9,11,14,5,12]$                          \\
 \#5    & $[9,10,17,5,2]$                           \\
 \#6    & $[1,2,3,4,5,6]$                           \\
 \#7    & $[12223,12224,36674, 61119,85569]$        \\
 \#8    & $[12137, 24269,36405,36407,48545,60683]$  \\
 \#9    & $[20601,40429,40429,45415,53725,61919,64470,69340,78539,95043]$ \\
 \#10   & $[5, 10, 10, 2, 8, 20, 15, 2, 9, 9, 7, 4, 12, 13, 19]$ \\
\bottomrule
\end{tabular}
\end{table}

\begin{table}[t]
\sisetup{
  output-exponent-marker = \text{e},
  table-format=+1.4e+2,
  exponent-product={},
  retain-explicit-plus
}
\centering\small
\caption{Computation times in seconds for finding the full Ehrhart polynomial
  using five different methods.}  \label{tab:coneApxCTE}
\begin{tabular}{
l
S[table-format=1.3,table-number-alignment=center]
S[table-format=1.3,table-number-alignment=center]
S[table-format=1.3,table-number-alignment=center]
S[table-format=1.3,table-number-alignment=center]
S[table-format=1.3e+2,table-number-alignment=center]
S[table-format=1.3,table-number-alignment=right]
}
\toprule
&  &  &  & \multicolumn{2}{c}{\emph{CTEuclid6}}  & \\
\cmidrule(l){5-6} 
& \latteKnapsack & \mapleKnapsack & \coneApx & {One point} &  {estimate} & \emph{pSn}\\
\midrule
\#1    & 0 & 0.316  & 0.160 & 0.004   & 3.168  &0.328\\
\#2    &0.03 & 5.984  & 2.208 & 0.048 & 347.4 & 0.292\\
\#3    &0.02 & 4.564  & 0.148 & 0.031 & 9.60   &0.212\\
\#4    &0.08 & 18.317 & 3.884 & 0.112 & 7761.6 &0.496\\
\#5    &0.06 & 15.200 & 3.588 & 0.096 & 734.4  &0.392\\
\#6    &0.11 & 37.974 & 8.068 & 0.088 & 31.68  &0.336\\
\#7    &0.19 & 43.006 & 8.424 & 0.436 & 9.466e+20 & {$>$30min}\\
\#8    &1.14 & 1110.857&184.663&2.120 & 8.530e+20 &{$>$30min}\\
\#9    &{$>$30min} & {$>$30min} & {$>$30min} & {$>$30min} & {$>$30min}  & {$>$30min} \\
\#10   &{$>$30min} & {$>$30min} & {$>$30min} & 142.792 & 1.333e+9 &2.336\\
\bottomrule
\end{tabular}
\end{table}


\begin{table}
\centering\small
\caption{Computation times in seconds for finding the top three and four  terms of the Ehrhart polynomial}  \label{tab:top3and4times}
\begin{tabular}
{lcc
S[table-format=1.3,table-number-alignment=center]
cc
S[detect-all,table-format=3.3,table-text-alignment=right,table-number-alignment=right]
}  
\toprule

& \multicolumn{3}{c}{Top 3 coefficients}  & \multicolumn{3}{c}{Top 4 coefficients}   \\ 
\cmidrule(l){2-4}\cmidrule(l){5-7}
  & \emph{LattE} &  \mapleKnapsack & \emph{LattE} &  \emph{LattE} & \mapleKnapsack & \emph{LattE} \\
 & \emph{Knapsack} & & \emph{Top-Ehrhart} & \emph{Knapsack} & & \emph{Top-Ehrhart} \\
\midrule
\#1    & 0 & 0.305   & 0.128  & -- &  --   &  {--} \\
\#2    & 0 & 0.004   & 0.768  & 0  & 0.096 & 1.356\\
\#3    & 0 & 0.004   & 0.788  & 0  & 0.080 & 1.308\\
\#4    & 0 & 0.003   & 0.792  & 0  & 0.124 & 1.368\\
\#5    & 0 & 0.004   & 0.784  & 0  & 0.176 & 1.424\\
\#6    & 0 & 0.004   & 1.660  & 0  & 0.088 & 2.976\\
\#7    & 0 & 0.004   & 0.836  & 0  & 0.272 & 1.652\\
\#8    & 0 & 0.068   & 1.828  & 0  & 0.112 & 3.544\\
\#9    & 0 & 0.004   & 18.437 & 0  & 0.016 & 59.527\\      
\#10   & 0 & 0.012   & 142.104& 0  & 0.044 & 822.187\\
\bottomrule
\end{tabular}
\end{table}

\chapter{MILP heuristic for finding feasible solutions}
\label{ch:dancingLinks} 
      
   In this chapter, we describe the author's  2014 summer internship project at SAS Institute where he developed a Mixed-Integer Linear Programming (MILP) heuristic for finding feasible solutions to MILP problems. The focus of this chapter is to describe a data structure for finding a feasible solution to a system of set partition constraints. We use this data structure to find feasible solutions to more general MILP problems. See Section \ref{sec:bg:milp} for a short background on mixed integer linear programming.
   
\section{Motivation}

A set partitioning constraint is a linear equality constraint that only includes binary variables and every coefficient, including the constant, is one (e.g., $x_1 + x_2 = 1$, $x_i \in \{0,1\}$). Despite having a very specific form, set partitioning constraints have very useful modeling power and often play an important role in MILP problems. There are many examples where these constraints play a role \cite{Balas76, Garfinkel69, garfinkel1972integer, marsten1981exact, vemuganti1999applications}. This section gives a quick introduction via one example.

\subsection{Example}
Consider the following motivating example. Suppose you are responsible for scheduling fabrication jobs to machines in a job shop. Assume the following.
\begin{enumerate}
\item There are $N$ jobs and $M$ machines, each with slightly different characteristics.
\item A job can only be processed on one machine. Once a job has started on a machine, it cannot be preempted or paused.
\item Each job $i \in N$ takes $t_i \in \R$ hours to complete.
\item No machine is allowed to work for more than $8$ hours in a day to give time for maintenance.
\item Job $i$ has a production cost of $c_{ij}$ if processed on machine $j \in M$.
\item Each machine $j$ has a fixed start up expense of $c_j$. Meaning, if at least one job is assigned to machine $j$, $c_j$ must be paid as an expense. If no job is assigned to machine $j$, then there is no expense.
\end{enumerate}

We give one mathematical encoding of the problem. Let us encode on which machine a job is scheduled by a binary variable:

\[ x_{ij} := \begin{cases}
1 & \text{ if job } i \text{ is scheduled on machine } j \\
0 & \text{ else}.
\end{cases} \]
Then requirement (2) above can be encoded in the constraints \[\sum_j x_{ij} = 1 \; \text{ for each job } i.\]
Notice that this produces $|N|$ set partitioning constraints. Requirement (4) can be encoded as \[ \sum_i t_i x_{ij} \leq 8\; \text{for each machine } j. \] Let us introduce another set of binary variables to model the fixed start up expense: 
\[ y_{j} := \begin{cases}
1 & \text{ if machine } j \text{ is used} \\
0 & \text{ else}.
\end{cases} \]
To force $y_j$ to be 1 when at least one job is scheduled on machine $j$ can be done with $|M|\cdot|N|$ constraints: \[x_{ij} \leq y_j \; \text{ for each } i \text{ and } j.\]
Finally, the total expense is then the sum of two terms: the production expenses $c_{ij}$ and set-up expenses $c_j$. We need to minimize \[\sum_j c_j y_j + \sum_{ij} c_{ij} x_{ij}. \]

This example shows two important properties that a lot of MILP problems have. First, notice that the number of set partitioning constraints ($\sum_j x_{ij} = 1$) is small compared to the total number of constraints in the problem. However, these are the ``most important" because once the $x_{ij}$ have been fixed, they force values for everything (or almost everything in the general case). Second, once the $x_{ij}$ have been set, their  feasibility can be easily checked. 

We will use these two properties to form the basis of a MILP heuristic in Section \ref{sec:ch:sas:how-to-use-dacing-links}. The focus of this chapter will be generating feasible solutions to the set partitioning constraints. 

\subsection{MIPLIB}

One of the most publicized benchmarks is the MIPLIB 2010 benchmark set \cite{miplib2010}. This set contains 361  MILP problem instances sorted into three groups: 1) 220 easy problems that can be solved within one hour using any  commercial MILP solver, 2) 59 hard problems that cannot be solved within an hour, and 3) 82 currently unsolved problems. The benchmark contains real-world problems from both industry and academia. 

Out of the benchmark, 133 or $37\%$ of the problems contain  set partitioning constraints. So the MILP heuristic that will be described in this chapter is usable by over a third of the problems in the MIPLIB set! Said differently, it is worth it to develop a heuristic for this special problem structure as it is extremely common. 

\section{Dancing Links}

In this section we review the \emph{Dancing Links algorithm} as described in \cite{dlx}. The algorithm finds a feasible solution to a system of set partition equations by running an exhaustive enumeration over the binary variables. The key feature of the algorithm is a data structure for doing two things: 1) propagating new variable fixings when a variable has been set, 2) bookkeeping everything so that the decision to set a variable to a given value can be undone.

The key idea that makes propagating the implications of setting a variable to a value, and being able to undo all the implications quickly lies in a doubly linked list, a standard  data structure \cite{cormen2009introduction}. Consider the linked list in Figure \ref{fig:dlx-llex-1}. The squares in the figures are the nodes in the linked list, and arrows denote who a node points to in the list. Let $L[2]$ denotes what node two points to on the left, and let $R[2]$ denote what node two points to on the right, then it is easy to remove node two from the list. All we have to do is set $R[L[2]] \gets R[2]$ and $L[R[2]] \gets L[2]$ to produce Figure \ref{fig:dlx-llex-2}. If we were really deleting node two, we should also remove what it is pointing to and delete it. Instead, lets leave node two alone; meaning, we want node two to keep pointing to its old neighbors. If we wanted to insert node two, it would be easy, as node two points to the nodes for which we have to update their pointers. This is the genius behind the dancing links algorithm. It is easy to remove a node by simply updating a set of pointers, and if we want to quickly undo this, the deleted node remembers which nodes to update. 

One side affect of this double link structure is that deleted nodes cannot be reinserted in any order, they must be removed and then inserted in a first-out last-in order. If we first remove node two and then node three, we must add node three back in before adding node two. Figure \ref{fig:dlx-llex} illustrates how the linked list can become corrupted if this order is not kept. Donald Knuth used this idea to develop a data structure that could quickly backtrack in solving the set partitioning problem via brute-force enumeration in \cite{dlx}.  

\begin{figure}
	\captionsetup[subfigure]{slc=off,margin={1cm,0cm}}
	\parbox[b]{0.5\textwidth}{\includegraphics[width=\hsize]{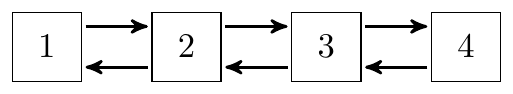}}%
	\parbox[b]{0.5\textwidth}{\subcaption{State of initial of the doubly linked list.}\label{fig:dlx-llex-1}}\\[1ex]
	\parbox[b]{0.5\textwidth}{\includegraphics[width=\hsize]{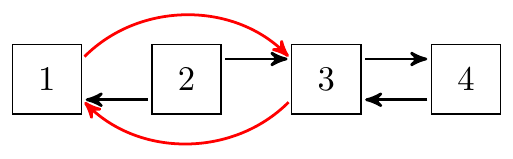}}%
	\parbox[b]{0.5\textwidth}{\subcaption{After removing node $2$. Notice that node $2$ points to its old neighbors. }\label{fig:dlx-llex-2}}\\[1ex]
	\parbox[b]{0.5\textwidth}{\includegraphics[width=\hsize]{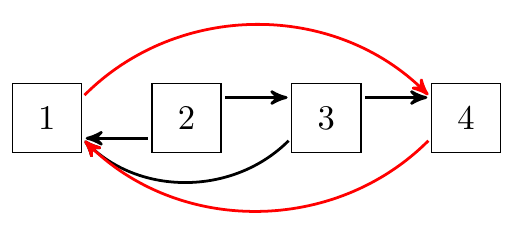}}%
	\parbox[b]{0.5\textwidth}{\subcaption{After removing node 3. Reading the list forward or backward results in only seeing nodes 1 and 4.}\label{fig:dlx-llex-3}}
	\parbox[b]{0.5\textwidth}{\includegraphics[width=\hsize]{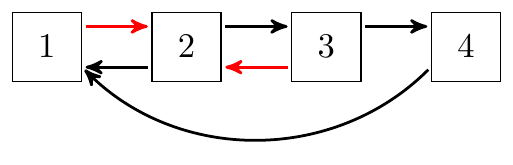}}%
	\parbox[b]{0.5\textwidth}{\subcaption{Inserting node 2 in, before inserting node 3 back in. Reading the list forwards results in $\{1, 2, 3, 4\}.$ Reading the list backwards results in $\{1, 4\}.$}\label{fig:dlx-llex-4}}	
    \caption{The problem with not using first-out last-in ordering.}
    \label{fig:dlx-llex}
\end{figure}

Next we will illustrate his data structure and algorithm on an example.
 \begin{align*}
  (A) \hspace{1em} 1 & =   x_2 + x_4   \\
  (B) \hspace{1em} 1 & =  x_3 +x_5 \\
  (C) \hspace{1em} 1 & = x_1 + x_3 \\
  (D) \hspace{1em} 1 & = x_1 + x_2 + x_3 
 \end{align*}
 
Before describing the data structure behind dancing links, lets see how a brute force search for a solution can be performed. Lets assume $x_2 = 1$ and see if a solution is possible. The equations become
 \begin{align*}
  (A) \hspace{1em} 1 & =   1 + x_4   \\
  (B) \hspace{1em} 1 & =  x_3 +x_5 \\
  (C) \hspace{1em} 1 & = x_1 + x_3 \\
  (D) \hspace{1em} 1 & = x_1 + 1 + x_3.
 \end{align*} 
This implies that $x_2 = x_3 = x_4 =0$. Inserting these values produces a contradiction in equation $(C)$,
 \begin{align*}
  (A) \hspace{1em} 1 & =   1 + 0   \\
  (B) \hspace{1em} 1 & =  0 +x_5 \\
  (C) \hspace{1em} 1 & = 0 + 0 \\
  (D) \hspace{1em} 1 & = 0 + 1 + 0.
 \end{align*} 

We see that the assumption $x_2=1$ produces a contradiction, and so if a solution to this system, $x_2$ must equal zero. The data structure in the dancing links algorithm will easily keep track of which variables are assigned values, propagate variable implications, and undo things when contradictions are produced.
  
We now collect the original equations into the rows of a matrix, and because the constant term is always one, it can be ignored, see Figure \ref{fig:dxl-initial-matrix}. We then associate a doubly linked circular list to each column and row of the matrix, see Figure \ref{fig:dxl-initial}. The first column contains row-headings for each row. The first data node in the header column is a special ``root" like object that is the starting point for any list iteration. Each data node $n$ will have five arrays of pointers associated with it: $L[n], R[n], U[n], D[n], H[n]$. These correspond to a node's left, right, up, and down neighbors in the linked list structures. $H[n]$ is a pointer back to a node in the header row (not illustrated in Figure \ref{fig:dxl-initial}). The root header object does not have a left or right linked list attached to it. Also, every header row node does not use the $H[n]$ pointer.

\begin{figure}
    \centering  
    \begin{subfigure}[b]{0.5\textwidth}
 \[
 \begin{pmatrix}
 A & & x_2 & & x_4 & \\
 B & & & x_3 & & x_5 \\
 C & x_1 & & x_3 & & \\
 D & x_1 & x_2 & x_3 & &
\end{pmatrix}
\]
\vspace{0.85in}
        \caption{Matrix form}
        \label{fig:dxl-initial-matrix}
    \end{subfigure}
    ~ 
    \begin{subfigure}[b]{0.5\textwidth}
        \includegraphics[width=\textwidth]{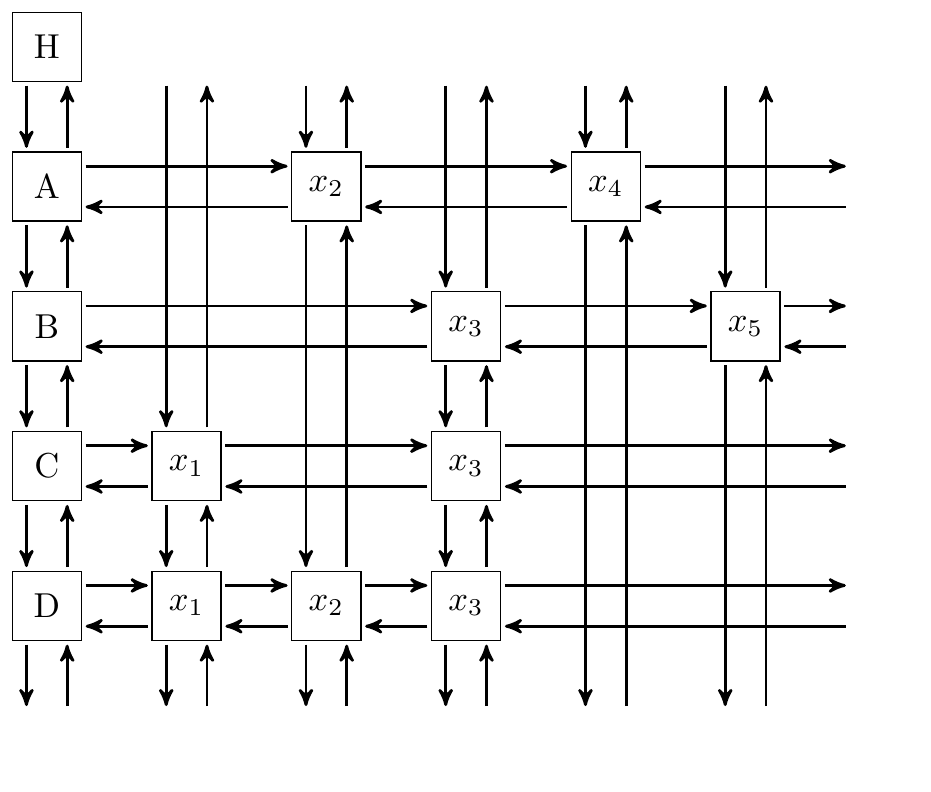}
        \caption{The initial data structure}
        \label{fig:dxl-initial}
    \end{subfigure}
    \caption{The initial state of the dancing links data structure}\label{fig:animals}
\end{figure}

\begin{algorithm}
\caption{Dancing Links Algorithm}
\label{alg:dlx}
\begin{justify}
Procedure \texttt{SEARCH}
\end{justify}
\begin{algorithmic}
\STATE Let $H$ be the root node.
\STATE Let $S$ be a stack of nodes that are nonnegative.
\STATE If $D[H] = H$, print the current solution from stack $S$ and stop.
\STATE Pick a row $r$ to process (say $r \gets D[H]$)
\STATE Run procedure \texttt{COVER}$(r).$
\FOR{ $n \gets R[r], R[R[r]], \dots$, while $n \neq r$}
	\STATE $S.$\texttt{PUSH}$(n)$
	\FOR{$j \gets U[n], U[U[n]], \dots$, while $j \neq n$}
		\STATE Run procedure \texttt{COVER}$(H[j]).$
	\ENDFOR
	\STATE Run procedure \texttt{SEARCH}
	\STATE $n \gets S.$\texttt{POP}$()$, $r \gets H[n]$
	\COMMENT{Search failed, undo last step}
	\FOR{$j \gets D[n], D[D[n]], \dots$, while $j \neq n$}
		\STATE Run procedure \texttt{UNCOVER}$(H[j]).$
	\ENDFOR
\ENDFOR
\STATE Run procedure \texttt{UNCOVER}$(r)$ and stop.
\end{algorithmic}
\begin{justify}
Procedure \texttt{COVER}$(n)$
\end{justify}
\begin{algorithmic}
\STATE $D[U[n]] \gets D[n]$, and $U[D[n]] \gets U[n]$.
\FOR{ $i \gets R[n], R[R[n]], \dots$, while $i \neq n$}
	\FOR{$j \gets U[i], U[U[i]], \dots$, while $j \neq i$}
		\STATE $L[R[j]] \gets L[j]$, and $R[L[j]] \gets R[j]$
	\ENDFOR
\ENDFOR
\end{algorithmic}
\end{algorithm}

The outline of the method is given in Algorithm \ref{alg:dlx}. We now illustrate the workings of the algorithm on our example. We start by calling procedure \texttt{SEARCH}. First, let us pick row $A$ to process. The first thing we do is run procedure \texttt{COVER} on the row. This procedure has the job of removing $x_2$ and $x_4$ from the list. By this, we mean it will be impossible to start at the root node and iterate over the list at reach a $x_2$ or $x_4$ node. See Figure \ref{fig:dxl-coverA}. The outer for-loop then loops over the variables in the $A$ row and iteratively attempts to assign them to $1$. When a node (variable) is assigned to one, it is recorded by inserting it into the solution stack $S$, so at any point $S$ will contain which variables are nonzero. It is important that $S$ is a stack of nodes in the linked list structure, and not just a stack of which variables are one. This allows us to \texttt{POP} a node from $S$ and know what variable it represents, and what row it came from. At this point we add row $A$'s $x_2$ node to the solution stack $S$. The first inner for-loop then loops over the rows that have an $x_2$ and calls the \texttt{COVER} procedure on them. This has the effect of propagating the implications of setting $x_2=1$, in that row $D$ is removed (because it is satisfied) and variables $x_1$ and $x_3$ have been removed (set to zero). The first step in covering row $D$ is to remove the $x_1$ node, see Figure \ref{fig:dxl-coverDx1}. The second step in covering row $D$ is to remove the $x_3$ node, see Figure \ref{fig:dxl-coverDx3-contradiction}. Next we finally enter our first recursive call to \texttt{SEARCH}. Upon doing so we should pick row $C$ to process next. We immediately see that row $C$ has no nodes left, and hence our current variable assignment $x_2=1$ leads to an infeasible system. 

\begin{figure}[H]
    \centering
    \begin{subfigure}[b]{0.45\textwidth}
        \includegraphics[width=\textwidth]{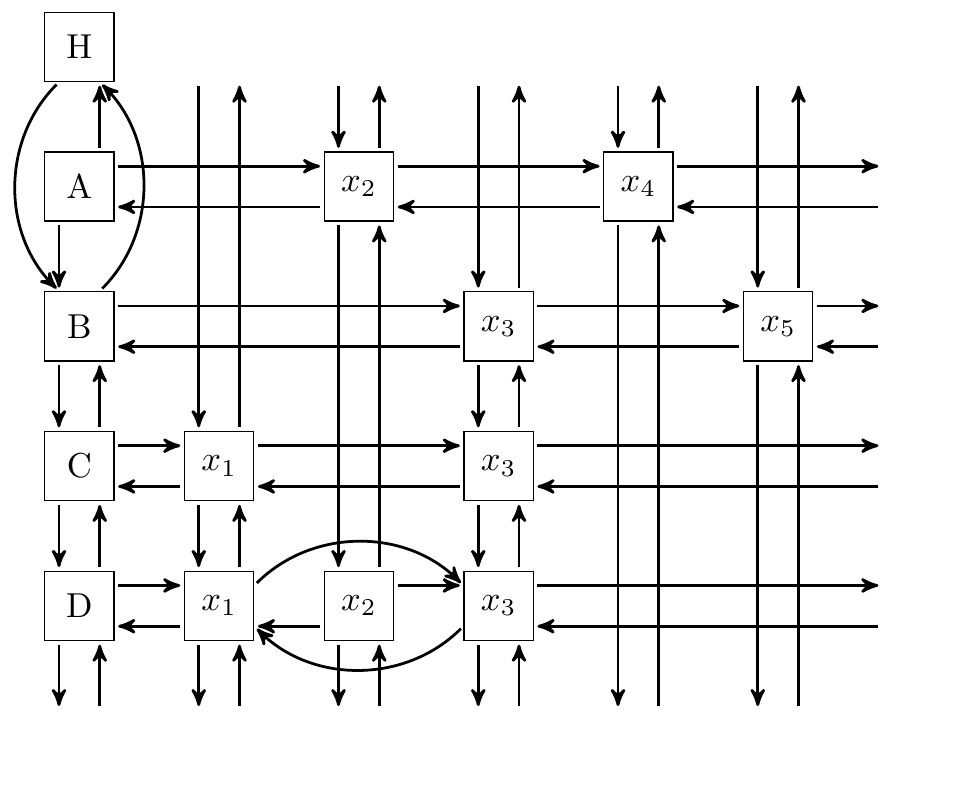}
        \caption{After running the cover procedure on row $A$. Will first pick $x_2=1$.}
        \label{fig:dxl-coverA}
    \end{subfigure}
    \quad 
    \begin{subfigure}[b]{0.45\textwidth}
        \includegraphics[width=\textwidth]{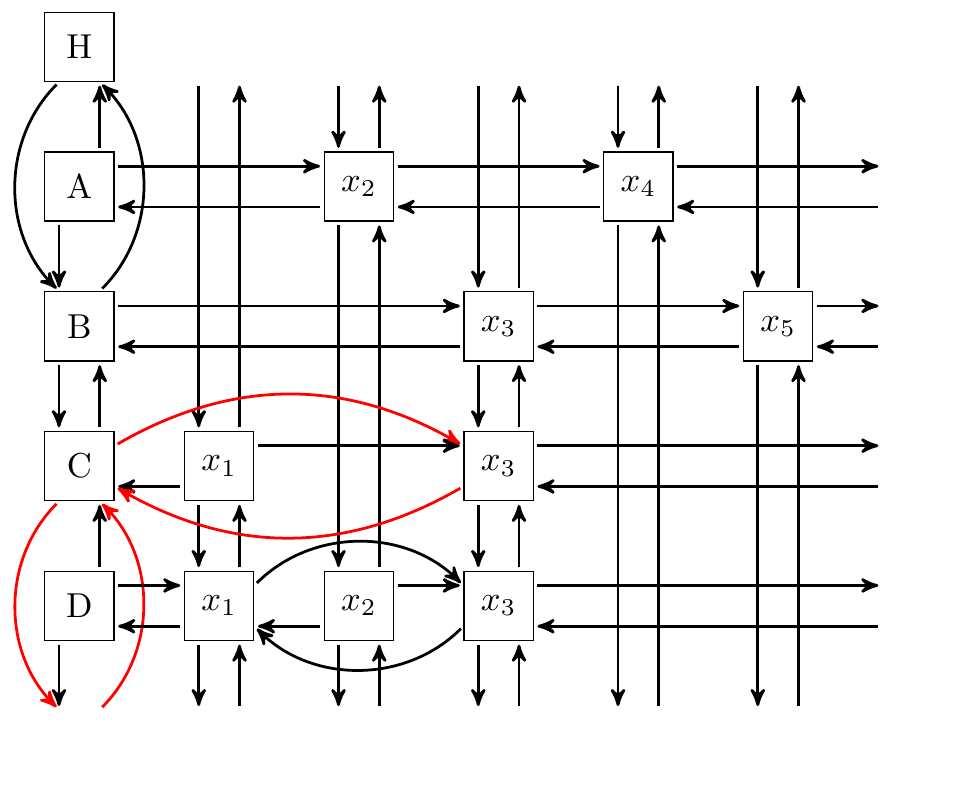}
        \caption{Propagating $x_2 = 1$: Cover row $D$. Step one is to remove $x_1$ from the list.}
        \label{fig:dxl-coverDx1}
    \end{subfigure}

    \caption{Selecting $x_2=1$.}\label{fig:animals}
\end{figure}

\begin{figure}[H]
    \centering
    \begin{subfigure}[b]{0.45\textwidth}
        \includegraphics[width=\textwidth]{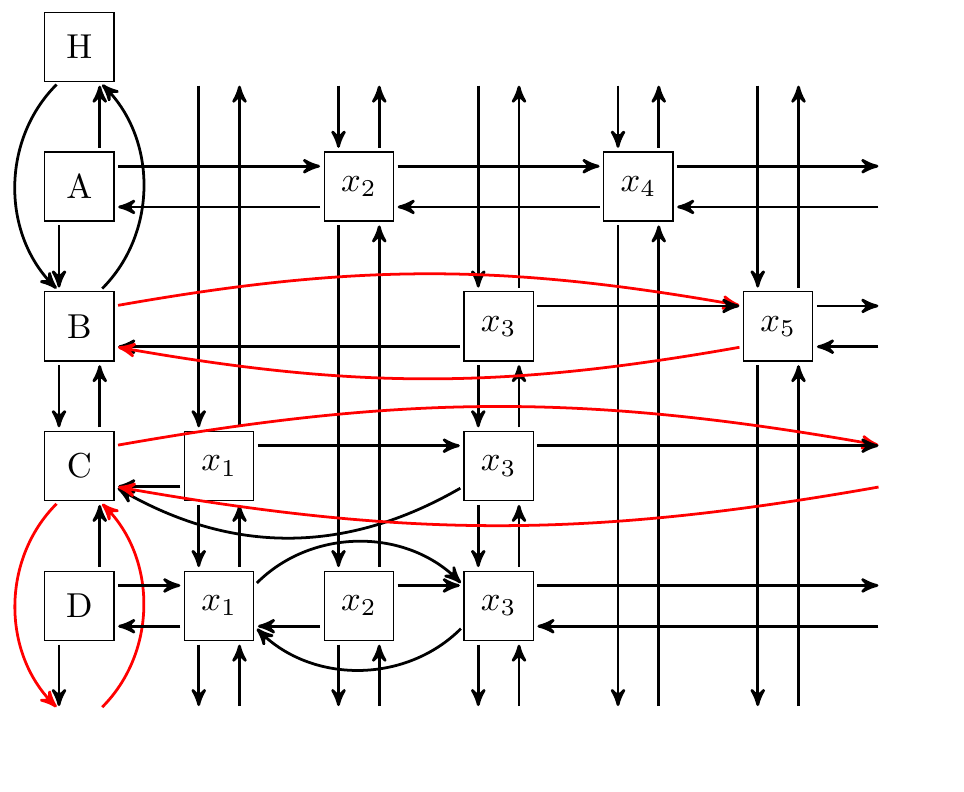}
        \caption{Propagating $x_2 = 1$: Cover row $D$. Step two is to remove $x_3$ from the list. This produces the contradiction in row $C$.}
        \label{fig:dxl-coverDx3-contradiction}
    \end{subfigure}
    \quad 
    \begin{subfigure}[b]{0.45\textwidth}
        \includegraphics[width=\textwidth]{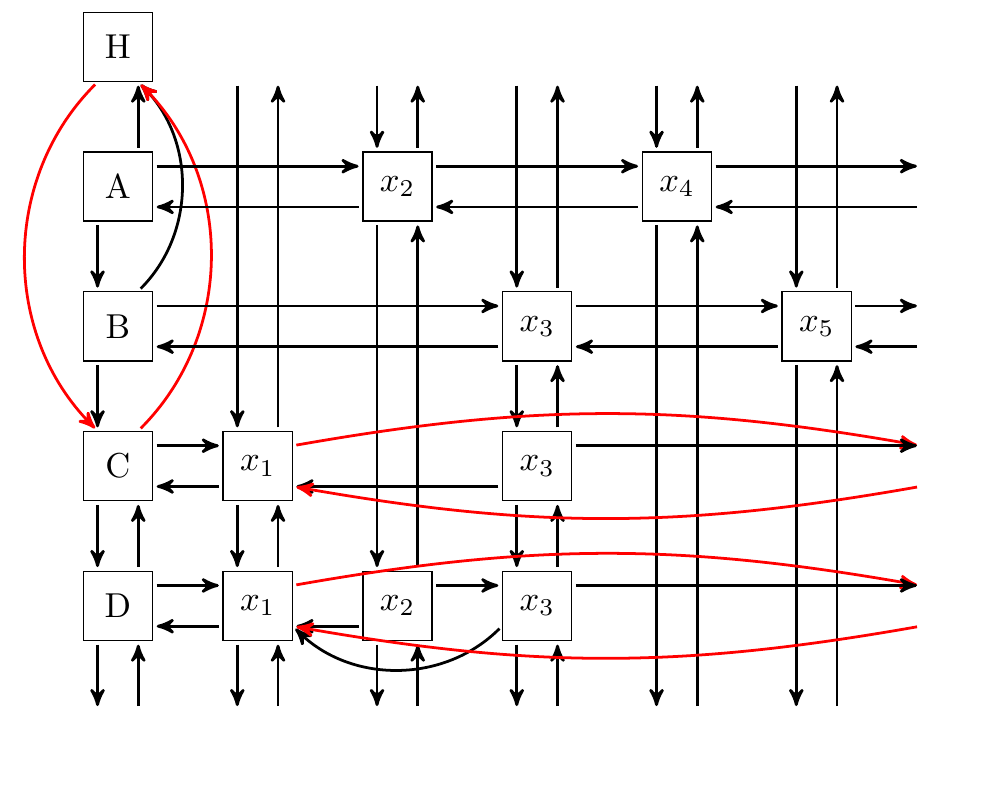}
        \caption{After covering row $B$.}
        \label{fig:dxl-coverAx4-CoverB}
    \end{subfigure}

    \caption{Getting a contradiction and undoing it.}\label{fig:animals}
\end{figure}

We continue stepping in the algorithm to see how it handles this. First, \texttt{COVER} is called on row $C$ which only has the effect of removing itself from the home column. The outer for-loop does not perform any iterations as $R[C] = C$. The \texttt{UNCOVER} procedure is not reproduced here, but it has the task of undoing what the \texttt{COVER} procedure does. Hence \texttt{UNCOVER} on row $C$ just puts it back into the first column's list. The second call to \texttt{SEARCH} now terminates and returns control to the first \texttt{SEARCH} call. Node $A$'s $x_2$ is removed from the stack $S$. \texttt{UNCOVER} is called on row $D$ and the data structure is put back into the initial state after \texttt{COVER} was called on row $A$ (see Figure \ref{fig:dxl-coverA}). Node $A$'s $x_4$ is then set to one an inserted in $S$. It appears in no other row, so no additional \texttt{COVER} operations are done. Then \texttt{SEARCH} is called again (the recursive tree is at depth one now). Let us pick row $B$ to process, and start by calling \texttt{COVER} on row $B$, see Figure \ref{fig:dxl-coverAx4-CoverB}. The first element in the row is selected and $x_3 = 1$, and $B$'s $x_3$ is inserted in $S$. As $x_3$ appears in rows $C$ and $D$, \texttt{COVER} is also called on those rows, see Figure \ref{fig:dxl-final}. This concludes the bound propagation of $x_3 =1$, and \texttt{SEARCH} is recursively called again. However, the linked list starting at the root only contains the root, and so this procedure terminates. The state of the stack $S$ contains $x_4$ and $x_3$, implying a solution of $x_4 = x_3 = 1$ with the other variables set to zero.

\begin{figure}
    \centering
    \begin{subfigure}[b]{0.45\textwidth}
        \includegraphics[width=\textwidth]{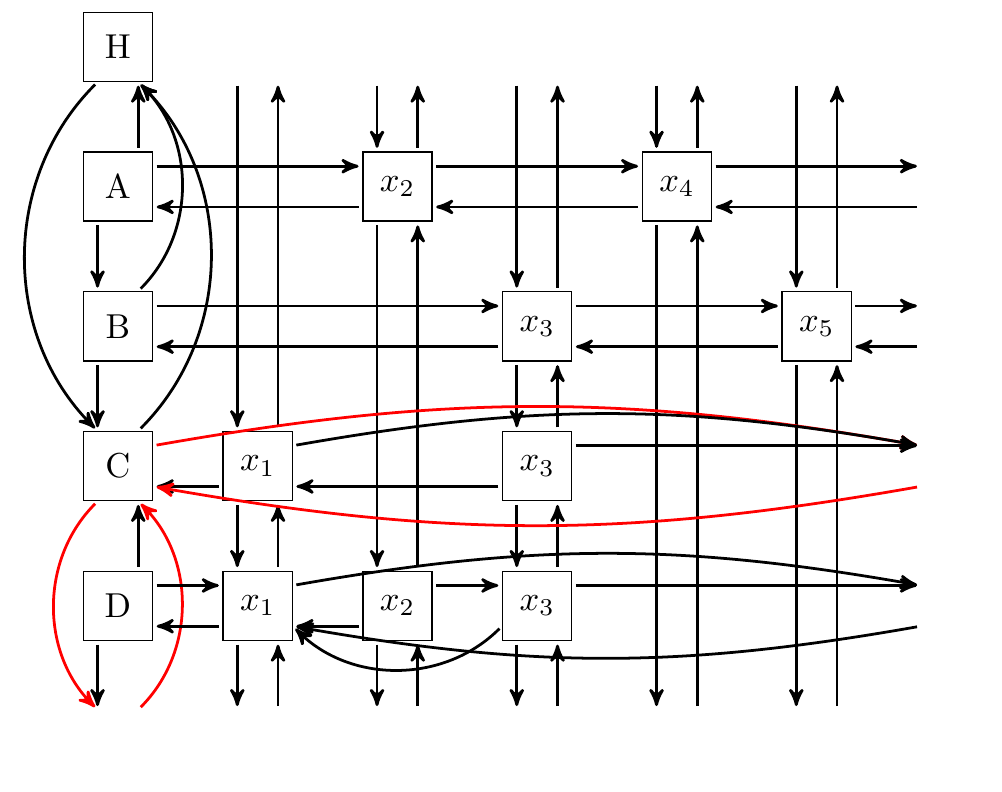}
        \caption{After covering row $D$.}
        \label{fig:dxl-coverAx4-CoverB-CoverD}
    \end{subfigure}
    \quad 
    \begin{subfigure}[b]{0.45\textwidth}
        \includegraphics[width=\textwidth]{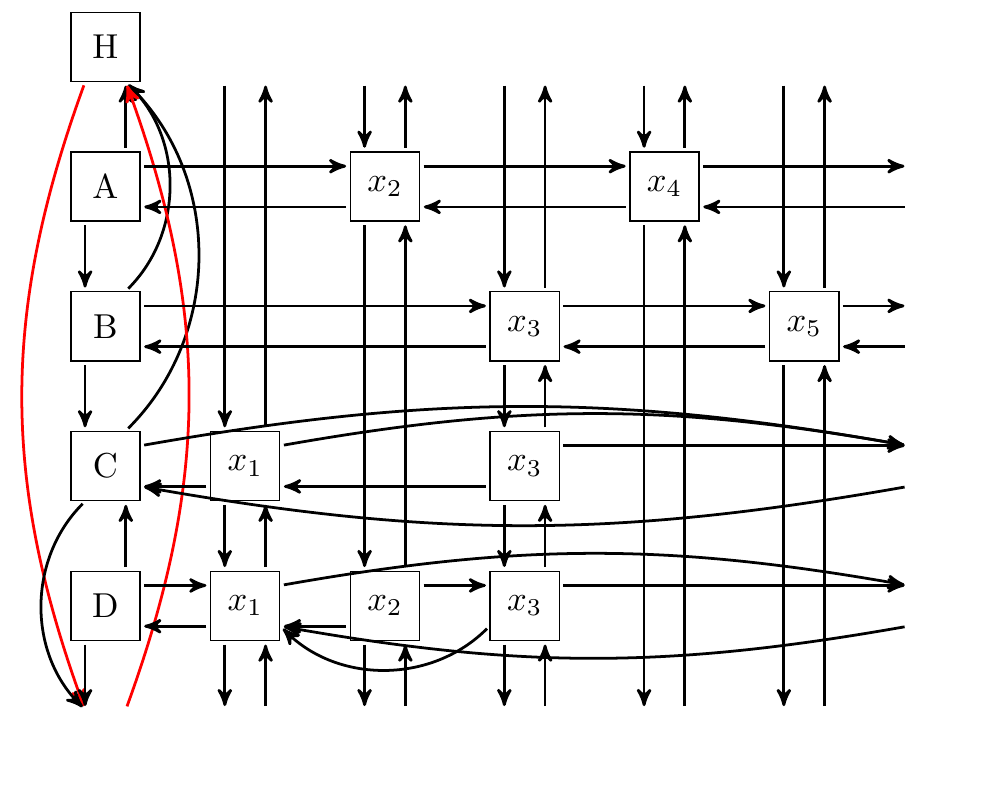}
        \caption{After covering row $C$. As $H$ points to itself, a feasible solution was found.}
        \label{fig:dxl-coverAx4-CoverB-CoverD-CoverC}
    \end{subfigure}

    \caption{The final state}\label{fig:dxl-final}
\end{figure}

There are no restrictions on how to pick the next row to process, other than it has to be one obtainable from the root's linked list. One could select a row randomly, or by selecting the row with the fewest (or largest) number of variables left. There are many ways a row could be given a priority. For example, a row's priority could be a function of how many of its variables appear in other rows.

Lastly, if these figures were placed in a movie, it would be clear why this algorithm is called dancing links! See \cite{dlx} for a full description of the \texttt{UNCOVER}  procedure. 

\section{How to use dancing links}
\label{sec:ch:sas:how-to-use-dacing-links}

There are a few ways dancing links can be applied in the solution process; for example, as a heuristic for finding a feasible point to a MILP problem. Even within this area, there are a few ways of doing this. Our approach will be to use dancing links to get a partial feasible solution and create a new MILP to build the fully feasible solution. 

Consider the MILP 

\begin{align*}
\min \quad & c_1^Tx  + c_2^Ty\\
Ax + By &\leq b\\
         Cy & = \ve 1 \\
         y_i \in \{0, 1\} & \\
         x_i \in \Z \text{ for } i \in I &
\end{align*}
where we explicitly write the set partition constraints in the form $Cy = \ve 1$, and $\ve 1$ is a vector of all ones. 

The strategy behind our heuristic is to apply dancing links to $Cy=\ve 1$. If dancing links shows that $Cy=\ve 1$ is infeasible, then the original MILP must be infeasible. Otherwise, we have that $Cy^0 = \ve 1$ and $y^0_i \in \{0, 1\}$. We can create a new MILP by plugging in the values for the $y$ variable and solve the smaller problem

\begin{align*}
\min \quad & c_1^Tx\\
Ax &\leq b - By^0\\
         x_i \in \Z \text{ for } i \in I. &
\end{align*}

This then produces a feasible point to the original problem. The idea is that by fixing the $y$ variable, the new problem is very easy to solve. If this is not the case, then creating a sub-MILP problem has no benefits. 

Extending the partial feasible solution $y^0$ to a feasible solution to the original problem can be done in many different ways. The above idea was to create a smaller MILP problem to find a $x^0$ solution to go with $y^0$. Extending the partial feasible solutions to fully feasible solutions are generally referred to as ``repair" heuristics in the literature, and so dancing links can be paired with many repair heuristics.

\section{Results}

One main contribution of the author's internship was implementing the above strategy as a heuristic plugin to the SAS/OR MILP solver. Table \ref{tab:sas-dl} illustrates the impact the heuristic had on a benchmark set. The  benchmark used contained $184$ problems, where each problem contained set partition constraints and the  SAS solver called the heuristic at least once on each of them. Of these $184$ problems, the sub-MILP heuristic was able to find feasible points to $35$ of these, so the heuristic was successful on $19\%$ of the problems. The feasible point that was found was the best feasible point in the solution process at the time to $13$ problems. Said differently, on $13$ or $7\%$ of the problems, the heuristic was responsible for producing new global primal bounds. 

\begin{table}[]
\centering
\caption{Impact of using dancing links in a sub-MILP heuristic}
\label{tab:sas-dl}
\begin{tabular}{@{}lll@{}}
\toprule
                          & Number of problems & Percent of problems \\ \midrule
size of benchmark & $184$                   &                     \\
new feasible points      & $35$                   & $19\%$                    \\
better feasible points   & $13$                   & $7\%$                    \\ \bottomrule
\end{tabular}
\end{table}

The dancing links data structure produces a decent MILP heuristic that takes advantage of the role set partitioning constraints play in many problems. A nice feature of the data structure is that it can be extended to work with more general constraints. 

It is easy to extend dancing links to also work with set packing constraints (e.g., $x_1 + x_2 \leq 1$, $x_i$ binary). For example, these rows could simply be added to the dancing links data structure. Every row could be given a priority based on their type, and when picking a new row to process, the priority could be taken into account. If set partition constraints correspond to a high priority and set packing constraints correspond to a low priority, then when a low priority row is processed, all the set partition constraints are satisfied and a feasible solution is obtained.

\chapter{Application in distance geometry for neuro-immune communication}
\label{ch:spleen} 

This chapter explains the mathematical tools used to study the neuro-immune interaction in the spleen during inflammation. See Section \ref{sec:bg:spleen} for a fuller description of the medical problem. There are two kinds of objects in the spleen we are interested in: $CD4^+ChAT^+$ T-cells (henceforth referred to as just T-cells) and nervous-system filaments. The main question was, how does the distribution of these two objects change as the spleen responds to inflammation? That is, are T-cells found closer to nerve cells during inflammation, or is there no relationship between the location of these two types of cells in the spleen? 

To research this question, my collaborators at the University of California, Davis, School of Veterinary Medicine have imaged the spleen of 18 mice, some of which suffer from inflammation. Advanced imaging technology is used to map the location of each cell type. However, the resulting image is extremely noisy. The software tool Imaris \cite{imaris-software} is used to construct the true location of the T-cells and nerve filaments. Despite being a powerful tool, using Imaris  to clean the imaging data is a labor- and time-intensive task. However, it is possible to write \matlab \cite{MATLAB:2015b} scripts that can interface with Imaris data sets. To help clean the data, we have written \matlab scripts that take Imaris data, clean it up by reducing the noise, and return it to Imaris. Section \ref{ch:spleen:sec:data} explains some of the methods we have developed to speed up the data cleaning steps.

After the data is cleaned, the central task was to study the distribution of T-cells and nerve cells. Our approach was to use cluster analysis to determine if there are differences during inflammation. Section \ref{ch:spleen:sec:cluster} is devoted to this process.

All code from this project can be found in the appendix. 

\section{Cleaning the data}
\label{ch:spleen:sec:data}

The initial data contains pixel information from the microscope. Each image is layered, creating a three dimensional array of pixel information called \emph{voxels}. The spleen is stained and the voxel values contain an intensity value of how much dye was detected by the microscope. Larger intensity values are associated with T-cells and nerve cells. Figure \ref{fig:spleen:rawdata} shows the raw voxel information. An immediate observation is that the imaging data is extremely noisy. Consider the raw nerve cell data. We can see a few regions of bright red lines or strings. These correspond to true nerve filaments. There is also a cloud of low intensity red points everywhere. These low intensity points mark other cells we are not interested in, and also contains imaging noise. The low intensity points must be filtered out. Likewise, the bright green areas are the T-cells we are interested in, while the low intensity cloud of points are noise representing other cell types. 

\begin{figure}
    \centering
    \begin{subfigure}[b]{0.3\textwidth}
        \includegraphics[width=\textwidth]{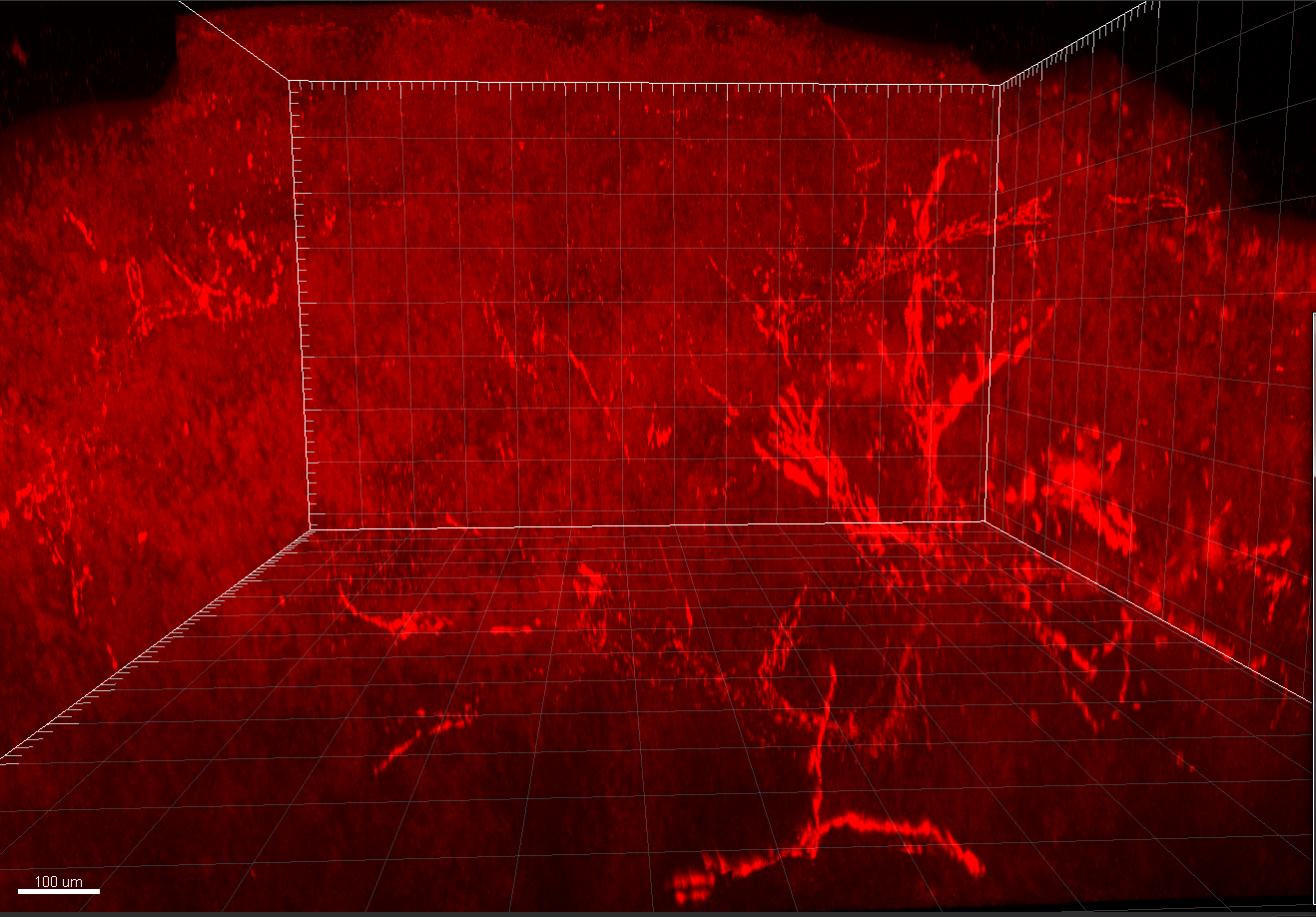}
        \caption{Raw nerve filament data}
        \label{fig:spleen:rawdata:a}
    \end{subfigure}
    ~ 
    \begin{subfigure}[b]{0.3\textwidth}
        \includegraphics[width=\textwidth]{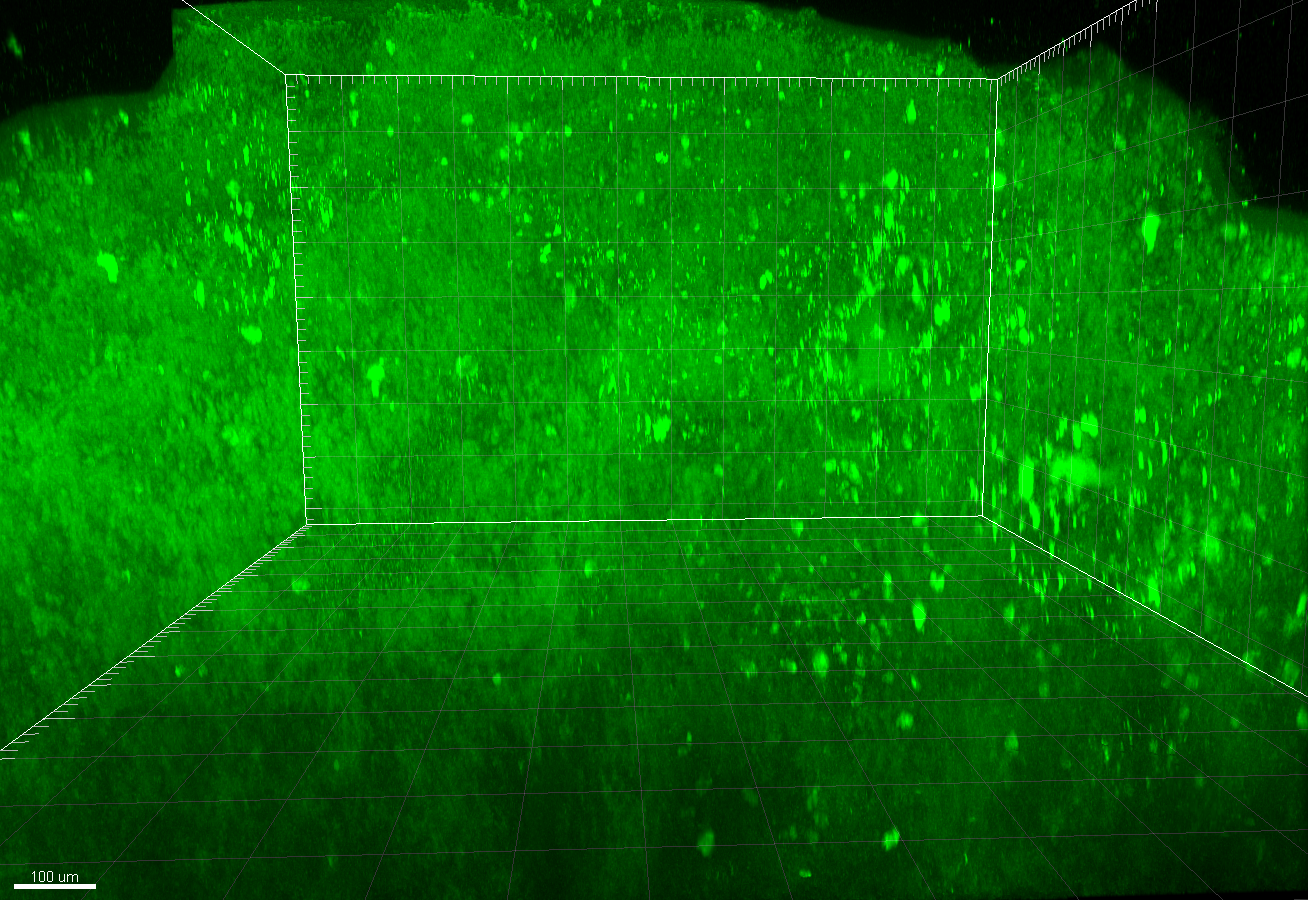}
        \caption{Raw T-cell data}
        \label{fig:spleen:rawdata:b}
    \end{subfigure}
    ~ 
    \begin{subfigure}[b]{0.3\textwidth}
        \includegraphics[width=\textwidth]{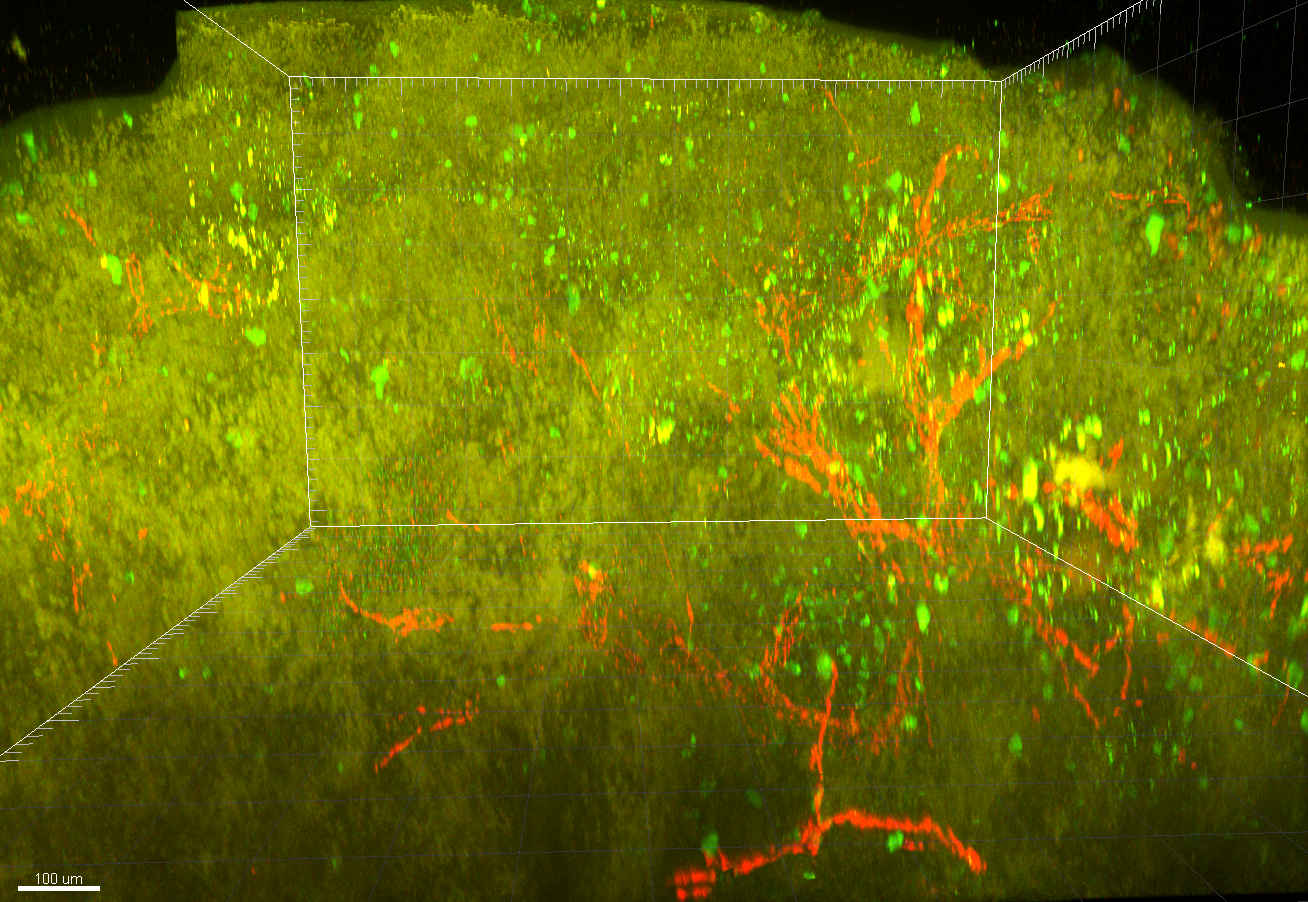}
        \caption{Overlay}
        \label{fig:spleen:rawdata:c}
    \end{subfigure}
    \caption{Raw microscope data}\label{fig:spleen:rawdata}
\end{figure}

In Figure \ref{fig:spleen:modelView}, we show how to use Imaris to remove the noise from the red nerve filaments, and how the filaments are modeled as a triangulated surface. That is, Figure \ref{fig:spleen:modelView:a} contains about 2000 different surfaces, where each surface is a two dimensional triangulated mesh. The T-cell data is simply modeled as a collection of spheres in Figure \ref{fig:spleen:modelView:b}, and we refer to these spheres as \emph{spots}. Going from Figure \ref{fig:spleen:rawdata} to Figure \ref{fig:spleen:modelView} is an extremely time-consuming process. In this section we describe the process of cleaning the data and how we applied mathematical tools to speed the process up. 


\begin{figure}
    \centering
    \begin{subfigure}[b]{0.4\textwidth}
        \includegraphics[width=\textwidth]{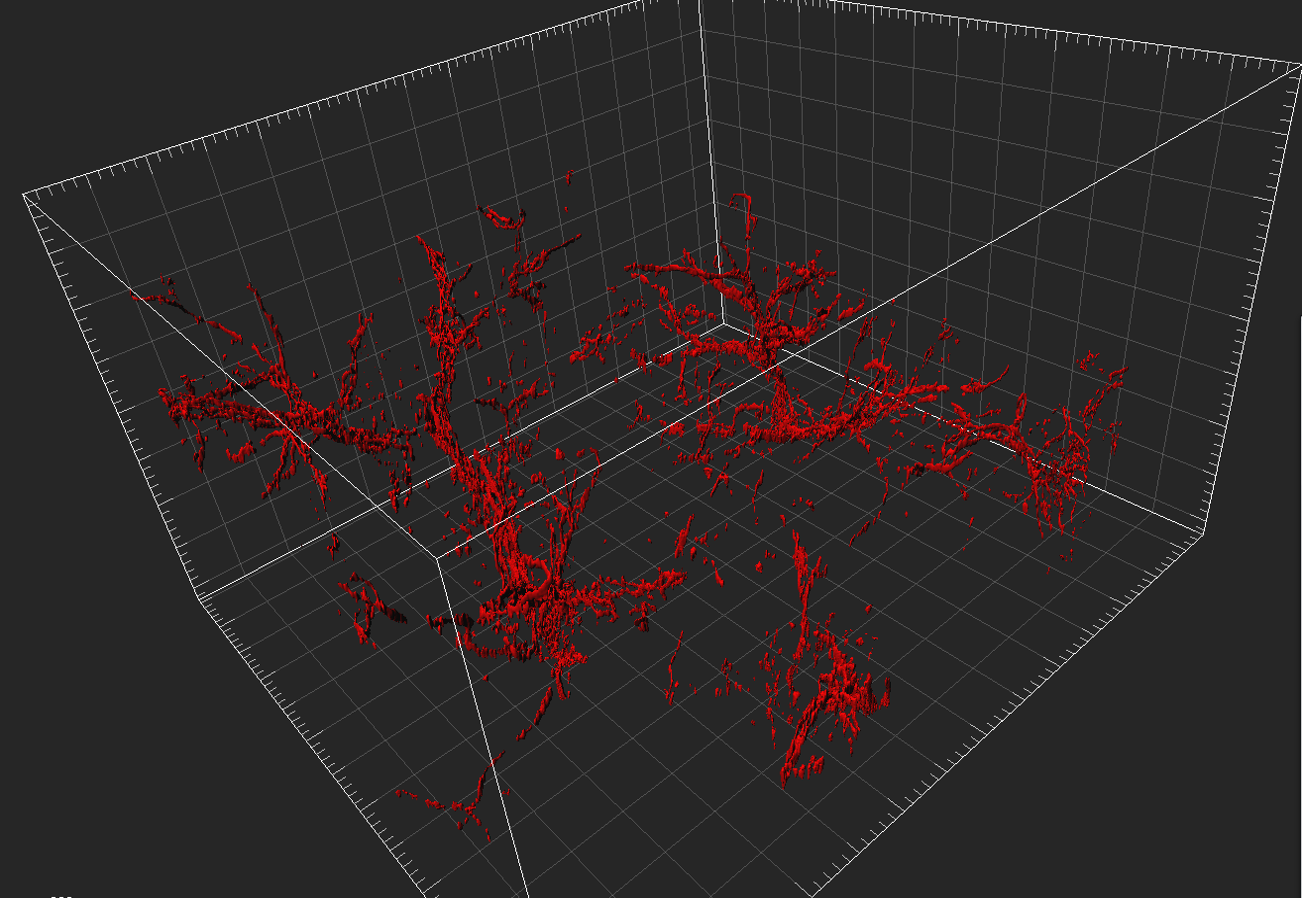}
        \caption{Filaments}
        \label{fig:spleen:modelView:a}
    \end{subfigure}
    ~ 
    \begin{subfigure}[b]{0.4\textwidth}
        \includegraphics[width=\textwidth]{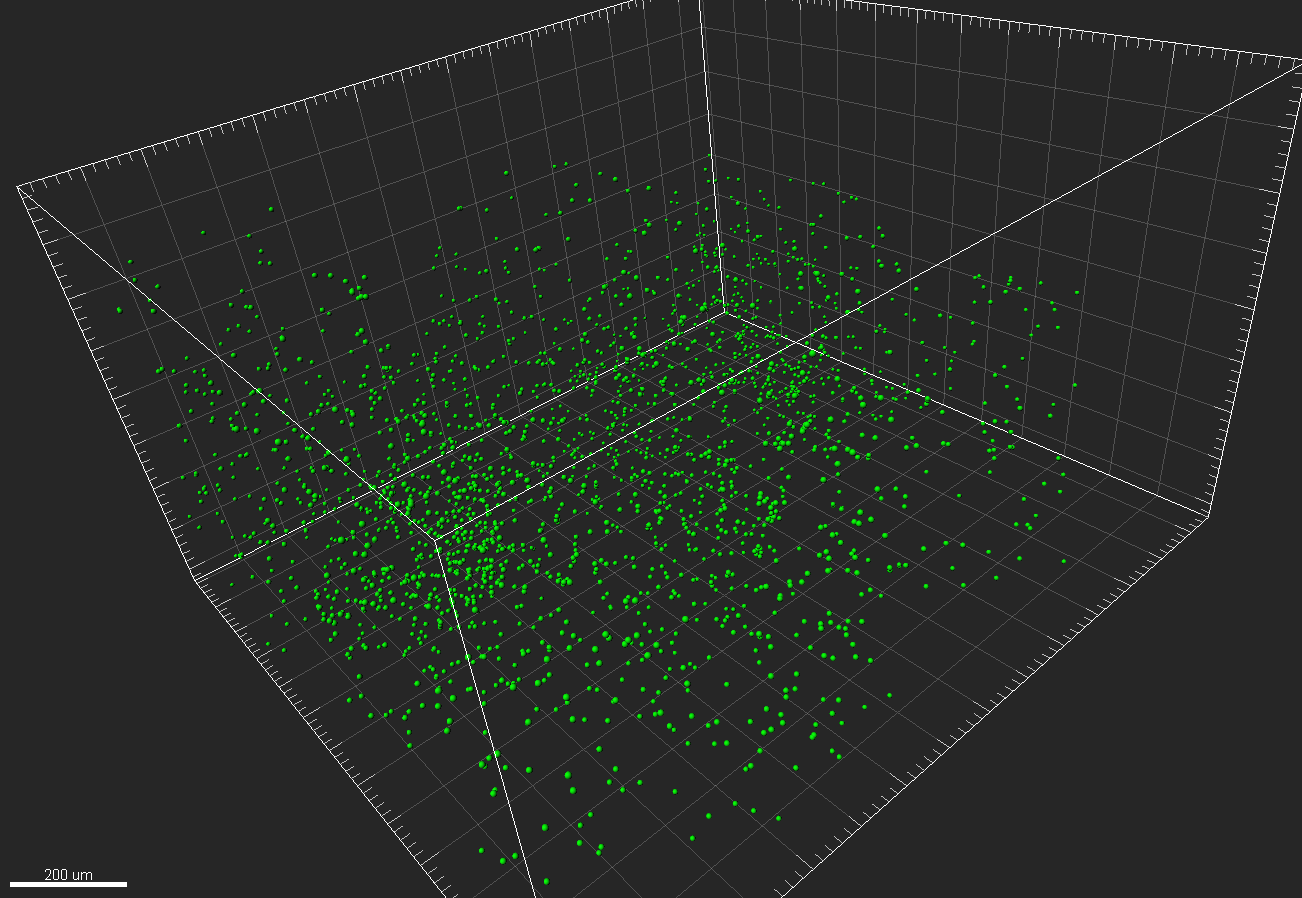}
        \caption{Spots}
        \label{fig:spleen:modelView:b}
    \end{subfigure}
    \caption{Clean data model}\label{fig:spleen:modelView}
\end{figure}

\subsection{Step 1: the boundary}
\label{ch:spleen:clean:1}

The boundary of the spleen produces imaging artifacts. Figure \ref{fig:spleen:noiseBoundary:a} shows a slice of the spleen in the original voxel data. There is a band of data near the boundary of the spleen where the voxel data looks fuzzy. This results in many false positives for T-cells and nerve filaments from the raw microscope data. If this band of noise is not removed from the data set, Imaris will later identify these areas as having unrealistically many T-cells and nerve filaments. Therefore, the first step in processing the image data is to remove the boundary from the raw voxel data set. Before joining the project, the researchers in the veterinary school were manually erasing the noisy boundary from each slice of the image. That is, if a spleen contained 2000 image slices, a human would manually edit 2000 images by erasing the boundary. This was a highly repetitive and time-consuming task.

\begin{figure}
    \centering
    \begin{subfigure}[b]{0.4\textwidth}
        \includegraphics[width=\textwidth]{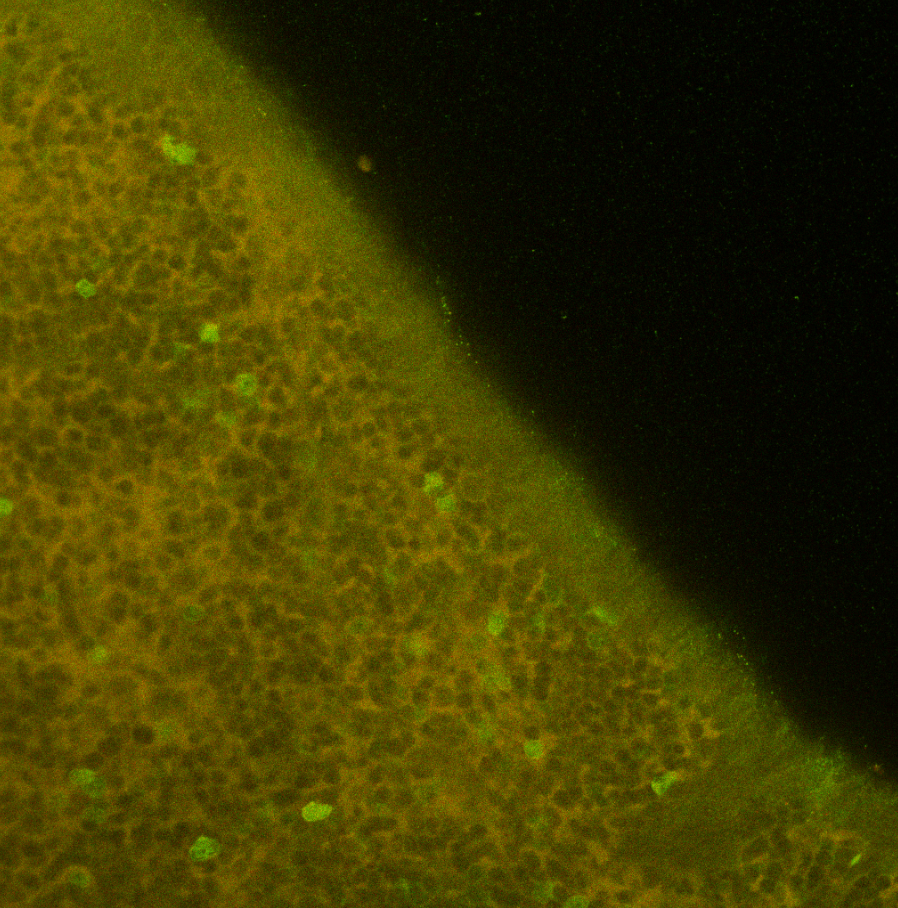}
        \caption{Spleen with boundary}
        \label{fig:spleen:noiseBoundary:a}
    \end{subfigure}
    ~ 
    \begin{subfigure}[b]{0.4\textwidth}
        \includegraphics[width=\textwidth]{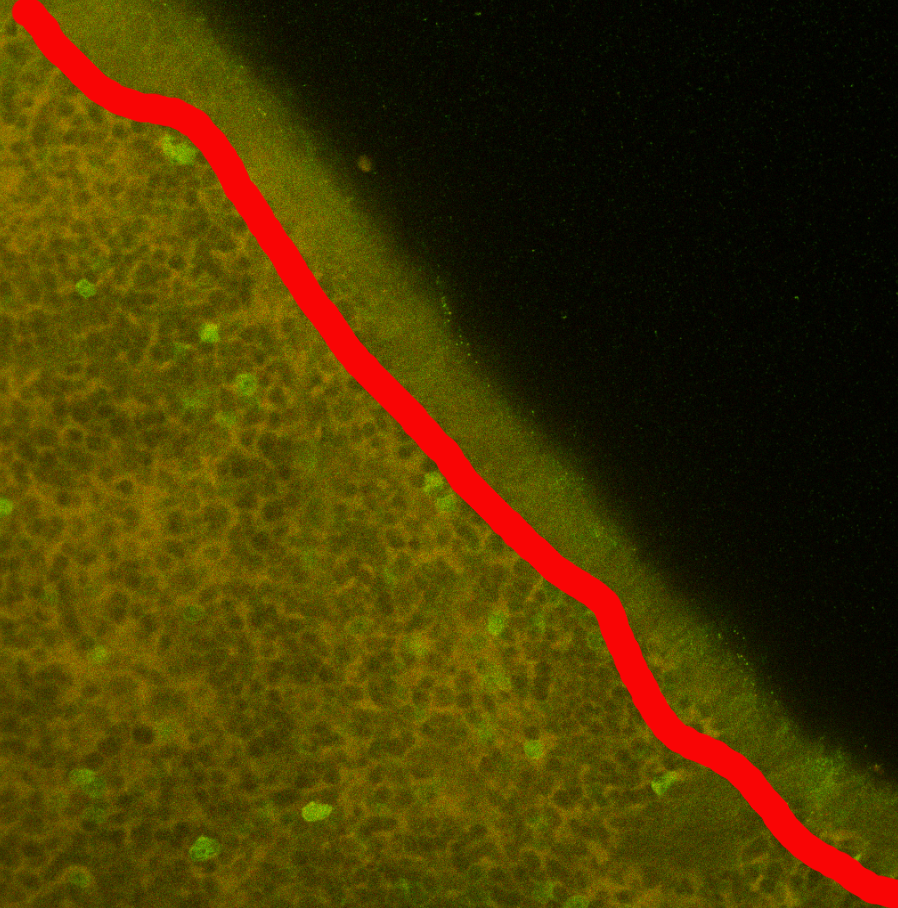}
        \caption{What we want to cut from the data}
        \label{fig:spleen:noiseBoundary:b}
    \end{subfigure}
    \caption{Boundary of the spleen}\label{fig:spleen:noiseBoundary}
\end{figure}

To do this automatically, our approach was to build a surface using Imaris that is a triangular mesh representing the boundary of the spleen. Imaris has the capability to filter a data set based on whether  it is inside of some surface. The boundary surface can be shifted inward by a small distance, and them Imaris can be used to filter data that now lies on the outside of the smaller surface. This  would remove the boundary of the spleen. Figure \ref{fig:spleen:triangularBoundary:a} is a partial view of the triangulated surface representing the boundary of the surface. Notice that this mesh has some flat sides representing the face where the spleen was cut by a scalpel. Also, the full surface is essentially a deformed sphere, without any holes.  Hence the problem was to take a polygonal mesh, and offset it by a small negative distance. In the computational geometry community, this problem is related to computing \emph{straight skeletons} \cite{aichholzer1996novel}, and many excellent software packages exist for this kind of computation such as the Computational Geometry
Algorithms Library \cite{cgal:c-sspo2-15b}. See also \cite{chen2005point, liu2011fast, liu2008vertex}. The difficulty in computing mesh offsets is that the topology of the mesh can change. For example, when shrinking a surface, two edges might intersect, requiring finding a new set of  verticals and triangles to describe the shifted mesh. Instead of proceeding further, we experimented with the naive approach of just shifting each vertex in the negative direction of its vertex normal vector. A vertex normal vector is the average of the normals at each facet the vertex is adjacent to. This simple approach is incorrect, as it does not work at sharp angles, see Figure \ref{fig:spleen:triangularBoundary:b}. However, we were able to easily remove the noisy spleen boundary in Imaris with this approach. This is a rare example when the \emph{correct} (and more sophisticated) method was not the best way to solve a problem! 

\begin{figure}
    \centering
    \begin{subfigure}[b]{0.4\textwidth}
        \includegraphics[width=\textwidth]{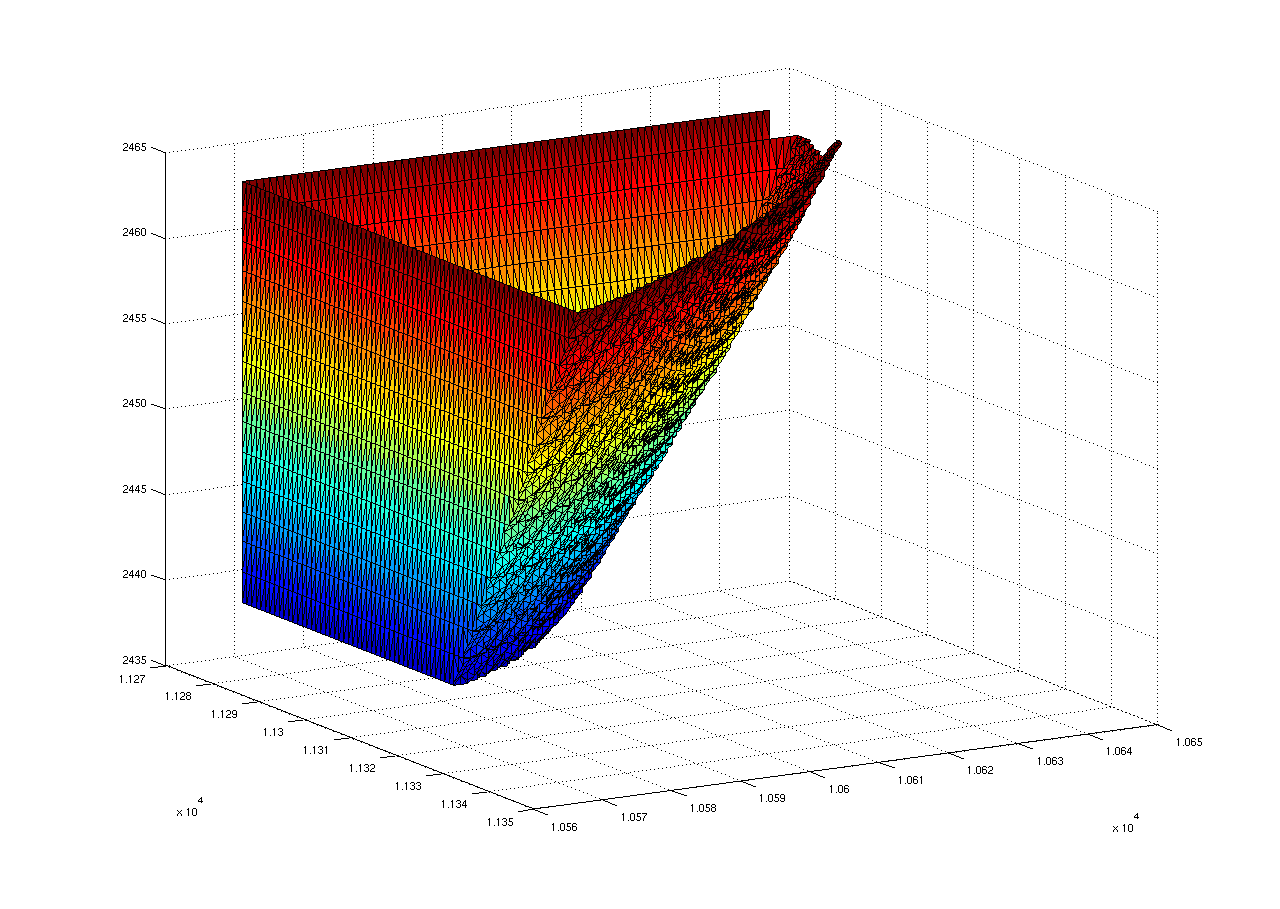}
        \caption{Partial view of triangulated boundary}
        \label{fig:spleen:triangularBoundary:a}
    \end{subfigure}
    ~ 
    \begin{subfigure}[b]{0.4\textwidth}
        \includegraphics[width=\textwidth]{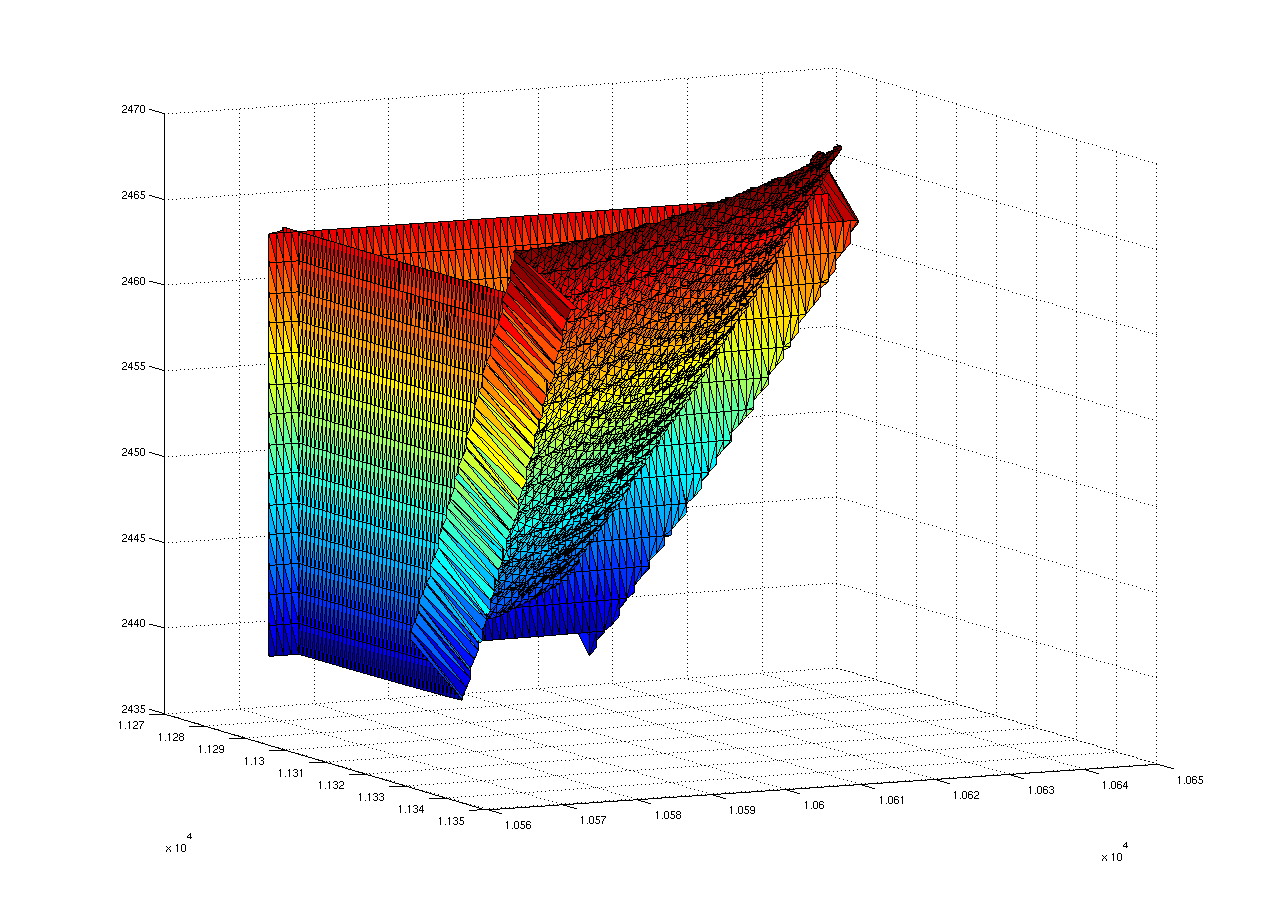}
        \caption{Naive shrinking of the boundary}
        \label{fig:spleen:triangularBoundary:b}
    \end{subfigure}
    \caption{Triangular mesh of spleen boundary}\label{fig:spleen:triangularBoundary}
\end{figure}

\subsection{Step 2: making Imaris spots from voxel data}
\label{ch:spleen:clean:2}

The task now is to go from the voxel data in Figure \ref{fig:spleen:rawdata:b} to Imaris spots in Figure \ref{fig:spleen:modelView:b}. 
Imaris constructs spots by looking at the voxel data and identifying a spot as a collection of voxels of high intensity. The threshold of when a voxel intensity is part of a spot is a user-defined constant in Imaris. This leads to a major problem: Imaris cannot be used to make spots for the entire spleen at once. This is because the average intensity values change throughout the spleen. The optimal thresholding value is a local value and changes throughout the spleen. If the Imaris constructs spots for the entire spleen at once, some regions will have too many spots, while other regions would appear not to contain T-cells. Looking at the raw data, we can see how Imaris is wrong in this case. Hence the spleen must be divided into smaller boxes, the spot-making function in Imaris must be used on each small box, and a unique threshold parameter must be assigned to each box. Speeding up this process is difficult. One thought was to write a \matlab extension that divides the spleen volume into smaller boxes, runs the Imaris spot making function on each box, and then sets the correct threshold value by also analyzing the raw data. After contacting Imaris, we discovered that each step is not supported. Because of these limitations, a human must divide up the spleen into smaller regions and apply the spot making function. This would not be a problem if we only needed to subdivide the spleen into a few boxes. But because of our dataset's size, we needed to subdivide the spleen into a few hundred boxes! Figure \ref{fig:spleen:makingSpots} shows the steps involved in using Imaris to make the spots in each subdivision. In total four menus must be clicked through before the spots are made, and for each menu, the user has to enter a command and click the next button. Figure \ref{fig:spleen:makingSpots:c}, or menu 2, is where the box boundary is modified, and Figure \ref{fig:spleen:makingSpots:d} is where the key spot parameter is modified. It is this parameter that must be set by the expert on each subdivision.

\begin{figure}
    \centering
    \begin{subfigure}[b]{0.24\textwidth}
        \includegraphics[width=\textwidth]{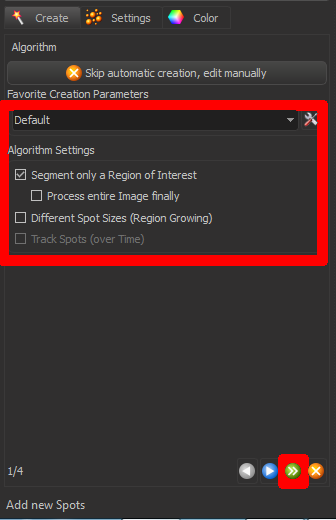}
        \caption{Menu 1}
        \label{fig:spleen:makingSpots:a}
    \end{subfigure}
    ~ 
    \begin{subfigure}[b]{0.24\textwidth}
        \includegraphics[width=\textwidth]{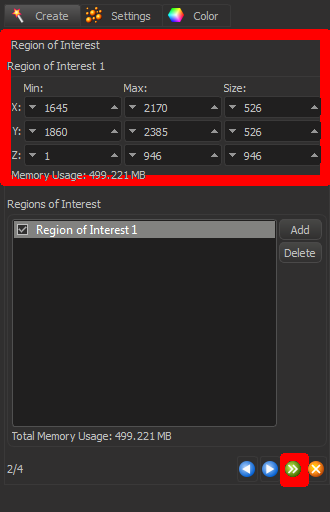}
        \caption{Menu 2}
        \label{fig:spleen:makingSpots:b}
    \end{subfigure}
    ~
    \begin{subfigure}[b]{0.24\textwidth}
        \includegraphics[width=\textwidth]{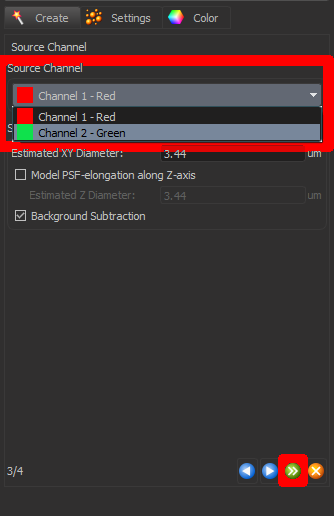}
        \caption{Menu 3}
        \label{fig:spleen:makingSpots:c}
    \end{subfigure}
    ~
    \begin{subfigure}[b]{0.24\textwidth}
        \includegraphics[width=\textwidth]{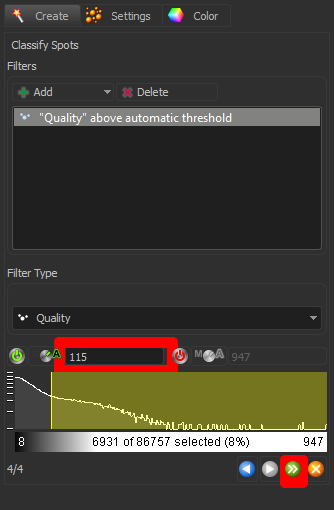}
        \caption{Menu 4}
        \label{fig:spleen:makingSpots:d}
    \end{subfigure}
            
    \caption{Sequence of menus for making spots}\label{fig:spleen:makingSpots}
\end{figure}

To reduce the necessary repetition, we sought to automatically divide the spleen into smaller boxes and run the Imaris spot making function on each box with the default value that Imaris produces on each box. The default value that Imaris uses on each small box is different for each box and is better than running the spot making function on the entire spleen at once, but a human might still need to modify the value. Hence our solution was to automatically build the spots on each box with the default value Imaris uses for each box. A human can then do a second pass through the set of boxes and modify the spot parameter on each box as necessary. 

To automatically make the spots, we wrote a script using AutoIt \cite{autoit-software}, which is a tool popular in IT administration for programming mouse and keyboard input. With this script, we automatically click through all the menus to build a few hundred boxes containing spots. Using this tool may not be an application of data science, but it is very important in reducing time spent cleaning the data. The AutoIt script allowed the researchers to better spend their time and improved their work flow. 

Dividing the spleen into smaller boxes and building spots in each box produces one problem.  The same spot can be identified twice. For example, let $B_1, B_2$ be boxes defined by 
\[B_1 = \{ x \in \R^3 \mid 0 \leq x_1 \leq 1, 0 \leq x_2 \leq 1, 0 \leq x_3 \leq 1\},\]\[B_2 = \{ x \in \R^3 \mid 1 \leq x_1 \leq 2, 0 \leq x_2 \leq 1, 0 \leq x_3 \leq 1\},  \]
and note that they have a common intersection. It is possible that a spot with center $(.9, 0, 0)$ and radius $0.2$ is produced in $B_1$. Notice that this spot produced by Imaris is allowed to extend past its box boundary! Moreover, Imaris could produce a spot with center $(1.1, 0, 0)$ and radius $0.2$ in $B_2$. This means the spots overlap, or that two different T-cells are in the same location, an impossibility. Therefore ``almost duplicate'' spots can be created. To remove these, we simply compute the nearest neighbor of each spot, and if two spots intercept, we keep just one of the spots. The nearest neighbor calculation is very similar to what is described in Section \ref{ch:spleen:knn}.

\subsection{Step 3: making Imaris filament surfaces from voxel data}
\label{ch:spleen:clean:3}

The process of going from voxel data in  \ref{fig:spleen:rawdata:a} to the Imaris surfaces in Figure \ref{fig:spleen:modelView:a} starts with applying the same techniques as for the spots. The spleen is subdivided, and the Imaris function that builds the surfaces from the voxel data is applied on each box. AutoIt can again be applied to speed things up. One difference is that duplicate surfaces are not produced by Imaris. 

But there is one key difference: sometimes Imaris will  identify noise as being true filaments. Sometimes we cannot modify Imaris parameters on a box that keep all the true filaments while deleting all the noise. Often there are small surfaces, in regions isolated from the other larger filaments, that Imaris identifies as filaments. The problem is illustrated in Figure \ref{fig:spleen:smallCleanedFilaments}. The purple area is a collection of surfaces that model the true nerve filaments. The isolated red surfaces are noise.  They are not clearly connected to a larger filament, nor do they seem to be connected by a line, which would also imply they form a filament. 

\begin{figure}
    \centering       \includegraphics[width=\textwidth]{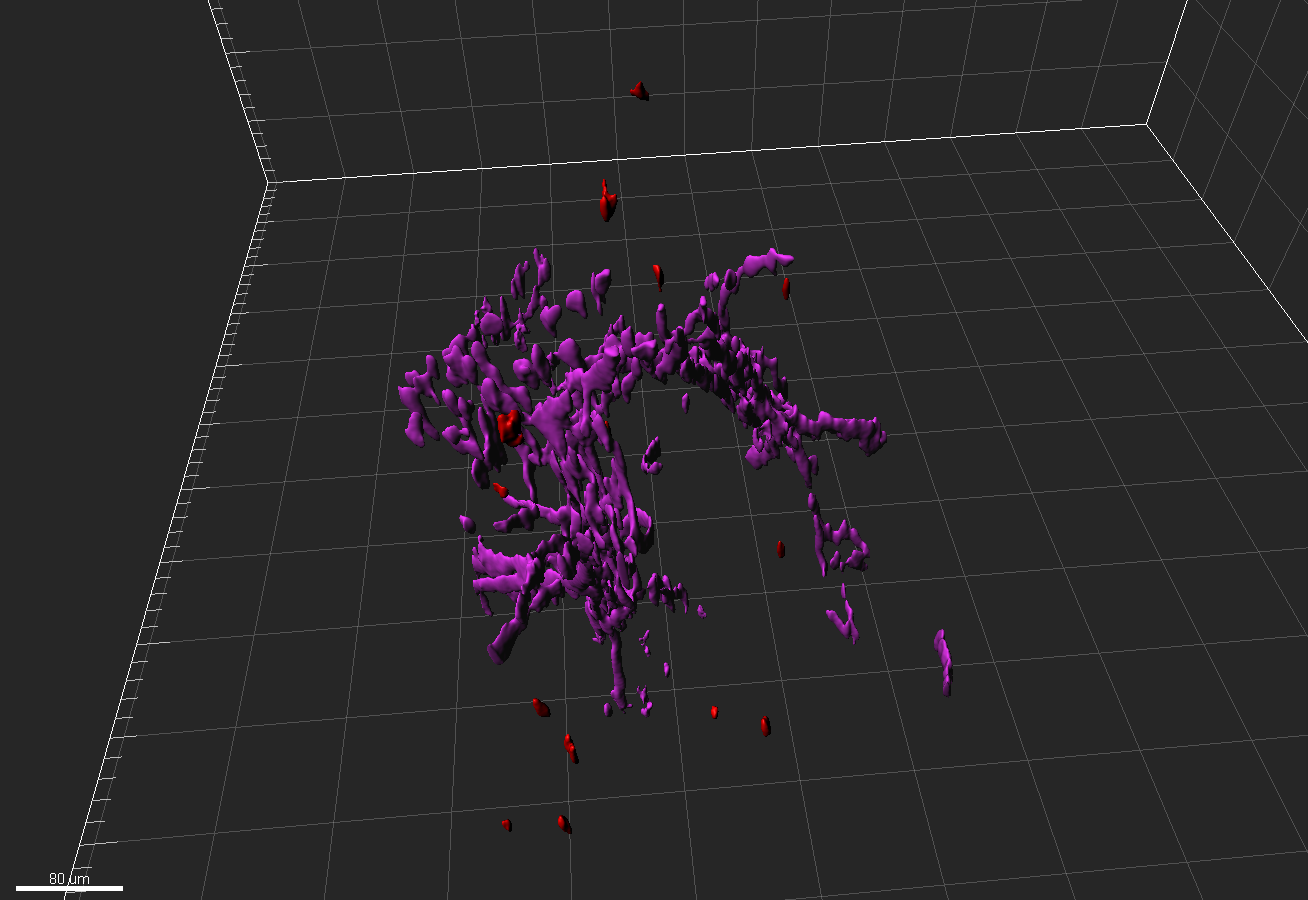}
   \caption{Red surfaces are noise, purple surfaces are true filament models }\label{fig:spleen:smallCleanedFilaments}
\end{figure}

Our strategy for removing the noise is to compute the closest neighbors for each surface. If a surface contains a large volume, or is near such a surface, we keep it. We also keep surfaces that form a large group. Figure \ref{fig:spleen:clustergoal}  illustrates in dimension two what kind of surfaces we want to identify. We keep both surfaces in the red C cluster because one surface is large, and the other is near to a large surface. We keep all the surfaces in the blue B cluster because their group size is large.  The green surfaces in cluster C are removed from everything, has a small total volume, and is a small group. Therefore the goal is to remove the surfaces in the C cluster.

\begin{figure}
    \centering       \includegraphics{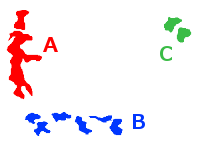}
   \caption{We remove cluster C}\label{fig:spleen:clustergoal}
\end{figure}

To implement this heuristic, we compute the distance between any two surfaces. Using this, we compute an agglomerative hierarchical clustering \cite{maimon2005data}. This creates a cluster tree where each surface starts off in its own cluster and forms the leaf nodes of the tree.  Pairs of clusters are merged as one moves up the hierarchy. The height at which two clusters are merged in the tree represents the minimum distance between the two clusters. We cut the tree at height $20 \mu$m, meaning that if two surfaces are further than $20 \mu$m, then they are not merged together. The value  $20 \mu$m was selected as it produced visually appropriate clusters. 
 
 For each resulting cluster of surfaces, we perform two heuristics on them. First, we compute the volume of each surface in a cluster. We then keep the largest clusters that account for $90\%$ of the total volume. In the example figure, this may mean that only cluster A would pass in Figure \ref{fig:spleen:clustergoal}. The clusters that fail this test  contain only small surfaces. We then apply the second heuristic on these small clusters, which is simply thresholding the number of surfaces in a cluster. In Figure \ref{fig:spleen:clustergoal} this means cluster B is kept while cluster C is removed. 
 
 Figure \ref{fig:spleen:bigCleanedFilaments} shows the result of this strategy on about 1600 surfaces, composing the filaments for the entire spleen. Originally, every surface is marked red. Our algorithm returns the surfaces to keep. The purple surfaces are the cleaned filament model, and the red surfaces are classified as noise and deleted.
 
\begin{figure}
    \centering       \includegraphics[width=\textwidth]{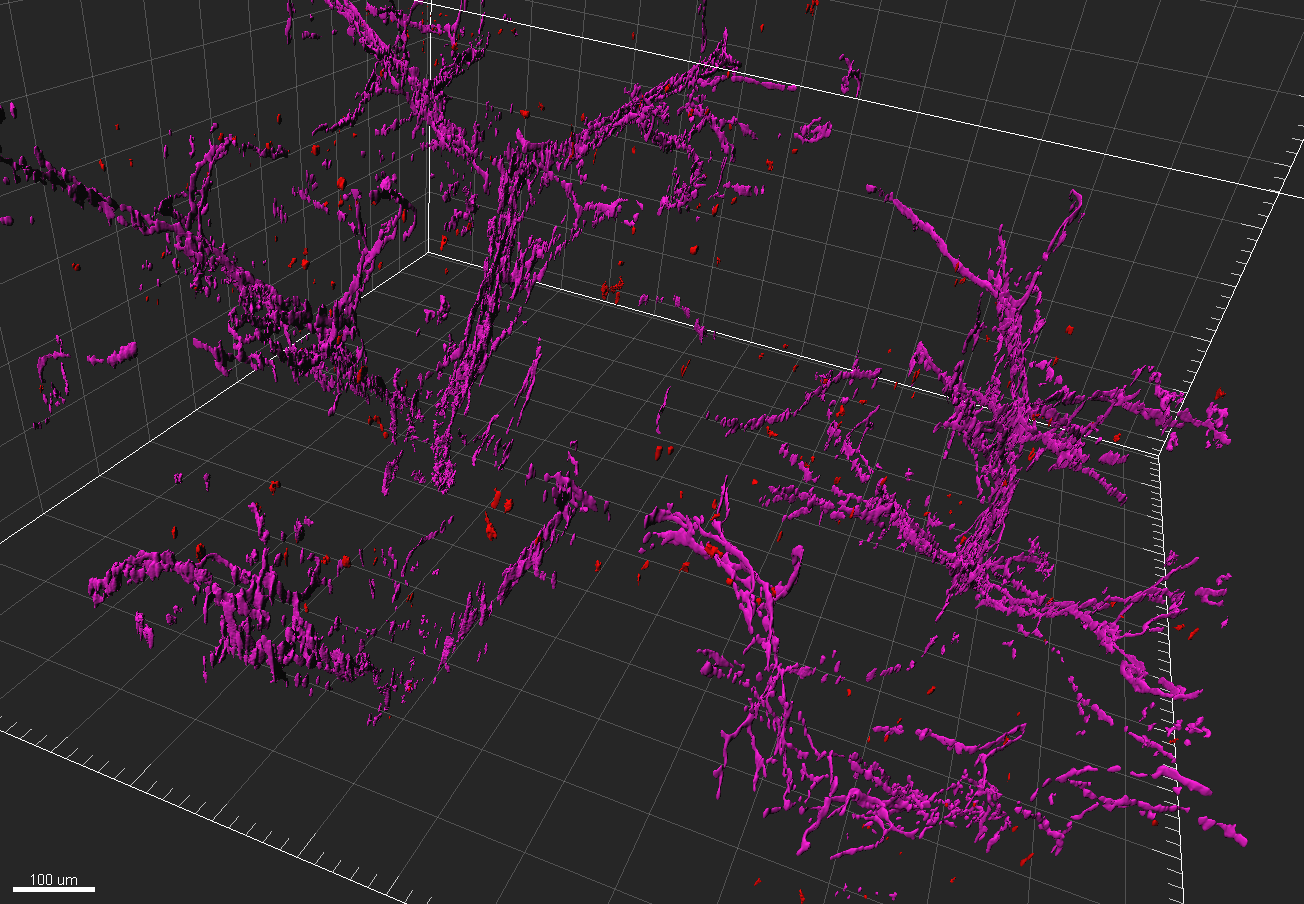}
   \caption{Red surfaces are noise, purple surfaces are true filament models }\label{fig:spleen:bigCleanedFilaments}
\end{figure}

\section{Cluster analysis}
\label{ch:spleen:sec:cluster}

The previous section described how we cleaned the data. Now we address the central research question: how does the distribution of spots and filaments change under inflammation? Figure \ref{fig:spleen:zoomedOveraly} shows a zoomed in overlay after cleaning the data in Figures \ref{fig:spleen:modelView:a} and \ref{fig:spleen:modelView:b}. We highlighted two regions in blue boxes. In the lower right box, we see that there are green spots really close to the red filaments. This suggest the T-cells and nerve cells are closely interacting, and potentially connecting. However, as the upper left box shows, many spots are not close to \emph{any} filament. To study the distribution of these two sets, we computed the distance from each spot to its closest point on a filament. For each spleen sample, we know if it came from a mouse with inflammation or not. Using the distribution of distance values between the spots and filaments, together with the inflammation label, we sought a pattern in the distance distributions. At the time of writing, work on this project was not completed. In the next sections, we describe the steps we \emph{will} perform in exploring how the distance distributions changes under  inflammation. 

\begin{figure}
    \centering       \includegraphics[width=\textwidth]{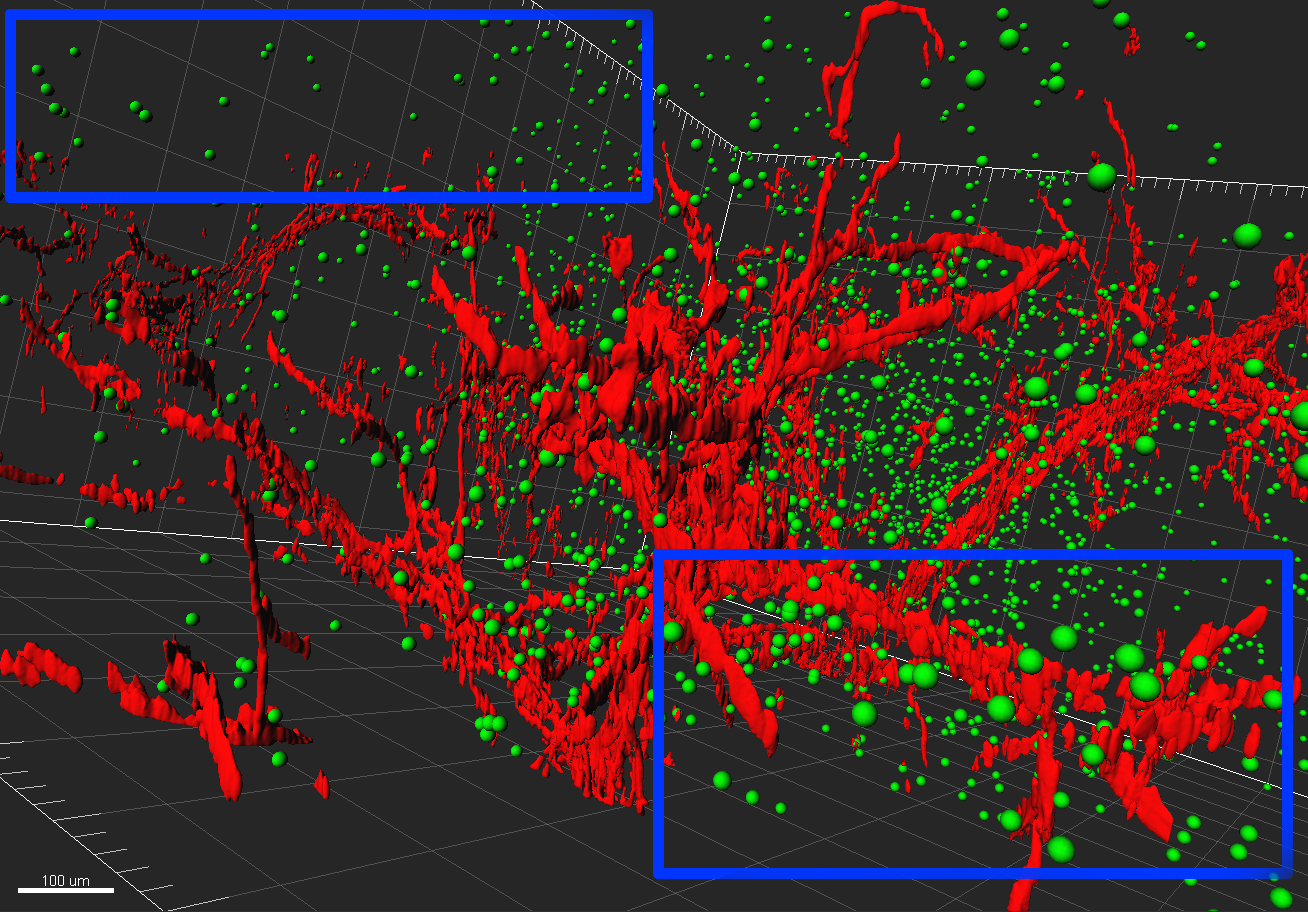}
   \caption{Zoomed in overlay of spots and filaments}\label{fig:spleen:zoomedOveraly}
\end{figure} 

\subsection{Step 1: computing nearest neighbour}
\label{ch:spleen:knn}

 Each surface in Imaris is a triangulated mesh containing vertices and triangles. Because the mesh is very fine, we approximate computing the nearest distance from a spot to a filament by instead computing for each spot, the closest distance from a spot's center to a red vertex. This is a classic nearest neighbor calculation on two point sets: for each green point, find its nearest neighbor among the red points. 

Before joining this project, the author's collaborators were using a \matlab extension from the vendor's website for computing these distance values. The algorithm employed took $O(rg)$ time where $r$ is the number of red vertices from the filaments, and $g$ is the number of green spots. The script was similar to Algorithm \ref{alg:ch:spleen:stupid}.  As $r$ is in the order of tens of millions, and $g$ is in the order of tens of thousands, this took many hours to compute. For a moderately sized spleen, this resulted in $6+$ hours of computational time to find these distances. Worse, sometimes things would crash and a day would be wasted. Again, the software tools are a shared resource, and so it could essentially take a few days to compute these distances! We immediately replaced the $O(rg)$ algorithm with a $O(g \log(r))$ average time algorithm using \matlab's nearest neighbor functions. The result was a $35x$ computing time decrease. The 6-hour computation was reduced to 10 minutes. Because this improvement was so large, we next describe the core  data structure behind \matlab's nearest neighbor search function. 

\begin{algorithm}                      
\caption{The initial algorithm from the Imaris website for computing the nearest distance between two point sets}
\label{alg:ch:spleen:stupid}
\begin{algorithmic}                    
\REQUIRE Set of spot centers $S \subset \R^3$, set of vertices $V \subset \R^3$
\ENSURE For each $s_i \in S$, the minimum distance $d_i \in \R$ to a vertex in $V$
\FORALL{ $s_i \in S$}
	\STATE $d_i \gets \infty$
	\FORALL{ $v \in V$}
		\STATE $d_i \gets \min(d_i, \norm{s_i - v})$
	\ENDFOR
\ENDFOR
\RETURN $d_i$ for $i=1,2, \dots, |S|$
\end{algorithmic}
\end{algorithm}

At the heart of \matlab's nearest neighbor function, \texttt{knnsearch}, is a kd-tree. A kd-tree is a binary tree where the leaf nodes are buckets containing points in $\R^k$ and each non-leaf node contains a coordinate index $i$ and a split value $v_i$. The left subtree of a non-leaf node contains points $x \in \R^k$ for which $x_i \leq v_i$ and the right subtree contains points where $x_i > v_i$. The non-leaf nodes implicitly define a splitting hyperplanes that are perpendicular to one axis. Figure \ref{fig:spleen:kdtreeEx} shows a kd-tree with the points  $(2,9,1)$, $(6,1,1)$, $(9,3,1)$, $(13,1,2)$, $(17,2,2)$, $(7,1,10)$, $(9,2,12)$, $(12,3,14)$, $(15,1,11)$, $(7,8,1)$, $(6,7,3)$, $(6,5,5)$, $(7,5,5)$, and $(7,6,5)$. Imagine inserting a new point, $(6, 5, 3)$ into the tree. The search process starts at the root. The root node says that points where the first coordinate is less than or equal to five are in the left child while points where the first coordinate is larger than five are in the right child. Because $6 > 5$, the right child is explored. The node $x_2:4$ tests the second coordinate of the point $(6, 5, 3)$. The right child is taken and we arrive at node $x_3:4$. The third coordinate is tested and the left child is taken. At this point, the node is simply a list of points, and the new point is appended to this list. Searching for a point is a similar process. Leaf nodes can be split if their size becomes too large. For example, if we impose a limit of four on the leaf nodes and the point $(9, 1, 10)$ is inserted, the right child of the node $x_3:9$ will have to be split. The test coordinates can be cycled, and the new node could just test the first coordinate. For a complete description of a kd-tree and how they are used for computing the nearest neighbor, see \cite{de2008computational, 1997Handbook-of-dis}.

\begin{figure}
    \centering       \includegraphics{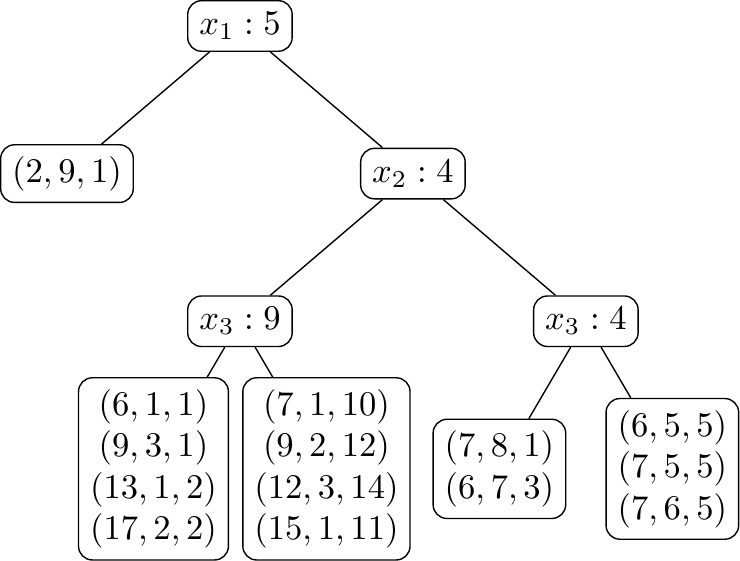}
   \caption{Example kd-tree}\label{fig:spleen:kdtreeEx}
\end{figure} 

\subsection{Step 2: dimensionality reduction}

Let $g$ be the number of spots in the data set, then the nearest neighbor calculation produces $g$ distance numbers. We compute a histogram of these distance numbers with $N$ bins. Computing the histograms of the nearest neighbor distance for each spleen allows us to study how the distribution of T-cells and nerve cells changes with inflammation. Figure \ref{fig:spleen:histo} shows one such histogram with $N=100$. The horizontal axis show the ranges of each bin, and the vertical axis is the frequency. Two observations are obvious. First, there is a large spike at the first bin, meaning there are many spots that are really close to a nerve filament. Second, most spots are not close to a nerve filament. This histogram is from a healthy spleen. At the time of writing, the data processing has not been finished for the other spleens, but it was hypothesized that the mass of the histogram would shift to the left for spleens with inflammation. 

\begin{figure}
    \centering       \includegraphics[width=0.80\textwidth]{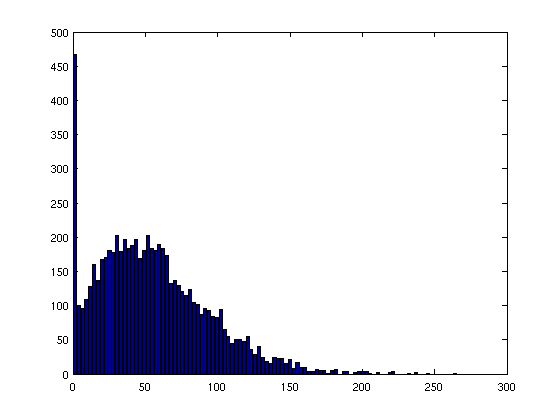}
   \caption{Histogram of nearest neighbor distances for a small spleen sample}\label{fig:spleen:histo}
\end{figure} 

Quantifying the difference between histograms with 100 bins is not easily done. Our approach was to encode the histogram into one vector $h \in \R^N$ where $h_i$ is the count in bin $i$. When we process all 18 spleens, we will have 18 histogram vectors $h^1, \dots, h^{18}$ in $\R^N$. Using the inflammation data, it would be trivial to cluster 18 points in a large dimensional space into two sets. To better analyze the 18 histograms, we first used dimensionality reduction techniques to map the $\R^N$ data into a low dimensional space such as $\R^2$. 

Principal component analysis (PCA) \cite{jolliffe2002pca} is a general technique for exploring the structure of data. It takes a set of points in $\R^N$ in the standard basis, and computes a new orthogonal basis. In the new basis, each coordinate is uncorrelated with the other coordinates. The first coordinate has the largest possible variance, and accounts for as much of the variability in the data as possible. The $i^{th}$ basis gives the direction that contains the largest variance in the data under the constraint that it is orthogonal to the preceding directions. PCA is often used to visualize a high-dimensional data set by first writing each point in the new basis, and then truncating the coordinates in each point. That is, for each data point, only the first few coordinates are kept. As an example, Figure \ref{fig:spleen:pcaEX:a} shows three different types of points in $\R^3$. It is difficult to see how these points are related. Figure \ref{fig:spleen:pcaEX:b} plots the same points after running PCA and keeping the leading two components. It is now easy to see that these three point types are clusterable. This illustrates the two main uses for PCA: visualizing high-dimensional data, and as a preprocessing step for a clustering algorithm. 

\begin{figure}
    \centering
    \begin{subfigure}[b]{0.5\textwidth}
        \includegraphics[width=\textwidth]{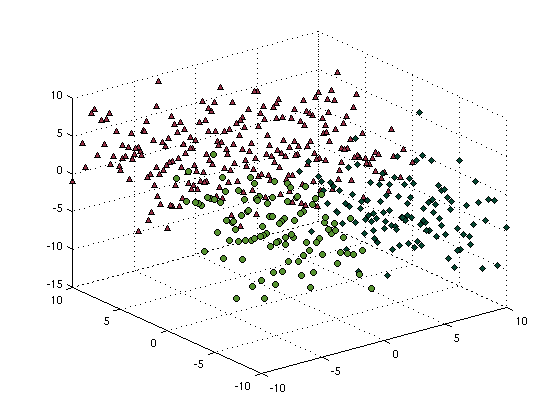}
        \caption{Raw data in $\R^3$}
        \label{fig:spleen:pcaEX:a}
    \end{subfigure}
    ~ 
    \begin{subfigure}[b]{0.5\textwidth}
        \includegraphics[width=\textwidth]{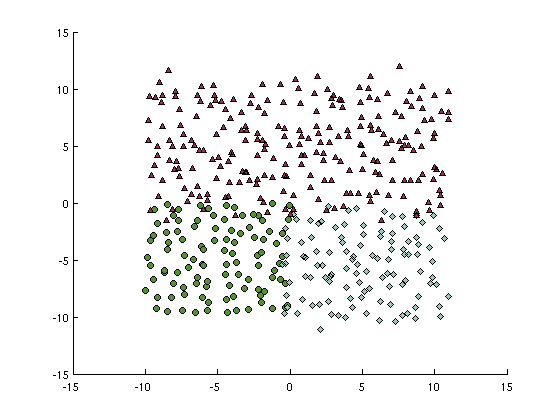}
        \caption{After mapping it into $\R^2$ with PCA}
        \label{fig:spleen:pcaEX:b}
    \end{subfigure}
            
    \caption{Example of using PCA for visualization}\label{fig:spleen:pcaEX}
\end{figure}

We now briefly mention one way to compute the PCA representation. Let $\hat H \in \R^{18 \times N}$ be a matrix containing the 18 spleen histogram vectors. PCA is a linear model, and each column of $\hat H$ should have zero mean. Let $\bar{h} = \frac{1}{18} \sum_{i=1}^{18} h^i$ be the average spleen histogram vectors, and define $H \in \R^{18 \times N}$ as the matrix formed by subtracting $\bar{h}$ from each row of $\hat H$. 

Singular value decomposition (SVD) on $H$ produces matrices $U \in \R^{18 \times 18}$, $\Sigma \in \R^{18 \times N}$ and $V \in \R^{N \times N}$ such that $H = U \Sigma V^{T}$, where $U$ and $V$ are orthogonal matrices, and $\Sigma$ is diagonal matrix with non-negative real numbers on the diagonal \cite{van2014parallel}. The histogram data in the new basis is given by the rows of $HV$. Let $1 \leq r \leq \min(18,N)$, and let $U_r$ be the first $r$ columns $U$, $V_r$ be the first $r$ columns of $V$, and $\Sigma_r$ be the upper left $r$ by $r$ submatrix. If $H$ has full rank, it can be shown that $U_r\Sigma_r V^{T}_r$ is the best rank $r$ approximation to $H$ under the Frobenius norm. The histogram data in the new basis after keeping the largest $r$ components is given by the rows of $HV_r$. Hence $V_r$ encodes a map from histogram vectors in $\R^N$ into $\R^r$. It was our belief that $r=2$ or $r=3$ would suffice for a nice mapping where the spleens labeled with inflammation are clearly grouped in the new space. 

Finally, there are many models for dimensionality reduction other than PCA. In particular, factor analysis \cite{comrey2013first}, projection pursuit \cite{huber1985projection}, independent component analysis \cite{hyvarinen2004independent}, neural networks \cite{hinton2006reducing}, and random projection methods\cite{kaski1998dimensionality, kohonen2000self} have been used for dimensionality reduction. See for example \cite{fodor2002survey, lee2007nonlinear}. These methods could also be explored. 



The author's impact in this collaboration was twofold. First, significant improvements to how the veterinary researchers clean and model their data was made. These improvements came about by recognizing classic problems from data science and geometry, and knowing how to implement more efficient algorithms to solve them. Second, we explored classic dimensionality reduction and clustering tools to automatically identify which spleens are connected to inflammation. Any result on how the distribution of T-cells and nervous cells is affected by inflammation can lead future researchers to better understand this interaction.


   \appendix
\chapter{Computer source code}
\label{ch:AppendixALabel}
   
\section[Codes for integration and Ehrhart polynomials]{Codes for integrating a polynomial over a polytope and for computing the top coefficients of the Ehrhart polynomial}

The source code for many of the algorithms covered in Sections \ref{ch:Integration}  and \ref{ch:knapsack} are  online in the software package \latteintegrale \cite{latteintegrale}. All code is available under the GNU General Public
  License at 
\begin{center}
  \url{https://www.math.ucdavis.edu/~latte/}.
\end{center}  

The \latteintegrale 1.7.3 bundle contains all software dependencies to run \latte. Within the bundle, the folder ``latte-int-1.7.3'' contains the core \latte code. The algorithms from Section \ref{ch:Integration} can be found in two folders:
\begin{center}
 ``latte-int-1.7.3/code/latte/integration'' and ``latte-int-1.7.3/code/latte/valuation''.
\end{center} 
  Algorithms from Section \ref{ch:knapsack} are in the folder ``latte-int-1.7.3/code/latte/top-knapsack''. Refer to the manual for examples of integrating a polynomial over a polytope, and for computing Ehrhart polynomials. 

It must also be noted that \latte can also compute other things not covered in this thesis. These include counting the number of integer points in a polytope, and computing Ehrhart quasi-polynomials for integer and rational polytopes.

\section{Codes for cleaning and processing the spleen data}

This section contains the \matlab scripts used within Imaris \cite{imaris-software} and other source code from Section \ref{ch:spleen}. 

\lstset{language=Matlab,%
    breaklines=true,%
    morekeywords={matlab2tikz},
    keywordstyle=\color{blue},%
    morekeywords=[2]{1}, keywordstyle=[2]{\color{black}},
    identifierstyle=\color{black},%
    stringstyle=\color{red},
    commentstyle=\color{teal},%
    showstringspaces=false,
    numbers=left,%
    numberstyle={\tiny \color{black}},
    numbersep=9pt, 
    emph=[1]{for,end,break},emphstyle=[1]\color{red}, 
    tabsize=4,
}

\subsection{Code for processing the boundary of the spleen}
This \matlab file is for processing the noisy boundary of the spleen from Section \ref{ch:spleen:clean:1} by shifting a triangular mesh inward. 

\lstinputlisting{XTshrink.m}

\subsection{Code for processing the filament surfaces}

This \matlab file is for removing noisy filament surfaces from Section \ref{ch:spleen:clean:3}. 

\lstinputlisting{XTFilterFilaments.m}

\subsection{Code for computing nearest neighbors}

This \matlab file is used in Section \ref{ch:spleen:knn}

\lstinputlisting{nearestNeighbor.m}

   \backmatter
   
   \singlespace 
   \bibliographystyle{plain}
   \bibliography{biblio}

\end{document}